% !TeX encoding = UTF-8
% !TeX TS-program = xelatex
            % this should also work with pdflatex but eventually want to migrate to xelatex
% nonspecial.tex

% If there is a subfile included (or inputted) from this one, use !TeX root = <main_filename>.tex

\NeedsTeXFormat{LaTeX2e} %[2011/06/28]  % If including a date, need leading zeros to make two-digit month
\listfiles
\IfFileExists{nag.sty}{\RequirePackage[l2tabu, orthodox, abort, 
							%experimental
]{nag}}{}   % didn't exist on coxeter until summer 2013
% In amsbook class, \chapter uses \centerline which causes nag warning in l2tabu
% So we can't use abort option; must look at warnings carefully.
%  NOT A PROBLEM with ut-thesis (or with document classes that don't have chapters, such as amsart).

% The command \marginpar (used in todonotes package by \todo) causes nag warning in experimental.
% So we can't use experimental option when using todonotes.
% (Alternatively, could use experimental without abort option, and then look through warnings carefully.)

\documentclass[openany,  % openany is not used by amsart, but keep it here in case we switch back to amsbook
final, 
%draft
]   % use draft to mark overfull lines etc.; need to override with final for hyperref
        {amsart}

\usepackage{amsmath, amsfonts, amsthm} % in case a non-AMS document class is used
\usepackage{amssymb, latexsym}
\usepackage[all]{onlyamsmath}
\usepackage{fixltx2e}
\usepackage{stmaryrd}   % Need this for \bigtriangledown as variable-sized operator, for diagonal unions

%\usepackage{thmtools}	% allows \declaretheorem instead of \newtheorem, 
				% and facilitates repeating a theorem in the abstract

\usepackage{empheq} % from the "mh" bundle; also loads mathtools and mhsetup

\usepackage[obeyFinal]{todonotes}

\usepackage[%backref,    % backref has some quirks in the way it refers to sections and theorem counters
    % but pagebackref causes warnings with ut-thesis because of resetting page numbering after preliminary!
        % Actually not sure what causes those warnings, but they appear only on coxeter with pdflatex.
	% Also, backref doesn't work with biblatex.
        final]          % need final to override draft option in the documentclass
        {hyperref}   % This must be the last \usepackage line,
        % except that showkeys comes afterward, otherwise don't see keys for bibitems
%  Notice also that the red squares around \ref numbers are too small when using XeLaTeX; they're better with pdfLaTeX.

\usepackage{showkeys}	% to see labels in margin
\makeatletter
\@ifpackagelater{showkeys}{2011/11/24}{
	\typeout{Using showkeys v3.16 (or later); this is fine.}
}{
	\@ifpackagelater{showkeys}{2006/01/10}{
		\@ifpackagewith{showkeys}{notcite}{
			\typeout{Using showkeys v3.14 or 3.15 with notcite option; this is fine.}
		}{
			\PackageError{showkeys}{
				Bad version of showkeys, 3.14 or 3.15%
			}{%
				Bug introduced in showkeys v3.14, \MessageBreak
				reported in tools/4173 in LaTeX bugs database: \MessageBreak
				http://www.latex-project.org/cgi-bin/ltxbugs2html?pr=tools/4173 \MessageBreak
				You must either upgrade the showkeys package to [2011/11/24 v3.16] \MessageBreak
				or use it with the [notcite] option.
			}
		}
	}{
		\typeout{Using showkeys v3.13 or earlier (such as v3.12 on Coxeter); this is fine.}
	}
}
\makeatother

%%% This would get rid of the blank page between title page and TOC in amsbook (space meant for abstract),
%%%% see http://tex.stackexchange.com/a/81709/21053
%%% but coxeter doesn't have etoolbox.  (It does now, since the upgrade of August 2013.)
%\usepackage{etoolbox}
%\makeatletter
%\patchcmd{\@maketitle}{\newpage}{}{}{}
%\makeatother

%%% The following packages do not exist on older systems such as coxeter, so they should not be used:
%\usepackage{mathstyle, flexisym, breqn}            % from the "mh" bundle
%\usepackage{expl3, xfrac, xparse, xtemplate, l3keys2e} % from the LaTeX 3 bundles
%\usepackage{fontspec, xltxtra, xunicode, realscripts, metalogo}    % can run only on XeLaTeX
%\usepackage{polyglossia, bidi, hebrewcal}                  % same
%\usepackage{biblatex}					% newer package
%%%%%%%%%

\newcommand{\pred}[1]{#1\mathord{\downarrow}}	
\newcommand{\cone}[1]{#1\mathord{\uparrow}}	% for the "cone" above a node t in a tree
% if we don't use \mathord, \downarrow and \uparrow are delimiters so spacing is wrong
% This is the notation from New-nen.  
% Previously would have used \hat{#1} or just pred(#1), and T^{#1}.

%\declaretheorem{theorem}
\newtheorem{theorem}{Theorem}
\newtheorem{lemma}[theorem]{Lemma}
\newtheorem{remark}[theorem]{Remark}
\newtheorem{corollary}[theorem]{Corollary}
\newtheorem{example}[theorem]{Example}
\newtheorem{question}{Question}

\newtheorem*{main}{Main Theorem}
\newtheorem*{unnumbered}{Theorem}

\newtheorem{claim}{Claim}[theorem]

% This is the trick to make the numbering appear right,
% from the bottom of page 348 of Math Into \LaTeX.

\newtheorem{definition}{Definition}

\DeclareMathOperator{\cf}{cf}
\DeclareMathOperator{\height}{ht}	% unfortunately can't use \ht as it's already a defined LaTeX command

\DeclareMathOperator{\range}{range}

\newcounter{condition}
        % to sequence the conditions in a single numbered list spanning several enumerate environments

\title[Stationary Trees and Baumgartner-Hajnal-Todorcevic Theorem]%
        {A Theory of Stationary Trees 
		and the \\
	Balanced Baumgartner-Hajnal-Todorcevic Theorem for Trees}
%	{Nonspecial Trees and Partition Relations% Their Applications%\\
%		%titleline2
%	}
\author{Ari Meir Brodsky%\\
%	Updated \today\ \copyright.
}
% Note that \date is ignored in amsbook class (though it is used in book class and amsart);
% That's why I put it in the \author command when using amsbook.
\date{Updated \today\ \copyright}
% Don't forget to set the date!

% article class doesn't allow \address or \email
\address{Department of Mathematics\\
            University of Toronto\\
            Toronto, Ontario, Canada M5S~2E4}
\email{\href{mailto:ari.brodsky@utoronto.ca}{ari.brodsky@utoronto.ca}}
%\dedicatory{Don't forget to add a dedication here!}

\newcommand{\textofBHTtrees}{%
Let $\kappa$ be any infinite regular cardinal,
let $\xi$ be any ordinal such that $2^{\left|\xi\right|} < \kappa$, and let $k$ be any natural number.
Then
\[
\text{non-$\left(2^{<\kappa}\right)$-special tree } \to \left(\kappa + \xi \right)^2_k.
\]
}

\newcommand{\textofCorBHTpo}{%
Let $\kappa$ be any infinite regular cardinal,
let $\xi$ be any ordinal such that $2^{\left|\xi\right|} < \kappa$, and let $k$ be any natural number.
Let $P$ be a partially ordered set such that $P \to (2^{<\kappa})^1_{2^{<\kappa}}$.
Then
\[
P %\text{non-$\left(2^{<\kappa}\right)$-special tree } 
\to \left(\kappa + \xi \right)^2_k.
\]
}

\begin{document}

\begin{abstract}
Building on early work by Stevo Todorcevic, 
we describe a theory of %special and non-
stationary subtrees of %non-special 
trees of successor-cardinal height. 
We define the diagonal union of subsets of a tree, as well as normal ideals on a tree, 
and we characterize arbitrary subsets of a non-special tree as being either stationary or non-stationary. 

We then use this theory to prove the following partition relation for trees:
\begin{main}
%\begin{restatable*}[Main Theorem]{theorem}{maintheorem}
%\begin{theorem}[Main Theorem]
%\label{BHT-trees-regular}
\textofBHTtrees
%Let $\kappa$ be any infinite regular cardinal,
%let $\xi$ be any ordinal such that $2^{\left|\xi\right|} < \kappa$, and let $k$ be any natural number.
%Then
%\[
%\text{non-$\left(2^{<\kappa}\right)$-special tree } \to \left(\kappa + \xi \right)^2_k.
%\]
\end{main}
%\end{restatable*}
%\end{theorem}

%\begin{quote}
%%\begin{theorem}[Main Theorem]%\label{BHT-trees-regular}
%THEOREM: 
%Let $\kappa$ be any infinite regular cardinal,
%let $\xi$ be any ordinal such that $2^{\left|\xi\right|} < \kappa$, and let $k$ be any natural number.
%Then
%\[
%\text{non-$\left(2^{<\kappa}\right)$-special tree } \to \left(\kappa + \xi \right)^2_k.
%\]
%\end{quote}
%%\end{theorem}

%THEOREM: Let ν and κ be cardinals such that ν<κ=ν, and let T be a non-special tree of height ν+. Then for any ordinal ξ such that 2∣∣ξ∣∣<κ, and finite k, we have T→(κ+ξ)2k.

This is a generalization to trees of the Balanced Baumgartner-Hajnal-Todorcevic Theorem, 
which we recover by applying the above %Main Theorem~\ref{BHT-trees-regular} 
to the cardinal $(2^{<\kappa})^+$,
%which is 
the simplest example of a non-$(2^{<\kappa})$-special tree.
%which is the special case of the above where the tree T is replaced by the cardinal $(2^{<\kappa})^+$,
%%which is 
%the simplest example of a non-$(2^{<\kappa})$-special tree:%ν+.

As a corollary, we obtain a general result for partially ordered sets:

\begin{unnumbered}
\textofCorBHTpo
\end{unnumbered}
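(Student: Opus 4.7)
Let $\mu$ denote $2^{<\kappa}$. The plan is to reduce the statement to the Main Theorem by extracting a non-$\mu$-special tree as a subposet of $P$.

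First, I observe that the hypothesis $P \to (\mu)^1_\mu$ precludes $P$ from being written as a union of $\mu$ antichains: any such decomposition would yield a $\mu$-coloring of $P$ whose color classes are antichains, hence contain no monochromatic comparable pair, let alone a monochromatic chain of order type $\mu$. The core step is then to construct, by a transfinite recursion, a subset $T \subseteq P$ such that the induced order $<_P \upharpoonright T$ is a tree order and $T$ is non-$\mu$-special. At each stage I would apply the hypothesis to suitably chosen auxiliary $\mu$-colorings of $P$ to extend the tree under construction, keeping predecessor sets linearly ordered (so that $T$ remains a tree) while introducing enough branching to defeat any potential $\mu$-coloring of $T$ by antichains.

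Once such a $T$ is in hand, the rest is immediate. Given an arbitrary $k$-coloring $c \colon [P]^2 \to k$, the Main Theorem applied to the non-$\mu$-special tree $T$ with the restricted coloring $c \upharpoonright [T]^2$ yields a chain $H \subseteq T$ of order type $\kappa + \xi$ on which $c$ is constant. Because the tree order on $T$ is exactly the restriction of $<_P$, the set $H$ is also a chain in $P$ of the same order type, and therefore it is the desired monochromatic subset.

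The main obstacle is clearly the construction of $T$: producing a non-$\mu$-special tree inside an arbitrary poset satisfying the partition hypothesis. The delicate point is guaranteeing, level by level of the recursion, that enough incomparable extensions can be found above each node to preserve non-specialness, while simultaneously maintaining the tree property. I would expect to handle this by bookkeeping all candidate antichain decompositions of the partial tree built so far, and at each stage invoking the partition hypothesis to produce a chain in $P$ that crosses---and thereby kills---those decompositions, so that in the end $T$ cannot be covered by $\mu$ antichains.
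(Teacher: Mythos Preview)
Your strategy of locating a non-$\mu$-special tree \emph{inside} $P$ (where $\mu = 2^{<\kappa}$) cannot succeed in general, and this is a genuine obstruction rather than a missing detail. Take $\kappa = \omega$, so $\mu = 2^{<\omega} = \omega$, and let $P = \mathbb{R}$ with its usual order. Any $\omega$-coloring of $\mathbb{R}$ has an uncountable color class, which (being an infinite linear order with no infinite decreasing well-ordered subset of cardinality $\aleph_1$) contains an increasing $\omega$-sequence; hence $\mathbb{R} \to (\omega)^1_\omega$ and the hypothesis is satisfied. But if $T \subseteq \mathbb{R}$ carries a tree order under $<_{\mathbb{R}}$, then $T$ has no infinite decreasing sequence (such a sequence would witness that some node's set of predecessors in $T$ fails to be well-ordered), so $T$ is itself a well-ordered set of reals and therefore countable. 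A countable tree is trivially $\aleph_0$-special. Thus $\mathbb{R}$ contains no non-$\aleph_0$-special tree as a subposet, and your recursion cannot get off the ground.

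The paper avoids this by not insisting on a subposet. Following Todor\v{c}evi\'c, it passes to the tree $\sigma'P$ of well-ordered chains of $P$ with a maximum, ordered by end-extension; the hypothesis $P \to (\mu)^1_\mu$ implies (via \cite[Theorem~9]{Stevo-PRPOS}) that $\sigma'P$ is non-$\mu$-special. The Main Theorem is then applied to $\sigma'P$, and the resulting homogeneous chain is pushed forward to $P$ through the order-homomorphism $a \mapsto \max(a)$. The point is that $\sigma'P$ is not embedded in $P$ but merely mapped to it order-preservingly, and that weaker relationship is exactly what is needed to transfer partition relations (\cite[Lemma~1]{Stevo-PRPOS}). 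Your idea of pulling back the coloring and pushing forward the chain is correct; what needs to change is the source of the tree.
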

\end{abstract}

\maketitle

\tableofcontents

% !TeX encoding = UTF-8
% !TeX TS-program = xelatex
            % this should also work with pdflatex but eventually want to migrate to xelatex
% intro_Acta_MH.tex

% !TeX root = nonspecial.tex

\section{Introdiction and Background}

\subsection{Partition Calculus}

Partition calculus, as a discipline within set theory, was developed
by Erd\H{o}s and Rado in their seminal paper \cite{E-R}, appearing more than fifty
years ago.
It %The infinite case 
offers a rich theory with many surprising and deep
results,
surveyed in texts such as~%\cite[chapter~15]{J-W}, as well as
\cite{Williams} and~\cite{EHMR}. 
%while the finite case is surveyed extensively in \cite{GRS}.
However, the primary focus of the early development of partition
calculus was %focused 
exclusively on linear (total) order types,
including cardinals and ordinals as specific examples.  
It wasn't
until the 1980s that Todorcevic \cite{Stevo-PRPOS} pioneered the
systematic study of partition relations for partially ordered sets,
although the extension of the partition calculus to non-linear order
types began with Galvin \cite{Galvin} and the idea was anticipated
even by Erd\H os and Rado~\cite[p.~430]{E-R}.  

%Furthermore, 
As we will see (\autoref{trees-to-partial-orders}), %in~\cite[Section~1]{Stevo-PRPOS} it is shown
Todorcevic showed that partition relations for partially ordered sets in general can be reduced to 
the corresponding partition relations for trees.
Furthermore, as Todorcevic writes in \cite[p.~13]{Stevo-PRPOS}, 
\begin{quote}
It turns out that partition relations for trees are very natural generalizations of partition relations for cardinals 
and that several well-known partition relations for cardinals are straightforward consequences 
of the corresponding relations for trees.
\end{quote}
This motivates our continuing of Todorcevic's study of the partition calculus %relations 
for trees.

%that partition relations for trees imply corresponding partition relations for partially ordered sets in general,
%making the study of trees a very powerful endeavour.  
%See there.

\subsection{Nonspecial Trees and Todorcevic's Paradigm Shift}

The systematic study of set-theoretic trees was pioneered by \DJ uro Kurepa in the 1930s~\cite{Kurepa-thesis},
%Special trees arose initially 
in the context of examining Souslin's Problem.%
\footnote{See %also 
Todorcevic's description of Kurepa's work on trees in~\cite[pp.~6--11]{Kurepa-selected},
as well as the survey article~\cite{Stevo-TLOS} covering Kurepa's work and related material.}
Kurepa showed %After showing 
%how to construct 
that Aronszajn trees can be constructed without assuming any special axioms,
but the existence of a Souslin tree is equivalent to the failure of Souslin's Hypothesis.
When constructing an Aronszajn tree, a natural question to ask %--- either in the presence of special axioms, 
%or before the independence results were known, even without any special axioms --- 
is whether the tree is Souslin.
%This leads to the observation that an Aronszajn tree may not be Souslin for a very \emph{special} reason,
%which can be formulated using either of the following equivalent conditions:
Kurepa observed that an Aronszajn tree may fail to be Souslin for a very \emph{special} reason:
it may be able to be written as a union of countably many antichains.

We now know that Souslin's Problem is independent of the usual ZFC axioms.
In particular, %Furthermore, 
Baumgartner, Malitz, and Reinhardt showed~\cite[Theorem~4]{BMR} that assuming MA$_{\aleph_1}$,
not only are there no Souslin trees, but every Aronszajn tree is special.
This may give the impression that nonspecial trees are somewhat pathological.
However, this is only because until now we have restricted our attention to Aronszajn trees,
so that our understanding of special and nonspecial trees in somewhat incomplete.

In Kurepa's work on trees, motivated by the quest to resolve Souslin's Problem, 
the main classification of trees was by their width~\cite[\S8.A.11, pp.~75--76]{Kurepa-thesis} 
\cite[pp.~71--72]{Kurepa-selected},
with a special focus on Aronszajn trees.
So %although Kurepa proved Theorem~\ref{special-equiv} for all partial orders,
the distinction between special and nonspecial was generally considered (by Kurepa and his successors)
only for Aronszajn trees.

But being Aronszajn is mainly a condition on the width of the tree,
the cardinality of its levels;
being special or non-special is a distinction in the number of its antichains,
in some sense related to %a condition on 
the height of the tree.
%the number of its antichains.
We can consider one without the other.

It was Stevo Todorcevic who pioneered the systematic study of nonspecial trees without regard to their width, 
in his early work in the late 1970s~\cite{Stevo-SSTC, Stevo-PRPOS}. %(late 1970s), 
With this paradigm shift, 
he was the first to properly understand the notion of %distinction between special and 
nonspecial trees and put it into the right context inside the whole set theory.  
%(Fix this.)
%In particular, he noticed the significance of %the distinction between special and 
%nonspecial trees beyond the context of Aronszajn trees.
We can forget about trees being Aronszajn or Souslin, and simply define what it means for trees of height $\omega_1$
to be special or nonspecial, regardless of their width:% FIX THIS.
\footnote{Unfortunately,
%Interestingly, 
it remains common \cite[Definition~III.5.16]{New-nen} \cite[p.~117]{Jech} \cite[p.~41]{J-W}
to define special Aronszajn trees only, rather than defining special and nonspecial trees more broadly
as introduced by Todorcevic.}

\begin{definition}\cite[p.~250]{Stevo-SSTC}
A tree $T$ is a \emph{special tree} if it
can be written as a union of countably many antichains.
%(See the equivalent conditions in Theorem~\ref{special-equiv}.)
%that is, we can write
%\[
%T = \bigcup_{n < \omega} A_n,
%\]
%where each $A_n$ is an antichain.
%
Otherwise, $T$ is a \emph{nonspecial tree}.
\end{definition}

In some sense the class of nonspecial trees
%partial orders satisfying condition (
%E:nonspecial
%1) 
represents a natural generalization of the
first uncountable ordinal $\omega_1$, 
which in turn can be considered the simplest example of a nonspecial tree.
%as ω1 does satisfy (
%E:nonspecial
%1).
Todorcevic showed that many partition relations known to be true for $\omega_1$ 
are true for %have natural generalizations to 
nonspecial trees as well.
And in contrast to our previous observation %(Theorem~\ref{BMR-specialize}) 
that
nonspecial \emph{Aronszajn} trees may not exist, 
Kurepa showed \cite[Theorem~1]{Kurepa-1956} \cite[p.~236]{Kurepa-selected}
that there \emph{does} exist a nonspecial tree with no uncountable chain, namely $w\mathbb Q$
(and its variant $\sigma \mathbb Q$),
the collection of all %(non-empty, bounded) 
well-ordered subsets of $\mathbb Q$, ordered by end-extension.
Thus our 
generalization from $\omega_1$ to nonspecial trees is not vacuous.

We can examine a similar generalization for heights greater than $\omega_1$:

%Similarly, we can examine a more general analogue for %non-special 
%trees with height taller than $\omega_1$:

\begin{definition}\cite[p.~246]{Stevo-TLOS}, \cite[p.~4, p.~15ff.]{Stevo-PRPOS}
For any infinite cardinal $\kappa$,
a tree $T$ is a \emph{$\kappa$-special tree} if it
can be written as a union of $\leq \kappa$ many antichains.
%that is, we can write 
%\[
%T = \bigcup_{\alpha < \kappa} A_\alpha,
%\]
%where each $A_\alpha \subseteq T$ is an antichain, or equivalently, if
%\[
%\exists f : T \to \kappa \left(\forall t, u \in T \right) \left[ t <_T u \implies f(t) \neq f(u) \right].
%\]
Otherwise, $T$ is a \emph{non-$\kappa$-special tree}.
\end{definition}

Again,
%In some sense 
the class of non-$\kappa$-special trees
represents a natural generalization of the
ordinal $\kappa^+$, 
which in turn can be considered the simplest example of a non-$\kappa$-special tree.
And again, 
Todorcevic showed that many partition relations known to be true for an arbitrary successor cardinal $\kappa^+$
are true for %have natural generalizations to 
non-$\kappa$-special trees as well.

In \autoref{section:stationary} we will describe a new theory of stationary subtrees of a nonspecial tree.
We will define the diagonal union of subsets of a tree, as well as normal ideals on a tree, 
and we characterize arbitrary subsets of a non-special tree as being either stationary or non-stationary. 

In subsequent sections, we will
%We then 
use this theory to prove the following partition relation for trees,
which is a generalization to trees of the 
Balanced Baumgartner-Hajnal-Todorcevic Theorem for cardinals~\cite[Theorem~3.1]{BHT}:

%\begin{restatable}[Main Theorem]{theorem}{maintheorem}
\begin{theorem}[Main Theorem]
\label{BHT-trees-regular}
\textofBHTtrees
%Let $\kappa$ be any infinite regular cardinal,
%let $\xi$ be any ordinal such that $2^{\left|\xi\right|} < \kappa$, and let $k$ be any natural number.
%Then
%\[
%\text{non-$\left(2^{<\kappa}\right)$-special tree } \to \left(\kappa + \xi \right)^2_k.
%\]
%\end{restatable}
\end{theorem}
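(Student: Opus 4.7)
My plan is to generalize Todorcevic's proof of the classical Balanced Baumgartner-Hajnal-Todorcevic Theorem for the cardinal $(2^{<\kappa})^+$, using the theory of stationary subtrees developed in Section~2 of the paper in place of the usual club/stationary machinery on ordinals. I proceed by induction on $\xi$. The base case $\xi = 0$ asks only for a monochromatic chain of order type $\kappa$ in a non-$(2^{<\kappa})$-special tree; by the stationary/non-stationary dichotomy for subsets of a non-special tree and a $k$-pigeonhole, at least one color class is stationary on a non-special subtree, and a Fodor-type pressing-down argument then extracts the desired chain of length $\kappa$.

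For the inductive step, fix a coloring $c : [T]^2 \to k$. For each $x \in T$ and each $i < k$, let $A_i(x) = \{y \in \cone{x} : c(\{x,y\}) = i\}$. Applying the dichotomy and a pigeonhole on $k$, I pass to a stationary non-$(2^{<\kappa})$-special subtree $T^* \subseteq T$ together with a color $i_0 < k$ such that $A_{i_0}(x) \cap T^*$ is stationary in $\cone{x}$ for every $x \in T^*$. I then take a continuous increasing chain $\langle M_\alpha : \alpha < \kappa \rangle$ of elementary submodels of some $H(\theta)$, each of size $2^{<\kappa}$ and containing all the relevant parameters. Choosing $x^* \in T^*$ sufficiently high above $\bigcup_{\alpha < \kappa}(M_\alpha \cap T)$, I recursively construct a chain $\langle x_\alpha : \alpha < \kappa \rangle$ of predecessors of $x^*$ inside $T^*$, with $x_\alpha \in M_{\alpha+1}$, such that $c(\{x_\alpha, x_\beta\}) = i_0$ for all $\alpha < \beta < \kappa$. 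At stage $\alpha$, the requirement is that $x_\alpha$ lie in $T^* \cap M_\alpha \cap \pred{x^*} \cap \bigcap_{\beta < \alpha} A_{i_0}(x_\beta)$; the normal-ideal machinery from Section~2 ensures that this intersection remains stationary, and regularity of $\kappa$ handles the limit stages.

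Finally, to append a monochromatic $\xi$-tail in color $i_0$, I exploit $2^{|\xi|} < \kappa$: to each node $y \in \cone{x^*} \cap T^*$, associate the signature $f_y : \kappa \to k$ with $f_y(\alpha) = c(\{x_\alpha, y\})$. After thinning $\langle x_\alpha : \alpha < \kappa \rangle$ to a cofinal subchain on which these signatures stabilize, a pigeonhole over the at most $2^{|\xi|}$-many relevant signatures produces a non-$(2^{<\kappa})$-special cone $C \subseteq \cone{x^*} \cap T^*$ every node of which pairs with every $x_\alpha$ in color $i_0$. Applying the base case inside $C$ and matching colors then yields a monochromatic chain of length $\xi$ in color $i_0$, which together with $\langle x_\alpha : \alpha < \kappa \rangle$ gives the desired chain of order type $\kappa + \xi$. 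The main obstacle will be the pressing-down step: ensuring that the normal-ideal and diagonal-union properties of stationary subtrees developed in Section~2 are strong enough to support the uniform $\kappa$-stage continuous construction of $\langle x_\alpha : \alpha < \kappa \rangle$ in the presence of the elementary-submodel filtration, and that the subsequent thinning preserves non-specialness of the cone.
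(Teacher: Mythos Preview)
Your proposal has a genuine gap at the \emph{color-matching} step, which is precisely the difficulty the paper's machinery is built to overcome. You fix a single color $i_0$ at the outset and attempt to make both the $\kappa$-chain below $x^*$ and the $\xi$-tail above $x^*$ homogeneous in that same $i_0$. But even if you succeed in finding a non-special $C\subseteq\cone{x^*}$ whose every node pairs with every $x_\alpha$ in color $i_0$, you still need a chain of length $\xi$ inside $C$ that is $i_0$-homogeneous on its \emph{internal} pairs, and nothing in your argument produces one; ``applying the base case'' yields a homogeneous chain of some color, not necessarily $i_0$. Your signature argument is also miscounted: the functions $f_y:\kappa\to k$ range over $k^\kappa$ possibilities, and no step legitimately reduces this to $2^{|\xi|}$. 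The paper avoids committing to a color early: it introduces ideals $I(t,\sigma)$ indexed by one-to-one sequences $\sigma\in k^{<\omega}$ and $(\rho,\sigma)$-good sets, builds the small top piece $Z$ (of size $<\log\kappa$) \emph{first}, then uses $\kappa$-completeness of $I(s,\sigma)$ and the bound $|\sigma|^{|Z|}<\kappa$ to find a compatible $\kappa$-chain below, choosing the common color only at the end from $\range(\sigma)$.

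Your treatment of the tree structure also fails. A linear chain of submodels $\langle M_\alpha:\alpha<\kappa\rangle$ and a node $x^*$ chosen ``above $\bigcup_\alpha(M_\alpha\cap T)$'' is sensible when $T$ is the cardinal $(2^{<\kappa})^+$, but a non-$(2^{<\kappa})$-special tree may have no chain of length $(2^{<\kappa})^+$, so there need not exist any node above a prescribed set of size $2^{<\kappa}$. Likewise, the stationary-subtree theory of Section~\ref{section:stationary} concerns subsets of $T$, not subsets of a single chain $\pred{x^*}$, so it does not directly guarantee that $T^*\cap M_\alpha\cap\pred{x^*}\cap\bigcap_{\beta<\alpha}A_{i_0}(x_\beta)$ is nonempty. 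The paper replaces the linear filtration by a $\kappa$-very nice collection $\langle N_t\rangle_{t\in T}$ indexed by the tree, together with the eligibility condition and the ideals $I_{N_t,t}$ on $\pred t$; it is the stationarity of the set of eligible $t$ with $[N_t]^{<\kappa}\subseteq N_t$ (Corollary~\ref{eligible-and-complete}), not a choice of a single high node, that drives the construction.
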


\section{Notation}
\label{Section:notation}

Our set-theoretic notation and terminology will generally follow standard conventions, 
such as in~\cite{EHMR, Jech, J-W, New-nen, Stevo-TLOS, Williams}.
For clarity and definiteness, and in some cases to resolve conflicts between the various texts, we state the following:

For cardinals $\nu$ and $\kappa$, where $\nu \geq 2$ and $\kappa$ is infinite, we define%
\footnote{%Notice that 
Some older texts use 
$\nu^{\underset{\smile}{\kappa}}$ instead of $\nu^{<\kappa}$,
such as~\cite{EHMR}, \cite{Stevo-TLOS}, and~\cite{Stevo-PRPOS}.}
\[
\nu^{<\kappa} = \sup_{\mu < \kappa} \nu^\mu, %\text{ (The $\mu$ are cardinals.)}
\]
where the exponentiation is \emph{cardinal exponentiation},
and the supremum is taken over \emph{cardinals} $\mu < \kappa$.
%\todo{The notation should be defined generally somewhere.}

Following~\cite{BHT}, 
we define\footnote{In~\cite{EHMR}, this would be denoted $L_3(\kappa)$.} 
$\log \kappa$ (for an infinite cardinal $\kappa$) to be the smallest cardinal $\tau$ such that $2^\tau \geq \kappa$.
So for any ordinal $\xi$, we have
\[
\xi < \log \kappa \iff 2^{\left|\xi\right|} < \kappa \iff m^{\left|\xi\right|} < \kappa \text{ for any finite } m,
\]
and in particular, the hypothesis on $\xi$ in the Main Theorem~\ref{BHT-trees-regular} can be stated as
$\xi < \log \kappa$.

If $\mathcal A \subseteq \mathcal P(X)$ is any set algebra (field of sets) over some set $X$,
then we follow the convention in~\cite[p.~171, Definition~29.5(i),(ii)]{EHMR}, 
\cite[Section~13.1]{J-W}, and~\cite{BHT}
that a sub-collection $I \subseteq \mathcal A$ can be an ideal in $\mathcal A$
even if $X \in I$ (so that $I = \mathcal A$).
If, in fact, $X \notin I$, then the ideal is called \emph{proper}.
A similar allowance is made in the definition of a filter.
This will allow us to define ideals and their corresponding filters without verifying that they are proper.

We will always assume $T$ is a tree with order relation $<_T$.

Following~\cite[p.~239]{Stevo-TLOS}, 
``Every subset of a tree $T$ will also be considered as a \emph{subtree} of $T$."
This is also as in~\cite[p.~27]{J-W}.
That is,
%Note that 
unlike in~\cite[Definition~III.5.3]{New-nen}, 
we do not require our subtrees to be \emph{downward closed}.

We use \emph{node} as a synonym for element of a tree,
following~\cite[p.~27]{J-W}, \cite[p.~204]{New-nen}, and implicitly~\cite[p.~244]{Jech},
but unlike~\cite[p.~240]{Stevo-TLOS} where \emph{node} has a different meaning.

For any tree $T$, a \emph{limit node} of $T$ is a node whose height is a limit ordinal,%
\footnote{Limit nodes correspond to the limit points of $T$, when we give $T$ the \emph{tree topology},
as we describe later in footnote~\ref{tree-topology}.}
while a \emph{successor node} is one whose height is a successor ordinal.

Following Kunen's notation
in~\cite[Definition~III.5.1]{New-nen},
we will use $\pred{t}$ (rather than $\hat t$ or pred$(t)$ or pr$(t)$) for the set of predecessors of the node
$t \in T$,
and $\cone t$ (rather than $T^t$) for the cone above $t$.
When discussing diagonal unions, it will be crucial that $\cone t$ be defined so as \emph{not} to include $t$.
\todo{%It is crucial
%here that $\cone t$ is defined so as \emph{not} to include $t$.  
Make sure this is used consistently.  %Actually this may be the place to make an
%exception for $t = \emptyset$ (the root), to solve the problem in
%the following footnote.
}
However, as we will see later, 
it will be convenient to make an exception for the cone above the root node $\emptyset$,
to allow the root to be in the ``cone above'' \emph{some} node.%
\footnote{%Decide what to do about the root never being in any diagonal union.  
Similar to~\cite[p.~8, footnote~1]{BTW}, the root node is ``an annoyance when dealing with diagonal unions''.  
%Lemmas need to be checked carefully.
}

Our notation for partition relations on trees (and on partially ordered sets in general) is based on~\cite{Stevo-PRPOS},
which generalizes the usual Erd\H os-Rado notation for linear orders as follows:

Suppose $\left<P, <_P \right>$ is any partial order.
If $\alpha$ is any ordinal, 
we write $[P]^\alpha$ to denote the set of all linearly ordered chains in $P$ of order-type $\alpha$.
If $\mu$ is any cardinal and $\alpha$ is any ordinal, the statement
\[
P \to \left(\alpha\right)^2_\mu
\]
means:
For any colouring (partition function) $c : [P]^2 \to \mu$, 
there is a chain $X \in [P]^\alpha$ 
%
%The statement written using arrow notation means the following:  
%
%For any non-$(2^{<\kappa})$-special tree $T$, and any colouring $c : [T]^2 \to k$, there is
%%a colour $i<k$ and 
%a chain $X \subseteq T$ of order type $\kappa + \xi$ 
that is $c$-homogeneous, 
that is, $c''[X]^2  = \{\chi\} %\subseteq c^{-1}(\{i\})
$ for some colour $\chi<\mu$.

If $T$ is a tree and $c: [T]^2 \to \mu$ is a colouring, % partition function, 
where $\mu$ is some cardinal, 
and $\chi < \mu$ is some ordinal (colour), and $t \in T$,
%$\delta < \lambda$,
%For each colour $i<k$ and each $\delta < \lambda$,
we define
%\todo{Put this definition in a more general location with other notation.}
%\footnote{Make sure to replace $\alpha$ with $\delta$
%consistently throughout.  Should we also replace $\beta$ with
%$\gamma$?}
\[
c_\chi(t) = \left\{ s <_T  t : c\{s, t\} = \chi \right\} \subseteq \pred t.
\]

For two subsets $A, B \subseteq T$, we will write $A <_T B$ to mean: 
for all $a \in A$ and $b \in B$ we have $a <_T b$.
In that case, the set $A \otimes B$ denotes
\todo{Check notation for consistency with
other papers and other chapters of thesis, especially $\otimes$.}
\[
\left\{ \{a, b\} : a \in A, b \in B \right\},
\]
which is a subset of $[T]^2$.

\section{A Theory of Stationary Trees}

\label{section:stationary}

In this %and following sub
section,
we discuss how some standard concepts that are defined on ordinals, such as regressive functions,
normal ideals, diagonal unions, and stationary sets can be generalized to nonspecial trees.

\subsection{The Ideal of Special Subtrees of a Tree}

\todo{Consider reversing the order of this subsection and the following subsection.}
%
%We will always assume $T$ is a tree with order relation $<_T$.
%
Suppose we fix an infinite cardinal $\kappa$ and a tree of height $\kappa^+$.
What is the correct analogue in $T$ of the ideal of bounded sets in $\kappa^+$?
What is the correct analogue in $T$ of the ideal of nonstationary sets in $\kappa^+$?

As an analogue to the ideal of bounded sets in $\kappa^+$, 
we consider the collection of $\kappa$-special subtrees
%\footnote{Following~\cite[p.~239]{Stevo-TLOS}, 
%``Every subset of $T$ will also be considered as a \emph{subtree} of $T$."
%This is also as in~\cite[p.~27]{J-W}.
%That is,
%%Note that 
%unlike in~\cite[Definition~III.5.3]{New-nen}, 
%we do not require our subtrees to be \emph{downward closed}.}
of $T$:

\begin{definition}\label{def-special-subtree}
Let $T$ be a tree of height $\kappa^+$.
We say that $U \subseteq T$ is 
a \emph{$\kappa$-special subtree of $T$} if $U$ %is a subtree that 
can be written as a union of $\leq \kappa$ many antichains.
That is, $U$ is a $\kappa$-special subtree of $T$ if
\[
U = \bigcup_{\alpha < \kappa} A_\alpha,
\]
where each $A_\alpha \subseteq T$ is an antichain, or equivalently, if
\[
\exists f : U \to \kappa \left(\forall t, u \in U \right) \left[ t <_T u \implies f(t) \neq f(u) \right].
\]
\end{definition}

The collection of $\kappa$-special subtrees of $T$ is clearly a $\kappa^+$-complete ideal on $T$,
and it is proper iff $T$ is itself non-$\kappa$-special.

The cardinal $\kappa^+$ itself is an example of a non-$\kappa$-special tree of height $\kappa^+$.
Letting $T = \kappa^+$, 
we see that the $\kappa$-special subtrees of $\kappa^+$ are precisely the bounded subsets of $\kappa^+$, 
supporting the choice of analogue.

The next important concept on cardinals that we would like to generalize to trees
is the concept of club, stationary, and nonstationary sets.
The problem is that we cannot reasonably define a club subset of a tree 
in a way that is analogous to a club subset of a cardinal.%
\footnote{A natural attempt would be to consider the collection of \emph{closed cofinal} subsets of a tree.
The problem is that this collection is not necessarily a filter base, that is, it is not necessarily \emph{directed}.
For example:
Consider $\sigma\mathbb Q$ to be the collection all (nonempty) \emph{bounded} well-ordered sequences of rationals,
ordered by end-extension.
This is a nonspecial tree, as mentioned in the Introduction. %previous subsection.
Define the two sets
\begin{align*}
%\[
C_1 &= \left\{ s \in \sigma\mathbb Q : \sup(s) \in (n, n+1] \text{ for some even integer } n \right\};	\\
%\]
%and
%\[
C_2 &= \left\{ s \in \sigma\mathbb Q : \sup(s) \in (n, n+1] \text{ for some odd integer } n \right\}.
%\]
\end{align*}
Both $C_1$ and $C_2$ are closed cofinal subsets of $\sigma\mathbb Q$,
but $C_1 \cap C_2$ is empty (and therefore not cofinal in $\sigma\mathbb Q$).}
Instead,
\todo{As an alternative, 
Prof.~Tall recommends investigating  the idea of \emph{stationary coding sets}, on $\mathcal P_\kappa(\lambda)$,
introduced in Zwicker's paper.  %~\protect\cite{Zwicker}.
This requires further investigation.}
we recall the alternate characterization of stationary and nonstationary subsets 
given by Neumer in~\cite{Neumer}:

\begin{theorem}[Neumer's Theorem]\label{Neumer}
For a regular uncountable cardinal $\lambda$,
and a set $X \subseteq \lambda$,
the following are equivalent:
\begin{enumerate}
\item $X$ intersects every club set of $\lambda$;
% is stationary iff 
\item For every regressive function $f : X \to \lambda$,
there is some $\alpha < \lambda$ such that $f^{-1}(\alpha)$ is unbounded below $\lambda$.
(In the terminology of diagonal unions:  
$X \notin \bigtriangledown \mathcal I$, where $\mathcal I$ is the ideal of bounded subsets of $\lambda$.)
\end{enumerate}
\end{theorem}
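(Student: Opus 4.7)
The plan is to establish both implications by contraposition. Neither direction requires Fodor's pressing-down lemma; indeed, Neumer's theorem can be viewed as a slight weakening of Fodor (stationarity of the preimage weakened to unboundedness), and is the natural precursor from which the stronger pressing-down lemma may be derived.

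For $(2) \Rightarrow (1)$, I would suppose $X$ misses some club $C \subseteq \lambda$, and manufacture a regressive $f : X \to \lambda$ witnessing the failure of (2). The natural choice is $f(x) = \sup(C \cap x)$, with $f(x) = 0$ when $C \cap x$ is empty. Closedness of $C$ together with $x \notin C$ forces this supremum to be strictly below $x$, so $f$ is regressive. To see that every preimage is bounded, fix $\alpha < \lambda$; by cofinality of $C$, select $\beta \in C$ with $\beta > \alpha$, and observe that any $x \in f^{-1}(\alpha)$ with $x > \beta$ would yield $\sup(C \cap x) \geq \beta > \alpha$, contradicting $f(x) = \alpha$. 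Hence $f^{-1}(\alpha) \subseteq \beta + 1$.

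For $(1) \Rightarrow (2)$, I would suppose a regressive $f : X \to \lambda$ has every preimage bounded, and construct a club disjoint from $X$. Define $g : \lambda \to \lambda$ by $g(\alpha) = \sup f^{-1}(\alpha)$, adopting the convention $\sup \emptyset = 0$; the boundedness hypothesis ensures $g$ takes values in $\lambda$. The set of closure points $C = \{\gamma < \lambda : g[\gamma] \subseteq \gamma\}$ is then a club in $\lambda$ by the standard argument using regularity of $\lambda$. If $x \in X \cap C$, then setting $\alpha = f(x) < x$ (by regressivity) places $x$ in $f^{-1}(\alpha)$, so $g(\alpha) \geq x$; yet $\alpha < x \in C$ forces $g(\alpha) < x$, a contradiction. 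So $X \cap C = \emptyset$.

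No step presents a genuine obstacle: the only minor care is handling small $x$ (those below $\min C$) in the first direction and verifying that the closure-points set is club in the second, both of which are routine. The witnessing function on each side is essentially forced by the construction, making this a clean piece of combinatorics on regular cardinals.
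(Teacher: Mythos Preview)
Your proof is correct and follows the standard contrapositive approach to Neumer's theorem. Both directions are handled cleanly: the regressive function $f(x) = \sup(C \cap x)$ for the $(2) \Rightarrow (1)$ direction and the closure-point club of $g(\alpha) = \sup f^{-1}(\alpha)$ for $(1) \Rightarrow (2)$ are exactly the natural witnesses, and your verification of regressivity and boundedness is accurate. The minor edge case at $x = 0$ that you flag is indeed trivially handled by excising $0$ from the club.

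Note, however, that the paper does not actually supply its own proof of this statement: Neumer's theorem is quoted as a classical result from \cite{Neumer} and used as motivation for the tree analogue, with only a passing historical remark that the Alexandroff--Urysohn cofinality argument adapts to prove it. So there is no paper-proof to compare against; your argument stands on its own as a complete and standard treatment.
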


We will use this characterization to motivate similar definitions on trees.
First, a few preliminaries:

\subsection{Regressive Functions and Diagonal Unions on Trees}

We begin by formalizing the following definition,
as mentioned in Section~\ref{Section:notation}:

%Following Kunen's notation
%in~\cite[Definition~III.5.1]{New-nen},
%we will use $\pred{t}$ (rather than $\hat t$ or pred$(t)$) for the set of predecessors of the node
%%\footnote{We use \emph{node} as a synonym for element of a tree,
%%following~\cite[p.~27]{J-W}, \cite[p.~204]{New-nen}, and implicitly~\cite[p.~244]{Jech},
%%but unlike~\cite[p.~240]{Stevo-TLOS} where \emph{node} has a different meaning.} 
%$t \in T$,
%and $\cone t$ (rather than $T^t$) for the cone above $t$.
%When discussing diagonal unions, it will be crucial that $\cone t$ be defined so as \emph{not} to include $t$.
%\todo{%It is crucial
%%here that $\cone t$ is defined so as \emph{not} to include $t$.  
%Make sure this is used consistently.  %Actually this may be the place to make an
%%exception for $t = \emptyset$ (the root), to solve the problem in
%%the following footnote.
%}
%However, it will be convenient to make an exception for the cone above the root node $\emptyset$,
%to allow the root to be in the ``cone above'' \emph{some} node:%
%\footnote{%Decide what to do about the root never being in any diagonal union.  
%Similar to~\cite[p.~8, footnote~1]{BTW}, the root node is ``an annoyance when dealing with diagonal unions''.  
%%Lemmas need to be checked carefully.
%}

\begin{definition}
For any tree $T$ and node $t \in T$, we define:
\begin{align*}
\pred t &= \left\{ s \in T : s <_T t \right\}	\\
\cone t &= 
	\begin{cases}
		\left\{ s \in T : t <_T s \right\}	&\text{if $t \neq \emptyset$}	\\
		T 					&\text{if $t = \emptyset$}.
	\end{cases}
\end{align*}
\end{definition}

Following immediately from the definition is:
\begin{lemma}
For any $A \subseteq T$ and $t \in T$ we have:
\[
A \cap \cone t = 
	\begin{cases}
		\left\{ s \in A : t <_T s \right\}	&\text{if } t \neq \emptyset	\\
		A 					&\text{if } t = \emptyset.
	\end{cases}
\]
\end{lemma}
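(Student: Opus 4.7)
The plan is to simply unpack the definition of $\cone t$ given immediately above, split into the two cases of the piecewise definition, and evaluate $A \cap \cone t$ in each case; no tree-theoretic content beyond the definition itself is needed.

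First I would handle the case $t \neq \emptyset$. By definition $\cone t = \{s \in T : t <_T s\}$, so intersecting with $A \subseteq T$ gives $A \cap \cone t = A \cap \{s \in T : t <_T s\} = \{s \in A : t <_T s\}$, which matches the first branch of the piecewise expression.

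Next I would handle the case $t = \emptyset$. Here the definition stipulates $\cone t = T$, and since by hypothesis $A \subseteq T$, we get $A \cap \cone t = A \cap T = A$, matching the second branch.

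There is no real obstacle; the lemma is a direct restatement of the definition, and the only reason to record it explicitly is that it confirms the two-case formula which will be invoked repeatedly (for instance when the root-exception convention for $\cone{\emptyset}$ interacts with diagonal-union computations later on). Accordingly I would keep the proof to a single sentence of the form ``Immediate from the definition of $\cone t$,'' or omit it entirely.
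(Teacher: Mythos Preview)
Your proposal is correct and matches the paper's treatment exactly: the paper introduces the lemma with the phrase ``Following immediately from the definition is'' and gives no further proof, which is precisely what your case-split on the definition of $\cone t$ accomplishes.
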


We now define the diagonal union of subsets of a tree, indexed by nodes
of the tree.
This is a generalization of the corresponding definition for subsets of a cardinal.

\begin{definition}
Let $T$ be a tree.  For a collection of subsets of $T$ indexed by
nodes of $T$, i.e.
\[
\left< A_t \right>_{t \in T} \subseteq \mathcal P(T),
\]
we define its \emph{diagonal union} to be
%\footnote{Consider interchanging the definition with the characterization in the
%following lemma, which actually seems easier to state.}
\[
\bigtriangledown_{t \in T} A_t = \bigcup_{t \in T} \left( A_t \cap \cone t\right).
\]
\end{definition}
Note that we use $\bigtriangledown$, rather than $\sum$  used by some texts such as~\cite{Jech}.

The following lemma supplies some elementary observations about the
diagonal union operation.  They all reflect the basic intuition that when
taking the diagonal union of sets $A_t$, the only part of each $A_t$
that contributes to the result is the part within %the cone above
$\cone t$.

\begin{lemma}\label{diagu}
For any tree $T$ and any collection
\[
\left< A_t \right>_{t \in T} \subseteq \mathcal P(T),
\]
we have:%\footnote{Decide whether this lemma is necessary at all, or
%whether we should reverse the definition.}
\begin{align*}
\bigtriangledown_{t \in T} A_t &= \left\{ s \in T : s \in A_\emptyset \cup \bigcup_{t
<_T s} A_t \right\}; \tag{$*$} \label{diagu-alt-def}   \\
\bigtriangledown_{t \in T} A_t &= \bigtriangledown_{t \in T} \left(
A_t \cap \cone t\right);    \\
\bigtriangledown_{t \in T} A_t &= \bigtriangledown_{t \in T} \left(
A_t \cup \pred t \cup \left( \left\{ t \right\} \setminus \left\{ \emptyset \right\} \right) \right);    \\
\bigtriangledown_{t \in T} A_t &= \bigtriangledown_{t \in T} \left(
A_t \cup \left( T \setminus \cone t \right) \right);    \\
\bigtriangledown_{t \in T} A_t &= \bigtriangledown_{t \in T} \left(
A_t \cup X_t\right), \text{ where each $X_t \subseteq T \setminus \cone t$};    \\
\bigtriangledown_{t \in T} A_t &= \bigtriangledown_{t \in T} \left(
A_t \setminus X_t\right), \text{ where each $X_t \subseteq T \setminus \cone t$};    \\
\bigtriangledown_{t \in T} A_t &= \bigtriangledown_{t \in T} \left(
\bigcup_{s \leq_T t} A_s \right);							\\
\bigtriangledown_{t \in T} A_t 
	&= \bigtriangledown_{t \in T} \left( A_t \setminus \bigcup_{s <_T t} A_s \right).
\end{align*}

\begin{proof}[Proof of~\protect\eqref{diagu-alt-def}]
\begin{align*}
\bigtriangledown_{t \in T} A_t
    &= \bigcup_{t \in T} \left(A_t \cap \cone t \right). \\
    &= \left\{ s \in T : \left(\exists t \in T\right) \left[s \in
    A_t \cap \cone t\right] \right\} \\
    &= \left\{ s \in T : \left(\exists t \in T\right) 
			\left[s \in A_t \text{ and } \left(t <_T s \text{ or } t = \emptyset \right) \right] \right\} \\
    &= \left\{ s \in T : 
			s \in A_\emptyset \text{ or } 
%    &= \left\{ s \in T : \left(\exists t <_T s\right) \left[s \in
%    A_t \cap \cone t\right] \right\} \\
	\left(\exists t <_T s\right) s \in A_t \right\} \\
    &= \left\{ s \in T : s \in A_\emptyset \cup \bigcup_{t <_T s} A_t \right\}
\qedhere
\end{align*}
\end{proof}
\end{lemma}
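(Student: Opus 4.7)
The plan is to leverage the characterization $(*)$ just proved, together with one structural observation: if $B_t \cap \cone t = A_t \cap \cone t$ for every $t \in T$, then $\bigtriangledown_{t \in T} B_t = \bigtriangledown_{t \in T} A_t$ directly from the definition $\bigtriangledown_t A_t = \bigcup_t (A_t \cap \cone t)$.

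Identities 2--6 all fall to this single observation, since in each case the modification of $A_t$ involves adding to or subtracting from $A_t$ only material disjoint from $\cone t$. Identity~2 is immediate with $B_t = A_t \cap \cone t$. For identities~5 and~6, the hypothesis $X_t \subseteq T \setminus \cone t$ is exactly what is needed. Identities~3 and~4 are instances of identity~5: both $T \setminus \cone t$ and $\pred t \cup (\{t\} \setminus \{\emptyset\})$ are subsets of $T \setminus \cone t$ (the latter because $\pred t$ and $\{t\}$ are disjoint from $\cone t$ whenever $t \neq \emptyset$, while for $t = \emptyset$ the added set is empty).

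For identity~7, set $B_t = \bigcup_{s \leq_T t} A_s$. Then $B_\emptyset = A_\emptyset$, and for each $u \in T$,
\[
\bigcup_{t <_T u} B_t \;=\; \bigcup \left\{ A_s : \exists\, t \text{ with } s \leq_T t <_T u \right\} \;=\; \bigcup_{s <_T u} A_s,
\]
the last equality using $t = s$ for one direction and transitivity for the other. Applying $(*)$ to both sides yields $\bigtriangledown_t B_t = \bigtriangledown_t A_t$.

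The one genuinely delicate identity is the last. Set $B_t = A_t \setminus \bigcup_{s <_T t} A_s$. Since $B_t \subseteq A_t$, the inclusion $\bigtriangledown_t B_t \subseteq \bigtriangledown_t A_t$ is immediate. For the reverse, suppose $u \in \bigtriangledown_t A_t$. By $(*)$, either $u \in A_\emptyset = B_\emptyset$ and we are done, or the set $S_u = \{ t <_T u : u \in A_t \}$ is nonempty. The key step, and the main obstacle, is to find a minimal witness: since $S_u \subseteq \pred u$ and $\pred u$ is well-ordered by $<_T$, the set $S_u$ has a $<_T$-least element $t_0$. By minimality of $t_0$, each $s <_T t_0$ lies in $\pred u$ but not in $S_u$, so $u \notin A_s$; hence $u \in A_{t_0} \setminus \bigcup_{s <_T t_0} A_s = B_{t_0}$, and thus $u \in \bigtriangledown_t B_t$ by $(*)$.
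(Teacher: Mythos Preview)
Your proposal is correct. The paper itself only proves identity~$(*)$ explicitly and leaves the remaining seven as ``elementary observations'' reflecting the intuition that only the part of each $A_t$ inside $\cone t$ contributes to the diagonal union; you have made that intuition precise via the observation $B_t \cap \cone t = A_t \cap \cone t \implies \bigtriangledown_t B_t = \bigtriangledown_t A_t$, and your handling of the final identity via a well-ordering argument on $\pred u$ is exactly right.
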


\begin{lemma}
For any tree $T$, if the collections
\[
\left< A_t \right>_{t \in T}, \left<B_t\right>_{t \in T} \subseteq \mathcal P(T)
\]
are such that for all $t \in T$ we have $A_t \subseteq B_t$, then
\[
\bigtriangledown_{t \in T} A_t \subseteq \bigtriangledown_{t\in T} B_t.
\]
\end{lemma}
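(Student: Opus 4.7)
The plan is to prove this monotonicity lemma by directly unravelling the definition of diagonal union, using the fact that both intersection (with a fixed set) and union are monotone operations under inclusion.

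First, I would recall that by definition,
\[
\bigtriangledown_{t \in T} A_t = \bigcup_{t \in T} \left( A_t \cap \cone t \right),
\qquad
\bigtriangledown_{t \in T} B_t = \bigcup_{t \in T} \left( B_t \cap \cone t \right).
\]
Next, fixing any $t \in T$, the hypothesis $A_t \subseteq B_t$ immediately yields $A_t \cap \cone t \subseteq B_t \cap \cone t$, since intersecting both sides with the same set $\cone t$ preserves containment. Taking the union over all $t \in T$ then gives the desired inclusion of diagonal unions.

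Alternatively, and perhaps more transparently, I would use the characterization~\eqref{diagu-alt-def} from the preceding \autoref{diagu}. Suppose $s \in \bigtriangledown_{t \in T} A_t$. Then by \eqref{diagu-alt-def}, either $s \in A_\emptyset$, in which case $s \in B_\emptyset$ (since $A_\emptyset \subseteq B_\emptyset$), or there exists some $t <_T s$ with $s \in A_t \subseteq B_t$. In either case, $s$ lies in $B_\emptyset \cup \bigcup_{t <_T s} B_t$, so $s \in \bigtriangledown_{t \in T} B_t$, again by \eqref{diagu-alt-def}.

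Since the argument reduces entirely to the monotonicity of set-theoretic union and intersection, I do not foresee any genuine obstacle; the only thing to be careful about is the slightly nonstandard convention for $\cone{\emptyset}$, which is already accounted for uniformly by working through the definition (or through \eqref{diagu-alt-def}) rather than by separately handling the root.
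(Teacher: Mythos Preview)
Your proof is correct; the paper states this lemma without proof (treating it as immediate from the definition), and your argument via the definition of $\bigtriangledown$ or via \eqref{diagu-alt-def} is exactly the kind of routine verification being left to the reader.
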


\begin{lemma}
For any tree $T$, any index set $J$, and collections
\[
\left< A^j_t \right>_{j \in J, t \in T} \subseteq \mathcal P(T),
\]
we have
\[
\bigcup_{j \in J} \left( \bigtriangledown_{t \in T} A^j_t \right) =
\bigtriangledown_{t \in T} \left( \bigcup_{j \in J} A^j_t \right).
\]

\begin{proof}
\begin{align*}
\bigcup_{j \in J} \left( \bigtriangledown_{t \in T} A^j_t \right)
    &= \bigcup_{j \in J} \left( \bigcup_{t \in T}
                \left( A^j_t \cap \cone t \right) \right) \\
%    &= \bigcup_{j \in J} \bigcup_{t \in T}
%                \left( A^j_t \cap \cone t \right) \\
%    &= \bigcup_{t \in T} \bigcup_{j \in J}
%                \left( A^j_t \cap \cone t \right) \\
    &= \bigcup_{t \in T} \left( \bigcup_{j \in J}
                \left( A^j_t \cap \cone t \right) \right) \\
    &= \bigcup_{t \in T} \left( \left( \bigcup_{j \in J}
                A^j_t \right) \cap \cone t \right) \\
    &= \bigtriangledown_{t \in T} \left( \bigcup_{j \in J}
                A^j_t \right)   \qedhere
\end{align*}
\end{proof}
\end{lemma}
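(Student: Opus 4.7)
The plan is to prove this by a straightforward unfolding of definitions and interchange of unions. The statement asserts that diagonal union commutes with arbitrary unions indexed by a parameter $j$, which is plausible because the defining formula $\bigtriangledown_{t \in T} A_t = \bigcup_{t \in T}(A_t \cap \cone t)$ contains no operation on the $A_t$ other than a union indexed by $t$ and an intersection with the fixed set $\cone t$, both of which interact well with outer unions.

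Concretely, I would proceed as follows. First, expand the left-hand side $\bigcup_{j \in J} \bigtriangledown_{t \in T} A^j_t$ using the definition of diagonal union to obtain the double union $\bigcup_{j \in J} \bigcup_{t \in T} (A^j_t \cap \cone t)$. Next, interchange the two unions, which is valid since arbitrary set-theoretic unions commute, yielding $\bigcup_{t \in T} \bigcup_{j \in J}(A^j_t \cap \cone t)$. Then apply the distributivity of intersection over union in the form $\bigcup_{j \in J}(A^j_t \cap \cone t) = \bigl(\bigcup_{j \in J} A^j_t\bigr) \cap \cone t$, which holds because $\cone t$ does not depend on $j$. Finally, recognize the resulting expression $\bigcup_{t \in T}\bigl(\bigl(\bigcup_{j \in J} A^j_t\bigr) \cap \cone t\bigr)$ as precisely $\bigtriangledown_{t \in T}\bigl(\bigcup_{j \in J} A^j_t\bigr)$ by the definition of diagonal union applied to the family $\bigl\langle \bigcup_{j \in J} A^j_t \bigr\rangle_{t \in T}$.

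There is no real obstacle; every step is a routine manipulation of set-theoretic unions and intersections, and the special convention that $\cone \emptyset = T$ plays no role since $\cone t$ is simply treated as a fixed set depending on $t$. The entire proof can be written as a short chain of equalities without any case analysis or appeal to properties of the tree order.
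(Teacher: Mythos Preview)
Your proposal is correct and follows exactly the same four-step chain of equalities as the paper's proof: expand the diagonal union, interchange the two unions, distribute the intersection with $\cone t$ over the inner union, and recognize the result as a diagonal union. There is nothing to add.
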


\begin{definition}
Let $\mathcal I \subseteq \mathcal P(T)$ be an ideal.  We
define
\[
\bigtriangledown \mathcal I = \left\{ \bigtriangledown_{t \in T} A_t
: \left< A_t \right>_{t \in T} \subseteq \mathcal I \right\}.
\]
\end{definition}

Some easy facts about $\bigtriangledown \mathcal I$:

\begin{lemma}\label{diagu-is-ideal}
If $\mathcal I$ is any ideal on $T$, then $\mathcal I \subseteq
\bigtriangledown \mathcal I$, and $\bigtriangledown \mathcal I$ is
also an ideal, though not necessarily proper.  Furthermore, for any
cardinal $\lambda$, if $\mathcal I$ is $\lambda$-complete, then so is
$\bigtriangledown \mathcal I$.
\end{lemma}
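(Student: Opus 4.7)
The plan is to verify each assertion in turn by direct computation, leveraging the earlier lemmas on the diagonal union operation. None of the steps look difficult, since the preceding machinery has essentially been set up to make this result routine; the only genuine question is which lemma does the real work at each point.

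For the inclusion $\mathcal I \subseteq \bigtriangledown \mathcal I$, the idea is to witness each $A \in \mathcal I$ by a constant collection. Setting $A_t := A$ for every $t \in T$, I get $\langle A_t \rangle_{t \in T} \subseteq \mathcal I$, and because our convention sets $\cone \emptyset = T$, the computation
\[
\bigtriangledown_{t \in T} A_t \;=\; \bigcup_{t \in T}(A \cap \cone t) \;\supseteq\; A \cap \cone \emptyset \;=\; A,
\]
together with the trivial reverse inclusion, gives $A = \bigtriangledown_{t \in T} A_t \in \bigtriangledown \mathcal I$.

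For downward closure, I would take any $B \subseteq \bigtriangledown_{t \in T} A_t$ with each $A_t \in \mathcal I$ and set $B_t := A_t \cap B$. Since $\mathcal I$ is an ideal, each $B_t \in \mathcal I$, and a short direct calculation shows
\[
\bigtriangledown_{t \in T} B_t \;=\; \bigcup_{t \in T}(A_t \cap B \cap \cone t) \;=\; B \cap \bigtriangledown_{t \in T} A_t \;=\; B,
\]
so that $B \in \bigtriangledown \mathcal I$. Closure of $\bigtriangledown \mathcal I$ under $\lambda$-unions is then handled by invoking the distributivity lemma proved immediately above: given $\{\bigtriangledown_{t \in T} A^j_t\}_{j \in J}$ with $|J| < \lambda$ and each $A^j_t \in \mathcal I$, the union rearranges to $\bigtriangledown_{t \in T}\bigl(\bigcup_{j \in J} A^j_t\bigr)$, and $\lambda$-completeness of $\mathcal I$ delivers each $\bigcup_{j \in J} A^j_t \in \mathcal I$. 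Taking $\lambda = \aleph_0$ (finite unions) yields the ``ideal'' assertion in the non-complete case.

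As for the parenthetical ``though not necessarily proper'' — I read this as a remark rather than something to prove: it simply flags that $T$ itself could turn out to lie in $\bigtriangledown \mathcal I$ even when $T \notin \mathcal I$, which is exactly the phenomenon (in the cardinal case, a bounded ideal whose diagonal closure is all of $\kappa^+$ on a stationary set) that motivates the whole theory to follow. The only place where one has to be mildly careful is the edge case involving the root: the convention $\cone \emptyset = T$ is what makes the first inclusion go through cleanly, and it is the step I expect to be most easily overlooked rather than genuinely hard.
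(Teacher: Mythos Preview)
Your proof is correct, and it is precisely the sort of argument the paper's setup is designed to make routine. In fact the paper does not give a proof of this lemma at all --- it is stated under the heading ``Some easy facts about $\bigtriangledown \mathcal I$'' and left to the reader, with only the remark that the inclusion $\mathcal I \subseteq \bigtriangledown \mathcal I$ depends on the convention $\emptyset \in \cone\emptyset$. Your write-up correctly identifies that convention as the only delicate point, uses the distributivity lemma (which does appear before this statement in the paper) for closure under $(<\lambda)$-unions, and handles downward closure cleanly; there is nothing to add or correct.
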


Notice that the statement $\mathcal I \subseteq \bigtriangledown \mathcal I$ of Lemma~\ref{diagu-is-ideal}
relies crucially on our earlier convention that $\emptyset \in \cone\emptyset$.
Otherwise any set containing the root would never be in $\bigtriangledown \mathcal I$.

%\begin{lemma}
%For any ideal $\mathcal I \subseteq \mathcal P(T)$, the following are equivalent:
%\todo{Consider removing this lemma as it may not be useful.}
%\begin{enumerate}
%\item $\bigtriangledown \mathcal I$ is proper, that is, $T
%\notin \bigtriangledown \mathcal I$;
%\item $\mathcal I^* \subseteq (\bigtriangledown \mathcal I)^+$.
%\end{enumerate}
%
%%\begin{proof}
%%Follows from Lemma~\ref{filter-in-coideal}.
%%\end{proof}
%\end{lemma}

\begin{lemma}
If $\mathcal I_1, \mathcal I_2 \subseteq \mathcal P(T)$ are two ideals such that $\mathcal I_1 \subseteq \mathcal I_2$,
then $\bigtriangledown \mathcal I_1 \subseteq \bigtriangledown \mathcal I_2$.
\end{lemma}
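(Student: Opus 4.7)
The plan is to unfold the definition of $\bigtriangledown \mathcal I$ on both sides and observe that the containment is immediate. I would start with an arbitrary element $X \in \bigtriangledown \mathcal I_1$ and aim to produce a witnessing collection in $\mathcal I_2$.

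By definition of $\bigtriangledown \mathcal I_1$, any $X \in \bigtriangledown \mathcal I_1$ has the form $X = \bigtriangledown_{t \in T} A_t$ for some indexed collection $\langle A_t \rangle_{t \in T} \subseteq \mathcal I_1$. The key observation is that this same indexed collection already witnesses $X \in \bigtriangledown \mathcal I_2$: since $\mathcal I_1 \subseteq \mathcal I_2$ by hypothesis, we have $A_t \in \mathcal I_2$ for every $t \in T$, so $\langle A_t \rangle_{t \in T} \subseteq \mathcal I_2$, and therefore $X = \bigtriangledown_{t \in T} A_t \in \bigtriangledown \mathcal I_2$ by definition.

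There is no real obstacle here; the lemma is a one-line consequence of the definition. The only reason to state it separately is to record monotonicity of the operator $\mathcal I \mapsto \bigtriangledown \mathcal I$, which will presumably be invoked later when comparing the ideal of $\kappa$-special subtrees to derived ideals such as $\bigtriangledown(\text{special})$ and its iterates. No use of the tree structure, of the root-convention, or of completeness of the ideals is required.
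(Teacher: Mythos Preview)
Your proof is correct and is exactly the immediate unfolding of the definition that the paper intends; in fact the paper states this lemma without proof, since it is a one-line monotonicity observation. There is nothing to add.
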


\begin{definition} \cite[Section~1]{Stevo-SSTC}
Let $X \subseteq T$.  A function $f : X \to T$ is \emph{regressive}
if
\[
\left(\forall t \in X \setminus \{\emptyset\} \right) f(t) <_T t.
\]
\end{definition}

\begin{definition} (cf.~ \cite[p.~7]{BTW})
Let $X \subseteq T$, and let $\mathcal I \subseteq \mathcal P(T)$ be
an ideal on $T$.  A function $f : X \to T$ is called \emph{$\mathcal
I$-small} if
\[
\left(\forall t \in T\right) \left[f^{-1}(t) \in \mathcal I \right].
\]
In words, a function is $\mathcal I$-small iff it is constant only
on $\mathcal I$-sets.  A function is not $\mathcal I$-small iff it
is constant on some $\mathcal I^+$-set.
\end{definition}

\begin{lemma} (cf.~\cite[p.~9]{BTW}) \label{diagu-equiv-regressive}
Let $T$ be a tree, and let $\mathcal I \subseteq \mathcal P(T)$ be
an ideal on $T$.  Then
\[
\bigtriangledown \mathcal I = \left\{ X \subseteq T : \exists \text{
$\mathcal I$-small regressive } f : X \to T \right\}.
\]

\begin{proof}\hfill
\begin{description}
\item[$\subseteq$] Let $X \in \bigtriangledown \mathcal I$.  Then we
can write
\[
X = \bigtriangledown_{t \in T} X_t = \bigcup_{t \in T} \left(X_t
\cap \cone t \right),
\]
where each $X_t \in \mathcal I$.
%For each $s \in X$, there must be some $t <_T s$ such
%that $s \in X_t$.
Define $f : X \to T$ by setting, for each $s \in X$, $f(s) = t$,
where we choose some $t$ such that $s \in X_t \cap \cone t$.
%\[
%f(s) = \min \left\{ t : s \in X_t \right\}.
%\]
It is clear that $f$ is regressive.  Furthermore, for any $t \in T$,
\[
f^{-1}(t) \subseteq X_t \in \mathcal I,
\]
so $f^{-1}(t) \in \mathcal I$, showing that $f$ is $\mathcal
I$-small.
\item[$\supseteq$]  Let $X \subseteq T$, and fix an $\mathcal
I$-small regressive function $f : X \to T$.  For each $t \in T$,
define
\[
X_t = f^{-1}(t).
\]
Since $f$ is $\mathcal I$-small, each $X_t \in \mathcal I$.  Since
$f$ is regressive, we have $X_t \subseteq \cone t$ for each $t \in T$.
We then have
\begin{align*}
%\[
X &= \bigcup_{t \in T} f^{-1}(t) \\
  &= \bigcup_{t \in T}X_t \\
  &= \bigcup_{t\in T} \left(X_t \cap \cone t\right) \\
  &= \bigtriangledown_{t\in T} X_t \in \bigtriangledown \mathcal I.
  \qedhere
\end{align*}
%\]
%where the last equality uses lemma~\ref{diagu}.\qedhere
\end{description}
\end{proof}
\end{lemma}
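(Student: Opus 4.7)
The plan is to prove the two inclusions separately, with both directions being essentially bookkeeping: one starts from a diagonal-union decomposition and extracts a regressive function by choosing witnesses, and the other starts from a regressive function and recovers the decomposition via its fibres.

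For the $\subseteq$ direction, I would take $X \in \bigtriangledown \mathcal I$ and fix a witnessing family $\langle X_t \rangle_{t \in T} \subseteq \mathcal I$ with $X = \bigcup_{t \in T}(X_t \cap \cone t)$. For each $s \in X$ I would pick (using choice) some $t \in T$ with $s \in X_t \cap \cone t$ and set $f(s) = t$. Regressivity follows because $s \in \cone t$ forces $t <_T s$ whenever $t \neq \emptyset$, which is exactly what the definition of regressive demands on $X \setminus \{\emptyset\}$. For $\mathcal I$-smallness, one just notes $f^{-1}(t) \subseteq X_t \in \mathcal I$, so $f^{-1}(t) \in \mathcal I$ since $\mathcal I$ is downward closed.

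For the $\supseteq$ direction, given an $\mathcal I$-small regressive $f : X \to T$, I would set $X_t := f^{-1}(t)$, which lies in $\mathcal I$ by $\mathcal I$-smallness. Regressivity gives $X_t \subseteq \cone t$ for $t \neq \emptyset$, and for $t = \emptyset$ the convention $\cone\emptyset = T$ makes $X_\emptyset \subseteq \cone\emptyset$ trivially. Hence $X_t = X_t \cap \cone t$ for every $t$, and
\[
X = \bigcup_{t \in T} f^{-1}(t) = \bigcup_{t \in T}\bigl(X_t \cap \cone t\bigr) = \bigtriangledown_{t \in T} X_t \in \bigtriangledown \mathcal I.
\]

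There is no real obstacle: the whole argument is a direct translation between two encodings of the same data. The only subtle point, and the one worth flagging explicitly, is the role of the root: both directions rely on the convention $\emptyset \in \cone\emptyset$ (equivalently $\cone\emptyset = T$) so that the root can participate as a legitimate ``index below itself,'' mirroring the observation already made after \autoref{diagu-is-ideal}. Without that convention, an $s = \emptyset$ in $X$ on the forward side would have no $t$ to map to, and an $X_\emptyset$ on the reverse side would be stripped away by the intersection with $\cone\emptyset$.
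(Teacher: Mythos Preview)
Your proof is correct and follows essentially the same approach as the paper's: choose witnesses to define $f$ for the forward inclusion, and take fibres $X_t = f^{-1}(t)$ for the reverse. Your explicit discussion of the root convention is exactly the point the paper highlights in the remark immediately following the proof.
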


Notice that in the proof of Lemma~\ref{diagu-equiv-regressive}, 
the special treatment of $\emptyset$ in the definition of $\cone \emptyset$ corresponds to the exclusion
of $\emptyset$ from the requirement that $f(t) <_T t$ in the definition of regressive function.

Taking complements, we have:

\begin{corollary}\label{diagu-complement}
For any ideal $\mathcal I \subseteq \mathcal P(T)$, we have
\[
\left( \bigtriangledown \mathcal I \right)^+ = \left\{ X \subseteq T
: \left( \forall \text{ regressive } f: X \to T\right) \left(\exists
t \in T \right) \left[f^{-1}(t) \in \mathcal I^+\right] \right\}.
\]
In words, a set $X$ is $(\bigtriangledown \mathcal I)$-positive iff
every regressive function on $X$ is constant on an $\mathcal
I^+$-set.
\end{corollary}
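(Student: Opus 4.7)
The plan is to derive this corollary purely by negation from Lemma~\ref{diagu-equiv-regressive}, so it amounts to tracking quantifier duality carefully rather than doing any new combinatorics.

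First I would unpack the notation: for any ideal $\mathcal J$ on $T$, we have $X \in \mathcal J^+$ precisely when $X \notin \mathcal J$. In particular, $X \in (\bigtriangledown \mathcal I)^+$ is just shorthand for $X \notin \bigtriangledown \mathcal I$. Apply Lemma~\ref{diagu-equiv-regressive}, which asserts that $X \in \bigtriangledown \mathcal I$ iff there exists an $\mathcal I$-small regressive $f : X \to T$. Negating, $X \in (\bigtriangledown \mathcal I)^+$ iff no regressive $f : X \to T$ is $\mathcal I$-small.

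Next, negate the definition of $\mathcal I$-small: the statement ``$f$ is $\mathcal I$-small'' is $\forall t \in T \, [f^{-1}(t) \in \mathcal I]$, whose negation is $\exists t \in T \, [f^{-1}(t) \notin \mathcal I]$, i.e., $\exists t \in T \, [f^{-1}(t) \in \mathcal I^+]$. Combining this with the previous paragraph gives
\[
X \in (\bigtriangledown \mathcal I)^+ \iff \left(\forall \text{ regressive } f : X \to T\right) \left(\exists t \in T\right) \left[ f^{-1}(t) \in \mathcal I^+ \right],
\]
which is the claimed equality.

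There is essentially no obstacle here; the only thing to be careful about is the convention that ideals need not be proper, so that both $\mathcal I^+$ and $(\bigtriangledown \mathcal I)^+$ are permitted to be empty (in which case both sides of the displayed equality are trivially empty / trivially $\mathcal P(T)$, respectively). In the write-up I would present this as a one-line corollary deduced from Lemma~\ref{diagu-equiv-regressive} by ``taking complements,'' as the statement of the corollary itself suggests.
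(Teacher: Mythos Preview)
Your proposal is correct and is exactly the paper's approach: the paper introduces this corollary with the phrase ``Taking complements, we have'' and gives no further proof, so your explicit unwinding of the quantifier duality from Lemma~\ref{diagu-equiv-regressive} is precisely what is intended.
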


%\begin{lemma}
%For any two ideals $\mathcal I_1, \mathcal I_2 \subseteq \mathcal
%P(T)$, with $\mathcal I_1 \subseteq \mathcal I_2$, the following are
%equivalent:
%\begin{enumerate}
%\item $\mathcal I_2$ is proper, that is, $T \notin \mathcal I_2$;
%\item $\mathcal I_1^* \subseteq (\mathcal I_2)^+$,
%where,\footnote{Move this general definition to a more appropriate
%place.} for any ideal $\mathcal I$,
%\[
%\mathcal I^* = \left\{ X \subseteq T : T \setminus X \in \mathcal I
%\right\}
%\]
%is the filter corresponding to $\mathcal I$.
%\end{enumerate}
%
%\begin{proof}\hfill
%\begin{description}
%\item[(1) $\Rightarrow$ (2)] We have
%\[
%\mathcal I_1^* \subseteq \mathcal I_2^* \subseteq \mathcal I_2^+,
%\]
%where the first inequality follows from $\mathcal I_1 \subseteq
%\mathcal I_2$, and the second follows from the fact that $\mathcal
%I_2$ is proper.
%\item[(2) $\Rightarrow$ (1)] We have
%\[
%T \in \mathcal I_1^* \subseteq \mathcal I_2^+. \qedhere
%\]
%\end{description}
%\end{proof}
%\end{lemma}

\begin{corollary}
For any ideal $\mathcal I \subseteq \mathcal P(T)$, the following
are equivalent:
\begin{enumerate}
\item $\mathcal I$ is closed under diagonal unions, that is,
$\bigtriangledown \mathcal I = \mathcal I$;
\item If $X \in \mathcal I^+$, and $f : X \to T$ is a regressive
function, then $f$ must be constant on some $\mathcal I^+$-set, that
is, $(\exists t\in T) f^{-1}(t) \in \mathcal I^+$.
\end{enumerate}
\end{corollary}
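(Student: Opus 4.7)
The plan is to prove this by a short chain of equivalences, using essentially nothing beyond Lemma~\ref{diagu-is-ideal} and Corollary~\ref{diagu-complement}. The key observation is that since $\mathcal I \subseteq \bigtriangledown \mathcal I$ is automatic (by Lemma~\ref{diagu-is-ideal}), the equality $\bigtriangledown \mathcal I = \mathcal I$ reduces to the reverse inclusion $\bigtriangledown \mathcal I \subseteq \mathcal I$, which is equivalent to $\mathcal I^+ \subseteq (\bigtriangledown \mathcal I)^+$.

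First I would note the chain: (i) $\mathcal I \subseteq \bigtriangledown \mathcal I$ always, hence $(\bigtriangledown \mathcal I)^+ \subseteq \mathcal I^+$; (ii) so $\bigtriangledown \mathcal I = \mathcal I$ iff $\mathcal I^+ \subseteq (\bigtriangledown \mathcal I)^+$, i.e.\ every $\mathcal I$-positive set is already $(\bigtriangledown \mathcal I)$-positive.

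Next I would plug in the characterization of $(\bigtriangledown \mathcal I)^+$ from Corollary~\ref{diagu-complement}: namely, $X \in (\bigtriangledown \mathcal I)^+$ iff every regressive function $f : X \to T$ is constant on some $\mathcal I^+$-set. Substituting this into the condition from the previous paragraph yields exactly statement~(2) of the corollary. So (1) $\Leftrightarrow$ (2) is immediate.

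I do not expect any real obstacle here, since both directions reduce transparently to material already established. The only small point worth writing out carefully is that ``$\bigtriangledown \mathcal I = \mathcal I$'' is genuinely equivalent to ``$\mathcal I^+ \subseteq (\bigtriangledown \mathcal I)^+$'' — for this one uses that $\mathcal P(T) \setminus \mathcal I = \mathcal I^+$, so containment of ideals is reversed upon taking positive sets. Beyond that, the proof is essentially a one-line appeal to Corollary~\ref{diagu-complement}, and could reasonably be written as ``Immediate from Lemma~\ref{diagu-is-ideal} and Corollary~\ref{diagu-complement}.''
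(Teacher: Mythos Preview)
Your proposal is correct and matches the paper's intended approach: the paper states this corollary without proof, immediately after Corollary~\ref{diagu-complement}, and it is meant to follow exactly as you describe, from Lemma~\ref{diagu-is-ideal} (giving $\mathcal I \subseteq \bigtriangledown \mathcal I$) together with the characterization of $(\bigtriangledown \mathcal I)^+$ in Corollary~\ref{diagu-complement}.
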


\begin{definition}
An ideal $\mathcal I$ on $T$ is \emph{normal} if it is closed under
diagonal unions (that is, $\bigtriangledown \mathcal I = \mathcal
I$), or equivalently, if every regressive function on an $\mathcal
I^+$ set must be constant on an $\mathcal I^+$ set.
\end{definition}

A natural question arises:  For a given ideal, how many times must
we iterate the diagonal union operation $\bigtriangledown$ before
the operation stabilizes and we obtain a normal ideal? 
In particular, when is $\bigtriangledown$ idempotent? 
The following lemma gives us a substantial class of ideals for which the
answer is \emph{one}, and this will be a useful tool in later proofs:

\begin{lemma}[Idempotence Lemma]\label{diagu-idempotent}
Let $\lambda = \height (T)$, and suppose $\lambda$ is any % regular
%\footnote{Don't seem to need any extra conditions here.}
cardinal. If $\mathcal I$ is a $\lambda$-complete ideal on $T$, then
$\bigtriangledown \bigtriangledown \mathcal I = \bigtriangledown
\mathcal I$, that is, $\bigtriangledown \mathcal I$ is normal.

\begin{proof}
$\bigtriangledown \mathcal I \subseteq \bigtriangledown
\bigtriangledown \mathcal I$ is always true, so we must show
$\bigtriangledown \bigtriangledown \mathcal I \subseteq
\bigtriangledown \mathcal I$.  Let $X \in \bigtriangledown
\bigtriangledown \mathcal I$.  We must show $X \in \bigtriangledown
\mathcal I$.

As $X \in \bigtriangledown \bigtriangledown \mathcal I$, we can
write
\[
X = \bigtriangledown_{t \in T} A_t,
\]
where each $A_t \in \bigtriangledown \mathcal I$.  For each $t \in
T$, we can write
\[
A_t = \bigtriangledown_{s \in T} B_t^s,
\]
where each $B_t^s \in \mathcal I$.

Notice that for each $t \in T$, the only part of $A_t$ that
contributes to $X$ is the part within $\cone t$.  For each
$s, t \in T$, the only part of $B_t^s$ that contributes to $A_t$ is
the part within $\cone s$.  We therefore have:
\begin{itemize}
\item If $s$ and $t$ are incomparable in $T$, we have $\cone s \cap \cone t
= \emptyset$, so $B_t^s$ does not contribute anything to $X$;
\item If $t \leq_T s$ then $\cone s \cap \cone t = \cone s$, so the only part of
$B_t^s$ that contributes to $X$ is within $\cone s$;
\item If $s \leq_T t$ then $\cone s \cap \cone t = \cone t$, so the only part of
$B_t^s$ that contributes to $X$ is within $\cone t$.
\end{itemize}

With this in mind, we collect the sets $B_t^s$ whose contribution to
$X$ lies within any $\cone r$.  We define, for
each $r \in T$,
\[
D_r = \bigcup_{t \leq_T r} B_t^r \cup \bigcup_{s \leq_T r} B_r^s.
\]
Since $\mathcal I$ is $\lambda$-complete and each $r$ has height $<\lambda$, it
is clear that $D_r \in \mathcal I$.

\begin{claim}
We have
\[
X = \bigtriangledown_{r \in T} D_r.
\]

\begin{proof}
\begin{align*}
X &= \bigtriangledown_{t \in T} A_t \\
  &= \bigtriangledown_{t \in T} \bigtriangledown_{s \in T} B_t^s \\
  &= \bigtriangledown_{t \in T} \bigcup_{s \in T}
                \left( B_t^s \cap \cone s\right) \\
  &= \bigcup_{t \in T} \left[ \bigcup_{s \in T}
                \left( B_t^s \cap \cone s\right) \cap \cone t\right] \\
  &= \bigcup_{t \in T} \bigcup_{s \in T}
                \left( B_t^s \cap \cone s \cap \cone t\right) \\
  &= \bigcup_{t, s \in T}
                \left( B_t^s \cap \cone s \cap \cone t\right) \\
  &= \bigcup_{\substack{t, s \in T \\ t \leq_T s}} \left( B_t^s \cap \cone s\right) \cup
     \bigcup_{\substack{t, s \in T \\ s \leq_T t}} \left( B_t^s \cap \cone t\right) \\
  &= \bigcup_{r \in T} \left[ \bigcup_{t \leq_T r} \left( B_t^r \cap \cone r\right) \cup
     \bigcup_{s \leq_T r} \left( B_r^s \cap \cone r\right)  \right] \\
  &= \bigcup_{r \in T} \left[ \left( \bigcup_{t \leq_T r} B_t^r \cup
     \bigcup_{s \leq_T r} B_r^s \right) \cap \cone r \right] \\
  &= \bigcup_{r \in T} \left( D_r \cap \cone r\right) \\
  &= \bigtriangledown_{r \in T} D_r. \qedhere
\end{align*}
\end{proof}
\end{claim}

It follows that $X \in \bigtriangledown \mathcal I$, as required.
\end{proof}
\end{lemma}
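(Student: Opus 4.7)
The plan is to focus on the nontrivial inclusion $\bigtriangledown\bigtriangledown \mathcal I \subseteq \bigtriangledown \mathcal I$, since the reverse direction follows from Lemma~\ref{diagu-is-ideal} applied to the ideal $\bigtriangledown \mathcal I$. Starting with $X \in \bigtriangledown\bigtriangledown \mathcal I$, I would unpack the two layers of diagonal union, writing $X = \bigtriangledown_{t \in T} A_t$ with $A_t \in \bigtriangledown\mathcal I$, and then each $A_t = \bigtriangledown_{s \in T} B_t^s$ with $B_t^s \in \mathcal I$. The aim is to collapse this doubly-indexed family $\{B_t^s\}_{s,t \in T}$ into a single $T$-indexed family lying in $\mathcal I$ whose diagonal union is still $X$.

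The key geometric observation is that in the expanded expression $X = \bigcup_{s,t \in T} (B_t^s \cap \cone t \cap \cone s)$, the set $B_t^s$ contributes nothing when $s$ and $t$ are $<_T$-incomparable; when they are comparable, $\cone t \cap \cone s = \cone r$, where $r$ is the $<_T$-larger of the two. This suggests gathering, for each $r \in T$, exactly those $B_t^s$ whose contributions land inside $\cone r$, namely
\[
D_r = \bigcup_{t \leq_T r} B_t^r \;\cup\; \bigcup_{s \leq_T r} B_r^s.
\]
Two things then need to be verified: (i) each $D_r \in \mathcal I$, and (ii) $X = \bigtriangledown_{r \in T} D_r$.

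For (i), the union defining $D_r$ is indexed by at most $2(|\pred r| + 1)$ nodes. Since $\height(r) < \lambda$ and $\lambda$ is a cardinal, $|\pred r| < \lambda$, so $\lambda$-completeness of $\mathcal I$ yields $D_r \in \mathcal I$. For (ii), I would expand both sides to a union over pairs $(s,t) \in T \times T$, discard the incomparable pairs (whose contributions vanish), split the remainder according to whether $s \leq_T t$ or $t \leq_T s$, and reindex each case by $r = \max_{<_T}(s,t)$; the two cases then reassemble into $\bigcup_{r \in T}(D_r \cap \cone r)$.

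The step I expect to require the most care is the cardinality count in (i), which genuinely uses the hypothesis that $\height(T)$ is a \emph{cardinal} rather than just an ordinal, so that every node has strictly fewer than $\lambda$ predecessors. A secondary bookkeeping subtlety is the special status of the root: one must remember that, under the convention $\emptyset \in \cone\emptyset$, any term $B_t^\emptyset$ or $B_\emptyset^s$ is automatically absorbed into the $\leq_T$ branches of the formula for $D_r$ rather than being dropped. Once these points are handled, the verification of (ii) is a routine chain of set-theoretic identities driven by the alternative formulation $(\ast)$ of diagonal union from Lemma~\ref{diagu}.
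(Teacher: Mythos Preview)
Your proposal is correct and follows essentially the same route as the paper: the same double unpacking of $X$, the same geometric observation about $\cone s \cap \cone t$, the identical definition of $D_r$, and the same two verifications (completeness for $D_r \in \mathcal I$, and the reindexing computation for $X = \bigtriangledown_{r} D_r$). Your remarks on the role of $\lambda$ being a cardinal and on the root convention are exactly the points the paper relies on implicitly.
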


\subsection{The Ideal of Nonstationary Subtrees of a Tree}

%First, we restate Theorem~\ref{Neumer} using %in terms of 
%our new terminology: 
%%in particular the characterization of the diagonal union given by Corollary~\ref{diagu-complement}:

%\begin{theorem}[Neumer's Theorem, restated]\label{Neumer-restated}
%For a regular uncountable cardinal $\lambda$,
%and a set $X \subseteq \lambda$,
%the following are equivalent:
%\begin{enumerate}
%\item $X$ intersects every club set of $\lambda$;
%% is stationary iff 
%\item %For every regressive function $f : X \to \lambda$,
%%there is some $\alpha < \lambda$ such that $f^{-1}(\alpha)$ is unbounded below $\lambda$.
%%(In our terminology:  
%$X \notin \bigtriangledown \mathcal I$, where $\mathcal I$ is the ideal of bounded subsets of $\lambda$.%)
%\end{enumerate}
%\end{theorem}

Armed with Neumer's characterization of stationary (and nonstationary) subsets of a cardinal in terms of diagonal unions
(Theorem~\ref{Neumer}),
we now explore an analogue for trees of this concept, % of nonstationary subsets of a cardinal.
using the new concepts we have introduced in the previous subsection:

\begin{definition}
Let $B \subseteq T$, where $T$ is a tree of height $\kappa^+$.  
We say that $B$ is a \emph{nonstationary subtree of $T$} if we can write
\[
B = \bigtriangledown_{t \in T} A_t,
\]
where each $A_t$ is a $\kappa$-special subtree of $T$.
We may, for emphasis, refer to $B$ as \emph{$\kappa$-nonstationary}.
If $B$ cannot be written this way, then $B$ is a \emph{stationary subtree of $T$}.

We define $NS^T_\kappa$ to be the collection of nonstationary subtrees of $T$.
That is, $NS^T_\kappa$ is the diagonal union of the ideal of $\kappa$-special subtrees of $T$.
(The subscript $\kappa$ is for emphasis and may sometimes be omitted.)
\end{definition}

Our definitions here are new, and in particular are different from Todorcevic's earlier use of 
$I_T$ in~\cite{Stevo-SSTC} and $NS_T$ in~\cite{Stevo-PRPOS}.
Todorcevic defines $NS_T$ as an ideal on the cardinal $\kappa^+$, 
consisting of %where elements of the ideal are 
subsets of $\kappa^+$ that are said to be 
nonstationary \emph{in} or \emph{with respect to} $T$,
while we define $NS^T_\kappa$ as an ideal on the tree $T$ itself,
consisting of sets that are nonstationary \emph{subsets of} $T$.
For any set $X \subseteq \kappa^+$, the statement 
$T \upharpoonright X \in NS^T_\kappa$ in our notation means the same thing as $X \in NS_T$ of~\cite{Stevo-PRPOS}.
However, our definitions will allow greater flexibility in stating and proving the relevant results.
In particular, we can discuss the membership of %status of %regressive functions whose domain is an 
arbitrary subsets of the tree in the ideal $NS^T_\kappa$, 
rather than only those of the form $T \upharpoonright X$ for some $X \subseteq \kappa^+$.

In the case that $T = \kappa^+$, 
the fact that $NS^T_\kappa$ is identical to the collection of nonstationary sets in the usual sense
(that is, sets whose complements include a club subset of $\kappa^+$)
is Theorem~\ref{Neumer} (Neumer's Theorem), %~\cite{Neumer}, 
so the analogue is correct.
In fact, more can be said about the analogue:
In what may be historically the first use% 
\footnote{See~\cite[footnote~214]{Larson-sets-extensions} and~\cite[p.~105]{Jech}.} 
of the word \emph{stationary} 
(actually, the French word \emph{stationnaire}) in the context of regressive functions,
G\'erard Bloch~\cite{Bloch} \emph{defines} a set $A \subseteq \omega_1$ to be stationary if
every regressive function on $A$ is constant on an uncountable set,
and then states as a \emph{theorem} that a set is stationary if its complement includes no club subset,
rather than using the latter characterization as the definition of stationary %the other way around 
as would be done 
% Neumer's Theorem would be commonly formulated 
nowadays
(cf.~%\cite[p.~6]{Krantz}, 
\cite[p.~251]{Stevo-SSTC}, \cite[Prop.~I.2.1(i)]{BTW}).
%(ADD REFERENCES HERE)
So the extension to stationary subtrees of a tree really is a direct generalization 
of the original definition of stationary subsets of a cardinal!

The following lemma collects facts about $NS^T_\kappa$ that follow easily from Lemma~\ref{diagu-is-ideal}:

\begin{lemma}\label{special-is-nonstationary}
Fix a tree $T$ of height $\kappa^+$.  
Then every $\kappa$-special subtree of $T$ is a nonstationary subtree.
Furthermore, $NS^T_\kappa$ is a $\kappa^+$-complete ideal on $T$.
\end{lemma}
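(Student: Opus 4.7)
The plan is to recognize that this lemma is essentially a direct corollary of Lemma~\ref{diagu-is-ideal} applied to a specific ideal. Let $\mathcal I$ denote the collection of $\kappa$-special subtrees of $T$. The paper has already observed, right after Definition~\ref{def-special-subtree}, that $\mathcal I$ is a $\kappa^+$-complete ideal on $T$ (it is closed under subsets since a subset of a union of $\leq \kappa$ antichains is itself such a union, and it is closed under $\kappa^+$-unions by simply concatenating the decomposing antichains). By definition, $NS^T_\kappa = \bigtriangledown \mathcal I$.

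With that identification, both conclusions fall out of Lemma~\ref{diagu-is-ideal}. First, the statement $\mathcal I \subseteq \bigtriangledown \mathcal I$ from that lemma says exactly that every $\kappa$-special subtree of $T$ belongs to $NS^T_\kappa$, i.e., is nonstationary. Second, the same lemma asserts that $\bigtriangledown \mathcal I$ is an ideal and inherits $\kappa^+$-completeness from $\mathcal I$, giving the remaining claim.

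There is no real obstacle here; the only thing to verify is that we have correctly matched the hypotheses of Lemma~\ref{diagu-is-ideal}, namely that $\mathcal I$ is an ideal (which we have) and that it is $\kappa^+$-complete (which we have). Note that properness is not required, in keeping with the convention adopted in Section~\ref{Section:notation}. In particular, if $T$ happens to be $\kappa$-special, then the whole tree lies in $NS^T_\kappa$, but this causes no problem under our conventions.
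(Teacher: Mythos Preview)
Your proposal is correct and matches the paper's approach exactly: the paper presents this lemma as collecting facts that ``follow easily from Lemma~\ref{diagu-is-ideal},'' and you have spelled out precisely how, identifying $NS^T_\kappa = \bigtriangledown \mathcal I$ for $\mathcal I$ the $\kappa^+$-complete ideal of $\kappa$-special subtrees and invoking both conclusions of that lemma.
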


The converse of the first conclusion of Lemma~\ref{special-is-nonstationary} is false.
In the special case where $T$ is just the cardinal $\kappa^+$,
there exist unbounded nonstationary subsets of $\kappa^+$ 
(for example, 
%suppose $T$ is just the cardinal $\kappa^+$, and let $X$ be 
the set of successor ordinals less that $\kappa^+$),
so any such set is a nonstationary subtree of $\kappa^+$ that is not $\kappa$-special.
This also means that the ideal of bounded subsets of $\kappa^+$ is not normal,
so that in general the ideal of $\kappa$-special subtrees of a tree $T$ is not a normal ideal.
However, we do have the following generalization to trees of Fodor's Theorem:

\begin{theorem}\label{nonstationary-normal}
For any tree $T$ of height $\kappa^+$, the ideal $NS^T_\kappa$ is a normal ideal on $T$.

\begin{proof}
This follows from the Idempotence Lemma (Lemma~\ref{diagu-idempotent}), 
since the ideal of $\kappa$-special subtrees is $\kappa^+$-complete.
\end{proof}
\end{theorem}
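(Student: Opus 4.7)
The plan is essentially to invoke the Idempotence Lemma (Lemma~\ref{diagu-idempotent}) as a black box. Recall that $NS^T_\kappa$ is defined precisely as $\bigtriangledown \mathcal J$, where $\mathcal J$ is the ideal of $\kappa$-special subtrees of $T$. Normality of $NS^T_\kappa$ means $\bigtriangledown NS^T_\kappa = NS^T_\kappa$, which is the same as saying $\bigtriangledown \bigtriangledown \mathcal J = \bigtriangledown \mathcal J$, and this is exactly the conclusion of the Idempotence Lemma.

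To apply the Idempotence Lemma, I would first verify its two hypotheses in our setting. The height $\lambda = \height(T) = \kappa^+$ is a cardinal, so the hypothesis on $\lambda$ is satisfied. Next, I would check that $\mathcal J$ is $\lambda$-complete, i.e., $\kappa^+$-complete; this was already observed immediately after Definition~\ref{def-special-subtree}, since a union of at most $\kappa$ many $\kappa$-special subtrees is itself a union of at most $\kappa \cdot \kappa = \kappa$ many antichains, hence $\kappa$-special.

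With both hypotheses in place, the Idempotence Lemma yields $\bigtriangledown \bigtriangledown \mathcal J = \bigtriangledown \mathcal J$, which unfolds to $\bigtriangledown NS^T_\kappa = NS^T_\kappa$. By definition this is normality, completing the argument. There is no real obstacle here: the entire content is already packaged in the Idempotence Lemma, and the point of the theorem is simply to recognize that the ambient ideal $\mathcal J$ meets its hypotheses. The only modest check is the $\kappa^+$-completeness of $\mathcal J$, but this is immediate from the definition of $\kappa$-special and does not require any tree-structural argument.
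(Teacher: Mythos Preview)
Your proposal is correct and follows exactly the same approach as the paper's own proof: apply the Idempotence Lemma to the $\kappa^+$-complete ideal of $\kappa$-special subtrees of $T$. The only difference is that you spell out the verification of the hypotheses in slightly more detail than the paper does.
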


Theorem~\ref{nonstationary-normal} tells us that
$\bigtriangledown NS^T_\kappa = NS^T_\kappa$.
%If $B \in (NS^T_\kappa)^+$, then 
Equivalently:
%That is, 
If $B$ is a stationary subtree of $T$, 
meaning that every regressive function on $B$ is constant on a non-$\kappa$-special subtree of $T$,
then in fact every regressive function on $B$ is constant on a stationary subtree of $T$.
So for any tree $T$ of height $\kappa^+$,
the main tool for extracting subtrees using regressive functions should be  the ideal $NS^T_\kappa$,
rather than the ideal of $\kappa$-special subtrees of $T$.

Theorem~\ref{nonstationary-normal} is stated without proof as~\cite[Theorem~13]{Stevo-PRPOS},
and the special case for trees of height $\omega_1$ is proven as~\cite[Theorem~2.2(i)]{Stevo-SSTC}.
The simplicity of our proof, compared to the one in~\cite{Stevo-SSTC}, 
is a result of our new definitions and machinery that we have built up to this point.

%that are 
%non-$\kappa$-special subtrees (unbounded subsets) of $\kappa^+$
%Then $X$ is a nonstationary subtree (subset) of $\kappa^+$, but also $X$ is not $\kappa$-special,
%as it is a chain of cardinality $\kappa^+$.

The ideal $NS^T_\kappa$ will be useful if we know that it is proper.
When can we guarantee that $T \notin NS^T_\kappa$?
The following lemma will be a crucial ingredient in the proof of Theorem~\ref{pressing-down-trees}:

\begin{lemma}\label{special-above-antichain}
Let $A \subseteq T$ be an antichain.  
For each $t \in A$, fix a $\kappa$-special subtree $X_t \subseteq \cone t$.  Then
\[
\bigcup_{t \in A} X_t 
\]
is also a $\kappa$-special subtree of $T$.
That is, a union of $\kappa$-special subtrees above pairwise incompatible nodes is also a $\kappa$-special subtree.
\end{lemma}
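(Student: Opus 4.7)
The plan is to invoke the function-based characterization of $\kappa$-specialness from Definition~\ref{def-special-subtree}. For each $t \in A$, fix a witness function $f_t : X_t \to \kappa$ such that $s <_T u$ in $X_t$ implies $f_t(s) \neq f_t(u)$. I then want to paste these together into a single function $f : \bigcup_{t \in A} X_t \to \kappa$ witnessing that the union is $\kappa$-special.

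First I would handle the degenerate case: if $\emptyset \in A$, then since the root is comparable with every node of $T$, the antichain condition forces $A = \{\emptyset\}$, and the conclusion is immediate from the hypothesis $X_\emptyset \subseteq \cone \emptyset = T$ being $\kappa$-special. So I may assume $\emptyset \notin A$, in which case for every $t \in A$ and every $s \in X_t$ we have $t <_T s$ (since $X_t \subseteq \cone t$).

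Next I would observe that the family $\{X_t\}_{t \in A}$ is pairwise disjoint: if $s \in X_t \cap X_{t'}$, then $t <_T s$ and $t' <_T s$, making $t$ and $t'$ comparable, hence $t = t'$ by the antichain property. This allows me to define $f(s) = f_t(s)$ unambiguously, where $t$ is the unique element of $A$ with $s \in X_t$. To verify $f$ is a valid witness, suppose $s <_T u$ with $s \in X_t$ and $u \in X_{t'}$. Then $t <_T s <_T u$ and $t' <_T u$, so $t$ and $t'$ are both predecessors of $u$ and therefore comparable in $T$; again by the antichain property $t = t'$, reducing the situation to a single $X_t$, and so $f(s) = f_t(s) \neq f_t(u) = f(u)$.

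I do not anticipate any substantive obstacle here; the only subtlety is the book-keeping around the root, handled by the convention that $\emptyset \in \cone \emptyset$ only, which makes the degenerate $A = \{\emptyset\}$ case collapse immediately and lets the antichain argument in the main case proceed without special attention to the root.
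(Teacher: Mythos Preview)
Your argument is correct. The paper does not actually supply a proof of this lemma; it merely remarks that the statement ``is easily seen to be true'' before discussing the precision of its hypotheses and its role in Theorem~\ref{pressing-down-trees}. Your pasting of the witness functions $f_t$ into a single $f : \bigcup_{t \in A} X_t \to \kappa$, together with the observation that any two comparable elements of the union must lie in the same $X_t$ (via the antichain property of $A$ applied to predecessors of the larger node), is exactly the straightforward verification the paper is omitting. The separate treatment of the case $\emptyset \in A$ is a sensible precaution given the paper's special convention that $\cone\emptyset = T$, though one could also handle it uniformly by noting that the argument only uses $t \leq_T s$ for $s \in X_t$ rather than $t <_T s$.
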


While Lemma~\ref{special-above-antichain} is easily seen to be true,
what is significant about it is the precision of its hypotheses.
If instead of each $X_t$ being a $\kappa$-special subtree 
we require it to be a union of at most $\kappa$ \emph{levels} of the tree,
even if we require $A$ to consist of nodes on a single level,
we do not get the result that the union of all $X_t$ is a union of $\kappa$ levels of the tree.
So in the development of our theory we cannot replace the ideal of $\kappa$-special subtrees with
the ideal of subtrees consisting of (at most) $\kappa$ levels of the tree,
even though the latter is also a $\kappa^+$-complete ideal on the tree, 
and may appear to be a reasonable generalization to trees 
of the concept of bounded subsets of $\kappa^+$.

%NOTE\footnote{elaborate a lot here}:  This lemma is crucial for the
%Pressing-Down Lemma for trees.  Note that it is not true for the
%ideal
%\[
%\mathcal I = \left\{ A \text{ has $<\kappa$ \emph{levels}} \right\}.
%\]

Similarly, if we try to generalize to trees of height a limit cardinal $\lambda$ rather than $\kappa^+$, 
replacing the ideal of $\kappa$-special subtrees with 
the ideal of subtrees that are unions of strictly fewer than $\lambda$ antichains,
we do not get an analogue of Lemma~\ref{special-above-antichain}
(even if the height is a regular limit cardinal),
and this is why Theorem~\ref{pressing-down-trees} is not valid for trees of limit cardinal height.%
\footnote{It is possible to extend the definitions of \emph{special} and \emph{nonspecial} to trees with height
an arbitrary regular cardinal, as Todorcevic does in~\cite[Chapter~6]{Stevo-walks}.
The essential difficulty is that Theorem~\ref{pressing-down-trees} doesn't hold for trees of limit-cardinal height,
but this is overcome by starting with the characterization $T \notin NS^T_\kappa$
in Theorem~\ref{pressing-down-trees} as the \emph{definition} of nonspecial,
rather than Definition~\ref{def-special-subtree}.
In this exposition, we have chosen to restrict our investigation to trees of successor-cardinal height.
%COME BACK TO THIS!
}

Obviously, if a tree is special, then all of its subtrees are special and therefore nonstationary.
Theorem~\ref{pressing-down-trees} gives the converse, 
establishing the significance of using a nonspecial tree as our ambient space.
It is a generalization to nonspecial trees of a theorem of Dushnik~\cite{Dushnik} on successor cardinals%
\footnote{Though he does not say so, Dushnik's proof works for regular limit cardinals as well.
Nevertheless, it does not generalize to trees of regular-limit-cardinal height,
due to the failure of Lemma~\ref{special-above-antichain} in that case, as we have explained.},
which itself was a generalization of Alexandroff and Urysohn's theorem~\cite{AU} on $\omega_1$.

The proofs in \cite{AU} and \cite{Dushnik} are substantially different from each other,
and each one of them has been generalized to prove theorems for which the other method would not be suitable.
The main ingredient in~\cite{Dushnik} is a cardinality argument, 
and this is the proof that extends to trees, where the focus will be on counting antichains, as we shall see.
On the other hand, the main argument of~\cite{AU} involves cofinality, 
%taking the supremum of a sequence of ordinals,
and this is the argument that is adaptable to prove Theorem~\ref{Neumer} (Neumer's Theorem),
% characterizing stationary subsets of a cardinal,
but does not extend easily to trees.

%It is the proof in~\cite{Dushnik}, rather than~\cite{AU}, that generalizes to trees
%(whereas the proof in~\cite{AU} is more easily adaptable to prove Neumer's Theorem).
The case of Theorem~\ref{pressing-down-trees} for nonspecial trees of height $\omega_1$ 
is proven as~\cite[Theorem~2.4]{Stevo-SSTC}.
The general case is subsumed by~\cite[Theorem~14]{Stevo-PRPOS},
but we present the theorem and its proof here, for several reasons:
to indicate the generality of Dushnik's technique as it applies to trees of successor-cardinal height,
to isolate this theorem and its proof from the harder portion of~\cite[Theorem~14]{Stevo-PRPOS}
(which we state later as Theorem~\ref{stationary subtree}),
and to show how the statement of the theorem and its proof are affected %(simplified?)\ 
by our new terminology and notation.

\begin{theorem}[Pressing-Down Lemma for Trees] \label{pressing-down-trees}
Suppose $T$ is a non-$\kappa$-special tree.
Then $NS^T_\kappa$ is a proper ideal on $T$, that is, $T \notin NS^T_\kappa$.

\begin{proof}
Fix a non-$\kappa$-special tree $T$, and
suppose $
%\[
\left< X_t \right>_{t \in T} %\subseteq \mathcal P(T)
%\]
$ is any indexed collection of $\kappa$-special subtrees of $T$.
We will show that
\[
T \neq \bigtriangledown_{t \in T} X_t.
\]

We define a sequence of subtrees of $T$ by recursion on $n<\omega$, as follows:
Let
\[
S_0 = \left\{\emptyset\right\},
\]
and for $n < \omega$, define
\[
S_{n+1} = \bigcup_{t \in S_n} \left( X_t \cap \cone t\right).
%f^{-1} \left(S_{n-1}\right) = \left\{ t \in T : f(t) \in
%S_{n-1} \right\} 
\]

\begin{claim}
For all $n<\omega$, $S_n$ is a $\kappa$-special subtree of $T$.

\begin{proof}
We prove this claim by induction on $n$.  Certainly $S_0 =
\{\emptyset\}$ is a $\kappa$-special subtree as it contains only one element.
Now fix $n<\omega$ and suppose $S_n$ is a $\kappa$-special subtree.  We need to
show that $S_{n+1}$ is $\kappa$-special.

Since $S_n$ is $\kappa$-special, we can write
\[
S_n = \bigcup_{\alpha<\kappa} A_\alpha,
\]
where each $A_\alpha$ is an antichain.
For each $t \in T$ we know that $X_t \cap \cone t$ is a $\kappa$-special subtree of $\cone t$,
so for each $\alpha < \kappa$, Lemma~\ref{special-above-antichain} tells us that
\[
\bigcup_{t \in A_\alpha} \left( X_t \cap \cone t\right) 
\]
is a $\kappa$-special subtree of $T$.
%and each $t \in A_\alpha$, we have
We then have
\[
S_{n+1} = \bigcup_{\alpha<\kappa} \bigcup_{t \in A_\alpha} \left( X_t \cap \cone t\right),
\]
so that $S_{n+1}$ is a union of $\kappa$ many $\kappa$-special subtrees, and is therefore $\kappa$-special,
%
%********************
%Fix $i<\omega$ and consider the antichain $X_i$.  For each $t \in
%X_i$, we have assumed that $f^{-1}(t)$ is a special tree, and since
%$f$ is regressive, it follows that $f^{-1}(t) \setminus
%\{\emptyset\}$ is a special subtree of $T^t$.\footnote{Make sure to
%move the definition of $T^t$ from page \pageref{cone-above} so that
%it precedes this use.}  We then see that $f^{-1}(X_i) \setminus
%\{\emptyset\}$ is a special tree, as it is a union of special trees
%above pairwise incomparable nodes.\footnote{Maybe make this a
%separate lemma.}  (Lemma~\ref{special-above-antichain})  Adding a
%single element, the root, does not harm the speciality, so it
%follows that $f^{-1}(X_i)$ is a special tree.
%
%We now have
%\[
%S_{n+1} = \bigcup_{i<\omega} f^{-1} \left(X_i\right)
%\]
%and this is a countable union of special trees and is therefore a
%special tree, as required to 
completing the induction.
%
%FROM HERE ON IS OLD!!!!!!
%
%For every $t \in T$, since $f^{-1}(t)$ is special, there is a
%specializing map (special witness)
%\[
%g_t : f^{-1}(t) \to \omega.
%\]
%This map $g_t$ satisfies the property that for every $s, u \in
%f^{-1}(t)$ with $s <_T u$, we have $g_t(s) \neq g_t(u)$.  We will
%use these maps to define a specializing map $h$ on $T$.
%
%For $t \in T$, consider the  So we let $k_t$ be the minimal such
%$k$, and we will restrict our attention to the finite sequence
%\[
%\left< f^{(i)}(t) \right>_{i\leq k_t},
%\]
%where we have
%\[
%\emptyset = f^{(k_t)}(t) <_T f^{(k_t-1)}(t) <_T \dots <_T f^{(1)}(t)
%<_T f^{(0)}(t) = t.
%\]
%Note that for all $i>0$, we have
%\[
%f^{(i-1)}(t) \in f^{-1} \left( f^{(i)}(t) \right) = dom \left(
%g_{f^{(i)}(t)} \right),
%\]
%and for all $i \leq k$, we have
%%\[
%$f^{(i)}(t) \in \hat{t}$.
%%\]
%
%
%We now define a function $h: T \to [\omega]^{<\omega}$, by setting,
%for $t \in T$,
%\[
%h(t) = \left< g_{f^{(1)}(t)} \left(f^{(0)}(t)\right), g_{f^{(2)}(t)}
%\left(f^{(1)}(t)\right), \dots, g_{f^{(k_t)}(t)}
%\left(f^{(k_t-1)}(t)\right) \right> \in [\omega]^{k_t}.
%\]
%Since $[\omega]^{<\omega}$ is countable, we can consider $h$ to be a
%specializing map on $T$ provided that it takes nodes comparable
%under $<_T$ to different values.  But it is easy to check that this
%is the case.  ADD MORE CHECKING HERE!
\end{proof}
\end{claim}

Since $T$ is a non-$\kappa$-special tree, 
and a union of countably many $\kappa$-special subtrees is also $\kappa$-special,
we have
\[
T \setminus \bigcup_{n < \omega} S_n \neq \emptyset,
\]
so we fix a $<_T$-minimal element $s$ of that set.

\begin{claim}
We have
\[
s \notin \bigtriangledown_{t \in T} X_t.
\]

\begin{proof}
By equivalence~\eqref{diagu-alt-def} of Lemma~\ref{diagu},
we need to show that
\[
s \notin X_\emptyset \cup \bigcup_{t <_T s} X_t.
\]
Since $\emptyset \in S_0$, we have $s \neq \emptyset$,
so we just need to show that for any $t <_T s$, we have $s \notin X_t$.
So suppose $t <_T s$.
Since $s$ was minimally not in any $S_n$, we must have $t \in S_n$ for some $n< \omega$.
If $s$ were in $X_t$, then by definition of $S_{n+1}$ we would have $s \in S_{n+1}$,
contradicting the choice of $s$.
So $s$ is not in any relevant $X_t$, as required.
\end{proof}
\end{claim}

We have thus found $s \in T$ that is not in the diagonal union of the $\kappa$-special sets $X_t$, 
as required to show that $T \notin NS^T_\kappa$.
\end{proof}
\end{theorem}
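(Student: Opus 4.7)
The plan is to prove the contrapositive constructively: given an arbitrary indexed family $\langle X_t \rangle_{t \in T}$ of $\kappa$-special subtrees, I would exhibit a specific node $s \in T$ that escapes the diagonal union $\bigtriangledown_{t \in T} X_t$. The idea is to peel off the diagonal union level-by-level from the root, collecting in stage $n$ precisely those nodes that would be ``captured'' by a chain of length $\leq n$ of the form $\emptyset = t_0 <_T t_1 <_T \cdots <_T t_n$ with $t_{i+1} \in X_{t_i}$.

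First I would define by recursion on $n < \omega$ the sets $S_0 = \{\emptyset\}$ and $S_{n+1} = \bigcup_{t \in S_n} (X_t \cap \cone t)$. The main claim, proved by induction on $n$, is that every $S_n$ is a $\kappa$-special subtree of $T$. The base case is trivial. For the inductive step, write $S_n = \bigcup_{\alpha < \kappa} A_\alpha$ as a union of $\kappa$ antichains, and observe that for each fixed $\alpha$, the set $\bigcup_{t \in A_\alpha} (X_t \cap \cone t)$ is a union of $\kappa$-special subtrees above pairwise incomparable nodes; by Lemma~\ref{special-above-antichain} this union is itself $\kappa$-special. Hence $S_{n+1}$ is a union of $\kappa$ many $\kappa$-special subtrees, and therefore $\kappa$-special by $\kappa^+$-completeness of that ideal.

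Once this claim is in hand, $\bigcup_{n<\omega} S_n$ is a countable, hence $\kappa$-sized, union of $\kappa$-special subtrees, so it is still $\kappa$-special. Since $T$ is non-$\kappa$-special by hypothesis, the complement $T \setminus \bigcup_{n<\omega} S_n$ is nonempty, and I would fix a $<_T$-minimal element $s$ of this complement. To verify that $s \notin \bigtriangledown_{t \in T} X_t$, I use the characterization \eqref{diagu-alt-def} of Lemma~\ref{diagu}, which reduces the task to showing $s \notin X_\emptyset$ and $s \notin X_t$ for every $t <_T s$. Since $\emptyset \in S_0$ we have $s \neq \emptyset$ and $s \notin X_\emptyset \cap \cone \emptyset = X_\emptyset$. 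For any $t <_T s$, minimality of $s$ puts $t$ into some $S_n$, and if $s$ were in $X_t$ then $s \in X_t \cap \cone t \subseteq S_{n+1}$, contradicting the choice of $s$. Hence $s$ witnesses $T \notin NS^T_\kappa$.

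The main obstacle is the inductive step that $S_{n+1}$ is $\kappa$-special. Naively, $S_{n+1}$ is a union of $\kappa$-special sets indexed by $S_n$, which could be arbitrarily large and unstructured; counting alone gives nothing. The essential leverage comes from slicing $S_n$ into antichains and invoking Lemma~\ref{special-above-antichain}, which is precisely what makes the Dushnik-style counting argument work in the tree setting and which is why, as the excerpt emphasizes, one cannot replace the ideal of $\kappa$-special subtrees by the coarser ideal of unions of $\leq \kappa$ levels.
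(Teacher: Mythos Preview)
Your proposal is correct and follows essentially the same approach as the paper: the same recursive definition of the $S_n$, the same induction via Lemma~\ref{special-above-antichain} after slicing $S_n$ into antichains, and the same choice of a $<_T$-minimal $s$ in $T \setminus \bigcup_n S_n$ verified against the diagonal union using Lemma~\ref{diagu}. The only difference is cosmetic: you explicitly separate the case $t = \emptyset$, whereas the paper absorbs it into the $t <_T s$ case after noting $s \neq \emptyset$.
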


What other nonstationary subtrees can we come up with?

\begin{lemma}\label{isolated-nonstationary}
Let $T$ be any tree of height $\kappa^+$, and let $S \subseteq T$ be any subtree.
Then the set of isolated points%
\footnote{\label{tree-topology}%
The topology on $T$ is the \emph{tree topology}, defined by any of the following equivalent formulations:
\begin{enumerate}
\item The tree topology has, as its basic open sets, 
all chains $C \subseteq T$ such that $\height_T''C$ is open as a set of ordinals.
\item \cite[p.~14]{Rudin-lectures} The tree topology has, as its basic open sets,
the singleton root $\{\emptyset\}$ as well as all intervals (chains) of the form $(s,t]$ for $s <_T t$ in $T$.
\item \cite[p.~244]{Stevo-TLOS} The tree topology has, as its basic open sets,
all intervals (chains) of the form $(s,t]$ for $s <_T t$ in $T \cup \{-\infty\}$.
\item \cite[Definition~III.5.15]{New-nen} 
A set $U \subseteq T$ is open in the tree topology iff for all $t \in U$ with height a limit ordinal,
\[
\left(\exists s <_T t\right) \left[\cone{s} \cap \pred{t} \subseteq U\right].
\]
\item A point $t \in T$ is a limit point of a set $X \subseteq T$ in the tree topology iff 
($\height_T (t)$ is a limit ordinal and)
$X \cap \pred{t}$ is unbounded below $t$.  %in $\pred{t}$.  
%DOES THIS ADEQUATELY DEFINE A TOPOLOGY?
\end{enumerate}
The fact that the tree topology doesn't have an easily intuitive definition in terms of basic open sets
(as seen especially by the awkward semi-open intervals in (2) and (3) above)
seems to relate to the fact that there is no obvious order topology on an arbitrary partial order.
%The tree topology is defined later in footnote~\ref{tree-topology}.
} 
of $S$ is a nonstationary subtree of $T$.

\begin{proof}
Let $R$ be the set of isolated points of $S$.
Define a function $f : R \to T$ by setting, for $t \in R$, 
\[
f(t) = \sup \left(S \cap \pred t \right),
\]
where the sup is taken along the chain $\pred t \cup \{t\}$.

%\begin{claim}
%The function $f$ is regressive.

%\begin{proof}
For any $t \in R$,
%From the definition of $f$, we have $f(t) \leq_T t$, even without any information about $R$.
%Since 
$t$ is an isolated point of $S$, so $S \cap \pred t$ must be bounded below $t$, so that $f(t) <_T t$.
This shows that $f$ is regressive.
%\end{proof}
%\end{claim}

\begin{claim}
For each $s \in T$, $f^{-1}(s)$ is an antichain.

\begin{proof}
If $t_1 <_T t_2$ are both in $R$, then $f(t_1) <_T t_1 \leq_T f(t_2)$.
\end{proof}
\end{claim}

So $R$ is a diagonal union of antichains, and is therefore a nonstationary subtree of $T$, as required.
\end{proof}
\end{lemma}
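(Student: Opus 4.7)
The plan is to exhibit a regressive function on the set $R$ of isolated points of $S$ whose fibers are $\kappa$-special subtrees, and then invoke Lemma~\ref{diagu-equiv-regressive} to conclude that $R \in \bigtriangledown\mathcal{I}$, where $\mathcal{I}$ is the ideal of $\kappa$-special subtrees of $T$. Since a single antichain is already a $\kappa$-special subtree, it will in fact suffice to produce a regressive $f : R \to T$ whose fibers are antichains.

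First I would define, for each $t \in R$ with $t \neq \emptyset$,
\[
f(t) = \sup\left(S \cap \pred{t}\right),
\]
where the supremum is computed along the chain $\pred{t}\cup\{t\}$, and I would set $f(\emptyset)$ arbitrarily if $\emptyset \in R$ (the definition of regressive ignores the root). The key point is to verify that $f(t) <_T t$ for every $t \in R \setminus \{\emptyset\}$. This is precisely where the topological hypothesis is used: because $t$ is isolated in $S$, there is a basic open neighborhood $(s,t]$ (in the tree topology, cf.~footnote~\ref{tree-topology}) that meets $S$ only at $t$; hence $S \cap \pred{t} \subseteq \pred{s}\cup\{s\}$ is bounded strictly below $t$, so the supremum exists as an element of $\pred{t}$. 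This is the most delicate step, as it is the only place where the isolation hypothesis enters.

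Next I would show that for each $s \in T$, the fiber $f^{-1}(s)$ is an antichain. If $t_1 <_T t_2$ were both in $f^{-1}(s)$, then $t_1 \in S \cap \pred{t_2}$ would force $f(t_2) \geq_T t_1 >_T f(t_1)$, contradicting $f(t_1) = f(t_2)$. Therefore each fiber is a single antichain and hence a $\kappa$-special subtree, so $f$ is $\mathcal{I}$-small.

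With $f$ regressive and $\mathcal{I}$-small, Lemma~\ref{diagu-equiv-regressive} gives $R \in \bigtriangledown\mathcal{I} = NS^T_\kappa$, so $R$ is a nonstationary subtree of $T$, as required. The main obstacle is the verification that the supremum defining $f(t)$ is strictly below $t$, which is essentially a translation between the topological notion of isolated point and the order-theoretic notion of $\pred{t}$ being bounded in $S$; everything else is a straightforward application of the machinery built up in this section.
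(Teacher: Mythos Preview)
Your proposal is correct and follows essentially the same approach as the paper: define $f(t) = \sup(S \cap \pred{t})$, verify it is regressive using the isolation hypothesis, and show each fiber is an antichain via the inequality $f(t_1) <_T t_1 \leq_T f(t_2)$ when $t_1 <_T t_2$ are both in $R$. Your treatment is slightly more explicit about the root, the topological translation, and the invocation of Lemma~\ref{diagu-equiv-regressive}, but the argument is the same.
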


What do we know about the status of sets of the form $T \upharpoonright X$, for some $X \subseteq \kappa^+$,
with respect to the ideal $NS^T_\kappa$?
%First, notice that for any set $X \subseteq \kappa^+$, 
%$T \upharpoonright X \in NS^T_\kappa$ in our notation means the same thing 
%as $X \in NS_T$ of~\cite{Stevo-PRPOS}.
The following facts are straightforward:

\begin{lemma}\label{clear subtrees}
Let $T$ be any tree of height $\kappa^+$, and let $X, C \subseteq \kappa^+$.  Then:
\begin{enumerate}
\item If $\left|X\right| \leq \kappa$ then $T \upharpoonright X$ is a $\kappa$-special subtree of $T$.
\item If $X$ is a nonstationary subset of $\kappa^+$, then $T \upharpoonright X \in NS^T_\kappa$.
\item In particular, the set of successor nodes
%\todo{Make sure to define \emph{limit node} and \emph{successor node} somewhere.}
 of $T$ is a nonstationary subtree of $T$.
\item If $C$ is a club subset of $\kappa^+$, 
then $T \upharpoonright C \in (NS^T_\kappa)^*$.
\item If $T$ is a non-$\kappa$-special tree %then $T \upharpoonright \kappa^+ = T \notin NS^T_\kappa$,
and $C$ is a club subset of $\kappa^+$, then $T \upharpoonright C \notin NS^T_\kappa$.
\end{enumerate}

\begin{proof}\hfill
\begin{enumerate}
\item $T \upharpoonright X$ is a union of $\left|X\right|$ antichains.
\item (cf.~\cite[p.~251]{Stevo-SSTC}) 
Let $X$ be a nonstationary subset of $\kappa^+$.
By
%Implicitly uses 
Theorem~\ref{Neumer} (Neumer's characterization of nonstationary subsets of a cardinal),
we can choose a regressive function $f : X \to \kappa^+$ such that 
$\left|f^{-1}(\alpha) \right| < \kappa^+$ for every $\alpha < \kappa^+$.
This induces a regressive function $f_T : T \upharpoonright X \to T$, as follows:
%For every $\alpha \in X$, and every $t \in T$ such that $\height_T(t) = \alpha$,
%let $f_T(t) = s$, where $s \leq_T t$ is such that $\height_T(s) = f(\alpha)$.
%
% CHOOSE ONE OF THESE TWO PRESENTATIONS
%
For every $t \in T \upharpoonright X$, 
let $f_T(t) \leq_T t$ be such that 
\[
\height_T(f_T(t)) = f(\height_T(t)).
\]
The function $f_T$ is regressive,
and for each $s \in T$ the set $f_T^{-1}(s)$ is a $\kappa$-special subtree by part~(1), 
since it is a subset of $T \upharpoonright f^{-1}(\height_T(s))$.
It follows that $T \upharpoonright X$ is a nonstationary subtree of $T$, as required.

\item This follows from part~(2), 
since the set of successor ordinals below $\kappa^+$ is a nonstationary subset of $\kappa^+$.
Alternatively, the successor nodes are precisely the isolated points of $T$, 
so we can apply Lemma~\ref{isolated-nonstationary} to the whole tree $T$.

\item We have %$T \notin NS^T_\kappa$ by part (3), and 
\[
T \setminus \left(T \upharpoonright C\right)  = T \upharpoonright \left(\kappa^+ \setminus C\right) \in NS^T_\kappa
\]
by part (2), 
so $T \upharpoonright C$ is the complement of an ideal set and therefore in the filter.
%and $NS^T_\kappa$ is an ideal, so the result follows. 
\item By the 
%The first part is just a restatement of the 
Pressing-Down Lemma for Trees (Theorem~\ref{pressing-down-trees}), 
$NS^T_\kappa$ is a proper ideal on $T$,
so that 
\[
\left(NS^T_\kappa\right)^* \subseteq \left(NS^T_\kappa\right)^+,
\]
giving the required result. 
\qedhere
\end{enumerate}
\end{proof}
\end{lemma}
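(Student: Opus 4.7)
The lemma has five parts, and the plan is to prove them in order, since each part but the first feeds naturally into the next.

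For part~(1), I would write $T \upharpoonright X = \bigcup_{\alpha \in X} (T \upharpoonright \{\alpha\})$ and note that each $T \upharpoonright \{\alpha\}$ is a single level of $T$, hence an antichain. Since $|X| \leq \kappa$, this exhibits $T \upharpoonright X$ as a union of at most $\kappa$ antichains, matching Definition~\ref{def-special-subtree}.

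Part~(2) is the core of the lemma, and my plan is to lift Neumer's Theorem from $\kappa^+$ up to $T$. Suppose $X \subseteq \kappa^+$ is nonstationary. By Theorem~\ref{Neumer}, pick a regressive $f : X \to \kappa^+$ such that $|f^{-1}(\alpha)| \leq \kappa$ for every $\alpha < \kappa^+$. Define $f_T : T \upharpoonright X \to T$ by letting $f_T(t)$ be the unique ancestor of $t$ at height $f(\height_T(t))$; this is well-defined because $f$ is regressive on $\kappa^+$, and by construction $f_T$ is regressive on $T \upharpoonright X$. For every $s \in T$,
\[
f_T^{-1}(s) \subseteq T \upharpoonright f^{-1}(\height_T(s)),
\]
which is $\kappa$-special by part~(1). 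Hence $f_T$ is $\mathcal I$-small for the ideal $\mathcal I$ of $\kappa$-special subtrees of $T$, and Lemma~\ref{diagu-equiv-regressive} gives $T \upharpoonright X \in \bigtriangledown \mathcal I = NS^T_\kappa$.

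Parts~(3)--(5) are routine consequences. Part~(3) follows by applying part~(2) to the set of successor ordinals below $\kappa^+$, which is nonstationary because its complement contains the club of limit ordinals; as an alternative, the successor nodes of $T$ are precisely the isolated points in the tree topology, so Lemma~\ref{isolated-nonstationary} applies directly to $S = T$. For part~(4), if $C \subseteq \kappa^+$ is club then $\kappa^+ \setminus C$ is nonstationary, so part~(2) yields $T \upharpoonright (\kappa^+ \setminus C) \in NS^T_\kappa$; since $T \setminus (T \upharpoonright C) = T \upharpoonright (\kappa^+ \setminus C)$, we conclude $T \upharpoonright C \in (NS^T_\kappa)^*$. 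Part~(5) is immediate once one invokes the Pressing-Down Lemma for Trees (Theorem~\ref{pressing-down-trees}): non-$\kappa$-speciality of $T$ makes $NS^T_\kappa$ proper, so $(NS^T_\kappa)^* \subseteq (NS^T_\kappa)^+$, and combining this with part~(4) finishes the argument. The main obstacle is the construction of the lifted regressive function $f_T$ in part~(2), together with the verification that its fibers remain $\kappa$-special; once this link to Neumer's Theorem is set up, the rest is bookkeeping driven by the diagonal-union machinery already developed in this section.
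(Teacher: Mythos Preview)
Your proposal is correct and follows essentially the same approach as the paper: part~(1) via levels as antichains, part~(2) by lifting a Neumer-style regressive function on $\kappa^+$ to a regressive function on the tree with $\kappa$-special fibers, and parts~(3)--(5) as straightforward consequences using the same auxiliary results (Lemma~\ref{isolated-nonstationary} as an alternative for~(3), and Theorem~\ref{pressing-down-trees} for~(5)). The only cosmetic difference is that you name Lemma~\ref{diagu-equiv-regressive} explicitly in part~(2), whereas the paper leaves that step implicit.
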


It is a standard textbook theorem (see e.g.~\cite[Lemma~III.6.9]{New-nen}) 
that for any regular infinite cardinal $\theta < \kappa^+$, the set
\[
S^{\kappa^+}_\theta = \left\{ \gamma < \kappa^+ : \cf(\gamma) = \theta \right\}
\]
is a stationary subset of $\kappa^+$.
A partial analogue to this theorem for trees is:

\begin{theorem}\cite[Theorem~14, (2) $\implies$ (3)]{Stevo-PRPOS}\label{stationary subtree}
If $T$ is a non-$\kappa$-special tree, then the subtree
\[
T \upharpoonright S^{\kappa^+}_{\cf(\kappa)} = 
	\left\{ t \in T : \cf(\height_T(t)) = \cf(\kappa) \right\}
\]
is a stationary subtree of $T$.
\end{theorem}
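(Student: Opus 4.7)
The plan is to prove the contrapositive: assuming $B := T \upharpoonright S^{\kappa^+}_{\cf(\kappa)} \in NS^T_\kappa$, I will derive that $T \in NS^T_\kappa$, contradicting the Pressing-Down Lemma for Trees (Theorem~\ref{pressing-down-trees}). From the assumption, Lemma~\ref{diagu-equiv-regressive} supplies a regressive function $f : B \to T$ whose fibers $f^{-1}(s)$ are all $\kappa$-special subtrees of $T$.

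I would then mirror the construction of $\langle S_n \rangle_{n < \omega}$ in the proof of Theorem~\ref{pressing-down-trees}, but iterate for $\theta := \cf(\kappa)$ stages rather than just $\omega$. Starting from $S_0 = \{\emptyset\}$, define
\[
S_{\alpha+1} = S_\alpha \cup \bigcup_{t \in S_\alpha} \left(f^{-1}(t) \cap \cone{t}\right),
\qquad
S_\alpha = \bigcup_{\beta < \alpha} S_\beta \text{ at limit } \alpha \leq \theta.
\]
An induction on $\alpha$, using Lemma~\ref{special-above-antichain} at successor stages (by decomposing $S_\alpha$ into $\kappa$ antichains and absorbing the fibers $f^{-1}(t) \cap \cone{t}$ above each of them) and the $\kappa^+$-completeness of the $\kappa$-special ideal at limits (since $\theta \leq \kappa$), shows that each $S_\alpha$ is a $\kappa$-special subtree of $T$; in particular $S_\theta$ is.

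The crux is to argue that $B \subseteq S_\theta$. Let $t \in B \setminus S_\theta$ be $<_T$-minimal (if any exists). Since $t \in B$, we have $\cf(\height(t)) = \theta$, so $\pred{t}$ has cofinality $\theta$. If we know $\pred{t} \subseteq S_\theta$, then by cofinality $f(t) \in S_\alpha$ for some $\alpha < \theta$, forcing $t \in f^{-1}(f(t)) \cap \cone{f(t)} \subseteq S_{\alpha+1} \subseteq S_\theta$, a contradiction. The hard part will be justifying $\pred{t} \subseteq S_\theta$: $<_T$-minimality of $t$ in $B \setminus S_\theta$ only guarantees that predecessors of $t$ lying in $B$ are already in $S_\theta$, so a predecessor $r <_T t$ with $r \notin B$ might fall outside $S_\theta$. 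I expect to overcome this by refining the recursion to incorporate non-$B$ nodes --- for instance, by folding in the nonstationary subtree of isolated points (Lemma~\ref{isolated-nonstationary}) at the base, or by iterating the entire $\theta$-length construction transfinitely, leveraging that $\pred{t}$'s cofinality matches the iteration length exactly. Once $B \subseteq S_\theta$ is established, the diagonal-union decomposition of $B$ can be extended to cover all of $T$ by augmenting each $f^{-1}(s)$ with the $\kappa$-special remnant produced by iteration, giving $T \in NS^T_\kappa$ and the desired contradiction with Theorem~\ref{pressing-down-trees}.
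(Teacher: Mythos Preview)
The paper does not prove Theorem~\ref{stationary subtree}; it is quoted from Todor\v cevi\'c~\cite{Stevo-PRPOS}. So there is no ``paper's own proof'' to compare against, and your proposal must stand on its own. It does not, for two reasons.

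First, the iteration you define satisfies $S_\alpha \subseteq \{\emptyset\} \cup B$ for every $\alpha$, since each $f^{-1}(t)$ is a subset of $\dom(f) = B$. Your minimality argument then needs $f(t) \in S_\theta$ for the minimal $t \in B \setminus S_\theta$, but $f$ is a regressive map into $T$, not into $B$: nothing prevents $f(t)$ from being a node of height with cofinality $\neq \cf(\kappa)$, and such a node never enters any $S_\alpha$. So the gap is worse than the one you flag: it is not that some predecessors of $t$ might lie outside $S_\theta$, it is that the one predecessor you actually need, namely $f(t)$, may lie outside $B$ altogether. Folding in isolated points or iterating further does not touch this, since the only mechanism you have for adding new nodes is pulling back along $f$, and that mechanism only ever produces nodes of $B$.

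Second, even if you succeeded in proving $B \subseteq S_\theta$, you would have shown only that $B$ is $\kappa$-special --- a strengthening of the hypothesis $B \in NS^T_\kappa$, not a contradiction. Your closing sentence claims this can be ``extended to cover all of $T$'', but nothing in your construction produces any node outside $B$, so there is no remnant with which to cover $T \setminus B$. In fact the implication ``$B$ is $\kappa$-special (or nonstationary) $\Rightarrow$ $T$ is $\kappa$-special'' is precisely the non-trivial content of Todor\v cevi\'c's theorem; it depends essentially on $B$ being the set of nodes at heights of cofinality exactly $\cf(\kappa)$, and its proof requires a genuinely new idea (roughly, using a cofinal $\cf(\kappa)$-sequence in $\kappa$ to thread the specializing map on $B$ through all of $T$) that your outline does not contain.
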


Of course, in the case where $T$ has height $\omega_1$ (that is, where $\kappa = \omega$),
Theorem~\ref{stationary subtree} provides no new information, 
because the set of ordinals with countable cofinality is just the set of limit ordinals below $\omega_1$
and is therefore a club subset of $\omega_1$,
so that Lemma~\ref{clear subtrees}(5) applies.
But when $\kappa > \omega$, 
Theorem~\ref{stationary subtree} provides a nontrivial example of a stationary subtree of $T$ 
whose complement is not (necessarily) nonstationary.

%\include{BHT_generalization_to_trees}
% !TeX encoding = UTF-8
% !TeX TS-program = xelatex
            % this should also work with pdflatex but eventually want to migrate to xelatex
% BHT_generalization_to_trees.tex

% !TeX root = nonspecial.tex

\section{Balanced Baumgartner-Hajnal-Todorcevic Theorem for Trees:
		Background and Motivation}
%			{Generalizing the Balanced Baumgartner-Hajnal-Todorcevic Theorem to %Nonspecial 
%													Trees}
% Want to control the line break within the chapter title using \\, but this messes up the table of contents

%\subsection{Statement of the Main Theorem}

The remainder of this paper %This section 
is devoted to our exposition of the Main Theorem, Theorem~\ref{BHT-trees-regular}. %following theorem:
%
%\maintheorem
%
%%\begin{restatable}[Main Theorem]{theorem}{maintheorem}
%\begin{theorem}[Main Theorem]
%\label{BHT-trees-regular}
%\textofBHTtrees
%%Let $\kappa$ be any infinite regular cardinal,
%%let $\xi$ be any ordinal such that $2^{\left|\xi\right|} < \kappa$, and let $k$ be any natural number.
%%Then
%%\[
%%\text{non-$\left(2^{<\kappa}\right)$-special tree } \to \left(\kappa + \xi \right)^2_k.
%%\]
%%\end{restatable}
%\end{theorem}
%
%
%\subsection{Background and Motivation}
%
The Main Theorem %(Theorem~\ref{BHT-trees-regular}) 
is a generalization to trees of 
the Balanced Baumgartner-Hajnal-Todorcevic Theorem,%
\footnote{Jean Larson refers to it by that name in~\cite[p.~312, p.~326]{Larson-sets-extensions}.} 
%\cite[Theorem~3.1]{BHT},
which we recover by applying the Main Theorem~\ref{BHT-trees-regular} to the cardinal $(2^{<\kappa})^+$,
which is the simplest example of a non-$(2^{<\kappa})$-special tree:

\begin{corollary}[Balanced Baumgartner-Hajnal-Todorcevic Theorem] \cite[Theorem~3.1]{BHT} \label{BHT-cards}
Let $\kappa$ be any infinite regular cardinal.
Then for any ordinal $\xi$ such that $2^{\left|\xi\right|} < \kappa$, and any natural number $k$, we have
\[
\left(2^{<\kappa}\right)^+ \to \left(\kappa + \xi \right)^2_k.
\]
\end{corollary}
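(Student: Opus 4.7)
The plan is to instantiate the Main Theorem (Theorem~\ref{BHT-trees-regular}) with the tree $T = (2^{<\kappa})^+$, regarded as a tree under its natural ordinal ordering. Since $\kappa$, $\xi$, and $k$ satisfy the hypotheses of the Main Theorem in precisely the same form as in the corollary, the conclusion $T \to (\kappa + \xi)^2_k$ will translate directly into the desired partition relation on $(2^{<\kappa})^+$, because in a linearly ordered tree every subset is automatically a chain, so the notion of ``chain of order-type $\kappa + \xi$'' used in Section~\ref{Section:notation} coincides with the usual Erd\H{o}s--Rado notion of a subset of $(2^{<\kappa})^+$ of that order-type.

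The only nontrivial step is to verify the hypothesis of the Main Theorem, namely that $(2^{<\kappa})^+$, viewed as a tree, is non-$(2^{<\kappa})$-special. For this I would invoke the following elementary observation: in any linearly ordered set, any two distinct elements are comparable, so every antichain contains at most one element. Consequently, a linearly ordered set is $\mu$-special (in the sense of Definition~\ref{def-special-subtree}) if and only if its cardinality is at most $\mu$. Since $\left|(2^{<\kappa})^+\right| = (2^{<\kappa})^+ > 2^{<\kappa}$, the cardinal $(2^{<\kappa})^+$ cannot be written as a union of $\leq 2^{<\kappa}$ antichains, and is therefore a non-$(2^{<\kappa})$-special tree, as required.

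There is essentially no obstacle to overcome: the Main Theorem carries all of the combinatorial weight, and the corollary is merely its specialization to the simplest possible non-$(2^{<\kappa})$-special tree, namely the successor cardinal $(2^{<\kappa})^+$ itself. This mirrors the way in which, throughout the paper, $\kappa^+$ is presented as the simplest example of a non-$\kappa$-special tree, so the passage from the tree-theoretic Main Theorem to its cardinal version is transparent.
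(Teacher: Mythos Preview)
Your proposal is correct and matches the paper's approach exactly: the paper derives the corollary simply by applying the Main Theorem~\ref{BHT-trees-regular} to the cardinal $(2^{<\kappa})^+$, noting that it is the simplest example of a non-$(2^{<\kappa})$-special tree. Your additional explanation that antichains in a linear order are singletons, so a set of size $(2^{<\kappa})^+$ cannot be a union of $\leq 2^{<\kappa}$ antichains, is precisely the verification the paper leaves implicit.
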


%is the special case of Theorem~\ref{BHT trees} where the tree is the cardinal $\nu^+$,
%and $\nu = 2^{<\kappa}$ for some regular cardinal $\kappa$.

The case of the Main Theorem~\ref{BHT-trees-regular} where $k=2$ was proven by Todorcevic
in~\cite[Theorem~2]{Stevo-PRPOS}.
This was a generalization to trees of the corresponding result for cardinals by Shelah~\cite[Theorem~6.1]{Shelah}.

The Main Theorem~\ref{BHT-trees-regular} is a partial strengthening of the following result of Todorcevic,
which is itself a generalization to trees of the balanced Erd\H os-Rado Theorem for pairs:
\todo{Source this to the 1956 E-R paper, or is the general version first recorded in~\protect\cite{EHMR}?}

\begin{theorem}\label{ER-trees-Stevo}\cite[Corollary~25]{Stevo-PRPOS}
Let $\kappa$ be any infinite %regular 
cardinal.
Then for any cardinal $\mu < \cf(\kappa)$, we have
\[
\text{non-$\left(2^{<\kappa}\right)$-special tree } \to \left(\kappa + 1 \right)^2_\mu.
\]
\end{theorem}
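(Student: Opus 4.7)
Set $\lambda = 2^{<\kappa}$ and $\theta = \cf(\kappa)$. Since any non-$\lambda$-special tree must have height at least $\lambda^+$ (its levels otherwise witnessing a decomposition into $\leq\lambda$ antichains), we may assume $\height(T) = \lambda^+$. The plan is to combine the stationary-subtree machinery of Section~\ref{section:stationary} with an Erd\H{o}s-Rado-style iterative chain construction, building the desired chain one node at a time while maintaining a stationary ``candidate set'' of potential extensions.

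\emph{Fixing the color.} By Theorem~\ref{stationary subtree}, the subtree $S := T \upharpoonright S^{\lambda^+}_\theta$ is stationary. For each $t \in S$, the chain $\pred{t}$ has cofinality $\theta > \mu$, so the $\mu$-coloring $s \mapsto c\{s,t\}$ on $\pred{t}$ must have some color class $c_\chi(t)$ cofinal in $\pred{t}$. Choose such a color $\chi(t) < \mu$ for each $t \in S$; since $\mu \leq \lambda$ and $NS^T_\lambda$ is $\lambda^+$-complete (Lemma~\ref{special-is-nonstationary}), some fiber $S^* := \chi^{-1}(\chi^*)$ is stationary, and for every $t \in S^*$ the set $c_{\chi^*}(t)$ is cofinal in $\pred{t}$.

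\emph{Iterative construction.} Construct by recursion on $\alpha < \kappa$ an increasing chain $\langle t_\alpha \rangle_{\alpha<\kappa}$ in $T$ together with a $\subseteq$-decreasing sequence $\langle W_\alpha \rangle_{\alpha\leq\kappa}$ of stationary subsets of $S^*$, with $W_0 := S^*$, such that $c\{t_\beta, t_\alpha\} = \chi^*$ for all $\beta < \alpha < \kappa$. At stage $\alpha$, apply the normality of $NS^T_\lambda$ (Theorem~\ref{nonstationary-normal}) to the regressive function $f_\alpha: W_\alpha \to T$ that sends each $t$ to some element of $c_{\chi^*}(t)$ lying strictly above every previous $t_\beta$---well-defined whenever the partial chain $\{t_\beta\}_{\beta<\alpha}$ is bounded in $\pred{t}$, which holds automatically when $\alpha<\theta$ because $c_{\chi^*}(t)$ is cofinal in $\pred{t}$ of cofinality $\theta$ while the partial chain has cofinality $\cf(\alpha) \leq |\alpha| < \theta$. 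Normality yields a constant value $t_\alpha$ on a stationary $W_{\alpha+1} \subseteq W_\alpha$, so by construction $W_{\alpha+1} \subseteq \{t : t_\alpha <_T t \text{ and } c\{t_\alpha, t\} = \chi^*\}$, preserving the color condition. A final $t^* \in W_\kappa$ extends $\{t_\alpha\}_{\alpha<\kappa} \cup \{t^*\}$ to a $c$-monochromatic chain of order type $\kappa+1$ in color $\chi^*$.

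\emph{Main obstacle.} The delicate point is maintaining the stationarity of $W_\alpha$ at limit stages $\alpha < \kappa$. One would like to argue that $W_\alpha = \bigcap_{\beta<\alpha} W_\beta$ remains stationary because each step removes only a nonstationary portion, so the cumulative removal $\bigcup_{\beta<\alpha}(W_\beta \setminus W_{\beta+1})$ is nonstationary by $\lambda^+$-completeness; however, normality alone only guarantees that \emph{some} fiber of $f_\alpha$ is stationary, not that its complement in $W_\alpha$ is nonstationary. Making this work requires a more refined use of the pressing-down machinery (Theorem~\ref{pressing-down-trees}), exploiting the $\lambda$-specialness structure of the non-selected fibers rather than just their nonstationarity; the appropriate choice is to apply the relevant ideal-theoretic argument directly to the ideal of $\lambda$-special subtrees (using that $W_\alpha \setminus W_{\alpha+1}$ is $\lambda$-special, not merely nonstationary) and then pass to its diagonal union at limit stages. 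An additional subtlety arises when $\kappa$ is singular: at stages $\alpha \geq \theta$ the partial chain can be cofinal in $\pred{t}$ for $t \in S^*$, causing $f_\alpha$ to be undefined, and the construction must be stratified into blocks of length $<\kappa$ indexed by an auxiliary length-$\theta$ chain of checkpoints drawn from stationary subtrees at higher cofinalities.
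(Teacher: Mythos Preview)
Your proposal has a genuine gap that you yourself identify but do not resolve. The ``Main obstacle'' paragraph correctly diagnoses the problem: at each successor step you apply pressing-down to obtain a stationary fiber $W_{\alpha+1} \subseteq W_\alpha$, but the complement $W_\alpha \setminus W_{\alpha+1}$ need not lie in $NS^T_\lambda$, so the intersection $\bigcap_{\beta<\alpha} W_\beta$ at limit $\alpha$ need not be stationary. Your proposed fix --- that $W_\alpha \setminus W_{\alpha+1}$ is in fact $\lambda$-special --- is unjustified and false in general: the other fibers of a regressive function can themselves be stationary, and there is no mechanism in the pressing-down lemma that forces the unchosen fibers to be $\lambda$-special. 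This is not a technical detail to be patched later; it is the entire difficulty in adapting the naive iterated-pressing-down argument to length $\kappa$ when $\kappa \leq \lambda$.

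The paper's approach (given only for regular $\kappa$) is completely different and sidesteps this obstacle. Rather than building the $\kappa$-chain from below by $\kappa$ many applications of pressing-down on $T$, the paper first fixes a \emph{single} node $t$ at the top: using a $\kappa$-very nice collection $\langle N_s\rangle_{s\in T}$ of elementary submodels of $H(\theta)$ (Lemma~\ref{build-nice-collection}) together with Corollary~\ref{eligible-and-complete}, one obtains $t$ that is $N_t$-eligible with $[N_t]^{<\kappa}\subseteq N_t$. This yields a proper $\kappa$-complete ideal $I_{N_t,t}$ on $\pred t$ (Lemmas~\ref{k-complete} and~\ref{eligible-equivalent}). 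The entire $\kappa$-chain is then built inside $\pred t$ by a single recursion (Lemma~\ref{homog-2.2}): at each stage the set of valid continuations lies in the dual filter $I^*_{N_t,t}$, and $\kappa$-completeness of that filter is exactly what carries the construction through all $\kappa$ steps. The node $t$ then caps the chain to give order type $\kappa+1$ (Corollary~\ref{homog-k+1}). In short, the paper replaces your global iteration on the nonstationary ideal of $T$ with a local iteration on a $\kappa$-complete ideal manufactured from an elementary submodel, and that is what makes the length-$\kappa$ recursion go through.
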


The Main Theorem~\ref{BHT-trees-regular} strengthens the result of Theorem~\ref{ER-trees-Stevo}
in the sense of providing a longer ordinal goal:  
$\kappa + \xi$ (for %$2^{\left|\xi\right|} < \kappa$
$\xi < \log\kappa$) instead of $\kappa + 1$.
However, this comes at a cost:
While Theorem~\ref{ER-trees-Stevo} applies to any infinite cardinal $\kappa$, 
the Main Theorem~\ref{BHT-trees-regular} applies to regular cardinals only
(see \autoref{section:singular-discussion} for discussion of the singular case);
and while Theorem~\ref{ER-trees-Stevo} allows any number of colours less than $\cf(\kappa)$,
the colourings in the Main Theorem~\ref{BHT-trees-regular} must be finite.

One of the main tools we will use in our proof of the Main Theorem~\ref{BHT-trees-regular} is
the technique of \emph{non-reflecting ideals determined by elementary submodels}.
This technique was introduced in~\cite{BHT}, 
where in Sections~1--3 it is used to prove the Balanced Baumgartner-Hajnal-Todorcevic Theorem
(our Corollary~\ref{BHT-cards}).
Those sections of~\cite{BHT} are reproduced almost verbatim in~\cite{Baumgartner}.
The basics of the technique are exposed in~\cite[Section~2]{Milner-esm},
and the method is developed in~\cite[Sections~3 and~4]{Hajnal-Larson-PR-Handbook}.
Some history of this technique is described in~\cite[pp.~312--313]{Larson-sets-extensions}.
In our \autoref{section:ideals-esm} below, 
we will %describe them in 
explain the technique in detail,
while developing a more general form that works for trees rather than cardinals.

%ADD COMMENTS ABOUT THE TECHNIQUE AND OTHER PAPERS THAT DESCRIBE IT.

%ADD INFO ABOUT LIMITATIONS.

\section{Limitations, Conjectures, and Open Questions}

What possibilities are there for further extensions of the Main Theorem~\ref{BHT-trees-regular}?

First of all, we notice that the hypothesis that the tree is non-$(2^{<\kappa})$-special is necessary,
due to the combination of the following two theorems:

\begin{theorem}\label{neg-Galvin}
Let $\kappa$ be any infinite cardinal.
If $T$ is any
$\kappa$-special tree, then
\[
T \not\to %\nrightarrow 
\left( \kappa+1, \omega \right)^2.
\]
\end{theorem}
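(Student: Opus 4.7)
The plan is to exhibit a $2$-coloring that witnesses the failure of $T \to (\kappa+1, \omega)^2$, using the $\kappa$-specialness to get an injection into $\kappa$ along every chain. Since $T$ is $\kappa$-special, fix a decomposition $T = \bigcup_{\alpha<\kappa} A_\alpha$ into antichains and define $f : T \to \kappa$ by
\[
f(t) = \min\{\alpha < \kappa : t \in A_\alpha\}.
\]
Because each $A_\alpha$ is an antichain, $f \upharpoonright C$ is injective for every chain $C \subseteq T$; equivalently, for any comparable $s <_T t$, either $f(s) < f(t)$ or $f(s) > f(t)$.

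Next I would define a coloring $c : [T]^2 \to 2$ that records whether $f$ agrees with $<_T$: for a pair $\{s,t\} \in [T]^2$ with $s <_T t$, set
\[
c\{s,t\} =
\begin{cases}
0 & \text{if } f(s) < f(t), \\
1 & \text{if } f(s) > f(t).
\end{cases}
\]
(Recall $[T]^2$ consists only of comparable pairs, so there is nothing further to specify.) Then a chain $X$ is $c$-homogeneous in color $0$ iff $f \upharpoonright X$ is strictly $<_T$-increasing, and $c$-homogeneous in color $1$ iff $f \upharpoonright X$ is strictly $<_T$-decreasing.

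Finally, I would verify both forbidden configurations are impossible. For a color-$1$ homogeneous chain of order type $\omega$, $f$ would yield an infinite strictly decreasing sequence of ordinals in $\kappa$, contradicting the well-ordering of $\kappa$. For a color-$0$ homogeneous chain of order type $\kappa+1$, $f$ would give a strictly increasing function $g : \kappa + 1 \to \kappa$; but by the standard fact that $g(\alpha) \geq \alpha$ for all $\alpha < \kappa$, we would obtain $g(\kappa) \geq \kappa$, contradicting $g(\kappa) < \kappa$. Thus $c$ witnesses $T \not\to (\kappa+1, \omega)^2$.

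There is no real obstacle here: the whole argument is driven by the observation that $\kappa$-specialness converts chains into ordinal-valued injections, after which the two obstructions are the two trivial facts about strictly monotonic ordinal-valued maps. The only point requiring mild care is the choice of ``order-agreement'' as the coloring rule, to ensure that \emph{both} color classes correspond to monotonicity conditions that ordinals cannot support in the relevant lengths.
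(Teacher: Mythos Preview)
Your proof is correct and essentially identical to the paper's: both fix a specializing map $f:T\to\kappa$, define the coloring by whether $f$ agrees or disagrees with $<_T$, and then argue that a $0$-homogeneous chain embeds order-preservingly into $\kappa$ (hence has order type $\le\kappa$) while a $1$-homogeneous chain yields a decreasing sequence of ordinals (hence is finite). The only cosmetic difference is that you construct $f$ explicitly via $\min$ over an antichain decomposition, whereas the paper simply invokes a specializing map.
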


This theorem is a generalization to trees of the relation for
ordinals given in~\cite[Theorem~7.1.5]{Williams}.  The special case
where $\kappa = \omega$ is given in~\cite[Theorem~7]{Galvin}.

\begin{proof}
Let $f : T \to \kappa$ be a $\kappa$-specializing map %witness 
for $T$.  So for any
$\left\{x,y\right\} \in [T]^2$, we clearly have $f(x) \neq f(y)$.
Define a colouring
\[
g: [T]^2 \to 2 = \{0,1\}
\]
by setting, for $\left\{x, y\right\} \in [T]^2$ with $x <_T y$,
\[
g (\left\{x, y\right\}) =
    \begin{cases}
        0 &\text{if $f(x) < f(y)$;} \\
        1 &\text{if $f(x) > f(y)$.}
    \end{cases}
\]

Suppose $A \subseteq T$ is a $0$-homogeneous chain for $g$.  Then
$\left<f(x):x \in A\right>$ is a sequence in $\kappa$ of the same order
type as $A$, so the order type of $A$ cannot be greater than
$\kappa$.

Suppose $B \subseteq T$ is a $1$-homogeneous chain for $g$.  Then
$\left<f(x):x \in B\right>$ is a decreasing sequence of ordinals, 
so $B$ cannot be infinite.
\end{proof}

\begin{theorem}\label{Sierpinski-trees}
Let $\kappa$ be any infinite regular cardinal, and suppose $2^{<\kappa} > \kappa$.
If $T$ is any $(2^{<\kappa})$-special tree, then
\[
T \not\to \left(\kappa\right)^2_2
\]

\begin{proof}
Since $2^{<\kappa} > \kappa$, we can find some $\mu < \kappa$ such that $2^\mu > \kappa$.
Using a Sierpinski partition \cite[bottom of p.~108]{EHMR} \cite[Lemma~9.4]{Jech} \cite[Theorem~15.12]{J-W},
we have
\[
2^\mu \not\to \left(\mu^+\right)^2_2.
\]
But $\kappa < 2^\mu$ and $\mu^+ \leq \kappa$, so this implies $\kappa \not\to (\kappa)^2_2$.
Then, since $\kappa$ is regular, \cite[Corollary~21.5(iii)]{EHMR} ensures that
\[
2^{<\kappa} \not\to \left(\kappa\right)^2_2.
\]
Combining a colouring $c : [2^{<\kappa}]^2 \to 2$ witnessing this last negative partition relation
with a specializing map $f : [T] \to 2^{<\kappa}$ in the obvious way
induces a colouring $c' : [T]^2 \to 2$ with the desired properties.
\end{proof}
\end{theorem}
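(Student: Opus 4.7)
My plan is to establish the negative partition relation at the cardinal level $2^{<\kappa}$ and then pull it back to $T$ through a $(2^{<\kappa})$-specializing map $f\colon T \to 2^{<\kappa}$. This two-step strategy is natural: the hypothesis on $T$ supplies such a map, and since $f$ must be injective on every chain of $T$, a bad colouring of pairs from $2^{<\kappa}$ will automatically induce a bad colouring of pairs from any chain of $T$.

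For the first step I would prove $2^{<\kappa} \not\to (\kappa)^2_2$. From $2^{<\kappa} > \kappa$ together with the regularity of $\kappa$, there must exist some $\mu < \kappa$ with $2^\mu > \kappa$; otherwise the supremum $2^{<\kappa} = \sup_{\mu<\kappa} 2^\mu$ would be at most $\kappa$. The classical Sierpinski partition then yields $2^\mu \not\to (\mu^+)^2_2$; restricting the witnessing colouring to any subset of $2^\mu$ of size $\kappa$ and using $\mu^+ \leq \kappa$ gives $\kappa \not\to (\kappa)^2_2$. I would then invoke the standard amalgamation result \cite[Corollary~21.5(iii)]{EHMR} to promote this to $2^{<\kappa} \not\to (\kappa)^2_2$, again exploiting the regularity of $\kappa$.

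For the second step, fix a bad colouring $c_0\colon [2^{<\kappa}]^2 \to 2$ witnessing $2^{<\kappa} \not\to (\kappa)^2_2$ together with a $(2^{<\kappa})$-specializing map $f\colon T \to 2^{<\kappa}$. Define $c\colon [T]^2 \to 2$ by $c(\{x,y\}) = c_0(\{f(x), f(y)\})$ whenever $f(x) \neq f(y)$, and arbitrarily on the remaining pairs. For any chain $A \subseteq T$ the restriction $f \upharpoonright A$ is injective by the specialization property, so a $c$-monochromatic chain $A \subseteq T$ of order type $\kappa$ would produce a $c_0$-monochromatic subset $f''A \subseteq 2^{<\kappa}$ of cardinality $\kappa$, contradicting the choice of $c_0$.

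The delicate step is the transfer $\kappa \not\to (\kappa)^2_2 \Rightarrow 2^{<\kappa} \not\to (\kappa)^2_2$: this genuinely requires both the regularity of $\kappa$ and the stepping-up technology of \cite[Chapter~21]{EHMR}. By comparison, the Sierpinski starting point and the tree-lifting through the specializing map are both routine once this cardinal-level amalgamation is in hand.
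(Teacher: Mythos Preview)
Your proposal is correct and follows essentially the same route as the paper's proof: find $\mu<\kappa$ with $2^\mu>\kappa$, apply the Sierpinski partition to get $2^\mu\not\to(\mu^+)^2_2$, deduce $\kappa\not\to(\kappa)^2_2$, step up via \cite[Corollary~21.5(iii)]{EHMR} to $2^{<\kappa}\not\to(\kappa)^2_2$, and pull back through a specializing map. One minor remark: the existence of such a $\mu$ follows directly from the definition of $2^{<\kappa}$ as a supremum and does not actually require the regularity of $\kappa$; regularity is genuinely used only in the stepping-up step.
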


\begin{corollary}\label{special-counterexample}
Let $\kappa$ be any infinite regular cardinal.
If $T$ is any $(2^{<\kappa})$-special tree, then
\[
T \not\to \left(\kappa +1, \kappa\right)^2.
\]

\begin{proof}
If $2^{<\kappa} = \kappa$, apply Theorem~\ref{neg-Galvin}.
Otherwise $2^{<\kappa} > \kappa$, so apply Theorem~\ref{Sierpinski-trees}.
\end{proof}
\end{corollary}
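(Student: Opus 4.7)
The plan is to split into two cases according to the cardinal arithmetic relationship between $2^{<\kappa}$ and $\kappa$, since the two preceding theorems cover complementary regimes: Theorem~\ref{neg-Galvin} handles $\kappa$-special trees outright, while Theorem~\ref{Sierpinski-trees} handles $(2^{<\kappa})$-special trees under the assumption $2^{<\kappa} > \kappa$. Together these exhaust the possibilities, since $\kappa \leq 2^{<\kappa}$ always holds.

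First I would dispose of the case $2^{<\kappa} = \kappa$. Here ``$(2^{<\kappa})$-special'' is literally the same condition as ``$\kappa$-special'', so Theorem~\ref{neg-Galvin} applies directly to $T$ and produces a colouring $g : [T]^2 \to 2$ witnessing $T \not\to (\kappa+1, \omega)^2$. To upgrade this to the stronger negative relation $T \not\to (\kappa+1, \kappa)^2$, I would observe that any chain of order type $\kappa$ contains a chain of order type $\omega$ (since $\kappa$ is infinite), so the absence of a $1$-homogeneous chain of order type $\omega$ for $g$ implies the absence of any $1$-homogeneous chain of order type $\kappa$ as well.

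In the remaining case $2^{<\kappa} > \kappa$, Theorem~\ref{Sierpinski-trees} supplies a colouring $c : [T]^2 \to 2$ witnessing $T \not\to (\kappa)^2_2$, that is, with no monochromatic chain of order type $\kappa$ in either colour. In particular, there is no $1$-homogeneous chain of order type $\kappa$, and there is no $0$-homogeneous chain of order type $\kappa+1$ (since such a chain would contain an initial segment of order type $\kappa$). This immediately yields $T \not\to (\kappa+1, \kappa)^2$.

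There is no real obstacle here: the heavy combinatorial work is already carried out in Theorems~\ref{neg-Galvin} and~\ref{Sierpinski-trees}, and the only step that requires any thought is the routine monotonicity observation that passing from $(\kappa+1, \omega)^2$ or $(\kappa)^2_2$ to $(\kappa+1, \kappa)^2$ only weakens the relation, so the witnessing colourings transfer without modification.
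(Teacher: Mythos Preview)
Your proposal is correct and follows exactly the same two-case split as the paper's proof, applying Theorem~\ref{neg-Galvin} when $2^{<\kappa} = \kappa$ and Theorem~\ref{Sierpinski-trees} when $2^{<\kappa} > \kappa$. The only difference is that you spell out the routine monotonicity observations that the paper leaves implicit, which is fine.
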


Next we consider:
%First:  
Is there any hope of extending the ordinal goals beyond the ordinal $\kappa + \xi$
(where $\xi < \log \kappa$% $2^{\left|\xi\right|} < \kappa$
) of the Main Theorem~\ref{BHT-trees-regular}?
Can we get a homogeneous chain of order-type $\kappa + \log \kappa$?
Alternatively, can we somehow combine the ordinal goals of the Main Theorem~\ref{BHT-trees-regular}
with the infinite number of colours in Theorem~\ref{ER-trees-Stevo}?

When $\kappa = \aleph_0$, both the Main Theorem~\ref{BHT-trees-regular} and Theorem~\ref{ER-trees-Stevo} are
subsumed by a stronger result of Todorcevic:
\begin{theorem}\cite[Theorem~1]{Stevo-PRPOS}\label{Stevo-trees-BH}
For %any nonspecial tree $T$ and 
all $\alpha<\omega_1$ and $k<\omega$ we have
\[
\text{nonspecial tree }  \to \left(\alpha\right)^2_k.
\]
\end{theorem}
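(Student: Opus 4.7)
The plan is to prove the relation by induction on $\alpha<\omega_1$, keeping the number of colours $k$ as a parameter. First I would reduce to $k=2$: given a colouring $c:[T]^2 \to k$, applying the $k=2$ case iteratively (recolouring by ``is the colour equal to $i$ or not'') allows us to extract a $c$-homogeneous chain of the desired order type from $k-1$ successive applications, so it suffices to prove the theorem for $k=2$. The base case of $\alpha<\omega$ is handled directly: a nonspecial tree of height $\omega_1$ contains arbitrarily long finite chains, and an obvious finite Ramsey argument produces a monochromatic one of any prescribed finite length. The successor step $\alpha=\beta+1$ follows from the inductive hypothesis applied, within the cone above a suitable node, to the colour class determined by that node, provided we verify that the relevant cone remains nonspecial --- which follows from the Pressing-Down Lemma for Trees (Theorem~\ref{pressing-down-trees}) together with Lemma~\ref{special-above-antichain}.

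The heart of the proof is the limit step $\alpha=\sup_n\alpha_n$, where I would use the technique of non-reflecting ideals determined by elementary submodels, the same method referenced in Section~\ref{section:ideals-esm}. Fix a sufficiently large cardinal $\theta$ and an elementary submodel $M\prec H(\theta)$ of size $\aleph_0$ containing $T$, $c$, and the sequence $\langle\alpha_n\rangle_{n<\omega}$. Using Theorem~\ref{stationary subtree} (with $\kappa=\omega$, where $\operatorname{cf}(\kappa)=\omega$), the set of limit nodes of $T$ of cofinality $\omega$ forms a stationary subtree; intersecting with the filter generated by $M$ still leaves a $(NS^T_\omega)$-positive set. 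For each such node $t$ outside $M$, examine its trace $\pred{t}\cap M$, which is cofinal below $t$. For each colour $\chi<2$, the set $c_\chi(t)\cap M$ is a subset of this trace, and normality of $NS^T_\omega$ (Theorem~\ref{nonstationary-normal}), applied via a suitably designed regressive function on the stationary subtree of candidate $t$'s, allows us to stabilize the ``colour pattern'' that $t$ induces on $M$.

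Armed with this stabilization, I would recursively build the chain of order type $\alpha$ by nesting: at each stage $n$, use the inductive hypothesis (for ordinal $\alpha_n$ and still $k=2$) inside a nonspecial subtree that has been pinned down by the trace analysis, obtaining a $\chi$-homogeneous chain of order type $\alpha_n$; then concatenate using a cofinal node $t$ whose colours to all previously chosen nodes are known to be $\chi$. The main obstacle will be the limit step, specifically coordinating the stabilization of colour patterns across the submodel so that the resulting nested chains amalgamate into a single homogeneous chain of order type $\alpha$. In the classical Baumgartner--Hajnal argument for $\omega_1$, this is where the linear structure is used most essentially; replacing it for trees requires exploiting the pressing-down argument on the stationary subtree of cofinality-$\omega$ nodes, and the fact (Lemma~\ref{special-above-antichain}) that the relevant ``small'' sets behave well under taking cones above antichains. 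The new theory of stationary subtrees developed in Section~\ref{section:stationary} is precisely what makes this substitution work.
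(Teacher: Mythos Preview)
The paper does not prove Theorem~\ref{Stevo-trees-BH}. It is quoted from \cite[Theorem~1]{Stevo-PRPOS} as background, to explain why the case $\kappa=\aleph_0$ of the Main Theorem is already subsumed by Todorcevic's earlier work; no argument for it is given or sketched anywhere in the paper. So there is no ``paper's own proof'' to compare your proposal against.

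That said, your sketch has a genuine gap at the very start. The reduction to $k=2$ does not work the way you describe. If you recolour $c:[T]^2\to k$ by ``colour $0$ versus not-$0$'' and apply the two-colour case, the homogeneous chain $H$ you obtain in the ``not-$0$'' class is a countable linear order of type~$\alpha$, not a nonspecial tree, so you cannot re-apply the hypothesis to $H$. To iterate you would need, for each $\alpha<\omega_1$, some countable ordinal $\beta$ with $\beta\to(\alpha)^2_{k-1}$; the existence of such $\beta$ is not a triviality independent of the theorem you are trying to prove. Todorcevic's proof in \cite{Stevo-PRPOS} handles finitely many colours directly and does not pass through a two-colour reduction.

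Your successor and limit steps are also more aspirational than argued. For the successor, picking a node $t$ and applying the induction hypothesis to a cone does not by itself arrange that the resulting $\beta$-chain is correctly coloured to $t$. For the limit step, ``stabilize the colour pattern that $t$ induces on $M$'' hides the entire difficulty: in a countable model there are continuum-many possible patterns, and a pressing-down argument on a stationary subtree will not by itself pin down enough of the combinatorics to allow concatenation of the $\alpha_n$-chains. The machinery of Section~\ref{section:stationary} is relevant to Todorcevic's argument, but the actual proof requires a more delicate oscillation/partition analysis than the outline you give.
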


(This itself is a generalization to trees of an earlier result of Baumgartner and Hajnal~\cite{B-H} %, 1973, 
for cardinals.)

What about uncountable values of $\kappa$?

The following theorem collects various results from~\cite[Section~3]{Kanamori-partitions}
that limit the possible extensions of our Main Theorem~\ref{BHT-trees-regular} 
that we can hope to prove without any special axioms:

\begin{theorem}\label{limits-from-Kanamori}
If $V = L$, then:%
\footnote{The negative partition relations proved in~\cite{Kanamori-partitions} are actually stronger
(the notation follows~\cite[p.~153]{Kanamori-partitions}):

For part~(1), we have
\[
\kappa^{+} \not\to \left[ \kappa : \log\kappa \right]^2_{\kappa},
\]
discussed for successor cardinals in~\cite[p.~161]{Kanamori-partitions} (this result was proven by Joseph Rebholz) 
and for inaccessible cardinals that are not weakly compact in~\cite[Theorem~3.6]{Kanamori-partitions} 
(this result was proven by Hans-Dieter Donder).

For part~(2), the result for successor cardinals is
\[
\kappa^{+} \not\to \left(\kappa : 2 \right)^2_{\log\kappa},
\]
given in~\cite[top of p.~163]{Kanamori-partitions}.
For a limit cardinal we have $\log\kappa = \kappa$, so the result follows from a Sierpinski relation
$2^\kappa \not\to (3)^2_\kappa$.}
\begin{enumerate}
\item If $\kappa$ is any regular uncountable cardinal that is not weakly compact, then
\[
\kappa^+ \not\to \left(\kappa + \log \kappa \right)^2_2.
\]
\item For any infinite %successor 
cardinal $\kappa$, we have
\[
\kappa^+ \not\to \left(\kappa + 2 \right)^2_{\log\kappa}.
\]
\end{enumerate}
\end{theorem}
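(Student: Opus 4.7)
My plan is to treat this as an almost immediate corollary of the stronger square-bracket and Sierpinski-style negative partition relations of Rebholz, Donder, and the classical Sierpinski partition that are cited in the preceding footnote; each of these is available under $V = L$ and forms the genuinely substantive input. What remains is to sketch two routine pigeonhole reductions from those stronger relations to the forms stated here.

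For part~(1), I would begin with a colouring $c : [\kappa^+]^2 \to \kappa$ witnessing the stronger relation $\kappa^+ \not\to [\kappa : \log\kappa]^2_\kappa$, so that whenever $A, B \subseteq \kappa^+$ satisfy $\left|A\right| = \kappa$, $\left|B\right| = \log\kappa$, and $A < B$, the image $c''(A \otimes B)$ is all of $\kappa$. Fixing any single colour $\chi < \kappa$, I would define $c' : [\kappa^+]^2 \to 2$ by $c'\{x,y\} = 1$ iff $c\{x,y\} = \chi$. Any putative $c'$-homogeneous chain $X$ of order type $\kappa + \log\kappa$ splits as $X = A \cup B$ with $A$ the initial $\kappa$-segment and $B$ the terminal $\log\kappa$-segment; depending on whether $X$ is $1$- or $0$-monochromatic, $c''(A \otimes B)$ then equals $\{\chi\}$ or lies inside $\kappa \setminus \{\chi\}$, and in either case fails the totality demanded of $c$.

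For part~(2) the same splitting argument applies directly, without any colour-indicator trick. In the successor case, I invoke the witness $c : [\kappa^+]^2 \to \log\kappa$ for $\kappa^+ \not\to (\kappa : 2)^2_{\log\kappa}$: a $c$-monochromatic chain of order type $\kappa + 2$ would split as an initial $\kappa$-segment $A$ and a terminal $2$-segment $B$, rendering $c$ constant on $A \otimes B$ and immediately contradicting the chosen negative relation. For $\kappa$ a limit cardinal, $V = L$ forces GCH, so $\log\kappa = \kappa$ (since $2^\tau = \tau^+ < \kappa$ for every $\tau < \kappa$) and $\kappa^+ = 2^\kappa$; the Sierpinski relation $2^\kappa \not\to (3)^2_\kappa$ then already defeats any chain of order type $\kappa + 2 \geq 3$.

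The main obstacle lies not in any of these reductions, which are essentially single-step pigeonhole, but in the stronger inputs quoted from Kanamori's paper. Those constructions rest on $\square_\kappa$, which holds in $L$ for every $\kappa$ that is not weakly compact by Jensen's results; this is precisely why the hypothesis ``not weakly compact'' appears in part~(1), and is also why the theorem is stated under the assumption $V = L$ rather than in ZFC.
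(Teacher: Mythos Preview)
The paper does not supply its own proof of this theorem; it is stated purely as a citation of results from Kanamori~\cite{Kanamori-partitions}, with the footnote recording the stronger square-bracket and polarized relations from which the two parts are meant to follow. Your proposal correctly identifies this structure and supplies exactly the routine reductions the footnote leaves implicit: the indicator-colouring argument for part~(1), the initial-segment/tail split for the successor case of part~(2), and the Sierpinski partition combined with GCH for the limit case. These reductions are sound, so your proposal is correct and matches the paper's intended (though unwritten) derivation.
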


Recall that $V=L$ implies GCH, which in turn implies:
\begin{itemize}
\item $2^{<\kappa} = \kappa$;
\item $\log \kappa = \kappa^-$ for a successor cardinal $\kappa$
(where $\kappa^-$ is the cardinal $\mu$ such that $\mu^+ = \kappa$);
%(where for a successor cardinal $\kappa$ we let $\kappa^-$ be its predecessor);
\item $\log \kappa = \kappa$ for a limit cardinal $\kappa$.
\todo{Decide whether we need these last two.}
\end{itemize}

So part~(1) of Theorem~\ref{limits-from-Kanamori} shows that 
(for regular uncountable cardinals that are not weakly compact) we cannot extend the ordinal goals of
the Main Theorem~\ref{BHT-trees-regular} without any special axioms.
That is, just as Corollary~\ref{BHT-cards} is described in~\cite[p.~142]{Hajnal-Larson-PR-Handbook},
our Main Theorem~\ref{BHT-trees-regular} is the best possible balanced generalization to trees of the
Erd\H os-Rado Theorem for finitely many colours to ordinal goals.

Furthermore, 
part~(2) of Theorem~\ref{limits-from-Kanamori} shows that
for successor cardinals $\kappa$ (where $\log \kappa < \kappa = \cf(\kappa)$)
we cannot %generally 
combine the ordinal goals of the Main Theorem~\ref{BHT-trees-regular}
with the larger number of colours in Theorem~\ref{ER-trees-Stevo}.

This leaves open the following questions:

\begin{question}
For a regular cardinal $\kappa > \aleph_1$,
do we have%
\footnote{According to~\cite[top of p.~143]{Hajnal-Larson-PR-Handbook},
the only known result in this direction %of the following questions 
is a result of Shelah~\cite{Shelah-strongly-compact}
that
\[
\left(2^{<\kappa}\right)^+ \to \left(\kappa + \mu\right)^2_\mu
\]
(for regular $\kappa$) with the assumption that there exists a strongly compact cardinal $\sigma$
such that $\mu < \sigma \leq \kappa$.
We conjecture that this result is true when generalized to trees as well.}
\[
\text{non-$\left(2^{<\kappa}\right)$-special tree } \to \left(\kappa + \xi \right)^2_{\mu} 
\]
for $\xi, \mu < \log \kappa$ (or even $\mu^+ < \kappa$)?
%or even 
\end{question}

The simplest case of the above question is when $\mu = \aleph_0$ and $\kappa = \aleph_2$, 
so that we ask:  Does % whether
\[
\left(2^{\aleph_1}\right)^+ \to \left(\omega_2 + 2 \right)^2_{\aleph_0}?
\]

We conjecture a \emph{yes} answer to the following question, 
generalizing the corresponding conjecture for cardinals in~\cite[p.~156]{Kanamori-partitions}:

\begin{question}
If $\kappa$ is a weakly compact cardinal, do we have,
for every $\alpha < \kappa^+$ and $n<\omega$,
\begin{align*}
\text{non-$\kappa$-special tree }	&\to \left(\alpha\right)^2_n ?
				%&& \text{(for every $\alpha < \kappa^+$ and $n<\omega$)}	
									\\
\text{non-$\kappa$-special tree }	&\to \left(\kappa^n\right)^2_{\aleph_0}?
				%&&\text{(for every $n<\omega$)}
\end{align*}
\end{question}

What about aiming for positive consistency results by avoiding $V=L$,
which caused the limitations in Theorem~\ref{limits-from-Kanamori} above?

For any fixed uncountable cardinal $\kappa$, if $2^{<\kappa} = 2^\kappa$, 
then applying Theorem~\ref{ER-trees-Stevo} to the cardinal $\kappa^+$ instead of $\kappa$
subsumes any extensions of the ordinal goals or number of colours that we would anticipate when applying our
Main Theorem~\ref{BHT-trees-regular} to $\kappa$.

So the question remains:

\begin{question}
Are any extensions of the Main Theorem~\ref{BHT-trees-regular} 
that are precluded when $V=L$ by Theorem~\ref{limits-from-Kanamori} consistent with $2^{<\kappa} < 2^\kappa$?%
\footnote{Some partial results in this direction are presented in~\cite[Section~2]{Kanamori-partitions},
and we conjecture that they can be generalized to trees.}
\end{question}

Singular cardinals are beyond the scope of this discussion.
In \autoref{section:singular-discussion} 
we will explain how our method of proof does not provide any results for singular cardinals.

\section{Examples}

Let us consider some examples of regular cardinals $\kappa$, 
to see what the Main Theorem~\ref{BHT-trees-regular} gives us in each case:

\begin{example}
Suppose %If we set 
$\kappa = \aleph_0$.  Then $2^{<\kappa} = \aleph_0$, and we have,
for %any nonspecial tree $T$ (i.e.\ $T$ is a non-$\aleph_0$-special tree, of height $\omega_1$), 
%and 
any natural numbers $k$ and $n$,
\[
\text{nonspecial tree } \to \left(\omega + n\right)^2_k.
\]
\end{example}

%However, we 
Notice that our proof remains valid in this case; 
nowhere in the proof of the Main Theorem~\ref{BHT-trees-regular} do we require $\kappa$ to be uncountable.
However, 
as we have mentioned earlier, 
this case is already subsumed by the stronger Theorem~\ref{Stevo-trees-BH} of Todorcevic.

%However, this case already follows from a stronger result of Todorcevic~\cite[Theorem~1]{Stevo-PRPOS}
%(which is itself a generalization to trees of an earlier result of Baumgartner and Hajnal~\cite{B-H} for cardinals), 
%who showed %the stronger result 
%that 
%for %any nonspecial tree $T$ and 
%all $\alpha<\omega_1$ and $k<\omega$ we have
%\[
%\text{nonspecial tree }  \to \left(\alpha\right)^2_k.
%\]

So we focus on uncountable values of $\kappa$.
The first case where we get something new is:

\begin{example}\label{example-main-aleph1}
%If we 
Let $\kappa = \aleph_1$.  Then $2^{<\kappa} = \mathfrak c$, but
$\xi$ must still be finite, so we have,
for %any non-$\mathfrak c$-special tree $T$ (of height $\mathfrak c^+$),
%and 
any natural numbers $k$ and $n$,
\[
\text{non-$\mathfrak c$-special tree } \to \left(\omega_1 + n\right)^2_k.
\]
\end{example}

Example~\ref{example-main-aleph1}
is the simplest example provided by the Main Theorem~\ref{BHT-trees-regular}. %this extension.
However, we can (consistently) strengthen Example~\ref{example-main-aleph1} by replacing $\omega_1$
with any regular cardinal $\kappa$ such that $2^{<\kappa} = \mathfrak c$.
For example:

\begin{example}%\label{example-main-p}
Suppose $\kappa = \mathfrak p$ (the pseudo-intersection number).  
Then by~\cite[Exercise~III.1.38]{New-nen}, $\kappa$ is regular, 
and by~\cite[Lemma~III.1.26]{New-nen},
$2^{<\kappa} = \mathfrak c$. 
%and $\xi$ must still be finite, 
So we have,
for %any non-$\mathfrak c$-special tree $T$ (of height $\mathfrak c^+$),
%and 
any natural numbers $k$ and $n$,
\[
\text{non-$\mathfrak c$-special tree } \to \left(\mathfrak p + n\right)^2_k.
\]
\end{example}

\begin{example}
Setting $\kappa = \mathfrak c^+$, we have $2^{<\kappa} = 2^{\mathfrak c}$, so that %and %we get:
for %any non-$2^{\mathfrak c}$- special tree $T$ (of height $(2^{\mathfrak c})^+$),
%and 
any ordinal% $\xi$ such that% 
\footnote{In particular, $\xi$ can be any countable ordinal.
More generally, we know from~\cite[Lemma~III.1.26]{New-nen} that $\mathfrak p \leq \log (\mathfrak c^+)$,
so that any $\xi < \mathfrak p$ %(the pseudo-intersection number) 
will work.}
%$2^{\left|\xi\right|} \leq \mathfrak c$,
$\xi < \log(\mathfrak c^+)$,
%\footnote{Consistently, $\xi$ can be some uncountable ordinals as well,
%provided $2^{\left|\xi\right|} \leq \mathfrak c$.
%In particular, any $\xi < \mathfrak p$ (the pseudo-intersection number) will work.}
% $\xi < \omega_1$ 
and any natural number $k$, we have
\[
\text{non-$2^{\mathfrak c}$-special tree }
%\left(2^{\mathfrak c}\right)^+ 
 \to \left(\mathfrak c^+ + \xi\right)^2_k.
\]
\end{example}

If we assume CH, then Example~\ref{example-main-aleph1} %the first uncountable example above 
becomes
(for finite $n$ and $k$)
%If we assume CH, then the first uncountable example above becomes:
%For any non-$\aleph_1$-special tree $T$ (of height $\omega_2$) and finite $n$ and $k$,
\[
\text{non-$\aleph_1$-special tree } \to \left(\omega_1 + n\right)^2_k.
\]
If we further assume GCH, then $2^{<\kappa} = \kappa$, %so $\lambda = \kappa^+$, 
so the %statement of the corollary (for any regular $\kappa$) 
general statement of the Main Theorem~\ref{BHT-trees-regular} 
is simplified to
%For any non-$\kappa$-special tree $T$ (of height $\kappa^+$), 
\[
\text{non-$\kappa$-special tree } \to \left(\kappa + \xi\right)^2_k,
\]
where the hypothesis $\xi < \log \kappa$ %condition on $\xi$ 
can be written as $\left|\xi\right|^+ <
\kappa$.  We will not assume these (or any) extra axioms in the proof of
the Main Theorem~\ref{BHT-trees-regular}, but if such assumptions% 
\footnote{Older papers often assume GCH, or variants of it, when stating related results,
due to the lack of good notation for iterated exponentiation~\cite[p.~218]{Larson-sets-extensions}
and for the weak power $2^{<\kappa}$.
Even Section~1 of~\cite{BHT} and~\cite{Baumgartner} (though not the subsequent sections, 2 and~3)
unnecessarily assumes $2^{<\kappa} = \kappa$.}
help the reader's intuition in
following the proof then there is no harm in doing so.

\section{The Role of Regularity and Discussion of Singular Cardinals}
\label{section:singular-discussion}

In the statement of the Main Theorem (Theorem~\ref{BHT-trees-regular}),
why do we require $\kappa$ to be regular?
Where is the regularity of $\kappa$ used in the proof?
Furthermore, where in the proof do we use the fact that the tree is non-$(2^{<\kappa})$-special?

Suppose we fix \emph{any} infinite cardinal $\kappa$.
How tall must a non-special tree be in order to obtain the homogeneous sets in the 
conclusion of the Main Theorem~\ref{BHT-trees-regular}?

In order to apply the lemmas of \autoref{section:very-nice} 
(in particular, Lemmas~\ref{build-nice-collection}(3) and~\ref{eligible-and-complete}),
we will require our tree to be non-$\nu$-special, 
where $\nu$ is an infinite cardinal satisfying $\nu ^ {<\kappa} = \nu$. 
So we need to determine:
What is the smallest infinite cardinal $\nu$ for which $\nu ^ {<\kappa} = \nu$?
It is clear that we must have $\nu \geq 2^{<\kappa}$.
What happens if we set $\nu = 2^{<\kappa}$?

The following fact follows immediately from~\cite[Theorem~6.10(f), first case]{EHMR}:

\begin{theorem}\label{regular-2<kappa}
For any regular cardinal $\kappa$, we have
\[
\left(2^{<\kappa}\right)^{<\kappa} = 2^{<\kappa}.
\]
\end{theorem}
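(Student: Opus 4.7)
The plan is to unfold the definition and reduce everything to computing $\lambda^\mu$ for a single $\mu < \kappa$. Writing $\lambda := 2^{<\kappa}$, we have $\lambda^{<\kappa} = \sup_{\mu < \kappa} \lambda^\mu$, and for finite $\mu$ the value $\lambda^\mu$ equals $\lambda$, so it suffices to show $\lambda^\mu = \lambda$ for every infinite cardinal $\mu < \kappa$. A preliminary observation I would record: $\lambda \geq \kappa$. Indeed, for each $\mu < \kappa$ we have $2^\mu \geq \mu^+$, whence $\lambda = \sup_{\mu < \kappa} 2^\mu \geq \sup_{\mu < \kappa} \mu^+ = \kappa$. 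This ensures $\kappa \cdot \lambda = \lambda$ at the end.

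The key step, and the only place regularity is used, is the following bounding claim: every function $f \colon \mu \to \lambda$ factors as $f \colon \mu \to 2^{\nu^*}$ for some $\nu^* < \kappa$. To prove it, use that $\lambda = \sup_{\nu < \kappa} 2^\nu$ as ordinals, so for each $i < \mu$ there is some $\nu_i < \kappa$ with $f(i) < 2^{\nu_i}$. Since $\mu < \kappa = \cf(\kappa)$, the ordinal $\nu^* := \sup_{i < \mu} \nu_i$ remains strictly less than $\kappa$, and the range of $f$ lies in $2^{\nu^*}$. This is precisely the step that would fail for singular $\kappa$, where a cofinal sequence of $\nu_i$'s of length $\cf(\kappa) \leq \mu < \kappa$ could push $\nu^*$ up to $\kappa$.

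The counting argument then finishes cleanly. For each fixed $\nu^* < \kappa$, the number of functions $\mu \to 2^{\nu^*}$ equals $(2^{\nu^*})^\mu = 2^{\nu^* \cdot \mu}$, and since $\nu^*, \mu < \kappa$ with $\kappa$ regular (hence a multiplicatively closed cardinal below itself) we have $\nu^* \cdot \mu < \kappa$, so this quantity is at most $2^{<\kappa} = \lambda$. Taking the union over all $\nu^* < \kappa$ gives
\[
\lambda^\mu \;\leq\; \sum_{\nu^* < \kappa} \lambda \;=\; \kappa \cdot \lambda \;=\; \lambda,
\]
and the reverse inequality $\lambda^\mu \geq \lambda$ is trivial. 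Hence $\lambda^\mu = \lambda$ for every $\mu < \kappa$, and consequently $\lambda^{<\kappa} = \lambda$, as claimed. There is no real obstacle here beyond correctly identifying where regularity enters; once the bounding claim is in place the computation is routine.
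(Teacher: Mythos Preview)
Your argument is correct. The paper does not actually give a proof of this theorem: it simply records the statement as an immediate consequence of \cite[Theorem~6.10(f), first case]{EHMR}. So your self-contained argument is a genuine addition rather than a paraphrase. The structure---bounding the range of each $f:\mu\to\lambda$ inside some $2^{\nu^*}$ with $\nu^*<\kappa$ using regularity, then counting---is exactly the standard direct computation, and you have correctly isolated the one place where regularity is needed. Two small cosmetic remarks: the case $\kappa=\omega$ is covered by your ``finite $\mu$'' sentence (there being no infinite $\mu<\omega$), and the phrase ``multiplicatively closed cardinal below itself'' could be replaced by the explicit observation that $\nu^*\cdot\mu=\max(\nu^*,\mu)<\kappa$ when one of them is infinite, and is finite otherwise.
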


So for a \emph{regular} cardinal $\kappa$, 
we can set $\nu = 2^{<\kappa}$ to satisfy the requirement $\nu ^ {<\kappa} = \nu$, 
so that the non-$(2^{<\kappa})$-special tree in the statement of the Main Theorem~\ref{BHT-trees-regular}
is exactly what we need for the proof to work.

What about the case where $\kappa$ is a \emph{singular} cardinal?
It turns out that Theorem~\ref{regular-2<kappa} is the only consequence of regularity 
used in the proof of the Main Theorem~\ref{BHT-trees-regular}.
In fact, the proof of the Main Theorem~\ref{BHT-trees-regular} 
actually gives the following (apparently) more general version of it,
with weaker hypotheses:

\begin{theorem}%[Main Theorem, general form]
\label{BHT-trees-general}
Let $\nu$ and $\kappa$ be infinite cardinals such that $\nu ^ {<\kappa} = \nu$. 
%and let $T$ be a non-$\nu$-special tree of height $\nu^+$. 
Then for any ordinal $\xi$ such that $2^{\left|\xi\right|} < \kappa$, and any natural number $k$, we have
\[
\text{non-$\nu$-special tree } \to \left(\kappa + \xi \right)^2_k.
\]
\end{theorem}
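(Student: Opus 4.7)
The plan is to take the forthcoming proof of the Main Theorem~\ref{BHT-trees-regular} and reread it, replacing every occurrence of $2^{<\kappa}$ with $\nu$. As the discussion preceding this theorem makes explicit, the regularity of $\kappa$ is used in the Main Theorem's proof only to derive the closure property $(2^{<\kappa})^{<\kappa} = 2^{<\kappa}$ from Theorem~\ref{regular-2<kappa}. Since we are now taking $\nu^{<\kappa} = \nu$ as a direct hypothesis, this closure is built in, and no further appeal to regularity should be needed. The claim ``the proof of the Main Theorem actually gives this more general version'' is therefore a promise of a substitution rather than of a new argument, and my task is to verify that the substitution goes through cleanly.

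Concretely, I would fix a non-$\nu$-special tree $T$ and a colouring $c\colon [T]^2 \to k$. The Pressing-Down Lemma for Trees (Theorem~\ref{pressing-down-trees}) applies directly since $T$ is non-$\nu$-special, so $NS^T_\nu$ is a proper normal ideal on $T$ by Theorem~\ref{nonstationary-normal}, and $T$ itself is stationary. The strategy will then be the two-phase Baumgartner-Hajnal-Todorcevic construction: build an initial segment of order type $\kappa$ by iterated regressive-function / pressing-down arguments inside an elementary submodel $M$ of size $\nu$ (using the non-reflecting ideal determined by $M$ as developed in \autoref{section:ideals-esm}), and then append the final $\xi$-tail using the stepping-up machinery of \autoref{section:very-nice}.

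The hypothesis $\nu^{<\kappa} = \nu$ is the crucial input at exactly the steps that the author flags — namely Lemmas~\ref{build-nice-collection}(3) and~\ref{eligible-and-complete} — where one needs the collection of $<\kappa$-sequences drawn from $M$ to still have cardinality $\nu$, so that these sequences can be organised by a single object of the same size without blowing past the tree's level width implicit in non-$\nu$-specialness. The hypothesis $2^{|\xi|} < \kappa$ is invoked in the stepping-up phase: there are at most $2^{|\xi|}$ patterns in which a single node can colour the $\xi$ future extensions of the chain, and because this number is below $\kappa$, one such pattern must occur on a stationary subtree, at which point we commit to it and extract the remaining $\xi$ points.

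The main obstacle is not mathematical creativity but bookkeeping: one must check, step by step, that every implicit use of the identity $\nu = 2^{<\kappa}$ in the original proof factors through either (i) the non-$\nu$-specialness of $T$, which is unchanged, or (ii) the closure relation $\nu^{<\kappa} = \nu$, which is now the hypothesis. The paper's deliberate set-up — defining non-$\kappa$-specialness, $NS^T_\kappa$, and the normal-ideal machinery of \autoref{section:stationary} for arbitrary infinite $\kappa$ rather than only for successor cardinals — is designed precisely to make this verification transparent, so I expect no hidden dependence on regularity to surface. With that verification complete, the conclusion $\text{non-}\nu\text{-special tree} \to (\kappa + \xi)^2_k$ follows by the same argument, and specialising to $\nu = 2^{<\kappa}$ with $\kappa$ regular recovers Theorem~\ref{BHT-trees-regular}.
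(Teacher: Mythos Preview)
Your proposal is correct and matches the paper's approach exactly: the paper gives no separate proof of Theorem~\ref{BHT-trees-general}, but merely observes that the proof of the Main Theorem~\ref{BHT-trees-regular} goes through verbatim with $\nu$ in place of $2^{<\kappa}$, since the only use of regularity is the invocation of Theorem~\ref{regular-2<kappa} to obtain $(2^{<\kappa})^{<\kappa}=2^{<\kappa}$, which is now the hypothesis $\nu^{<\kappa}=\nu$. Your identification of Lemmas~\ref{build-nice-collection}(3) and~\ref{eligible-and-complete} as the places where this closure is needed is precisely what the paper flags.

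One minor remark on your sketch of the mechanics: the proof does not use a single elementary submodel $M$ of size $\nu$, but rather a $\kappa$-very nice \emph{collection} $\langle N_t\rangle_{t\in T}$ of submodels indexed by the tree, and the $\xi$-tail is actually built \emph{first} (above a node $s$) before the $\kappa$-segment is extracted below it. These are descriptive imprecisions rather than mathematical errors, and they do not affect the validity of the substitution argument you propose.
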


For a singular cardinal $\kappa$,
we should be able to find some infinite cardinal $\nu$ satisfying the requirement $\nu ^ {<\kappa} = \nu$,
so that we can apply Theorem~\ref{BHT-trees-general} to a non-$\nu$-special tree for such $\nu$.
It would seem to be significant that the $\kappa$ in the conclusion does not need to be weakened to $\cf(\kappa)$.
It is tempting to conclude that we should present Theorem~\ref{BHT-trees-general} as our main result,
since it appears to have broader application than our Main Theorem~\ref{BHT-trees-regular}.

In particular,
depending on the values of the continuum function,
there may be some singular cardinals $\kappa$ for which the sequence
$\{2^\mu  : \mu < \kappa\}$ is eventually constant, %stabilizes, 
in which case any such $\kappa$ would satisfy $\cf(2^{<\kappa}) \geq \kappa$ 
and $(2^{<\kappa})^{<\kappa} = 2^{<\kappa}$.
%the hypothesis of the corollary.
%In this case, it is quite significant that the $\kappa$ in the conclusion does not need to be weakened to $\cf(\kappa)$.
%
%There may also be some singular cardinals $\kappa$ for which the
%hypothesis of the theorem is true, though this cannot happen under
%GCH, as we will see (Lemma~\ref{regular-2^<kappa}).
%
(Of course, this cannot happen under GCH.)
For such $\kappa$, we can apply Theorem~\ref{BHT-trees-general} with $\nu = 2^{<\kappa}$,
just as we do for regular cardinals.

However, it turns out that the singular case gives no new results, as we will see presently.

%Fixing any singular cardinal $\kappa$, what is the smallest infinite cardinal $\nu$ satisfying $\nu ^ {<\kappa} = \nu$,
%so that we can apply Theorem~\ref{BHT-trees-general}?
%If $\nu$ is a cardinal such that $2 \leq \nu < 2^{<\kappa}$, 
%then we clearly have $\nu^{<\kappa} \geq 2^{<\kappa} > \nu$.
%If $\nu$ is a cardinal such that $2^{<\kappa} \leq \nu < (2^{<\kappa})^{<\kappa}$
%(there may or may not be any such $\nu$),
%then $\nu^{<\kappa} \geq (2^{<\kappa})^{<\kappa} > \nu$.
%So any cardinal $\nu < (2^{<\kappa})^{<\kappa}$ cannot satisfy $\nu ^ {<\kappa} = \nu$.
%
%What happens if we set $\nu = (2^{<\kappa})^{<\kappa}$?
%In this case, $\nu$ may or may not be equal to $2^{<\kappa}$, but either way 

The following fact follows immediately from~\cite[Theorem~6.10(f), second case]{EHMR}:
%or from~\cite[Theorem~5.16(iii)]{Jech}:
% NOT FROM JECH BECAUSE Jech only gives case for base = 2

\begin{theorem}\label{singular-nu^<kappa}
For any singular cardinal $\kappa$ and any cardinal $\nu \geq 2$, we have
\[
\left(\nu^{<\kappa}\right)^{<\kappa} = \nu^{\kappa}.
\]
\end{theorem}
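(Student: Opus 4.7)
The plan is to derive the identity $(\nu^{<\kappa})^{<\kappa} = \nu^\kappa$ directly from the definition of the weak power and a short König-style decomposition, exploiting the singularity of $\kappa$. Since the author points to~\cite[Theorem~6.10(f), second case]{EHMR} and says the conclusion follows ``immediately'', the proof in the text is presumably just a citation; I would still want to see in my head why it is true.

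For the inequality $(\nu^{<\kappa})^{<\kappa} \leq \nu^\kappa$, the idea is pure monotonicity: for every $\mu < \kappa$,
\[
(\nu^{<\kappa})^\mu \leq (\nu^\kappa)^\mu = \nu^{\kappa \cdot \mu} = \nu^\kappa,
\]
and taking the supremum over $\mu < \kappa$ preserves the bound. Here I am using only that $\nu^{<\kappa} \leq \nu^\kappa$ and standard associativity of cardinal exponentiation.

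The reverse inequality $\nu^\kappa \leq (\nu^{<\kappa})^{<\kappa}$ is where singularity enters and is the step to get right. I would fix a cofinal sequence $\langle \kappa_\alpha : \alpha < \cf(\kappa)\rangle$ of cardinals below $\kappa$ with $\sup_\alpha \kappa_\alpha = \kappa$, and partition $\kappa$ into intervals of lengths $\kappa_\alpha$. Restricting a function $f : \kappa \to \nu$ to each piece gives an injection
\[
\nu^\kappa \hookrightarrow \prod_{\alpha < \cf(\kappa)} \nu^{\kappa_\alpha} \leq (\nu^{<\kappa})^{\cf(\kappa)}.
\]
Since $\kappa$ is singular, $\cf(\kappa) < \kappa$, so $(\nu^{<\kappa})^{\cf(\kappa)}$ is one of the terms whose supremum defines $(\nu^{<\kappa})^{<\kappa}$, yielding $\nu^\kappa \leq (\nu^{<\kappa})^{<\kappa}$.

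The only real obstacle is making the König-style estimate on $\nu^\kappa$ precise; everything else is bookkeeping with the definition $\nu^{<\kappa} = \sup_{\mu < \kappa} \nu^\mu$. Since this cardinal-arithmetic fact is standard and already packaged in the cited EHMR theorem, the cleanest presentation is to just invoke that reference, which is what the author does.
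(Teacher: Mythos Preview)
Your reading is correct: the paper gives no proof at all beyond the citation to \cite[Theorem~6.10(f), second case]{EHMR}, so there is nothing in the text to compare your argument against. The two-inequality argument you sketch is the standard one and is sound; in particular, the key step $\nu^{\kappa} = \prod_{\alpha<\cf(\kappa)} \nu^{\kappa_\alpha} \le (\nu^{<\kappa})^{\cf(\kappa)} \le (\nu^{<\kappa})^{<\kappa}$ uses the singularity of $\kappa$ exactly where it is needed.
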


Fixing any singular cardinal $\kappa$, 
suppose we choose some infinite cardinal $\nu$ satisfying $\nu ^ {<\kappa} = \nu$,
in order to %so that we can 
apply Theorem~\ref{BHT-trees-general} to a non-$\nu$-special tree.
But then we also have (using Theorem~\ref{singular-nu^<kappa})
%
%So if we set $\nu = (2^{<\kappa})^{<\kappa} = 2^\kappa$,
%we then have 
\[
2^{<\left(\kappa^+\right)} = 2^\kappa \leq \nu^\kappa = \left(\nu^{<\kappa}\right)^{<\kappa} = 
\nu^{<\kappa} = %\left(2^\kappa\right)^{<\kappa} = 2^\kappa = 
\nu,
\]
%so we can apply Theorem~\ref{BHT-trees-general} with such $\nu$ to obtain:
%
%\begin{theorem}\label{BHT-trees-singular}
%Let $\kappa$ be any singular cardinal,
%let $\xi$ be any ordinal such that $2^{\left|\xi\right|} < \kappa$, and let $k$ be any natural number.
%Then
%\[
%\text{non-$\left(2^{\kappa}\right)$-special tree } \to \left(\kappa + \xi \right)^2_k.
%\]
%\end{theorem}
%
%However, $2^\kappa = 2^{<\kappa^+}$,
so that the non-$\nu$-special tree in Theorem~\ref{BHT-trees-general} 
is also non-$(2^{<(\kappa^+)})$-special.
Applying the Main Theorem~\ref{BHT-trees-regular} to the regular (successor) cardinal $\kappa^+$
gives us a longer homogeneous chain (of order-type $> \kappa^+$) 
than the one we get when applying Theorem~\ref{BHT-trees-general} to the original singular cardinal $\kappa$,
without %while not 
requiring a taller tree.
%for the same partition on pairs from the same tree.
Thus %the result of Theorem~\ref{BHT-trees-singular},
%which is the best 
any result we can get by applying Theorem~\ref{BHT-trees-general} to a singular cardinal $\kappa$
%gives us nothing that 
is %not 
already subsumed %(and improved) 
by the Main Theorem~\ref{BHT-trees-regular}.

This explains how our Main Theorem~\ref{BHT-trees-regular} is the optimal statement of the result;
nothing is gained by attempting to state a more general result that includes singular cardinals.

\section{%Passing 
From Trees to Partial Orders}
\label{trees-to-partial-orders}

In this section we derive a corollary of our Main Theorem~\ref{BHT-trees-regular},
using a result of Todorcevic~\cite[Section~1]{Stevo-PRPOS} that states that
partition relations for nonspecial trees imply corresponding partition relations for partially ordered sets in general.

First, we outline the main result of~\cite[Section~1]{Stevo-PRPOS}:

%This section summarizes the main result of~\cite[Section~1]{Stevo-PRPOS},
%which is that partition relations for nonspecial trees extend automatically to more general partial orders.

\begin{theorem} \label{tree-to-partial-order}
Let $r$ be any positive integer, let $\kappa$ and $\theta$ be cardinals, 
and for each $\gamma < \theta$ let $\alpha_\gamma$ be an ordinal.
If every non-$\kappa$-special tree $T$ satisfies
\[
T \to \left(\alpha_\gamma\right)^r_{\gamma < \theta},	\tag{$**$}	\label{general-partition}
\]
then every partial order $P$ satisfying $P \to (\kappa)^1_\kappa$ also satisfies 
the above partition relation~\eqref{general-partition}.

\begin{proof}
Suppose $\left<P, <_P \right>$ is any partial order satisfying $P \to (\kappa)^1_\kappa$.
Let $\sigma'P$ be the set of well-ordered chains of $P$ with a maximal element,
% order-type 
%a successor ordinal,
ordered by %inclusion 
end-extension ($\sqsubseteq$).

%denote the partial order $\left< \sigma'A, \subseteq \right>$, where
%\[
%\sigma'A = \left\{ \text{order-preserving one-to-one mappings into $A$ with domain } \right\}.
%\]

Since $P \to (\kappa)^1_\kappa$, \cite[Theorem~9]{Stevo-PRPOS} tells us that 
$\sigma'P$ is not the union of $\kappa$ antichains.
But $\sigma'P$ is clearly a tree (as it is a collection of well-ordered sets, ordered by end-extension),
so this means that $\sigma'P$ is a non-$\kappa$-special tree.  
By the hypothesis of our theorem it follows that $\sigma'P$ satisfies~\eqref{general-partition}.

We now define a function $f : \sigma'P \to P$ by setting, for each $a \in \sigma'P$,
\[
f(a) = \max (a).
\]
It is clear that
\[
f : \left< \sigma'P, \sqsubseteq \right> \to \left< P, <_P \right>
\]
is an order-homomorphism.%
\footnote{\label{embeddable}%
This is often described by saying ``$\left< \sigma'P, \sqsubseteq \right>$ is $\left<P, <_P \right>$-embeddable",
but this is an unfortunate use of the term \emph{embeddable}, as we do not require $f$ to be injective,
so that it is not an embedding in the usual sense.}
\todo{In thesis, move footnote~\ref{embeddable} to introduction, and restore reference to it from here.}%
%\footnote{See footnote~\ref{embeddable} on p.~\pageref{embeddable}.}

%The structure $\left<\sigma'P, \sqsubseteq \right>$ is $\left<P, <_P\right>$-embeddable,
%in the sense that there exists a homomorphism
%\[
%f : \left<\sigma'P, \sqsubseteq \right> \to \left<P, <_P\right>.
%\]
%%mapping $f : \sigma'P \to P$ such that 
%%for any chains $a, b \in \sigma'P$ with $a \sqsubseteq b$, we have $f(a) <_P f(b)$.
%Namely, for every chain $a \in \sigma'P$, define $f(a)$ to be the maximal element of $a$.
%(Unfortunately, this is a bad use of the term \emph{embeddable}, as $f$ does not have to be an injection.)

Since $\sigma'P$ satisfies~\eqref{general-partition} 
and $\left<\sigma'P, \sqsubseteq \right>$ is $\left<P, <_P\right>$-embeddable,
it follows from~\cite[Lemma~1]{Stevo-PRPOS} that $P$ satisfies~\eqref{general-partition} as well.
This is what we needed to show.
\end{proof}
\end{theorem}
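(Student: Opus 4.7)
The plan is to reduce the partition problem on the partial order $P$ to a partition problem on a suitably chosen tree, so that the tree hypothesis can be invoked. A natural candidate is the tree of well-ordered chains of $P$ with a maximal element, ordered by end-extension; call this $\sigma'P$. Because every element of $\sigma'P$ is a well-ordered set and we are using end-extension, $\sigma'P$ is genuinely a tree, and the maximum-element map $f : \sigma'P \to P$, sending each chain $a$ to $\max(a)$, is order-preserving in the obvious sense.

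First, I would check that $\sigma'P$ is non-$\kappa$-special. This is where the hypothesis $P \to (\kappa)^1_\kappa$ has to enter: if $\sigma'P$ were a union of $\kappa$ antichains, one should be able to combine that decomposition with a bad colouring of $P$ (of the type ruled out by $P \to (\kappa)^1_\kappa$) to obtain a contradiction. This seems to be the substance of a result from Todorcevic's \cite{Stevo-PRPOS} that I would cite rather than reprove, since the combinatorics of this step is independent of the particular partition relation we are aiming at.

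Once $\sigma'P$ is known to be non-$\kappa$-special, I apply the tree hypothesis: $\sigma'P \to (\alpha_\gamma)^r_{\gamma < \theta}$. Given any colouring $c : [P]^r \to \theta$, I pull it back along $f$ to a colouring $\tilde c : [\sigma'P]^r \to \theta$, defined by $\tilde c\{a_1, \dots, a_r\} = c\{f(a_1), \dots, f(a_r)\}$ whenever the $f(a_i)$ are distinct. Chains in $\sigma'P$ (ordered by $\sqsubseteq$) are mapped by $f$ to chains in $P$ (ordered by $<_P$) of the same order type, provided $f$ is injective on chains — which it is, because if $a \sqsubsetneq b$ in $\sigma'P$ then $\max(a) <_P \max(b)$. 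A $c$-homogeneous chain in $\sigma'P$ of order-type $\alpha_\gamma$ therefore produces a $c$-homogeneous chain of the same order-type in $P$.

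The main obstacle is the non-$\kappa$-specialness of $\sigma'P$: one must translate the Ramsey-type hypothesis $P \to (\kappa)^1_\kappa$ into a structural statement about the tree built from $P$. The rest is essentially a routine pullback argument, and is cleanly handled by the observation that $f$ induces an order-homomorphism from $\sigma'P$ into $P$, plus the general principle (which I would invoke from \cite{Stevo-PRPOS}) that partition relations transfer along such embeddings.
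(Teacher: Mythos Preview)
Your proposal is correct and follows essentially the same route as the paper: both use the tree $\sigma'P$ of well-ordered chains with a maximal element, invoke Todorcevic's result to deduce from $P \to (\kappa)^1_\kappa$ that $\sigma'P$ is non-$\kappa$-special, apply the tree hypothesis, and then transfer the partition relation back to $P$ via the order-homomorphism $f(a)=\max(a)$. Your explicit observation that $a \sqsubsetneq b$ implies $\max(a) <_P \max(b)$ is exactly the content of the transfer lemma the paper cites from \cite{Stevo-PRPOS}.
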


\begin{theorem}
\textofCorBHTpo

\begin{proof}
Apply Theorem~\ref{tree-to-partial-order} to the Main Theorem~\ref{BHT-trees-regular}.
\end{proof}
\end{theorem}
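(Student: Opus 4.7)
The plan is to deduce this corollary as a direct instance of Theorem~\ref{tree-to-partial-order}, the general transfer principle that lifts partition relations from non-$\kappa$-special trees to arbitrary partial orders satisfying $P \to (\kappa)^1_\kappa$. The Main Theorem~\ref{BHT-trees-regular} supplies the required input partition relation for trees, and Theorem~\ref{tree-to-partial-order} carries it over to $P$ with no further combinatorial work.

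In more detail, the Main Theorem tells us that every non-$(2^{<\kappa})$-special tree $T$ satisfies $T \to (\kappa + \xi)^2_k$ whenever $\kappa$ is a regular infinite cardinal, $2^{|\xi|} < \kappa$, and $k < \omega$. I would then apply Theorem~\ref{tree-to-partial-order} with $r = 2$, with the cardinal ``$\kappa$'' of that theorem's statement instantiated as $2^{<\kappa}$, with $\theta = k$, and with each ordinal $\alpha_\gamma$ taken to be $\kappa + \xi$. The hypothesis of Theorem~\ref{tree-to-partial-order} becomes exactly $P \to (2^{<\kappa})^1_{2^{<\kappa}}$, which is precisely our standing assumption on $P$. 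The ``input'' partition relation to be transferred is exactly the content of the Main Theorem, and so the conclusion $P \to (\kappa + \xi)^2_k$ drops out immediately.

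Conceptually, one can unpack the transfer: Theorem~\ref{tree-to-partial-order} associates to $P$ the tree $\sigma'P$ of well-ordered chains of $P$ with a maximum, ordered by end-extension, and uses the hypothesis $P \to (2^{<\kappa})^1_{2^{<\kappa}}$ to guarantee that $\sigma'P$ is non-$(2^{<\kappa})$-special. Once the Main Theorem is applied to $\sigma'P$, the order-homomorphism $a \mapsto \max(a)$ from $\sigma'P$ onto $P$ pulls the homogeneous chain back into $P$. There is no substantive obstacle to surmount here; all the real work has already been done in the Main Theorem and in the machinery of Theorem~\ref{tree-to-partial-order}, so this corollary amounts to a bookkeeping exercise in parameter matching.
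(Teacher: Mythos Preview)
Your proposal is correct and follows exactly the paper's own proof, which is the single line ``Apply Theorem~\ref{tree-to-partial-order} to the Main Theorem~\ref{BHT-trees-regular}.'' Your parameter matching and conceptual unpacking are accurate elaborations of that one-line application.
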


\section{Non-Reflecting Ideals Determined by Elementary Submodels}
\label{section:ideals-esm}

In this section, we consider a fixed tree $T$ and a regular cardinal $\theta$ such that $T \in H(\theta)$.
We will consider elementary submodels $N \prec H(\theta)$ with $T \in N$, and 
use them to create certain algebraic structures on $T$.
Ultimately, for some nodes $t \in T$, we will use models $N$ to define ideals on $\pred t$.
We make no assumptions about the height of the tree $T$ at this point.

\begin{lemma}\label{N-algebra}
Suppose $N \prec H(\theta)$ is an elementary submodel such that
$T \in N$.  Then the collection $\mathcal P(T) \cap N$
is a field of sets (set algebra) over the set $T$.

%If we also have $[N]^{<\kappa} \subseteq N$, then the set algebra
%$\mathcal P(T) \cap N$ is $\kappa$-complete.

\begin{proof}
The collection $\mathcal P (T) \cap N$ is clearly a collection
of subsets of $T$.  Furthermore:
\begin{description}
\item[Nonempty]
Clearly $\emptyset \in \mathcal P (T) \cap N$.
\item[Complements] Since $T \in N$, by
elementarity of $N$ it follows that $T \setminus B \in N$ for
any $B \in N$.
\item[Finite unions] Suppose $A, B \in \mathcal P(T) \cap N$.
The set $A \cup B$ is definable from $A$ and $B$, so by elementarity
of $N$ we have $A \cup B \in N$.  A union of subsets of $T$ is
certainly a subset of $T$, so we have $A \cup B \in \mathcal
P(T) \cap N$, as required to show that $\mathcal P(T)
\cap N$ is a set algebra.
%
%\item[$\kappa$-completeness] Suppose we have $[N]^{<\kappa}
%\subseteq N$.  Let $\mu<\kappa$ and fix $B_\iota \in \mathcal P
%(T) \cap N$ for each $\iota<\mu$.  Let
%\[
%B = \bigcup_{\iota<\mu} B_\iota,
%\]
%and we want to show that $B \in \mathcal P(T) \cap N$:
%
%First, a union of subsets of $T$ is certainly a subset of
%$T$, so we have $B \in \mathcal P(T)$.
%
%%By Lemma~\ref{kappa-subsets-in-model}(4),
%%Since $\alpha \in S_0$,
%For each $\iota<\mu$ we have $B_\iota \in N$, so
%\[
%\left< B_\iota : \iota<\mu \right> \in \left[N\right]^\mu \subseteq
%\left[N\right]^{<\kappa} \subseteq N.
%\]
%Now $N \prec H(\theta)$, so $N$ models a sufficient fragment of
%ZFC, including the union axiom, so it follows that $B \in N$, as
%required, and we have shown that the set algebra $\mathcal
%P(T) \cap N$ is $\kappa$-complete in this case.
\qedhere
\end{description}
\end{proof}
\end{lemma}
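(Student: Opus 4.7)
The plan is to verify directly the three defining axioms of a field of sets on $T$: nonemptiness, closure under complementation in $T$, and closure under finite unions. In each case the mechanism is the same elementarity argument: the relevant set is a subset of $T$ that is first-order definable from parameters already in $N$, so by elementarity of $N \prec H(\theta)$ it lies in $N$, and by construction it is a subset of $T$, placing it in $\mathcal{P}(T) \cap N$.

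Concretely, I would first note that $\emptyset$ is definable without parameters, hence $\emptyset \in N$, and $\emptyset \subseteq T$, giving $\emptyset \in \mathcal{P}(T) \cap N$ (and similarly $T$ itself lies in $\mathcal{P}(T) \cap N$ by the hypothesis $T \in N$). For closure under complements, given $A \in \mathcal{P}(T) \cap N$, the set $T \setminus A$ is definable from the parameters $T$ and $A$, both of which are in $N$, so $T \setminus A \in N$ by elementarity, and it is a subset of $T$ by construction. For closure under finite unions, given $A, B \in \mathcal{P}(T) \cap N$, the set $A \cup B$ is definable from the parameters $A$ and $B$, hence lies in $N$ by elementarity, and is a subset of $T$ since both $A$ and $B$ are.

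There is no genuine obstacle here; the argument is essentially bookkeeping around the fact that $N$ is closed under definable operations with parameters from $N$. The one point worth flagging is that the hypothesis $T \in N$ is essential for the complements step — without $T$ available as a parameter in $N$, there would be no way to conclude $T \setminus A \in N$, and $\mathcal{P}(T) \cap N$ would in general fail to be closed under complements relative to $T$.
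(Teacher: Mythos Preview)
Your proposal is correct and matches the paper's proof essentially line for line: both verify nonemptiness, closure under complements (using $T \in N$ as a parameter), and closure under finite unions, each by the same elementarity-and-definability argument. Your added remark that the hypothesis $T \in N$ is essential for the complements step is a nice observation not made explicit in the paper.
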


\begin{lemma}\label{ultrafilter-on-tree}
Suppose $N \prec H(\theta)$ is an elementary submodel such that $T \in N$, and let $t \in T$.
Then the collection
\[
\left\{ B \subseteq T : B \in N \text{ and } t \in B \right\}
\]
is an ultrafilter in the set algebra $\mathcal P(T) \cap N$,
and the collection
\[
\left\{ B \subseteq T : B \in N \text{ and } t \notin B \right\}
\]
is the corresponding maximal (proper) ideal in the same set algebra.
\end{lemma}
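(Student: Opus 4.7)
The plan is to verify the three defining properties (upward closure, closure under finite intersection, and maximality) directly from the definitions, using Lemma~\ref{N-algebra} at each step to ensure that the constructed sets remain inside $\mathcal P(T) \cap N$. Nothing subtle happens here; the main point is that membership of $t$ in a set $B$ is a property of $B$ alone, so the ``test point'' $t$ need not lie in $N$ for the collection $\mathcal F = \{B \in \mathcal P(T) \cap N : t \in B\}$ to make sense.

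First I would check that $\mathcal F$ is a proper filter in the set algebra $\mathcal P(T) \cap N$. Nonemptyness and properness are immediate: $T \in \mathcal F$ because $T \in N$ (given) and $t \in T$, while $\emptyset \notin \mathcal F$ because $t \notin \emptyset$. Upward closure within $\mathcal P(T) \cap N$ is trivial since $t \in B$ and $B \subseteq C$ imply $t \in C$. For closure under finite intersection, given $B_1, B_2 \in \mathcal F$, the set $B_1 \cap B_2$ is definable from $B_1$ and $B_2$, so by elementarity of $N \prec H(\theta)$ we have $B_1 \cap B_2 \in N$; since $t$ lies in both $B_1$ and $B_2$, we conclude $B_1 \cap B_2 \in \mathcal F$.

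Next I would establish maximality (ultrafilter property). Let $B \in \mathcal P(T) \cap N$ be arbitrary. By Lemma~\ref{N-algebra}, the complement $T \setminus B$ also belongs to $\mathcal P(T) \cap N$. Since $t \in T$, exactly one of $t \in B$ or $t \in T \setminus B$ holds, so exactly one of $B$ or $T \setminus B$ belongs to $\mathcal F$. This shows $\mathcal F$ is an ultrafilter in $\mathcal P(T) \cap N$.

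Finally, the dual collection $\mathcal I = \{B \in \mathcal P(T) \cap N : t \notin B\}$ is precisely $\{T \setminus B : B \in \mathcal F\}$, so by general duality in a set algebra it is the corresponding maximal proper ideal; alternatively one verifies directly that $\mathcal I$ is closed under subsets (within the algebra), closed under finite unions (again using elementarity to keep $B_1 \cup B_2 \in N$), omits $T$, and that every member of $\mathcal P(T) \cap N$ lies in $\mathcal F$ or in $\mathcal I$ by the dichotomy above. There is no genuine obstacle; the only thing to be careful about is to invoke elementarity (not mere closure properties of $N$) whenever forming intersections, unions, or complements, which is exactly what Lemma~\ref{N-algebra} packages for us.
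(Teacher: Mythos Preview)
Your proof is correct. The paper actually states this lemma without proof, treating it as evident from the definitions and from Lemma~\ref{N-algebra}; your direct verification of the filter axioms and the maximality dichotomy via $t \in B$ versus $t \in T \setminus B$ is exactly the intended routine check.
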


What we really want are algebraic structures on $\pred t$ determined by $N$.
So we now consider what happens when we %collapse $\mathcal P(\lambda) \cap N$ onto a transitive set:
intersect members of $N$ with $\pred t$:

%we must collapse our structures on $T$ to $\pred t$:

\begin{definition}
Suppose $N \prec H(\theta)$ is an elementary submodel such that $T \in N$, and let $t \in T$.
Define a collapsing function
\[
\pi_{N,t} : \mathcal P(T) \cap N \to \mathcal P(\pred t)
\]
by setting, for $B \subseteq T$ with $B \in N$,
\[
\pi_{N,t} (B) = B \cap \pred t.
\]
We then define the collection
\[
\mathcal A_{N,t}
    = \range\left(\pi_{N,t}\right)
%    = \pi_{N,t}''\left(\mathcal P(T) \cap N\right)
    = \left\{B \cap \pred t : B \in \mathcal P(T) \cap N \right\}
    = \left\{ B \cap \pred t : B \in N \right\}
    \subseteq \mathcal P\left(\pred t\right).
\]
\end{definition}

\begin{lemma}\label{homomorphism-set-algebra}
Suppose $N \prec H(\theta)$ is an elementary submodel such that $T \in N$, and let $t \in T$.
%Suppose $N \prec H(\lambda^+)$ is an elementary submodel such that
%$\left|N\right| = \nu$, $\nu \subseteq N$, and $\lambda \in N$.

Then the collection $\mathcal A_{N,t}$ is a set algebra over the set
$\pred t$, and the collapsing function $\pi_{N,t}$ defines a
surjective homomorphism
%embedding 
of set algebras
\[
\pi_{N,t} : \left< \mathcal P (T) \cap N, \cup, \cap, \setminus, \emptyset, T \right> 
			\to %\embeds 
		\left< \mathcal A_{N,t}, %\mathcal P \left(\pred t\right), 
						\cup, \cap, \setminus, \emptyset, \pred t \right>.
\]

%In particular\footnote{Do we need this sentence?}, the hom%is
%omorphism preserves $\subseteq$, so that for
%any $A, B \in \mathcal P(T) \cap N$, we have $A \subseteq B
%\implies %\iff 
%\pi_{N,t} (A) \subseteq \pi_{N,t} (B)$.

\begin{proof}
We will show that $\pi_{N,t} : \mathcal P(T) \cap N \to \mathcal P(\pred t) %\mathcal A_{N,t}
$ preserves the set-algebra %structure
operations:

We have $\pi_{N,t} (\emptyset) = \emptyset \cap \pred t = \emptyset$, and
$\pi_{N,t} (T) = T \cap \pred t = \pred t$, as required.

For every $B \in \mathcal P(T) \cap N$, we have
\begin{align*}
\pi_{N,t} (T \setminus B)
    &= (T \setminus B) \cap \pred t            \\
    &= (T \cap \pred t) \setminus (B \cap \pred t)             \\
    &= \pred t \setminus \pi_{N,t} (B),
\end{align*}
showing that $\pi_{N,t}$ preserves complements.

Let $\mathcal C \subseteq \mathcal P(T) \cap N$ be any
collection whose union is in $N$.  We then have
\begin{align*}
\pi_{N,t} \left( \bigcup_{B \in \mathcal C} B \right)
%    &= \left( \bigcup \mathcal C \right) \cap \alpha            \\
    &= \left( \bigcup_{B \in \mathcal C} B \right) \cap \pred t  \\
    &= \bigcup_{B \in \mathcal C} \left(B \cap \pred t \right)   \\
    &= \bigcup_{B \in \mathcal C} \pi_{N,t} (B),
\end{align*}
so that $\pi_{N,t}$ preserves unions.

Preservation of intersections follows from preservation of
complements and unions, using De Morgan's laws.

So we have shown that $\pi_{N,t} %\upharpoonright (\mathcal P(T) \cap N)
$ respects the set-algebra operations %structure
(there are no relations in the set-algebra structure; only constants
and functions), and is therefore a homo%n iso
morphism of set algebras onto
its range, which is $\mathcal A_{N,t}$.

The homomorphic image of a set algebra 
(where the range is a subset of a power-set algebra, with the usual set-theoretic operations) is a set algebra,
so it follows that $\mathcal A_{N,t}$ 
%has the same structure as $\mathcal P(T) \cap N$ and 
is a set algebra over the set %ordinal
$\pred t$.
%
%The particular comment about $\subseteq$ follows since the
%$\subseteq$ relation is defined from the set-algebra operations (as
%$A \subseteq B \iff A \setminus B = \emptyset$).
\end{proof}
\end{lemma}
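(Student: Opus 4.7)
The plan is to verify that $\pi_{N,t}$ preserves each of the set-algebra operations, and then to observe that the surjective image of a set algebra under such an operation-preserving map — when the codomain sits inside a power-set algebra with the usual operations — is automatically a set algebra. Since Lemma~\ref{N-algebra} already establishes that $\mathcal{P}(T) \cap N$ is a set algebra over $T$, this delivers both conclusions of the lemma simultaneously.

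First I would check the constants: $\pi_{N,t}(\emptyset) = \emptyset \cap \pred t = \emptyset$, and $\pi_{N,t}(T) = T \cap \pred t = \pred t$, using that $\pred t \subseteq T$. Preservation of unions is immediate from the distributive identity $(A \cup B) \cap \pred t = (A \cap \pred t) \cup (B \cap \pred t)$. For complements — which must be taken relative to the respective top elements, $T$ in the domain and $\pred t$ in the codomain — I would compute
\[
(T \setminus B) \cap \pred t \;=\; (T \cap \pred t) \setminus (B \cap \pred t) \;=\; \pred t \setminus \pi_{N,t}(B),
\]
again using $\pred t \subseteq T$. Preservation of intersections then follows by De Morgan's laws, or directly from $(A \cap B) \cap \pred t = (A \cap \pred t) \cap (B \cap \pred t)$.

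Surjectivity onto $\mathcal{A}_{N,t}$ is automatic, since $\mathcal{A}_{N,t}$ is defined as the range of $\pi_{N,t}$. To conclude that $\mathcal{A}_{N,t}$ is itself a set algebra over $\pred t$, I would observe that it contains $\emptyset = \pi_{N,t}(\emptyset)$ and $\pred t = \pi_{N,t}(T)$, and that for any $\pi_{N,t}(A), \pi_{N,t}(B) \in \mathcal{A}_{N,t}$ we have $\pi_{N,t}(A) \cup \pi_{N,t}(B) = \pi_{N,t}(A \cup B)$ and $\pred t \setminus \pi_{N,t}(B) = \pi_{N,t}(T \setminus B)$, both of which lie in $\mathcal{A}_{N,t}$ because $\mathcal{P}(T) \cap N$ is closed under the corresponding operations by Lemma~\ref{N-algebra}.

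There is essentially no obstacle here; the content is routine verification of distributive set-theoretic identities together with one application of Lemma~\ref{N-algebra}. The only mild subtlety worth flagging is keeping careful track of which ambient set each complement is taken relative to — $\pi_{N,t}$ translates complements in $T$ to complements in $\pred t$, not vice versa — and ensuring that closure of $\mathcal{P}(T) \cap N$ under the ambient operations (guaranteed by the elementarity of $N$) is what makes the preimages used above well-defined inside the domain.
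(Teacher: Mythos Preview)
Your proposal is correct and follows essentially the same route as the paper's proof: verify preservation of the constants, complements, and unions (the paper does unions for arbitrary collections whose union lies in $N$, but only the binary case is needed here), derive intersections via De Morgan, and conclude that $\mathcal A_{N,t}$, as the surjective image, is itself a set algebra. The only cosmetic difference is that the paper phrases the last step as ``the homomorphic image of a set algebra is a set algebra,'' whereas you spell out the closure verification explicitly; these amount to the same thing.
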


\begin{definition}
Suppose $N \prec H(\theta)$ is an elementary submodel such that $T \in N$, and let $t \in T$.
We define the collections
%\[
%\mathcal G_N
%    = \left\{ X \in \mathcal A_N :
%                \delta^N \notin \pi_N^{-1}(X) \right\}
%%    = \Phi_\alpha''
%%        \left\{ A \in \mathcal P(\lambda) \cap N_\alpha :
%%                \alpha \notin A \right\}
%    = \left\{ A \cap \delta^N :
%        A \in N \text{ and } \delta^N \notin A \right\}.
%\]
%Then set
\begin{align*}
\mathcal G_{N,t} 	&= \left\{ \pi_{N,t}(A) : A \in \mathcal P(T) \cap N \text{ and } t \notin A \right\} 	\\
			&= \left\{ A \cap \pred t : A \in N \text{ and } t \notin A \right\}
			\subseteq \mathcal A_{N,t}, \text{ and }							\\
\mathcal G^*_{N,t} 	&= \left\{ \pi_{N,t}(B) : B \in \mathcal P(T) \cap N \text{ and } t \in B \right\} 	\\
			&= \left\{ B \cap \pred t : B \in N \text{ and } t \in B \right\}
			\subseteq \mathcal A_{N,t}.
\end{align*}

\end{definition}

\begin{lemma}\label{G_N-ideal}
Suppose $N \prec H(\theta)$ is an elementary submodel such that $T \in N$, and let $t \in T$.
%Suppose $N \prec H(\lambda^+)$ is an elementary submodel such that
%$\left|N\right| = \nu$, $\nu \subseteq N$, and $\lambda \in N$.
%For every $\alpha \in S_0$,
Then the collection $\mathcal G_{N,t}$ is a (not necessarily proper) ideal in the set algebra
$\mathcal A_{N,t}$,
and $\mathcal G^*_{N,t}$ is the dual filter corresponding to $\mathcal G_{N,t}$.

%If we also have $[N]^{<\kappa} \subseteq N$, then the ideal
%$\mathcal G_{N,t}$ is $\kappa$-complete.

\begin{proof}
The collections $\mathcal G_{N,t}$ and $\mathcal G^*_{N,t}$ are (respectively) the homomorphic images
(under $\pi_{N,t}$) of the ideal and filter in the algebra $\mathcal P(T) \cap N$ given by Lemma~\ref{ultrafilter-on-tree}.
More explicitly:

It is clear that both $\mathcal G_{N,t}$ and $\mathcal G^*_{N,t}$ are subcollections of the set
algebra $\mathcal A_{N,t}$.  Furthermore:
\begin{description}
\item[Nonempty]
$\emptyset \in \mathcal G_{N,t}$, since $\emptyset = \emptyset \cap
\pred t$, $\emptyset \in N$, and $t \notin \emptyset$.

\item[Subsets] Suppose $X$ and $Y$ are both in $\mathcal A_{N,t}$,
with $X \subseteq Y$ and $Y \in \mathcal G_{N,t}$.  We need to show that
$X \in \mathcal G_{N,t}$.

Since $X$ and $Y$ are both in $\mathcal A_{N,t}$,
we have $X = \pi_{N,t}(A)$ and $Y = \pi_{N,t}(B)$ for some $A, B \in \mathcal P(T) \cap N$.
%there are $A, B \in \mathcal P(T) \cap N$ such that $\pi_{N,t}(A) = X$ and $\pi_{N,t}(B) = Y$.
Furthermore, since $Y \in \mathcal G_{N,t}$, we can choose $B$ so that $t \notin B$.
We then have $A \cap B \in \mathcal P(T) \cap N$, and $t \notin A \cap B$, and
\[
\pi_{N,t}(A \cap B)  = \pi_{N,t}(A) \cap \pi_{N,t}(B) = X \cap Y = X,
\]
showing 
%
%Lemma~\ref{isomorphism-set-algebra} gives us $\pi_{N,t}^{-1}(X)
%\subseteq \pi_{N,t}^{-1}(Y)$.  But $Y \in \mathcal G_{N,t}$, so $t
%\notin \pi_{N,t}^{-1}(Y)$.  It follows that also $t \notin
%\pi_{N,t}^{-1}(X)$, so 
that $X \in \mathcal G_{N,t}$, as required.

%Since $X \in \mathcal A_\alpha$, we have $X = A \cap \alpha$ for
%some $A \in N_\alpha$.  Since $Y \in \mathcal G_\alpha$, we have $Y
%= B \cap \alpha$ for some $B \in N_\alpha$ with $\alpha \notin B$.

\item[Finite unions]
Suppose $X, Y \in \mathcal G_{N,t}$.  
We can choose $A, B \in \mathcal P(T) \cap N$ with $t \notin A, B$ such that $\pi_{N,t}(A) = X$ and $\pi_{N,t}(B) = Y$.
We then have $A \cup B \in \mathcal P(T) \cap N$, and $t \notin A \cup B$, and
\[
\pi_{N,t}(A \cup B)  = \pi_{N,t}(A) \cup \pi_{N,t}(B) = X \cup Y,
\]
%showing that $X \cup Y \in \mathcal G_{N,t}$, as required.
%
%Certainly $X \cup Y \in \mathcal
%A_{N,t}$.  We then have $t \notin \pi^{-1}_{N,t} (X)$ and $t
%\notin \pi^{-1}_{N,t} (Y)$, so that we have (using
%Lemma~\ref{isomorphism-set-algebra})
%\[
%t \notin \pi^{-1}_{N,t} (X) \cup \pi^{-1}_{N,t} (Y) = \pi^{-1}_{N,t}
%\left(X \cup Y\right),
%\]
and it follows that $X \cup Y \in \mathcal G_{N,t}$.
%
%\item[Finite unions or $\kappa$-completeness]
%For any collection of sets $\mathcal D \subseteq \mathcal G_{N,t}$, we
%have $t \notin \Phi_{N,t}^{-1}(X)$ for each $X \in \mathcal D$.
%Certainly then,
%\[
%t \notin \bigcup_{X \in \mathcal D} \Phi_{N,t}^{-1}(X).
%\]
%%not containing $\alpha$, their union also does not contain $\alpha$,
%So if the union $\bigcup \mathcal D$ is in the set algebra $\mathcal
%A_{N,t}$, then we can write (using Lemma~\ref{isomorphism-set-algebra})
%\[
%t \notin \bigcup_{X \in \mathcal D} \Phi_{N,t}^{-1}(X) =
%\Phi_{N,t}^{-1} \left(\bigcup_{X \in \mathcal D} X \right) = \Phi_{N,t}^{-1}
%\left(\bigcup \mathcal D\right)
%\]
%so that $\bigcup \mathcal D \in \mathcal G_{N,t}$.  Any set algebra is
%closed under finite unions, so it follows that $\mathcal G_{N,t}$ is
%closed under finite unions as well.  If we also have $[N]^{<\kappa}
%\subseteq N$, then $\mathcal A_{N,t}$ is $\kappa$-complete by
%Lemma~\ref{isomorphism-set-algebra}, so it follows that the ideal
%$\mathcal G_{N,t}$ is $\kappa$-complete.

\item[Dual filter]
Finally, for any $X \in \mathcal A_{N,t}$, 
we have %$X = \pi_{N,t}(A)$ for some $A \in \mathcal P(T) \cap N$,
%so that
\begin{align*}
X \in \mathcal G_{N,t} 
	&\iff X = \pi_{N,t}(A) \text{ for some $A \in \mathcal P(T) \cap N$ with } t \notin A 	\\
	&\iff \pred t \setminus X = \pi_{N,t}(T \setminus A) \text{ for some $A \in \mathcal P(T) \cap N$ with } t \notin A\\
	&\iff \pred t \setminus X = \pi_{N,t}(B) \text{ for some $B \in \mathcal P(T) \cap N$ with } t \in B 	\\
	&\iff \pred t \setminus X \in \mathcal G^*_{N,t},
\end{align*}
showing that $\mathcal G_{N,t}$ and $\mathcal G^*_{N,t}$ are dual to each other.
\qedhere
\end{description}
\end{proof}
\end{lemma}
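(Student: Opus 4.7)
The plan is to leverage the structural setup we have already built, rather than verifying the ideal axioms from scratch. By Lemma~\ref{ultrafilter-on-tree}, the collection $F_t = \{B \subseteq T : B \in N \text{ and } t \in B\}$ is an ultrafilter on the set algebra $\mathcal{P}(T) \cap N$, with dual maximal ideal $I_t = \{A \subseteq T : A \in N \text{ and } t \notin A\}$. By Lemma~\ref{homomorphism-set-algebra}, $\pi_{N,t}$ is a surjective homomorphism of set algebras onto $\mathcal{A}_{N,t}$, and by definition $\mathcal{G}_{N,t} = \pi_{N,t}''I_t$ and $\mathcal{G}^*_{N,t} = \pi_{N,t}''F_t$. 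So the first step is simply to invoke the general principle that the direct image of an ideal (respectively filter) under a surjective homomorphism of set algebras is again an ideal (respectively filter).

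Concretely, I would verify the ideal conditions for $\mathcal{G}_{N,t}$ by pulling each witness back through $\pi_{N,t}$: emptiness is immediate because $\emptyset \in N$ and $t \notin \emptyset$; closure under finite unions follows because if $X = \pi_{N,t}(A)$ and $Y = \pi_{N,t}(B)$ with $A, B \in N$ and $t \notin A \cup B$, then $A \cup B \in N$ witnesses $X \cup Y \in \mathcal{G}_{N,t}$. For the downward-closure step, which is the subtlest, given $X \subseteq Y$ with both in $\mathcal{A}_{N,t}$ and $Y = \pi_{N,t}(B)$ for some $B \in N$ with $t \notin B$, I can write $X = \pi_{N,t}(A)$ for some $A \in N$, and then replace $A$ by $A \cap B \in N$, which still satisfies $t \notin A \cap B$ and $\pi_{N,t}(A \cap B) = X \cap Y = X$.

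For the duality claim, the key observation is that $\pi_{N,t}$ commutes with complementation relative to $\pred{t}$: for any $A \in \mathcal{P}(T) \cap N$,
\[
\pred{t} \setminus \pi_{N,t}(A) = \pi_{N,t}(T \setminus A),
\]
and $t \notin A$ if and only if $t \in T \setminus A$. This gives a bijection $X \mapsto \pred{t} \setminus X$ between $\mathcal{G}_{N,t}$ and $\mathcal{G}^*_{N,t}$, establishing that they are dual in $\mathcal{A}_{N,t}$.

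I do not anticipate a genuine obstacle here; the only mildly delicate point is the subset axiom, where one must remember that an element of $\mathcal{A}_{N,t}$ can in general be represented by many different $A \in N$, and the representation witnessing $X \in \mathcal{G}_{N,t}$ must have $t \notin A$. Intersecting the chosen representative of $X$ with the good representative of the larger set $Y$ solves this cleanly.
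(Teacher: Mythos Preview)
Your proposal is correct and follows essentially the same approach as the paper's proof: both first observe that $\mathcal{G}_{N,t}$ and $\mathcal{G}^*_{N,t}$ are the $\pi_{N,t}$-images of the maximal ideal and ultrafilter from Lemma~\ref{ultrafilter-on-tree}, and then verify the ideal axioms explicitly, including the same intersection trick $A \cap B$ for downward closure and the same complementation argument for duality. Your identification of the subset axiom as the one delicate point, and your handling of it, match the paper exactly.
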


In general, there is no reason to expect that the homomorphism $\pi_{N,t}$ is injective,
as there can be many different subsets of $T$ in the model $N$ that share the same intersection with $\pred t$.
As we will see later (see Remark~\ref{isomorphism-when-cardinal}),
this is the new difficulty that arises when generalizing these structures from cardinals to trees.
%, that does not occur when $T$ is a cardinal. % $\lambda$.
In particular, it may happen that $\pi_{N,t}$ collapses the algebra to the extent that $\mathcal G_{N,t}$,
which is the image of a maximal proper ideal, 
is equal to the whole algebra $\mathcal A_{N,t}$ rather than a proper ideal in it.
That is, there may be some $A \in \mathcal P(T) \cap N$ with $t \notin A$ but $\pi_{N,t}(A) = \pred t$.
More generally, there may be some $B \in N$ with $t \in B$, but $B \cap \pred t \in \mathcal G_{N,t}$
because it is equal to $A \cap \pred t$ for some $A \in N$ with $t \notin A$.
We will need to avoid such combinations of models and nodes, and we will show later how to do so.
In the mean time:

\begin{lemma}\label{G_N-dichotomy}
Suppose $N \prec H(\theta)$ is an elementary submodel such that $T \in N$, and let $t \in T$.
Then
\[
\mathcal G_{N,t} \cup \mathcal G^*_{N,t} = \mathcal A_{N,t},
\]
so that %either
exactly one of the following two alternatives is true:
\begin{enumerate}
\item $\mathcal G_{N,t} = \mathcal G^*_{N,t} = \mathcal A_{N,t}$, or
\item $\mathcal G_{N,t}$ is a maximal proper ideal in $\mathcal A_{N,t}$,
and $\mathcal G^*_{N,t}$ is the corresponding ultrafilter, 
so that $\mathcal G_{N,t} \cap \mathcal G^*_{N,t} = \emptyset$.
\end{enumerate}

\begin{proof}
This follows from the fact that 
%The collections 
$\mathcal G_{N,t}$ and $\mathcal G^*_{N,t}$ are (respectively) the homomorphic images
(under $\pi_{N,t}$) of the maximal proper ideal and ultrafilter in the algebra $\mathcal P(T) \cap N$ 
given by Lemma~\ref{ultrafilter-on-tree}.
\end{proof}

\end{lemma}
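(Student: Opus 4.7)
The plan is to establish the covering identity $\mathcal G_{N,t} \cup \mathcal G^*_{N,t} = \mathcal A_{N,t}$ first, then handle the two-alternative dichotomy via a case split on whether $\mathcal G_{N,t}$ is proper. For the covering step, any $X \in \mathcal A_{N,t}$ has the form $B \cap \pred{t}$ for some $B \in \mathcal P(T) \cap N$, and then the straightforward dichotomy $t \in B$ vs.\ $t \notin B$ immediately places $X$ in $\mathcal G^*_{N,t}$ or $\mathcal G_{N,t}$ respectively.

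For the main dichotomy, I would first observe that $\mathcal G_{N,t}$ is an ideal by Lemma~\ref{G_N-ideal}, so it is either proper or equal to $\mathcal A_{N,t}$. In the improper case, $\pred{t} \in \mathcal G_{N,t}$ and closure under subsets forces $\mathcal G_{N,t} = \mathcal A_{N,t}$; dualizing (using preservation of complements by $\pi_{N,t}$) gives $\mathcal G^*_{N,t} = \mathcal A_{N,t}$ as well, yielding alternative (1). In the proper case, I need disjointness and maximality. For disjointness, suppose $X \in \mathcal G_{N,t} \cap \mathcal G^*_{N,t}$; witness $X = A \cap \pred{t} = B \cap \pred{t}$ with $A,B \in N$, $t \notin A$, and $t \in B$. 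Then $\pred{t} \setminus X = (T \setminus B) \cap \pred{t}$ also lies in $\mathcal G_{N,t}$ (since $t \notin T \setminus B$), and taking the union gives $\pred{t} \in \mathcal G_{N,t}$, contradicting properness. Maximality is then immediate: for any $X \in \mathcal A_{N,t}$, the covering identity places $X$ in $\mathcal G_{N,t}$ or $\mathcal G^*_{N,t}$, and in the latter case $\pred{t} \setminus X \in \mathcal G_{N,t}$.

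I do not foresee a genuine obstacle here; the argument is a short deduction from Lemmas~\ref{ultrafilter-on-tree}, \ref{homomorphism-set-algebra}, and~\ref{G_N-ideal}, essentially transporting the ultrafilter/maximal-ideal structure on $\mathcal P(T) \cap N$ through the homomorphism $\pi_{N,t}$. The only point requiring care is that $\pi_{N,t}$ need not be injective, so a single element of $\mathcal A_{N,t}$ can admit multiple preimages in $N$ with different relations to $t$; this is precisely what makes the degenerate alternative (1) possible, and it also explains why one must quantify over the \emph{existence} of a suitable witness $A$ or $B$ in $N$ rather than treating membership in $\mathcal G_{N,t}$ or $\mathcal G^*_{N,t}$ as a property of a fixed preimage.
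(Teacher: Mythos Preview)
Your proof is correct and takes essentially the same approach as the paper: both arguments transport the maximal-ideal/ultrafilter structure on $\mathcal P(T) \cap N$ from Lemma~\ref{ultrafilter-on-tree} through the surjective homomorphism $\pi_{N,t}$. The paper states this in a single sentence, whereas you unpack the details (covering identity, case split on properness, disjointness via complements), but the underlying idea is identical.
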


For %suitable 
elementary submodels $N \prec H(\theta)$ such that $T \in N$, and nodes $t \in T$,
%any $\alpha \in S_0$,
recall that $\mathcal A_{N,t} \subseteq \mathcal P(\pred t)$ is a set
algebra over the set $\pred t$, and we defined a certain
%$\kappa$-complete
ideal $\mathcal G_{N,t} \subseteq \mathcal A_{N,t}$.  We now consider the
ideal on $\pred t$ (that is, the ideal in the whole power set
$\mathcal P (\pred t)$) generated by $\mathcal G_{N,t}$:

\begin{definition}
Suppose $N \prec H(\theta)$ is an elementary submodel such that $T \in N$, and let $t \in T$.
%Suppose $N \prec H(\theta)$ is an elementary submodel such that
%$\left|N\right| = \nu$, $\nu \subseteq N$, and $\lambda \in N$.
%For every $\alpha \in S_0$,
We define
\[
I_{N,t} = \left\{ X \subseteq \pred t : X \subseteq Y \text{ for some }
Y \in \mathcal G_{N,t} \right\}.
\]
\end{definition}
We explore the properties of $I_{N,t}$:

\begin{lemma}\label{I_N-ideal}
Suppose $N \prec H(\theta)$ is an elementary submodel such that $T \in N$, and let $t \in T$.
%Suppose $N \prec H(\theta)$ is an elementary submodel such that
%$\left|N\right| = \nu$, $\nu \subseteq N$, and $\lambda \in N$.
%Fix $\alpha \in S_0$.
Then the collection $I_{N,t}$ is a (not necessarily proper) 
ideal on $\pred t$,
that is, an ideal in the whole power set $\mathcal P(\pred t)$.
\todo{Show that $I_{N,t}$ is not a maximal ideal (unlike $\mathcal G_{N,t}$).}

%If we also have $[N]^{<\kappa} \subseteq N$, then the ideal $I_{N,t}$ is
%$\kappa$-complete.

\begin{proof}Since $I_N$ consists of all subsets of sets in
$\mathcal G_N$, where by Lemma~\ref{G_N-ideal} $\mathcal G_N$ is an
ideal in the algebra $\mathcal A_N \subseteq \mathcal P(\pred t)$,
it is clear that $I_N$ is an ideal on $\pred t$.
\end{proof}
\end{lemma}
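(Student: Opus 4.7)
The plan is to verify the three defining properties of an ideal on $\pred t$ directly from the definition of $I_{N,t}$, using Lemma~\ref{G_N-ideal} which already tells us that $\mathcal G_{N,t}$ is an ideal in the set algebra $\mathcal A_{N,t}$. Since $I_{N,t}$ is by construction the downward closure of $\mathcal G_{N,t}$ within $\mathcal P(\pred t)$, the ideal axioms should all transfer with minor bookkeeping.

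Concretely, first I would check nonemptiness: $\emptyset \in \mathcal G_{N,t}$ (as shown in the proof of Lemma~\ref{G_N-ideal}), and $\emptyset \subseteq \emptyset$, so $\emptyset \in I_{N,t}$. Next I would verify closure under subsets: given $X \subseteq X' \subseteq \pred t$ with $X' \in I_{N,t}$, by definition there is $Y \in \mathcal G_{N,t}$ with $X' \subseteq Y$; then $X \subseteq X' \subseteq Y$ witnesses $X \in I_{N,t}$. This is immediate from the ``closure under subsets'' shape of the definition of $I_{N,t}$ and does not even require $\mathcal G_{N,t}$ to be an ideal.

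The only place where we genuinely invoke Lemma~\ref{G_N-ideal} is in verifying closure under finite unions: given $X_1, X_2 \in I_{N,t}$, fix $Y_1, Y_2 \in \mathcal G_{N,t}$ with $X_i \subseteq Y_i$ for $i = 1, 2$. Since $\mathcal G_{N,t}$ is an ideal in $\mathcal A_{N,t}$, we have $Y_1 \cup Y_2 \in \mathcal G_{N,t}$, and $X_1 \cup X_2 \subseteq Y_1 \cup Y_2$, so $X_1 \cup X_2 \in I_{N,t}$. At this point all three ideal properties have been checked.

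There is really no main obstacle here: the lemma is a soft transfer of structure from $\mathcal G_{N,t}$ to its downward closure, and the fact that $\mathcal G_{N,t}$ is allowed to be improper is exactly why the statement carefully says ``not necessarily proper.'' The only subtle point worth flagging in the exposition (consistent with the \texttt{todo} note attached in the excerpt) is that properness of $I_{N,t}$ is \emph{not} automatic and depends on the dichotomy of Lemma~\ref{G_N-dichotomy}; the present lemma deliberately does not address this.
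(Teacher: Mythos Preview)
Your proof is correct and follows essentially the same approach as the paper: the paper's proof is a single sentence observing that $I_{N,t}$ is the downward closure in $\mathcal P(\pred t)$ of the ideal $\mathcal G_{N,t}$, and you have simply spelled out the three ideal axioms that this entails. Your exposition is more detailed but not substantively different.
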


Although we have defined the ideal $I_{N,t}$, we will be more
interested in the corresponding co-ideal.
%\begin{definition}
Recall that for any ideal $\mathcal I$ in a set algebra $\mathcal A
\subseteq \mathcal P(Z)$, we define:
\begin{align*}
\mathcal I^+ &= \mathcal A \setminus \mathcal I = \left\{ X \in \mathcal A : X \notin \mathcal I \right\} \\
\mathcal I^* &= \left\{ X \in \mathcal A : Z \setminus X \in \mathcal I \right\}
\end{align*}
$\mathcal I^*$ is called the \emph{filter dual to $\mathcal I$}.
$\mathcal I^+$ is called the \emph{co-ideal} corresponding to
$\mathcal I$, and sets in $\mathcal I^+$ are said to be
\emph{$\mathcal I$-positive}.
%\end{definition}

\begin{lemma}\label{I_N-formulations}
Suppose $N \prec H(\theta)$ is an elementary submodel such that $T \in N$, and let $t \in T$.
Then we have the following facts:
\begin{gather*}
\mathcal A_{N,t} \cap I_{N,t} = \mathcal G_{N,t}     		\\
\mathcal A_{N,t} \cap I^*_{N,t} = \mathcal G^*_{N,t}	\\
\mathcal A_{N,t} \cap I^+_{N,t} = \mathcal G^+_{N,t}
\end{gather*}
It follows that for all $B \in N$, we have:
\begin{gather*}
B \cap \pred t \in I_{N,t} \iff B \cap \pred t \in \mathcal G_{N,t}	\\
B \cap \pred t \in I^*_{N,t} \iff B \cap \pred t \in \mathcal G^*_{N,t}	\\
B \cap \pred t \in I^+_{N,t} \iff B \cap \pred t \in \mathcal G^+_{N,t}
\end{gather*}

Furthermore,
we can express the ideal, co-ideal and filter
as follows:
\todo{Decide on most intuitive formulations, and how to optimize them for later use.} 
%for every suitable elementary
%submodel $N \prec H(\lambda^+)$:
%$\alpha \in S_0$ FIX!:
\begin{align*}
%\[
I_{N,t} &= \left\{ X \subseteq \pred t : X \subseteq A \text{ for some } A \in N \text{ with } t \notin A
%
%\left(\exists A \in N_\alpha\right) \left[ A \supseteq X \text{ and
%} \alpha \notin A\right]
\right\}                   \\
%\]
%
I^+_{N,t} &= \left\{ X \subseteq \pred t : \forall A \in N
\left[X \subseteq A \implies t \in A \right] \right\} \\
I^+_{N,t} &= \left\{ X \subseteq \pred t : \forall B \in N
\left[t \in B \implies X \cap B \neq \emptyset \right] \right\} \\
I^*_{N,t} &= \left\{ X \subseteq \pred t : X \supseteq Y \text{ for
some $Y \in \mathcal G^*_{N,t}$} \right\}                      \\
I^*_{N,t} &= \left\{ X \subseteq \pred t : X \supseteq B \cap \pred t
\text{ for some $B \in N$ with } t \in B \right\}
\end{align*}

Finally, $I_{N,t}$ is a proper ideal (in $\mathcal P(\pred t)$) iff 
$\mathcal G_{N,t}$ is a proper ideal (in $\mathcal A_{N,t}$).

\end{lemma}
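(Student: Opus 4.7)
The plan is to prove the three identities in the opening display first; everything else then follows by routine set-theoretic manipulations. The core is the first identity, from which the second and third come by duality within the set algebra $\mathcal A_{N,t}$, and from which the equivalences for sets of the form $B \cap \pred t$ are immediate since such sets automatically lie in $\mathcal A_{N,t}$.

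To establish $\mathcal A_{N,t} \cap I_{N,t} = \mathcal G_{N,t}$: the inclusion $\supseteq$ holds because $\mathcal G_{N,t} \subseteq \mathcal A_{N,t}$ by definition and $\mathcal G_{N,t} \subseteq I_{N,t}$ because $I_{N,t}$ is the downward closure of $\mathcal G_{N,t}$ in $\mathcal P(\pred t)$. For $\subseteq$, suppose $X \in \mathcal A_{N,t} \cap I_{N,t}$, so write $X = B \cap \pred t$ for some $B \in N$, and pick $A \in N$ with $t \notin A$ and $X \subseteq A \cap \pred t$. By elementarity of $N$, the set $B \cap A$ lies in $N$; it satisfies $t \notin B \cap A$ (inherited from $A$), and $(B \cap A) \cap \pred t = X \cap (A \cap \pred t) = X$ (using $X \subseteq A \cap \pred t$). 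Thus $X \in \mathcal G_{N,t}$.

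For the second and third identities: since $\mathcal A_{N,t}$ is closed under complement relative to $\pred t$, we have $\mathcal A_{N,t} \cap I^+_{N,t} = \mathcal A_{N,t} \setminus I_{N,t} = \mathcal A_{N,t} \setminus \mathcal G_{N,t} = \mathcal G^+_{N,t}$, and likewise $X \in \mathcal A_{N,t} \cap I^*_{N,t}$ iff $\pred t \setminus X \in \mathcal A_{N,t} \cap I_{N,t} = \mathcal G_{N,t}$, iff $X \in \mathcal G^*_{N,t}$. Unpacking $\mathcal G_{N,t}$ in the definition of $I_{N,t}$ and using $X \subseteq \pred t$ gives the first formulation of $I_{N,t}$; its negation is the first $I^+$ formulation. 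The second $I^+$ formulation follows by substituting $B := T \setminus A \in N$ (using $T \in N$ and elementarity), which converts $X \subseteq A$ into $X \cap B = \emptyset$ and $t \notin A$ into $t \in B$. The two $I^*$ formulations come by the same complementation trick applied to the $I_{N,t}$ formulations. Properness then follows by applying the first identity at $X = \pred t \in \mathcal A_{N,t}$: $\pred t \in I_{N,t}$ iff $\pred t \in \mathcal G_{N,t}$.

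The main obstacle is the $\subseteq$ direction of the first identity. The subtle point is that the witness $B \in N$ for $X \in \mathcal A_{N,t}$ may have $t \in B$, while the containing set $A \in N$ witnessing $X \in I_{N,t}$ satisfies $t \notin A$; a priori these two sets are unrelated. The key move is that elementarity makes $B \cap A$ available inside $N$ as a single replacement witness that simultaneously inherits $t \notin A$ (giving membership in $\mathcal G_{N,t}$) and collapses back to $X$ under intersection with $\pred t$ (because $X$ is already contained in $A \cap \pred t$). Once this identity is in hand, every remaining claim reduces to closure of $\mathcal A_{N,t}$ under complement and of $N$ under pairwise Boolean operations.
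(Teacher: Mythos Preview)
Your proof is correct. The paper states this lemma without proof, treating it as a routine consequence of the definitions, so there is no paper proof to compare against directly.

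One minor observation: the ``key move'' you highlight for the $\subseteq$ direction of $\mathcal A_{N,t} \cap I_{N,t} = \mathcal G_{N,t}$ --- replacing the two witnesses $A, B \in N$ by the single witness $B \cap A$ --- is exactly the argument already carried out in the paper as the ``Subsets'' clause of Lemma~\ref{G_N-ideal}, which shows that $\mathcal G_{N,t}$ is downward closed within $\mathcal A_{N,t}$. So you could shorten your argument by citing that lemma: if $X \in \mathcal A_{N,t} \cap I_{N,t}$, then $X \subseteq Y$ for some $Y \in \mathcal G_{N,t}$; since $X \in \mathcal A_{N,t}$ and $\mathcal G_{N,t}$ is an ideal in $\mathcal A_{N,t}$, we get $X \in \mathcal G_{N,t}$ immediately. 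Either way, the substance is the same.
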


When considering the ideal $I_{N,t}$, we will generally want to have $\pred t \subseteq N$.
The following lemma explains why:

%See where we need $\pred t \subseteq N$ and discuss that.
%Also see whether we should restrict to limit nodes.

\begin{lemma}\label{bounded-in-ideal}
Suppose $N \prec H(\theta)$ is an elementary submodel such that $T \in N$, and let $t \in T$.
%Suppose $N \prec H(\lambda^+)$ is an elementary submodel such that
%$\left|N\right| = \nu$, $\nu \subseteq N$, and $\lambda \in N$.
Then:
\begin{enumerate}
\item If $A \subseteq \pred t$ and $A \in N$, then $A \in
\mathcal G_{N,t}$.
\todo{Is this fact ever used other than for the subsequent parts of this lemma?}
    \setcounter{condition}{\value{enumi}}
\end{enumerate}
Furthermore, if, in addition to the previous hypotheses, we have $\pred t \subseteq N$, then:
\begin{enumerate}
    \setcounter{enumi}{\value{condition}}
\item If %$\pred t \subseteq N$ and 
$s <_T t$, % is an ordinal, 
then $\pred s \in \mathcal G_{N,t}$ and 
$\pred t \setminus \pred s \in \mathcal G^*_{N,t}$.
\item
If %$\pred t \subseteq N$, and 
$X \subseteq \pred t$ is not cofinal%
\footnote{For limit nodes $t$, \emph{not cofinal in $\pred t$} is equivalent to \emph{bounded below $t$}.
However, for successor nodes there is a distinction, 
as for a successor node $t$ every subset of $\pred t$ has an upper bound below $t$, 
namely the node $s$ such that $\pred t = \pred s \cup \{s\}$.
In fact the stronger statement is true, 
that every $X \subseteq \pred t$ that is \emph{bounded below $t$} is in $I_{N,t}$,
but this requires a separate proof for successor nodes, 
and successor nodes are made irrelevant by Lemma~\ref{eligible-implications} anyway.}
in $\pred t$, that is, $X
\subseteq \pred s$ for some %ordinal 
$s \in \pred t$, then $X \in
I_{N,t}$.  Equivalently, any set in $I^+_{N,t}$ must be cofinal in
$\pred t$.
\todo{Maybe this is never used either?  Yes, used in Lemma~\protect\ref{easy-reflection}.}
\item %If $\pred t \subseteq N$, then
For any set $Y \subseteq \pred t$ and any %ordinal 
$s \in \pred t$, we have
\[
Y \in I^+_{N,t} \iff Y \setminus \pred s %\cap \left[s, \pred t\right) 
					\in I^+_{N,t}.
\]

\end{enumerate}

\begin{proof}\hfill
\begin{enumerate}
\item Since $A \subseteq \pred t$, we certainly have $t \notin
A$.  Then, since $A \in N$, we also have
$\pi_{N,t}(A) = A \cap \pred t = A$, 
so it follows that $A \in \mathcal G_{N,t}$.
%First, if $X \subseteq \alpha$ is defined in $N_\alpha$, then we can
%take $A = X$ in the definition of $I_\alpha$, showing that $X \in
%I_\alpha$.
\item
Since $\pred t \subseteq N$, % $N \cap \lambda = \pred t$,
any %ordinal 
$s <_T t$ is in $N$, and $\pred s$ is defined from $s$ and $T$, which are both in $N$,
so by elementarity we have $\pred s \in N$.
Clearly, $\pred s \subseteq \pred t$, so (1) gives us 
$\pred s \in \mathcal G_{N,t}$.
% using (1).  
Then, the corresponding filter set to $\pred s$ is $%[s,\pred t) = 
\pred t \setminus \pred s$, so it follows that $\pred t \setminus \pred s \in \mathcal G^*_{N,t}$.
\item
We have $X \subseteq \pred s$, where by part (2) we know $\pred s \in
\mathcal G_{N,t}$.  It follows by definition of $I_{N,t}$ that $X \in I_{N,t}$.
\item %Removing an ideal set from a co-ideal set leaves a co-ideal set.
%\begin{description}
%\item[($\Leftarrow$)] A superset of a co-ideal set remains in the
%co-ideal.
%\item[($\implies$)] Suppose $Y \in I^+_{N,t}$.
From part (2) and Lemma~\ref{I_N-formulations}, 
we have $\pred s \in \mathcal G_{N,t} \subseteq I_{N,t}$.  
Then $Y$ is equivalent to $Y \setminus \pred s$ modulo a set from the ideal $I_{N,t}$.
%The intersection of a co-ideal set and a filter
%set must be in the co-ideal, so we have $Y \cap [s,\pred t)
%\in I^+_{N,t}$, as required.\qedhere
%\end{description}
%
%Choosing $Y \subseteq \alpha$ and $\gamma < \alpha$, we have
%\[
%Y = (Y \cap \gamma) \cup (Y \cap \left[\gamma,\alpha\right)).
%\]
%Since $Y \cap \gamma \in I_\alpha$ from (1), the result follows.
%%\item $\lambda \in N_\alpha$ and $\alpha \in \lambda$, so applying
%%Lemma~\ref{ultrafilter-on-algebra} to $\lambda$, we get $\alpha =
%%\lambda \cap \alpha \in I^+_\alpha$.
\qedhere
\end{enumerate}
\end{proof}
\end{lemma}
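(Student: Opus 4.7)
The plan is to prove the four parts in sequence, as each relies on its predecessors, and all four reduce to a routine unpacking of the definitions of $\pi_{N,t}$, $\mathcal G_{N,t}$, and $I_{N,t}$, combined with elementarity of $N$ and the hypothesis $\pred t \subseteq N$.

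Part (1) is almost immediate from the definitions: given $A \in N$ with $A \subseteq \pred t$, I would note that $t \notin A$ automatically and that $\pi_{N,t}(A) = A \cap \pred t = A$, which exhibits $A$ as an element of $\mathcal G_{N,t}$ via the defining formula for that collection. For part (2), the hypothesis $\pred t \subseteq N$ ensures $s \in N$ for any $s <_T t$, so by elementarity (using that $T \in N$ and $\pred s$ is definable from $s$ and $T$) we also have $\pred s \in N$, and likewise $T \setminus \pred s \in N$. Applying part (1) to $\pred s \subseteq \pred t$ gives $\pred s \in \mathcal G_{N,t}$; for the filter statement, I would use that $T \setminus \pred s$ is a set in $N$ containing $t$ (since $s <_T t$ implies $t \notin \pred s$) whose intersection with $\pred t$ is exactly $\pred t \setminus \pred s$, so this set lies in $\mathcal G^*_{N,t}$ by definition.

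Part (3) then follows by combining part (2) with the definition of $I_{N,t}$ as the downward closure of $\mathcal G_{N,t}$ in $\mathcal P(\pred t)$: any $X \subseteq \pred s$ lies below the ideal set $\pred s$, hence in $I_{N,t}$; the alternative formulation is the contrapositive. For part (4), the key point is that part (2) together with $\mathcal G_{N,t} \subseteq I_{N,t}$ (from Lemma~\ref{I_N-formulations}) puts $\pred s$ into the ideal $I_{N,t}$, so $Y \cap \pred s \in I_{N,t}$ as well. Writing $Y = (Y \setminus \pred s) \cup (Y \cap \pred s)$, the ideal axioms give $Y \in I_{N,t} \iff Y \setminus \pred s \in I_{N,t}$, which by complementation in $\mathcal P(\pred t)$ is exactly the required positivity equivalence.

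I anticipate no serious obstacle; the lemma is fundamentally a dictionary between the algebraic objects $\mathcal G_{N,t}$, $\mathcal G^*_{N,t}$, $I_{N,t}$ and the tree structure below $t$. The only step requiring a moment's care is verifying in part (2) that $T \setminus \pred s$ — rather than, say, the cone $\cone s$ — is the correct set to pull out of $N$ to witness membership of $\pred t \setminus \pred s$ in $\mathcal G^*_{N,t}$, and this is precisely where elementarity plays its role.
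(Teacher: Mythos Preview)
Your proposal is correct and follows essentially the same approach as the paper's proof, unpacking the definitions in the same order and with the same key observations. The only cosmetic difference is in part~(2), where the paper invokes the ideal--filter duality of $\mathcal G_{N,t}$ and $\mathcal G^*_{N,t}$ directly to pass from $\pred s \in \mathcal G_{N,t}$ to $\pred t \setminus \pred s \in \mathcal G^*_{N,t}$, whereas you exhibit the explicit witness $T \setminus \pred s \in N$; both amount to the same computation.
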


As mentioned earlier, in order to ensure that our ideals are proper, we want to avoid situations where
there may be some $B \in %\mathcal P(T) \cap 
N$ with $t \notin B$ but $B \cap \pred t = \pred t$.
We will therefore impose an \emph{eligibility condition}:

\begin{definition}
Suppose $W$ is any collection of sets.
%$N \prec H(\theta)$ is an elementary submodel such that $T \in N$. %, and let $t \in T$.
We say that a node $t \in T$ is \emph{$W$-eligible} if
\[
\forall B \in W \left[ \pred t \subseteq B \implies t \in B \right].
\]
\end{definition}

%The idea is that although we would like the model to contain all predecessors of $t$ but not $t$ itself,
%we do not want the model to be able to distinguish $t$ from its predecessors.

When $W$ is an elementary submodel $N$, 
the eligibility condition can be formulated in several ways, in terms of our structures on $\pred t$.
Particularly useful among the following is condition (\ref{intersection-determines-t}),
which states that for an $N$-eligible node $t$ and any $X \in \mathcal A_{N,t}$,
we can determine whether or not $X \in \mathcal G_{N,t}$ by choosing a single $A \in N$ with $\pi_{N,t}(A) = X$
and checking whether or not $t \in A$, rather than having to check every such $A$.

\begin{lemma}\label{eligible-equivalent}
Suppose $N \prec H(\theta)$ is an elementary submodel such that $T \in N$, and let $t \in T$.
Then the following are all equivalent:
\begin{enumerate}
\item $t$ is $N$-eligible;
\item $\nexists A \in N \left[\pred t \subseteq A \subseteq T \setminus \{t\}\right]$;
\item $\pred t \notin \mathcal G_{N,t}$, that is, $\mathcal G_{N,t}$ is a proper ideal in $\mathcal A_{N,t}$;
\item $\mathcal G_{N,t} \cap \mathcal G^*_{N,t} = \emptyset$;
\item $\mathcal G_{N,t}$ is a maximal proper ideal in $\mathcal A_{N,t}$;
\item $\mathcal G^*_{N,t}$ is an ultrafilter in $\mathcal A_{N,t}$;
\item $\mathcal G^*_{N,t} = \mathcal G^+_{N,t}$;
\item $\pred t \notin I_{N,t}$, that is, $I_{N,t}$ is a proper ideal on $\pred t$;
\item $I^*_{N,t} \subseteq I^+_{N,t}$;
\item 	\label{intersection-determines-t}
For all $A, B \in N$ with $A \cap \pred t = B \cap \pred t$,
we have $t \in A \iff t \in B$
(even if $\pi_{N,t}$ is not injective);
\item For all $B \in N$, we have
\[
t \in B \iff B \cap \pred t \in \mathcal G^+_{N,t}.
\]
\end{enumerate}

\begin{proof}\hfill
\begin{description}
\item[(1) $\implies$ (2)] Clear.
\item[$\neg$(1) $\implies \neg$(2)]
If $B$ witnesses that $t$ is not $N$-eligible, then %there is $B \in N$ with $\pred t \subseteq B$ and $t \notin B$.
let $A = B \cap T$.  Since $T \in N$, by elementarity of $N$ we have $A \in N$, violating (2).
\item[(1) $\iff$ (3)] From the definition of $\mathcal G_{N,t}$.
\item[(3) $\iff$ (4)] These are always equivalent for any ideal.
\item[(3) $\iff$ (5)] From the dichotomy given by Lemma~\ref{G_N-dichotomy}.
\item[(5) $\iff$ (6) $\iff$ (7)] These are always equivalent for any ideal.
\item[(3) $\iff$ (8)] From the last sentence of Lemma~\ref{I_N-formulations}.
\item[(8) $\iff$ (9)] These are always equivalent for any ideal.
\item[$\neg$(10) $\implies \neg$(1)] 
If there were $A, B \in N$ with $A \cap \pred t = B \cap \pred t$ but $t \in A \setminus B$,
then $B \cup (T \setminus A)$ violates (1).
%\item[$\neg$(1) $\implies \neg$(10)] 
%If $B$ violates (1), then $T$ and $B$ violate (10).
\item[(10) $\implies$ (11)]
The $\impliedby$ implication in (11) is always true by definition of $\mathcal G_{N,t}$.
If some $B \in N$ violates the $\implies$ implication,
then we would have $B \cap \pred t = A \cap \pred t$ for some $A \in N$ with $t \notin A$, violating (10).
\item [(11) $\implies$ (3)]
Apply the $\implies$ implication of (11) to $T$.
%ADD SOME DETAILS HERE!
\qedhere
\end{description}
\end{proof}

\end{lemma}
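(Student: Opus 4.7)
The plan is to prove all eleven equivalences by identifying a short backbone of implications and branching the rest off it. The bulk of the conditions say essentially ``$\mathcal G_{N,t}$ (or $I_{N,t}$) is a proper ideal, equivalently maximal, equivalently the dual filter is an ultrafilter,'' and these will follow from general set-algebra facts together with the dichotomy of Lemma~\ref{G_N-dichotomy}. The genuinely new content is the two ``collapsing'' conditions~(10) and~(11), which describe eligibility in terms of $\pi_{N,t}$.

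First I would handle (1)~$\iff$~(2): these differ only in whether we require the witness $B \in N$ to be a subset of $T$. Since $T \in N$, intersecting any witness with $T$ (which stays in $N$ by elementarity) produces an equivalent one. Next, (1)~$\iff$~(3) essentially unpacks the definition of $\mathcal G_{N,t}$: the statement $\pred t \in \mathcal G_{N,t}$ says there is some $A \in N$ with $t \notin A$ and $A \cap \pred t = \pred t$, which is literally the negation of $N$-eligibility.

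The chain (3)~$\iff$~(4)~$\iff$~(5)~$\iff$~(6)~$\iff$~(7) is then routine: the dichotomy from Lemma~\ref{G_N-dichotomy} tells us $\mathcal G_{N,t}$ is either all of $\mathcal A_{N,t}$ or a maximal proper ideal, so ``$\pred t \notin \mathcal G_{N,t}$'' forces the maximal-proper alternative, yielding the corresponding statements about $\mathcal G^*_{N,t}$ and $\mathcal G^+_{N,t}$. The equivalences (3)~$\iff$~(8)~$\iff$~(9) fall out of Lemma~\ref{I_N-formulations}, which identifies $I_{N,t}$ behavior with $\mathcal G_{N,t}$ behavior on sets of the form $B \cap \pred t$, together with the general fact that a proper-ideal condition is equivalent to the dual filter being contained in the co-ideal.

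The main work is on (10) and (11), though even these are not hard. For $\neg$(1)~$\implies$~$\neg$(10), I take a witness $B \in N$ to non-eligibility (so $\pred t \subseteq B$ and $t \notin B$) and pair it with $A = T$; then $A \cap \pred t = \pred t = B \cap \pred t$ but $t \in A \setminus B$. For (10)~$\implies$~(11), the $\Leftarrow$ direction is immediate from the definition of $\mathcal G_{N,t}$, and for $\Rightarrow$, if $t \in B$ but $B \cap \pred t \in \mathcal G_{N,t}$, then $B \cap \pred t = A \cap \pred t$ for some $A \in N$ with $t \notin A$, contradicting~(10). Finally, (11)~$\implies$~(3) is obtained by applying~(11) to $B = T$, which forces $\pred t \in \mathcal G^+_{N,t}$ and hence $\pred t \notin \mathcal G_{N,t}$.

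I expect no serious obstacles: the only thing to watch carefully is which steps require elementarity of $N$ (replacing an arbitrary $B$ by $B \cap T$, and forming $B \cup (T \setminus A)$ in the argument $\neg(10) \implies \neg(1)$) versus which are purely structural consequences of the lemmas already established. Once the backbone (1)~$\iff$~(2)~$\iff$~(3) and the Lemma~\ref{G_N-dichotomy}/Lemma~\ref{I_N-formulations} citations are in place, all remaining implications are one-line verifications.
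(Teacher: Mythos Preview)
Your overall plan matches the paper's proof almost exactly, and the citations of Lemma~\ref{G_N-dichotomy} and Lemma~\ref{I_N-formulations} for the middle block are spot on. There is, however, one genuine gap in the chain of implications as you have written it.

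In the body you argue $\neg(1) \implies \neg(10)$: from a witness $B$ to non-eligibility you pair it with $A = T$ to violate~(10). That argument is correct, but it establishes $(10) \implies (1)$, which you already have via $(10) \implies (11) \implies (3) \iff (1)$. What is missing is any implication \emph{into}~(10) or~(11) from the block $(1)$--$(9)$; without it, the equivalences do not close. The paper closes the loop with exactly the implication you mention only in your closing paragraph, namely $\neg(10) \implies \neg(1)$: given $A, B \in N$ with $A \cap \pred t = B \cap \pred t$ and $t \in A \setminus B$, the set $C = B \cup (T \setminus A)$ lies in $N$ by elementarity, contains $\pred t$, and omits $t$, so $t$ is not $N$-eligible. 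You should swap this argument into the body in place of your $\neg(1) \implies \neg(10)$ step (which you can then drop as redundant), and the proof will be complete and essentially identical to the paper's.
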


The eligibility condition has other consequences that are not, in general, equivalent to it:

\begin{lemma}\label{eligible-implications}
Suppose $N \prec H(\theta)$ is an elementary submodel such that $T \in N$.
If $t \in T$ is $N$-eligible, then:
\begin{enumerate}
\item $\pred t \notin N$.
\item $t \notin N$.
\item $\height_T(t) \notin N$.
\item If we also have $\pred t \subseteq N$, then 
\[
\height_T(t) = \min\left\{ \delta : \delta \text{ is an ordinal and } \delta \notin N \right\},
\]
so that in particular
%\item If we also have $\pred t \subseteq N$, then 
$t$ must be a limit node
%\todo{Make sure to define \emph{limit node} and \emph{successor node} somewhere.}
in that case.
\end{enumerate}

\begin{proof}\hfill
\begin{enumerate}
\item If $\pred t \in N$, then $\pred t$ itself would violate the $N$-eligibility of $t$.
\item If $t \in N$, then by elementarity we have $\pred t \in N$, contradicting (1).
\item Define
\[
B = \left\{ s \in T : \height_T(s) < \height_T(t) \right\}.
\]
If $\height_T(t) \in N$ then by elementarity we would have $B \in N$.
But $\pred t \subseteq B$ and $t \notin B$,
violating the $N$-eligibility of $t$.
\item For any $\beta < \height_T(t)$, there must be some $s <_T t$ with $\height_T(s) = \beta$.
But then $s \in N$ (since by assumption $\pred t \subseteq N$), 
so by elementarity we have $\beta = \height_T(s) \in N$.
Then (3) gives the desired equation.

For any ordinal $\beta \in N$, its successor $\beta \cup \{\beta\} \in N$ by elementarity,
so the smallest ordinal not in the model must be a limit ordinal.
%\item If $t$ is a successor node with predecessor $s$, and $\pred t \subseteq N$, 
%then in particular we have $s \in N$, and we have $\pred t = \pred s \cup \{s\}$, which is in $N$ by elementarity,
%contradicting (1).
\qedhere
\end{enumerate}
\end{proof}
\end{lemma}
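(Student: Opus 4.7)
The plan is to deduce each of the four conclusions by playing off the $N$-eligibility condition (that every $B \in N$ with $\pred t \subseteq B$ must also contain $t$) against the elementarity of $N$. Whenever a forbidden object is assumed to lie in $N$, I will use elementarity to manufacture a set $B \in N$ witnessing $\pred t \subseteq B$ and $t \notin B$, directly contradicting eligibility.

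For (1), I would simply take $B = \pred t$: if $\pred t$ itself belonged to $N$, then trivially $\pred t \subseteq B$ and $t \notin B$, violating eligibility. For (2), I would reduce to (1): the set $\pred t$ is definable from $t$ and $T$, so if $t \in N$ then $\pred t \in N$ by elementarity, contradicting (1). For (3), the right witness is $B = \{s \in T : \height_T(s) < \height_T(t)\}$, which is definable from $T$ and the ordinal $\height_T(t)$. Assuming $\height_T(t) \in N$, elementarity would give $B \in N$; but every predecessor of $t$ has strictly smaller height, so $\pred t \subseteq B$, while $t \notin B$, again violating eligibility.

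For (4), under the extra assumption $\pred t \subseteq N$, I would first show that every ordinal $\beta < \height_T(t)$ lies in $N$: any such $\beta$ is the height of some $s <_T t$, and $s \in \pred t \subseteq N$, so $\beta = \height_T(s) \in N$ by elementarity. Combined with (3), this identifies $\height_T(t)$ as the least ordinal not in $N$. To see that $t$ must be a limit node, I would observe that $N$ is closed under the successor operation $\alpha \mapsto \alpha+1$, so the minimum ordinal outside $N$ cannot be a successor and is therefore a limit ordinal.

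There is no genuine obstacle here; the whole lemma is a sequence of short elementarity arguments. The only conceptual point is recognizing in (3) that the appropriate witness definable from $\height_T(t)$ is the \emph{level-below} set $\{s \in T : \height_T(s) < \height_T(t)\}$ rather than $\pred t$ itself (which is exactly the set that (1) has just forbidden from lying in $N$).
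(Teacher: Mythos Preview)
Your proposal is correct and matches the paper's proof essentially line for line: the same witness $B=\pred t$ for (1), the reduction to (1) via elementarity for (2), the same level-below set $B=\{s\in T:\height_T(s)<\height_T(t)\}$ for (3), and the same two-step argument (heights of predecessors lie in $N$, then closure under successor) for (4).
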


The following corollary follows immediately from Lemma~\ref{eligible-implications}(4):

\begin{corollary}
Suppose $N \prec H(\theta)$ is an elementary submodel such that $T \in N$.
If $s, t \in T$ are two $N$-eligible nodes such that $\pred s, \pred t \subseteq N$, 
then $\height_T(s) = \height_T(t)$.
Furthermore, if the set
\[
\left\{t \in T : t \text{ is $N$-eligible and } \pred t \subseteq N \right\}
\]
is nonempty, then its nodes are all at the same height, 
and that height is the ordinal $\min\left\{ \delta : \delta \notin N \right\}$.
\end{corollary}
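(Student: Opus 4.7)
The plan is to simply apply part~(4) of Lemma~\ref{eligible-implications} to each of the two nodes separately. That lemma asserts that whenever $t \in T$ is $N$-eligible and satisfies $\pred t \subseteq N$, the height $\height_T(t)$ is precisely the ordinal $\delta_0 := \min\{\delta : \delta \text{ is an ordinal and } \delta \notin N\}$. Since this characterization depends only on $N$ (and not on the particular eligible node chosen), any two nodes $s, t$ satisfying the hypotheses must have $\height_T(s) = \delta_0 = \height_T(t)$, giving the first assertion.

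For the second assertion, observe that the ordinal $\delta_0$ is well-defined (as a minimum of a nonempty class of ordinals) regardless of whether any eligible node exists. If the set $\{t \in T : t \text{ is $N$-eligible and } \pred t \subseteq N\}$ happens to be nonempty, then picking any element of it and applying Lemma~\ref{eligible-implications}(4) identifies the common height of its members as exactly $\delta_0$, as claimed.

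There is no real obstacle here — the entire content has already been packaged into Lemma~\ref{eligible-implications}(4), so the proof reduces to invoking that lemma twice (or, equivalently, noting that the value $\delta_0$ assigned to $\height_T(t)$ by the lemma is independent of $t$). The proof can therefore be presented in two or three lines.
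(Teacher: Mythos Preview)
Your proposal is correct and matches the paper's own approach exactly: the paper states that the corollary ``follows immediately from Lemma~\ref{eligible-implications}(4)'', which is precisely the reduction you describe. Your additional remark that $\delta_0$ is well-defined independently of the existence of any eligible node is a harmless elaboration of the same idea.
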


\begin{remark}\label{isomorphism-when-cardinal}
In the special case where $T$ is a cardinal $\lambda$ and $t \geq \sup (N \cap \lambda)$,
we have $N \cap T \subseteq \pred t$, so that elementarity of $N$ implies that $\pi_{N,t}$ is one-to-one,
giving an
isomorphism of set algebras
\[
\pi_{N,t} : \left< \mathcal P (\lambda) \cap N, \cup, \cap, \setminus,
\emptyset, \lambda \right> \cong \left< \mathcal A_{N,t}, \cup, \cap,
\setminus, \emptyset, \pred t \right>.
\]
%and therefore an isomorphism of the set-algebra structures $\mathcal P(T) \cap N \cong \mathcal A_{N,t}$.
In this case, $t$ is necessarily $N$-eligible, via condition (\ref{intersection-determines-t}) 
of Lemma~\ref{eligible-equivalent}.
So provided that $\sup (N \cap \lambda) < \lambda$ 
(such as when $\left|N\right| < \lambda$ for regular cardinal $\lambda$),
we can always choose an $N$-eligible node $t = \sup (N \cap \lambda)$ in this case.%
\footnote{\label{Mostowski}%
Furthermore, if $N \cap \lambda$ is downward closed and equal to an ordinal $\delta < \lambda$, 
then we can choose $t = \delta$,
and we have $\pred t \subseteq N$, 
and $\pi_{N,t}$ is just the Mostowski collapsing function of $\left<\mathcal P(T) \cap N, \in \right>$
onto its transitive collapse $\mathcal A_{N,t}.$}
In the general case of a tree $T$, it not clear that every model $N$ necessarily has an $N$-eligible node,
so we will have to work harder later on to show that such models and nodes exist.
\end{remark}

The significance of the following lemma will become apparent when we
introduce \emph{reflection points} later on.

\begin{lemma}\label{reflection-intersection}
Suppose $N \prec H(\theta)$ is an elementary submodel such that $T \in N$, and let $t \in T$.
%Suppose $N \prec H(\lambda^+)$ is an elementary submodel such that
%$\left|N\right| = \nu$, $\nu \subseteq N$, and $\lambda \in N$.  
Fix $X \in \mathcal A_{N,t}$, and $S \subseteq T$ such that $S \cap
\pred t \in I_{N,t}^+$.
Then
\todo{Consider moving this later as it is not needed here and may fit better with the description of reflection points.}
\[
X \in \mathcal G^+_{N,t} \iff X \cap S \in I^+_{N,t}.
\]

\begin{proof}%\hfill	FIX THIS
(Notice that since $S \cap \pred t \in I^+_{N,t}$, we have in particular that $I^+_{N,t} \neq \emptyset$,
so that $t$ is necessarily $N$-eligible.  
However, we do not use this fact formally in the proof.)
\begin{description}
\item[($\impliedby$)] If $X \cap S \in I^+_{N,t}$ then certainly $X
\in I^+_{N,t}$.  Since also $X \in \mathcal A_{N,t}$, Lemma~\ref{I_N-formulations}
gives $X \in \mathcal G^+_{N,t}$.
%and 
%
%This direction is true regardless of any information about $\alpha,
%X$, and $S$, as the ideal is closed under subsets.
\item[($\implies$)] Suppose $X \in \mathcal G^+_{N,t}$.  
Then Lemma~\ref{G_N-dichotomy} gives $X \in \mathcal G^*_{N,t}$, and
then by Lemma~\ref{I_N-formulations} also $X \in I^*_{N,t}$.
By hypothesis, $S \cap \pred t \in I_{N,t}^+$.
%$\alpha$ is a reflection point of $S$, so $S \cap \alpha \in
%I^+_\alpha$.
The intersection of a co-ideal set and a filter set must be in the
co-ideal, and of course $X \subseteq \pred t$, so we have $X \cap S
\in I^+_{N,t}$, as required.\qedhere
\end{description}
\end{proof}
\end{lemma}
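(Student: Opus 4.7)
The plan is to prove the two implications separately, leveraging the structural facts about $\mathcal{G}_{N,t}$, $\mathcal{G}^*_{N,t}$, $I_{N,t}$, and their duals that were established just above in Lemmas~\ref{G_N-ideal}, \ref{G_N-dichotomy}, \ref{I_N-formulations}, and \ref{eligible-equivalent}. The key preliminary observation is that the hypothesis $S \cap \pred t \in I^+_{N,t}$ forces $I^+_{N,t}$ to be nonempty, hence forces $I_{N,t}$ to be a proper ideal, hence by Lemma~\ref{eligible-equivalent} forces $t$ to be $N$-eligible. This is crucial because it puts us in case~(2) of the dichotomy of Lemma~\ref{G_N-dichotomy}, so that $\mathcal{G}^+_{N,t} = \mathcal{G}^*_{N,t}$ and $I^*_{N,t} \subseteq I^+_{N,t}$.

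For the easy direction $(\impliedby)$, I would argue that if $X \cap S$ is $I_{N,t}$-positive, then a fortiori $X$ itself is $I_{N,t}$-positive (since $I_{N,t}$ is closed under subsets). Combined with the hypothesis $X \in \mathcal{A}_{N,t}$, Lemma~\ref{I_N-formulations} (specifically $\mathcal{A}_{N,t} \cap I^+_{N,t} = \mathcal{G}^+_{N,t}$) gives $X \in \mathcal{G}^+_{N,t}$.

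For the forward direction $(\implies)$, suppose $X \in \mathcal{G}^+_{N,t}$. By the dichotomy already noted (since $t$ is $N$-eligible), $X \in \mathcal{G}^*_{N,t}$, and then Lemma~\ref{I_N-formulations} promotes this to $X \in I^*_{N,t}$. Now $X$ is a filter set and $S \cap \pred t$ is a coideal set, both inside $\mathcal{P}(\pred t)$; since $X \subseteq \pred t$ we have $X \cap S = X \cap (S \cap \pred t)$, and the intersection of a filter set with a coideal set always lies in the coideal. Hence $X \cap S \in I^+_{N,t}$.

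The main (really the only) subtle point is the eligibility observation at the start: one must notice that $S \cap \pred t \in I^+_{N,t}$ already guarantees properness of $I_{N,t}$ and hence access to the ``good'' side of the dichotomy. Everything else is a formal manipulation using the equivalences and containments already proven. I would include a short parenthetical flagging this point, as the statement of the lemma does not explicitly assume $N$-eligibility of $t$ but implicitly requires it via the hypothesis on $S$.
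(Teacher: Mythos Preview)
Your proof is correct and follows essentially the same route as the paper's: the $(\impliedby)$ direction via closure of $I_{N,t}$ under subsets plus $\mathcal A_{N,t}\cap I^+_{N,t}=\mathcal G^+_{N,t}$, and the $(\implies)$ direction via $\mathcal G^+_{N,t}\subseteq\mathcal G^*_{N,t}$, then $\mathcal G^*_{N,t}\subseteq I^*_{N,t}$, then filter $\cap$ co-ideal $\subseteq$ co-ideal.

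One small remark: you describe the eligibility observation as ``crucial'' for accessing the good side of the dichotomy, whereas the paper explicitly flags that it does \emph{not} use eligibility formally. The point is that Lemma~\ref{G_N-dichotomy} asserts $\mathcal G_{N,t}\cup\mathcal G^*_{N,t}=\mathcal A_{N,t}$ unconditionally, so $X\in\mathcal G^+_{N,t}$ already forces $X\in\mathcal G^*_{N,t}$ without appealing to eligibility (in the degenerate case $\mathcal G^+_{N,t}=\emptyset$ and the implication is vacuous). Your version is not wrong, just slightly less streamlined; the paper's framing avoids the detour through Lemma~\ref{eligible-equivalent}.
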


We now consider what happens to the algebraic structures on $\pred t$ when we build a new model by
\emph{fattening} an existing one, that is, by adding sets to the model:
%without adding ordinals to it:

\begin{lemma}\label{fatten}
Suppose $M, N \prec H(\theta)$ are two elementary submodels such that $T \in M, N$, 
and also $N \subseteq M$, and let $t \in T$.
%Suppose $M, N \prec H(\lambda^+)$ are two elementary submodels such
%that $\left|M\right| = \left|N\right| = \nu$, $\nu \subseteq M, N$,
%and $\lambda \in M, N$, and also $N \subseteq M$ and $\delta^M =
%\delta^N$.  
Then we have:
\begin{gather*}
\mathcal A_{N,t} \subseteq \mathcal A_{M,t}     \\
\pi_{N,t} = \pi_{M,t} \upharpoonright \left( \mathcal P(T) \cap N\right)   \\
\mathcal G_{N,t} \subseteq \mathcal G_{M,t} \\	%\cap \mathcal A_{N,t}     \\
%\mathcal G_{N,t}^+ = \mathcal G_{M,t}^+ \cap \mathcal A_{N,t}     \\
I_{N,t} \subseteq I_{M,t}                           \\
I_{N,t}^* \subseteq I_{M,t}^* 		\\	%\subseteq 
I_{N,t}^+ \supseteq I_{M,t}^+
\end{gather*}
Furthermore, if $t$ is $M$-eligible then $t$ is also $N$-eligible, and we have
\begin{gather*}
\mathcal G_{N,t} = \mathcal G_{M,t} \cap \mathcal A_{N,t}	\tag{$*$}	\label{fatten-G}	\\
\mathcal G_{N,t}^+ = \mathcal G_{M,t}^+ \cap \mathcal A_{N,t}     \\
I_{N,t}^* \subseteq I_{M,t}^* \subseteq I_{M,t}^+ \subseteq I_{N,t}^+
\end{gather*}

\begin{proof}
Mostly straight from the definitions and previous lemmas.
For~\eqref{fatten-G}:
Suppose $t$ is $M$-eligible, and $X \in \mathcal G_{M,t} \cap \mathcal A_{N,t}$.
We must show $X \in \mathcal G_{N,t}$.
Since $X \in \mathcal G_{M,t}$, we have $X = A \cap \pred t$ for some $A \in M$ with $t \notin A$.
Since $X \in \mathcal A_{N,t}$, we have $X = B \cap \pred t$ for some $B \in N \subseteq M$.
Since $t$ is $M$-eligible, and $A \cap \pred t = B \cap \pred t$ where $A, B \in M$ and $t \notin A$,
condition~(\ref{intersection-determines-t}) of Lemma~\ref{eligible-equivalent} gives $t \notin B$,
so that $X \in \mathcal G_{N,t}$, as required.
\end{proof}
\end{lemma}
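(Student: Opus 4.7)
The plan is to proceed directly from the definitions, using $N \subseteq M$ as the main lever, and invoke the characterizations already established in Lemma~\ref{eligible-equivalent} and Lemma~\ref{I_N-formulations} for the more delicate parts. The first six assertions (before ``Furthermore'') are all essentially bookkeeping. Since $\mathcal A_{N,t} = \{B \cap \pred t : B \in N\}$ and similarly for $M$, the inclusion $\mathcal A_{N,t} \subseteq \mathcal A_{M,t}$ is immediate from $N \subseteq M$; the restriction formula for $\pi_{N,t}$ falls out of the common defining formula $B \mapsto B \cap \pred t$; the containment $\mathcal G_{N,t} \subseteq \mathcal G_{M,t}$ holds because any witness $A \in N$ with $t \notin A$ is automatically a witness in $M$; and $I_{N,t} \subseteq I_{M,t}$ follows by reusing the same superset $Y$ in the definition of $I_{N,t}$. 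The inclusions $I_{N,t}^* \subseteq I_{M,t}^*$ and $I_{N,t}^+ \supseteq I_{M,t}^+$ are then formal consequences of $I_{N,t} \subseteq I_{M,t}$.

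For the ``Furthermore'' clause, I would first check that $M$-eligibility implies $N$-eligibility: the defining condition $\forall B \in M\,[\pred t \subseteq B \implies t \in B]$ is universal, so restricting the quantifier to $N \subseteq M$ preserves its truth. The heart of the argument is the non-trivial inclusion $\mathcal G_{M,t} \cap \mathcal A_{N,t} \subseteq \mathcal G_{N,t}$. Given $X$ in this intersection, I would pick witnesses $A \in M$ with $t \notin A$ and $X = A \cap \pred t$, and $B \in N$ with $X = B \cap \pred t$. Since $B \in N \subseteq M$ and $A \cap \pred t = B \cap \pred t$, condition (\ref{intersection-determines-t}) of Lemma~\ref{eligible-equivalent} applied in $M$ forces $t \notin B$, whence $X \in \mathcal G_{N,t}$. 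The reverse inclusion comes from combining $\mathcal G_{N,t} \subseteq \mathcal G_{M,t}$ with $\mathcal G_{N,t} \subseteq \mathcal A_{N,t}$. The companion identity $\mathcal G_{N,t}^+ = \mathcal G_{M,t}^+ \cap \mathcal A_{N,t}$ then follows by taking complements within $\mathcal A_{N,t}$, using that $\mathcal G_{N,t}^+$ and $\mathcal G_{M,t}^+$ are by definition the relative complements inside their respective set algebras. Finally, the chain $I_{N,t}^* \subseteq I_{M,t}^* \subseteq I_{M,t}^+ \subseteq I_{N,t}^+$ concatenates the earlier inclusions with the fact that $I_{M,t}$ is proper (so $I_{M,t}^* \subseteq I_{M,t}^+$), which is supplied by $M$-eligibility via Lemma~\ref{eligible-equivalent}.

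The only real obstacle is the step $\mathcal G_{M,t} \cap \mathcal A_{N,t} \subseteq \mathcal G_{N,t}$. The subtlety is that $\pi_{N,t}$ need not be injective, so a set $X \in \mathcal A_{N,t}$ can have many preimages in $N$; we need to rule out that some preimage in $N$ happens to contain $t$. This is precisely what $M$-eligibility (applied in the larger model, where both our $A$ and $B$ live) buys us through condition (\ref{intersection-determines-t}). I would make sure the proof highlights this use of the stronger hypothesis, since the weaker assumption of $N$-eligibility alone would not suffice to compare the witness $A$ from $M$ with a witness $B$ from $N$.
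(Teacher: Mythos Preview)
Your proposal is correct and follows essentially the same approach as the paper. In particular, your treatment of the key inclusion $\mathcal G_{M,t} \cap \mathcal A_{N,t} \subseteq \mathcal G_{N,t}$---picking witnesses $A \in M$ and $B \in N$ with the same trace on $\pred t$ and invoking condition~(\ref{intersection-determines-t}) of Lemma~\ref{eligible-equivalent} for $M$ to transfer $t \notin A$ to $t \notin B$---is exactly the argument the paper gives, and the rest is, as both you and the paper indicate, routine from the definitions.
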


%At this point, we have not yet fixed $\kappa$ in relation to $\nu$
%or any model $N$.  When we eventually do fix $\kappa$, 
We will want our algebraic structures defined using a model $N$ to be $\kappa$-complete, 
for some fixed cardinal $\kappa$.  
To ensure this, we impose the condition that the model $N$ must contain all of its subsets of size $<\kappa$, 
that is, we suppose $[N]^{<\kappa} \subseteq N$.  
What conditions does this impose on the combinatorial relationship
\todo{Maybe the next two lemmas should be in a separate section on cardinal arithmetic, 
along with other relevant facts.} 
between $\kappa$ and $%\nu$, where $\nu = 
\left|N\right|$?  

\begin{lemma}\label{closed-subsets}
For any infinite set $A$ and any %infinite\footnote{These shouldn't have
%to be infinite, but the finite cases may need separate proofs.}
cardinal $\kappa$, if $[A]^{<\kappa} \subseteq A$, then %:
%\begin{enumerate}
%\item $\kappa \leq \left|A\right|$.
%\item For all $\mu < \kappa$, $\left|A\right|^\mu = \left|A\right|$.
%\item
$\left|A\right|^{<\kappa} = \left|A\right|$.
%\item $2^{<\kappa} \leq \left|A\right|$.
%\item $\kappa \leq \cf(\left|A\right|)$.
%\item $\kappa \leq \cf(\delta^N)$.
%\end{enumerate}

%\begin{proof}[Proof of (2)]
%%To show that $\kappa \leq \cf (\left|A\right|)$:  
%Consider any cardinal
%$\mu<\kappa$, and we must show that $\mu < \cf (\left|A\right|)$.  Using (1) %the hypothesis 
%and K\"onig's theorem, we have
%\[
%\left|A\right|^\mu \leq \left|A\right|^{<\kappa} = \left|A\right| < \left|A\right|^{\cf\left(\left|A\right|\right)},
%\]
%and it follows that $\mu < \cf(\left|A\right|)$, as required.
%\end{proof}

\end{lemma}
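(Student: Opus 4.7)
The plan is to prove the two inequalities comprising $\left|A\right|^{<\kappa} = \left|A\right|$ separately. The inequality $\left|A\right| \leq \left|A\right|^{<\kappa}$ is immediate from the case $\mu = 1$ in the definition of $\left|A\right|^{<\kappa} = \sup_{\mu < \kappa}\left|A\right|^\mu$. The substantive work is in showing $\left|A\right|^\mu \leq \left|A\right|$ for each cardinal $\mu < \kappa$, and then taking the supremum.

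As a preliminary step, I would first establish $\kappa \leq \left|A\right|$. Suppose for contradiction that $\left|A\right| < \kappa$. Then every subset of $A$ has cardinality at most $\left|A\right| < \kappa$, so $\mathcal P(A) \subseteq [A]^{<\kappa}$. The hypothesis $[A]^{<\kappa} \subseteq A$ then gives $\mathcal P(A) \subseteq A$, whence $\left|\mathcal P(A)\right| \leq \left|A\right|$, contradicting Cantor's theorem.

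Next, fix a cardinal $\mu < \kappa$; I would bound $\left|A\right|^\mu \leq \left|A\right|$ as follows. If $\mu$ is finite, then $\left|A\right|^\mu = \left|A\right|$ because $A$ is infinite. If $\mu$ is infinite, then by the preliminary step we have $\mu < \kappa \leq \left|A\right|$, so $\left|\mu \times A\right| = \left|A\right|$. Each function $f : \mu \to A$ corresponds bijectively to its graph $G_f = \left\{(\alpha, f(\alpha)) : \alpha < \mu\right\} \in [\mu \times A]^\mu$, so the map $f \mapsto G_f$ injects ${}^\mu A$ into $[\mu \times A]^\mu$. Transferring along a bijection $\mu \times A \to A$ converts $[\mu \times A]^\mu$ into $[A]^\mu$, so
\[
\left|A\right|^\mu \leq \left|[\mu \times A]^\mu\right| = \left|[A]^\mu\right| \leq \left|[A]^{<\kappa}\right| \leq \left|A\right|,
\]
where the last inequality uses the hypothesis that $[A]^{<\kappa} \subseteq A$.

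Taking the supremum over all $\mu < \kappa$ gives $\left|A\right|^{<\kappa} \leq \left|A\right|$, completing the proof. I do not anticipate a serious obstacle: the only subtlety is that the graph-encoding argument requires $\mu \leq \left|A\right|$, which is exactly what the preliminary step $\kappa \leq \left|A\right|$ guarantees for every $\mu < \kappa$.
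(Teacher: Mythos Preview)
Your proof is correct. The paper states this lemma without proof, so there is no argument to compare against; your approach via Cantor's theorem (to secure $\kappa \leq |A|$) followed by the graph-encoding injection ${}^\mu A \hookrightarrow [A]^\mu$ is a clean and standard way to fill in the details.
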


\begin{lemma}\label{nu-and-kappa}
Let $\nu$ and $\kappa$ be infinite cardinals.  If $\nu^{<\kappa} = \nu$, then %$2^{<\kappa} \leq \nu$ and 
$\kappa \leq \cf(\nu)$.

\begin{proof}
Clearly, for every $\mu < \kappa$ we have $\nu^\mu = \nu$, that is, 
$\left\{ \nu^\mu  : \mu < \kappa \right\}$ is constant.
Then by~\cite[Theorem~6.10(d)(i)]{EHMR}, we have $\cf(\nu^{<\kappa}) \geq \kappa$.
%
%Suppose $\nu^{<\kappa} = \nu$.  %Then we clearly have
%%\[
%%2^{<\kappa} \leq \nu^{<\kappa} = \nu.
%%\]
%To show that $\kappa \leq \cf (\nu)$:  Consider any cardinal
%$\mu<\kappa$, and we must show that $\mu < \cf (\nu)$.  Using the
%hypothesis and K\"onig's theorem, we have
%\[
%\nu^\mu \leq \nu^{<\kappa} = \nu < \nu^{\cf(\nu)},
%\]
%and it follows that $\mu < \cf(\nu)$, as required.
\end{proof}
\end{lemma}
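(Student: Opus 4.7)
The plan is to prove this by contradiction, using König's Theorem. The hypothesis $\nu^{<\kappa} = \nu$ is extremely strong: for any nonzero $\mu < \kappa$, we have
\[
\nu \leq \nu^\mu \leq \nu^{<\kappa} = \nu,
\]
so in fact $\nu^\mu = \nu$ for every such $\mu$. The sequence $\langle \nu^\mu : \mu < \kappa \rangle$ is therefore constant with value $\nu$, not merely bounded by $\nu$.

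Now suppose for contradiction that $\cf(\nu) < \kappa$. Then $\cf(\nu)$ is itself one of the exponents $\mu < \kappa$ to which the previous observation applies, so $\nu^{\cf(\nu)} = \nu$. But König's Theorem asserts that $\nu^{\cf(\nu)} > \nu$ for every infinite cardinal $\nu$, which is an outright contradiction. Hence $\cf(\nu) \geq \kappa$, as required.

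There is no real obstacle here: the only subtlety is recognising that the hypothesis forces the power sequence to be not merely eventually constant but constant, so that $\cf(\nu) < \kappa$ actually puts $\cf(\nu)$ into the range of exponents where $\nu^\mu = \nu$ holds. Once that is noted, König's Theorem finishes the job immediately. Alternatively, one could invoke the cited \cite[Theorem~6.10(d)(i)]{EHMR}, which packages the same König-theoretic content into the statement that $\cf(\nu^{<\kappa}) \geq \kappa$ whenever the sequence of powers $\nu^\mu$ for $\mu < \kappa$ is eventually constant; combined with $\nu^{<\kappa} = \nu$, this yields $\cf(\nu) \geq \kappa$ directly. Either route is essentially one line once the eventually-constant observation is made.
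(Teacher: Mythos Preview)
Your proof is correct and follows essentially the same approach as the paper: both begin by observing that $\nu^{<\kappa}=\nu$ forces $\nu^\mu=\nu$ for all $\mu<\kappa$, and then deduce $\cf(\nu)\geq\kappa$. The only cosmetic difference is that the paper cites \cite[Theorem~6.10(d)(i)]{EHMR} for the final step, whereas you unpack that citation into a direct application of K\"onig's Theorem (and then note the EHMR route as an alternative); the underlying argument is the same.
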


%Some
%upper bounds for $\kappa$ are given by Lemmas~\ref{closed-subsets}
%and~\ref{nu-and-kappa}.  In particular, we must have $\left|A\right|^{<\kappa}
%= \left|A\right|$, $2^{<\kappa} \leq \left|A\right|$, and $\kappa \leq \cf(\left|A\right|)$.  
Further consequences on the algebraic structure due to the elementarity of $N$ are:

\begin{lemma}\label{k-complete}
Suppose $N \prec H(\theta)$ is an elementary submodel such that $T \in N$, and let $t \in T$.
%Suppose $N \prec H(\lambda^+)$ is an elementary submodel such that
%$\left|N\right| = \nu$, $\nu \subseteq N$, and $\lambda \in N$.  
Let $\kappa$ be any cardinal.  If we have $[N]^{<\kappa} \subseteq N$,
then:
\begin{enumerate}
\item If $\mathcal B \in [N]^{<\kappa}$, then $\bigcup \mathcal B \in N$.
\item The set algebra $\mathcal P(T) \cap N$ %and $\mathcal A_{N,t}$ are 
is $\kappa$-complete.
\item The ultrafilter and maximal ideal of Lemma~\ref{ultrafilter-on-tree} are $\kappa$-complete.
\item The set algebra %$\mathcal P(T) \cap N$ and 
$\mathcal A_{N,t}$ is $\kappa$-complete.
\item The ideals $\mathcal G_{N,t}$ and $I_{N,t}$ are $\kappa$-complete.
\item If $\delta$ is the smallest ordinal not in $N$, then $\cf(\delta) \geq \kappa$.
\item If $t$ is $N$-eligible and $\pred t \subseteq N$, then
\[
\kappa \leq \cf\left(\height_T(t)\right).
\]
%\item $\kappa \leq \cf(\delta^N)$.
\end{enumerate}

\begin{proof}
Suppose the cardinal $\kappa$ satisfies $[N]^{<\kappa} \subseteq N$.
\begin{enumerate}
\item
Suppose $\mathcal B \in [N]^{<\kappa}$.
Since $[N]^{<\kappa} \subseteq N$, we have $\mathcal B \in N$.
Now $N \prec H(\theta)$, so $N$ models a sufficient fragment of
ZFC, including the union axiom, so it follows that $\bigcup \mathcal B \in N$, as required.
\item
%To show that $\mathcal P (T) \cap N$ is $\kappa$-complete, 
%let $\mu<\kappa$ be a cardinal, 
%and for each ordinal $\iota<\mu$ fix $A_\iota \in \mathcal P (T) \cap N$. 
%Let
%\[
%A = \bigcup_{\iota<\mu} A_\iota,
%\]
%and we want to show that $A \in \mathcal P(T) \cap N$:

%First, a 
A union of subsets of $T$ is certainly a subset of $T$, %so we have $A \in \mathcal P(T)$.
and from part~(1) we know that a union of fewer than $\kappa$ sets from $N$ is in $N$.

%For each $\iota<\mu$ we have $A_\iota \in N$, so
%\[
%\left< A_\iota : \iota<\mu \right> \in \left[N\right]^\mu \subseteq
%\left[N\right]^{<\kappa} \subseteq N.
%\]
%Now $N \prec H(\theta)$, so $N$ models a sufficient fragment of
%ZFC, including the union axiom, so it follows that $A \in N$, as
%required, and we have shown that the set algebra $\mathcal
%P(T) \cap N$ is $\kappa$-complete.
\item %Clear.
For any collection $\mathcal D$ of sets in the maximal ideal, %$\mathcal D \subseteq \mathcal G_{N,t}$, 
we have $t \notin B$ for each $B \in \mathcal D$.
Certainly then,
\[
t \notin \bigcup_{B \in \mathcal D} B.%\pi_{N,t}^{-1}(X).
\]
So if the union $\bigcup \mathcal D$ is in the set algebra $\mathcal P(T) \cap N %A_{N,t}
$, then it is in the maximal ideal as well.  
%
%we can write (using Lemma~\ref{isomorphism-set-algebra})
%\[
%\delta^N \notin \bigcup_{X \in \mathcal D} \pi_{N,t}^{-1}(X) =
%\pi_{N,t}^{-1} \left(\bigcup_{X \in \mathcal D} X \right) = \pi_{N,t}^{-1}
%\left(\bigcup \mathcal D\right),
%\]
%so that $\bigcup \mathcal D \in \mathcal G_{N,t}$.
Since $\mathcal P(T) \cap N$ is $\kappa$-complete by part~(1), it follows
that the maximal ideal is $\kappa$-complete,
and so is the dual ultrafilter.

\item
By Lemma~\ref{homomorphism-set-algebra}, the set algebra $\mathcal
A_{N,t}$ is a homomorphic image of the $\kappa$-complete set algebra %isomorphic to 
$\mathcal P(T) \cap N$, so it is also
$\kappa$-complete.

\item
The ideal $\mathcal G_{N,t}$ is the homomorphic image of a $\kappa$-complete ideal (from part~(3))
into a $\kappa$-complete set algebra $\mathcal A_{N,t}$, so it is $\kappa$-complete as well.
%For any collection of sets $\mathcal D \subseteq \mathcal G_{N,t}$, we
%have $\delta^N \notin \pi_{N,t}^{-1}(X)$ for each $X \in \mathcal D$.
%Certainly then,
%\[
%\delta^N \notin \bigcup_{X \in \mathcal D} \pi_{N,t}^{-1}(X).
%\]
%So if the union $\bigcup \mathcal D$ is in the set algebra $\mathcal
%A_{N,t}$, then we can write (using Lemma~\ref{isomorphism-set-algebra})
%\[
%\delta^N \notin \bigcup_{X \in \mathcal D} \pi_{N,t}^{-1}(X) =
%\pi_{N,t}^{-1} \left(\bigcup_{X \in \mathcal D} X \right) = \pi_{N,t}^{-1}
%\left(\bigcup \mathcal D\right),
%\]
%so that $\bigcup \mathcal D \in \mathcal G_{N,t}$.
%Since $\mathcal A_{N,t}$ is $\kappa$-complete by part~(1), it follows
%that the ideal $\mathcal G_{N,t}$ is $\kappa$-complete.

$\kappa$-completeness of $I_{N,t}$ follows easily.
\item Let $\delta$ be the smallest ordinal not in $N$,
and fix any cardinal $\mu < \kappa$. 
We must show that $\mu < \cf (\delta)$.

For each ordinal $\iota<\mu$, choose some ordinal $\gamma_\iota < \delta$.
Let
\[
\gamma = \sup_{\iota<\mu} \gamma_\iota = \bigcup_{\iota<\mu} \gamma_\iota,
\]
and we will show that $\gamma < \delta$.  

Since each $\gamma_\iota < \delta$, it is clear that $\gamma \leq \delta$.  But also each $\gamma_\iota \in N$, so
we have
\[
\left\{ \gamma_\iota : \iota < \mu \right\} \in \left[N\right]^\mu
\subseteq \left[N\right]^{<\kappa}, % \subseteq N,
\]
so %by elementarity of $N$
it follows from part~(1) that $\gamma \in N$.
Since $\delta \notin N$, it follows that $\gamma < \delta$,
as required. 
\item This follows immediately by combining the previous part with Lemma~\ref{eligible-implications}(4).
%
%Suppose $t$ is $N$-eligible and $\pred t \subseteq N$, and
%fix a cardinal $\mu<\kappa$. 
%We must show that $\mu < \cf (\height_T(t))$.
%
%For each ordinal $\iota<\mu$, choose some node $s_\iota <_T t$. %ordinal $\gamma_\iota < \height_T(t)$.
%Then define
%\[
%X = \bigcup_{\iota < \mu} \left(\pred{s_\iota} \right).
%\]
%It is clear that $X \subseteq \pred t$, but we must show that $X \neq \pred t$.
%
%%Then define\footnote{Formalize this notation, since the $\sup$ of a sequence of nodes
%%is defined only because we are clearly within a chain $\pred{t}$ (or possibly $\pred{t} \cup \{t\}$).}
%%\[
%%s = \sup_{\iota<\mu} s_\iota, % = \bigcup_{\iota<\mu} s_\iota,
%%\]
%%where the $\sup$ is taken along the chain $\pred t \cup \{t\}$.
%%We must show that $s <_T t$.  
%
%Since $\pred t \subseteq N$, we have each $s_\iota \in N$, 
%so it follows by elementarity that each $\pred {s_\iota} \in N$.
%We then have
%\[
%\left\{ \pred{s_\iota} : \iota < \mu \right\} \in \left[N\right]^\mu \subseteq \left[N\right]^{<\kappa}, % \subseteq N,
%\]
%so it follows from part~(1)
%%
%%So each $\pred{s_\iota} \in \mathcal P(T) \cap N$,
%%and this set algebra is
%%$\kappa$-complete by part~(1), so
%%
%%so by elementarity of $N$
%%it follows 
%%
%that $X \in N$.
%Since $t$ is $N$-eligible, it then follows from Lemma~\ref{eligible-implications}(1) that $X \neq \pred t$,
%as required.
%%
%%FILL IN HERE.
%% and we certainly have $s <
%%\lambda$, so that $s < \height_T(t)$, as required. 
\qedhere
\end{enumerate}
\end{proof}
\end{lemma}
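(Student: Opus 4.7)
The plan is to prove the seven parts in order, since each later part builds on earlier ones, with the entire argument driven by the single closure principle that $[N]^{<\kappa} \subseteq N$ forces $N$ to be closed under unions of fewer than $\kappa$ of its members.

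First, for part~(1), given $\mathcal B \in [N]^{<\kappa}$, the hypothesis $[N]^{<\kappa} \subseteq N$ immediately gives $\mathcal B \in N$; then, since $N \prec H(\theta)$ satisfies enough of ZFC (in particular the union axiom), elementarity yields $\bigcup \mathcal B \in N$. Part~(2) follows immediately: any union of fewer than $\kappa$ subsets of $T$ is itself a subset of $T$, and by~(1) it lies in $N$. For part~(3), I would observe that if $\mathcal D$ is a collection of fewer than $\kappa$ sets from the maximal ideal of Lemma~\ref{ultrafilter-on-tree}, then $t \notin B$ for each $B \in \mathcal D$, hence $t \notin \bigcup \mathcal D$; combined with~(2) this shows the union remains in the maximal ideal, and dually for the ultrafilter.

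Parts~(4) and~(5) then come essentially for free by transport along the homomorphism $\pi_{N,t}$ of Lemma~\ref{homomorphism-set-algebra}. Specifically, $\mathcal A_{N,t}$ is the homomorphic image of the $\kappa$-complete set algebra $\mathcal P(T) \cap N$ (under set-theoretic operations), so it inherits $\kappa$-completeness; and $\mathcal G_{N,t}$ is the homomorphic image of the $\kappa$-complete maximal ideal from~(3), so it too is $\kappa$-complete. Finally, $I_{N,t}$ is the downward closure of $\mathcal G_{N,t}$ in $\mathcal P(\pred t)$, and $\kappa$-completeness passes through downward closure, completing part~(5).

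For part~(6), let $\delta$ be the least ordinal not in $N$ and fix $\mu < \kappa$. Given any $\mu$-sequence of ordinals $\gamma_\iota < \delta$ (so $\gamma_\iota \in N$), the set $\{\gamma_\iota : \iota < \mu\}$ belongs to $[N]^{\mu} \subseteq [N]^{<\kappa} \subseteq N$, and hence by~(1) its union $\gamma = \sup_{\iota < \mu} \gamma_\iota$ lies in $N$; since $\delta \notin N$, necessarily $\gamma < \delta$, showing $\mu < \cf(\delta)$. Part~(7) is then immediate by combining part~(6) with Lemma~\ref{eligible-implications}(4), which under the extra hypotheses identifies $\height_T(t)$ with the least ordinal not in $N$. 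No step here is a genuine obstacle — the argument is essentially a bookkeeping exercise showing that the single assumption $[N]^{<\kappa} \subseteq N$ propagates through all the derived algebraic structures — and the only place requiring any care is making the transport in~(4) and~(5) precise, ensuring the homomorphic image of a $\kappa$-complete ideal/algebra into another $\kappa$-complete algebra is automatically $\kappa$-complete.
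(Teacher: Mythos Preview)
Your proposal is correct and follows essentially the same approach as the paper's own proof, part by part, including the use of the homomorphism $\pi_{N,t}$ to transport $\kappa$-completeness in parts~(4) and~(5) and the combination with Lemma~\ref{eligible-implications}(4) for part~(7). If anything, your treatment of $I_{N,t}$ in part~(5) is slightly more explicit than the paper's, which simply says ``$\kappa$-completeness of $I_{N,t}$ follows easily.''
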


%\begin{definition}
%If $c: [T]^2 \to \mu$ is a colouring, % partition function, 
%where $\mu$ is some cardinal, 
%and $\chi < \mu$ is some ordinal (colour), and $t \in T$,
%%$\delta < \lambda$,
%%For each colour $i<k$ and each $\delta < \lambda$,
%define
%\todo{Put this definition in a more general location with other notation.}
%%\footnote{Make sure to replace $\alpha$ with $\delta$
%%consistently throughout.  Should we also replace $\beta$ with
%%$\gamma$?}
%\[
%c_\chi(t) = \left\{ s <_T  t : c\{s, t\} = \chi \right\} \subseteq \pred t.
%\]
%\end{definition}

We now consider what effect our elementary submodels have on a given colouring $c : [T]^2 \to \mu$:

\begin{lemma}\label{some-colour-in-co-ideal}
Suppose we have cardinals  $\mu$ and $\kappa$, with $\mu < \kappa$,
and a colouring $c: [T]^2 \to \mu$. 
Suppose also that $N \prec H(\theta)$ is an elementary submodel such that 
$T \in N$, and also $[N]^{<\kappa} \subseteq N$, and let $t \in T$.
%Suppose $t \in T$ is $N$-eligible and $\pred t \subseteq N$.  
Then for any $X \subseteq \pred t$, we have%
\footnote{Recall the definition of the notation $c_\chi(t)$ in Section~\ref{Section:notation}.}
\[
X \in I^+_{N,t} \iff \exists \text{ some colour $\chi < \mu$ such that } X \cap c_\chi(t) \in I^+_{N,t}.
\]

\begin{proof}
For any $X \subseteq \pred t$,
we clearly have
\[
X = X \cap \bigcup_{ \chi<\mu} c_\chi(t) = \bigcup_{\chi<\mu} \left(X \cap c_\chi(t) \right).
\]
Since the model $N$ 
satisfies $[N]^{<\kappa} \subseteq N$, Lemma~\ref{k-complete}(5) tells us
that $I_{N,t}$ is $\kappa$-complete.  
Since $\mu < \kappa$,
the required result follows.
%there must
%be some colour $\chi < \mu$ such that $c_\chi(t) \in I^+_{N,t}$.
\end{proof}
\end{lemma}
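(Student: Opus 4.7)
The plan is to prove the equivalence by decomposing $X$ along the colour partition of $\pred t$ and then invoking $\kappa$-completeness of the ideal $I_{N,t}$.

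First I would record the elementary observation that the sets $c_\chi(t)$ for $\chi < \mu$ partition $\pred t$: every $s <_T t$ satisfies $\{s,t\} \in [T]^2$, so $c\{s,t\}$ takes exactly one value in $\mu$, placing $s$ in exactly one of the sets $c_\chi(t)$. Consequently, for any $X \subseteq \pred t$,
\[
X = \bigcup_{\chi < \mu} \bigl(X \cap c_\chi(t)\bigr).
\]
This is the only structural fact about colourings that is needed.

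The $(\impliedby)$ direction is immediate: if $X \cap c_\chi(t) \in I^+_{N,t}$ for some $\chi$, then since $X \cap c_\chi(t) \subseteq X \subseteq \pred t$, the fact that $I_{N,t}$ is an ideal on $\pred t$ (Lemma~\ref{I_N-ideal}) together with closure of $I^+_{N,t}$ under supersets gives $X \in I^+_{N,t}$.

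For the $(\implies)$ direction I would argue by contrapositive. Suppose $X \cap c_\chi(t) \in I_{N,t}$ for every $\chi < \mu$. The hypothesis $[N]^{<\kappa} \subseteq N$ allows us to invoke Lemma~\ref{k-complete}(5), which tells us that $I_{N,t}$ is $\kappa$-complete. Since $\mu < \kappa$, the union
\[
X = \bigcup_{\chi < \mu} \bigl(X \cap c_\chi(t)\bigr)
\]
is a union of fewer than $\kappa$ members of $I_{N,t}$, and is therefore itself in $I_{N,t}$, i.e.\ $X \notin I^+_{N,t}$. This completes the equivalence. There is no serious obstacle here; the only point that requires care is verifying that the hypothesis $\mu < \kappa$ and the closure condition $[N]^{<\kappa} \subseteq N$ are deployed in the right place so that the $\kappa$-completeness of $I_{N,t}$ can genuinely absorb the $\mu$-indexed union.
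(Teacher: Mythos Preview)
Your proof is correct and follows essentially the same approach as the paper: decompose $X$ as $\bigcup_{\chi<\mu}(X\cap c_\chi(t))$ and invoke the $\kappa$-completeness of $I_{N,t}$ from Lemma~\ref{k-complete}(5) together with $\mu<\kappa$. You simply spell out the two directions more explicitly than the paper's terse version.
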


The following lemma contains the crucial recursive construction of a homogeneous chain of length~$\kappa$ 
as a subset of an appropriate set from the co-ideal $I^+_{N,t}$:

\begin{lemma} (cf.~\cite[Claim before Lemma~2.2]{BHT}, \cite[Claim~2.2]{Baumgartner})
\label{homog-2.2}  
Suppose we have cardinals  $\mu$ and $\kappa$,
a colouring 
%Let $\kappa$ and $\mu$ be cardinals, where $\mu
%\leq \nu$.\footnote{This condition is required only to ensure that
%$\chi \in N$.  The lemma can be stated for arbitrary $\mu$, with the
%extra condition that the particular colour $\chi$ is in $N$, but
%that would add confusion and there is no advantage here to the extra
%generality.} Suppose 
$c: [T]^2 \to \mu$, %is a partition function, 
and some colour $\chi<\mu$. %and some node $t \in T$.
Suppose also that $N \prec H(\theta)$ is an elementary submodel such that 
%$\left|N\right| = \nu$, $\nu \subseteq N$, and
$T, c, \chi \in N$, and also $[N]^{<\kappa} \subseteq N$. 
Let $t \in T$ be a node such that $\pred t \subseteq N$.  
%such that $\chi \in N$.  (In particular, if $\mu \leq \nu$ then all
%colours are in $N$.)

If $X \subseteq c_\chi(t)$ is such that $X \in I^+_{N,t}$, then there is a $\chi$-homogeneous chain
$Y \in %\subseteq 
[X]^\kappa$. %such that %$Y$ is and
%$\left|Y\right| = \kappa$.%\footnote{Maybe move this earlier because
%it depends only on a single model, not on the sequence.  Or maybe
%not, because the observation in the paragraph after this lemma
%requires the model $N_\kappa$.}

\begin{proof}
%(Notice that implicit in the hypothesis $X \in I^+_{N,t}$ is that $t$ is $N$-eligible,
%though we do not use this fact formally in the proof.)
%
We will recursively construct a $\chi$-homogeneous chain
\[
Y = \left<y_\eta\right>_{\eta<\kappa} \subseteq X,
\]
of order type $\kappa$, as follows:

Fix some ordinal $\eta < \kappa$, and suppose we have constructed
$\chi$-homogeneous
\[
Y_\eta = \left<y_\iota\right>_{\iota<\eta} \subseteq X
\]
of order type $\eta$.  We need to choose $y_\eta \in X$ such that
$Y_\eta <_T \{y_\eta\}$ and $Y_\eta \cup \{y_\eta\}$ is
$\chi$-homogeneous.

Since $Y_\eta \subseteq X \subseteq \pred t \subseteq N$ and
$\left|Y_\eta\right|<\kappa$, %Lemma~\ref{kappa-subsets-in-model}(4)
the hypothesis that $[N]^{<\kappa} \subseteq N$ gives us $Y_\eta \in
N$.  Define
\[
Z = \left\{ s \in T : \left(\forall y_\iota \in
Y_\eta\right) \left[y_\iota <_T s \text{ and } c \left\{ y_\iota,
s \right\} = \chi \right] \right\}.
\]

Since $Z$ is defined from parameters $T, Y_\eta, c$, and
$\chi$ that are all in $N$, it follows by elementarity of $N$ that
$Z \in N$, so that $Z \cap \pred t \in \mathcal A_{N,t}$.

Since $Y_\eta \subseteq X \subseteq c_\chi(t)$, it follows
from
the definition of $Z$ that $t \in Z$.  But then %applying
%Lemma~\ref{ultrafilter-on-algebra} [(2) $\implies$ (3)] to $Z$,
we have $Z \cap \pred t \in %\mathcal G^+_N = 
\mathcal G^*_{N,t}
\subseteq I^*_{N,t}$.  By assumption we have $X \in I^+_{N,t}$. The
intersection of a filter set and a co-ideal set must be in the
co-ideal, so we have $X \cap Z \in I^+_{N,t}$.  In particular, this set
is not empty, so we choose $y_\eta \in X \cap Z$.  Because $y_\eta
\in Z$, we have $Y_\eta <_T \{y_\eta\}$ and $Y_\eta \cup \{y_\eta\}$
is $\chi$-homogeneous, as required.
\end{proof}
\end{lemma}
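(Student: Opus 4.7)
The plan is to construct the $\chi$-homogeneous chain $Y = \langle y_\eta \rangle_{\eta<\kappa}$ by transfinite recursion of length $\kappa$, maintaining at each stage $\eta<\kappa$ a partial chain $Y_\eta = \langle y_\iota \rangle_{\iota<\eta} \subseteq X$ that is $<_T$-increasing and $\chi$-homogeneous. The only question at each stage is whether we can successfully pick $y_\eta \in X$ that extends $Y_\eta$ as a chain while preserving $\chi$-homogeneity with all elements so far.

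First I would observe that the hypothesis $X \in I^+_{N,t}$ implies $I^+_{N,t} \neq \emptyset$, so $I_{N,t}$ is proper and hence $t$ is $N$-eligible by Lemma~\ref{eligible-equivalent}; in particular, $\mathcal G^*_{N,t} \subseteq I^*_{N,t}$. Next, at stage $\eta<\kappa$, since $Y_\eta \subseteq X \subseteq \pred t \subseteq N$ and $|Y_\eta| < \kappa$, the closure hypothesis $[N]^{<\kappa} \subseteq N$ gives $Y_\eta \in N$. Then the set
\[
Z = \left\{ s \in T : (\forall y \in Y_\eta)\left[y <_T s \text{ and } c\{y,s\} = \chi\right] \right\}
\]
is definable from the parameters $T, c, \chi, Y_\eta$, all of which lie in $N$, so $Z \in N$ by elementarity.

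The key observation is that $t \in Z$: indeed, every $y_\iota \in Y_\eta \subseteq X \subseteq c_\chi(t)$ satisfies $y_\iota <_T t$ and $c\{y_\iota, t\} = \chi$, by definition of $c_\chi(t)$. Since $Z \in N$ and $t \in Z$, we obtain $Z \cap \pred t \in \mathcal G^*_{N,t} \subseteq I^*_{N,t}$. Then, since $X \in I^+_{N,t}$, the intersection of a co-ideal set with a filter set is again in the co-ideal, so $X \cap Z \in I^+_{N,t}$. In particular $X \cap Z \neq \emptyset$, and we may pick $y_\eta \in X \cap Z$; membership in $Z$ guarantees both that $y_\eta$ lies strictly above every element of $Y_\eta$ and that $c\{y_\iota, y_\eta\} = \chi$ for every $\iota<\eta$, keeping $Y_{\eta+1} = Y_\eta \cup \{y_\eta\}$ a $\chi$-homogeneous chain in $X$.

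The main obstacle to carry through this construction is ensuring the recursion never gets stuck before reaching length $\kappa$; this is precisely where the interaction between the three hypotheses is used together. The closure condition $[N]^{<\kappa} \subseteq N$ is what guarantees $Y_\eta$ stays inside $N$ for all $\eta<\kappa$ so that elementarity applies to $Z$; the fact that $X \subseteq c_\chi(t)$ is what forces $t \in Z$ so that $Z \cap \pred t$ lies in the dual filter; and the $N$-eligibility of $t$ (derived from $X \in I^+_{N,t}$) is what prevents the filter and co-ideal from colliding. Once the recursion is shown to survive every stage, the resulting chain $Y = \{y_\eta : \eta<\kappa\} \subseteq X$ has order type $\kappa$ and is $\chi$-homogeneous, as required.
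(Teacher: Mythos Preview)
Your proof is correct and follows essentially the same approach as the paper's: a transfinite recursion of length $\kappa$, using $[N]^{<\kappa}\subseteq N$ to keep $Y_\eta\in N$, defining the same set $Z$ by elementarity, observing $t\in Z$ because $Y_\eta\subseteq c_\chi(t)$, and concluding $X\cap Z\in I^+_{N,t}$ is nonempty. Your explicit remark that $X\in I^+_{N,t}$ forces $N$-eligibility of $t$ is a nice addition the paper leaves implicit, though note that the inclusion $\mathcal G^*_{N,t}\subseteq I^*_{N,t}$ already holds unconditionally by Lemma~\ref{I_N-formulations}.
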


Two observations about Lemma~\ref{homog-2.2} will be demonstrated in the following corollary;
one about the hypotheses, and the other about the conclusion.

First, implicit in the hypothesis $X \in I^+_{N,t}$ of Lemmas~\ref{some-colour-in-co-ideal} and~\ref{homog-2.2} 
is the fact that $t$ is $N$-eligible.
Ultimately, it will be the existence of $N$-eligible nodes that will help us find suitable sets $X \in I^+_{N,t}$ 
to which we can apply Lemma~\ref{homog-2.2}.
\todo{Check that this matches the actual use!}

Second, the conclusion of
%Under the hypotheses of 
Lemma~\ref{homog-2.2} actually gives us
%we actually end up with 
a $\chi$-homogeneous chain $Y \cup \{t\}$ of order-type $\kappa+1$.
This will be useful when we prove Theorem~\ref{ER-trees-Stevo} for regular cardinals 
%a version of the Erd\H os-Rado Theorem 
in \autoref{section:ER}.
%\footnote{Will we prove some special cases sooner?}

\begin{corollary}\label{homog-k+1}
Suppose we have cardinals  $\mu$ and $\kappa$, with $\mu < \kappa$,
and a colouring 
$c: [T]^2 \to \mu$. 
%some colour $\chi<\mu$, and some node $t \in T$.
Suppose also that $N \prec H(\theta)$ is an elementary submodel such that 
$T, c \in N$, and also $[N]^{<\kappa} \subseteq N$.
Suppose $t \in T$ is $N$-eligible and $\pred t \subseteq N$.  
Then there is a chain
$Y \in [\pred t]^\kappa$ such that %, homogeneous for $c$. 
%It follows that 
$Y \cup \{t\}$ is %a chain of order-type $\kappa+1$, 
homogeneous for $c$.

\begin{proof}
Since $t$ is $N$-eligible, we have by Lemma~\ref{eligible-equivalent} that
$I_{N,t}$ is a proper ideal on $\pred t$,
so that $\pred t \in I^+_{N,t}$.
Applying Lemma~\ref{some-colour-in-co-ideal} to $\pred t$ itself,
%\[
%\bigcup_{\chi<\mu} c_\chi (t) = \pred t  \in I^+_{N,t}.
%\]
%%Since $\delta^N \in I^+_N$
%%by Lemma~\ref{I_N-ideal}, and 
%Since %we chose 
%the model $N$ %to
%satisfies $[N]^{<\kappa} \subseteq N$, Lemma~\ref{k-complete}(5) tells us
%that $I_{N,t}$ is $\kappa$-complete.  
%Since $\mu < \kappa$,
%%So it follows from
%%Lemma~\ref{union-of-colours} (since $\mu < \kappa$) that 
%there must
%be some 
we fix a colour $\chi < \mu$ such that $c_\chi(t) \in I^+_{N,t}$.
%Fix such a colour $\chi$.  

%Since $N = N_t$ was taken from a nice collection of elementary submodels, 
%we have $\pred t \subseteq N$.

\begin{claim}
We have $\chi \in N$.

\begin{proof}
Since $\chi < \mu < \kappa \leq \cf(\height_T(t)) 
\leq \height_T(t)$ (using Lemma~\ref{k-complete}(7)),
there must be
%\footnote{We could avoid this step (proving that $\chi$ is in our particular model $N$)
%by insisting that $\mu \subseteq N_\emptyset$
%when we chose the $\kappa$-very nice collection of elementary submodels earlier in the proof.
%That is certainly possible as an alternative, but unnecessary.} 
some $s <_T t$ with $\height_T(s) = \chi$.
But then $s \in N$ (since by assumption $\pred t \subseteq N$), 
so by elementarity we have $\chi = \height_T(s) \in N$.
%So $s \in N$, and it follows by elementarity that $\chi \in N$.
\end{proof}
\end{claim}

We now
%Then 
apply Lemma~\ref{homog-2.2} to
$c_\chi(t)$ itself, to obtain a $\chi$-homogeneous chain $Y \in [c_\chi(t)]^\kappa$.  
Then $Y \cup \{t\}$ is a
$\chi$-homogeneous chain of order type $\kappa + 1$, as required.
%In this case, there was no reason to restrict $k$ to be finite; in
%fact the proof goes through for any number of colours $\mu <
%\kappa$, as the ideal $I_N$ is $\kappa$-complete (by
%Lemma~\ref{k-complete}, since we chose the model $N$ to satisfy
%$[N]^{<\kappa} \subseteq N$).
\end{proof}
\end{corollary}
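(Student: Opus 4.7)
My plan is to combine Lemmas~\ref{homog-2.2} and~\ref{some-colour-in-co-ideal} with the $N$-eligibility of $t$, in three short steps.

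First, I would use the hypothesis that $t$ is $N$-eligible to conclude (via Lemma~\ref{eligible-equivalent}, specifically the equivalence of eligibility with $\pred t \notin I_{N,t}$) that $\pred t \in I^+_{N,t}$. Applying Lemma~\ref{some-colour-in-co-ideal} to $X = \pred t$, I would obtain some colour $\chi < \mu$ with $c_\chi(t) = \pred t \cap c_\chi(t) \in I^+_{N,t}$. This colour $\chi$ is the one on which the homogeneous chain produced by Lemma~\ref{homog-2.2} will be built.

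The key subtlety, and what I expect to be the main obstacle, is that to invoke Lemma~\ref{homog-2.2} one needs $\chi \in N$, which is not immediate from $\chi < \mu$. To arrange this, I would exploit the cofinality bound from Lemma~\ref{k-complete}(7): since $t$ is $N$-eligible and $\pred t \subseteq N$, we have $\kappa \leq \cf(\height_T(t)) \leq \height_T(t)$, so the chain of predecessors of $t$ reaches every ordinal below $\kappa$. In particular, since $\chi < \mu < \kappa$, there exists $s <_T t$ with $\height_T(s) = \chi$. Because $\pred t \subseteq N$, this $s$ lies in $N$, and then by elementarity $\chi = \height_T(s) \in N$.

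With $\chi \in N$ secured and $c_\chi(t) \in I^+_{N,t}$, I would apply Lemma~\ref{homog-2.2} (taking $X = c_\chi(t)$) to produce a $\chi$-homogeneous chain $Y \in [c_\chi(t)]^\kappa$. Since $Y \subseteq c_\chi(t)$, every $y \in Y$ satisfies $y <_T t$ and $c\{y,t\} = \chi$ by definition of $c_\chi(t)$, so $Y \cup \{t\}$ is a $\chi$-homogeneous chain of order-type $\kappa + 1$, which in particular gives a $\chi$-homogeneous chain $Y \in [\pred t]^\kappa$ such that $Y \cup \{t\}$ is homogeneous for $c$, as required.
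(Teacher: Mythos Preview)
Your proposal is correct and follows essentially the same approach as the paper's proof: you invoke eligibility to get $\pred t \in I^+_{N,t}$, apply Lemma~\ref{some-colour-in-co-ideal} to find $\chi$ with $c_\chi(t) \in I^+_{N,t}$, establish $\chi \in N$ via Lemma~\ref{k-complete}(7) and elementarity exactly as the paper does, and then apply Lemma~\ref{homog-2.2}. You even correctly anticipated that the need for $\chi \in N$ is the one nontrivial step.
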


So if we could ensure the existence of models $N$ with some $N$-eligible nodes $t$ such that $\pred t \subseteq N$,
then we would be part way toward our goal of obtaining the long homogeneous chains we are looking for.
Having built up an algebraic structure based on hypothetical
elementary submodels $N \prec H(\theta)$, 
we would like to know:
Under what circumstances can we guarantee that some models $N$ will have some $N$-eligible nodes?

First, a counterexample:
%\footnote{Similar counterexamples can be constructed for $(2^{<\kappa})$-special trees,
%where $\kappa$ is a successor cardinal, %Generalize this counterexample for larger $\kappa$, 
%using the Sierpinski partition.}

\begin{example}
Fix any infinite regular cardinal $\kappa$.
Let $T$ be a $(2^{<\kappa})$-%an $\aleph_0$-
special tree of height $(2^{<\kappa})^+$ (such as, for example, a special Aronszajn tree,
which we know exists for $\kappa = \aleph_0$% by Theorem~\ref{exist-special}
).
\todo{In thesis, restore reference here to theorem in introduction.}
Then by Corollary~\ref{special-counterexample} %a theorem of Galvin~\cite[Theorem~7]{Galvin},
we have 
\[
T \not\to \left(\kappa+1, \kappa\right)^2.
\]
Fix a colouring $c : [T]^2 \to 2$ witnessing this negative partition relation.
In particular, there is no $c$-homogeneous chain in $T$ of order-type $\kappa+1$.

\begin{claim}
If $N \prec H(\theta)$ is any %countable 
elementary submodel such that $T, c \in N$ and $[N]^{<\kappa} \subseteq N$,
then there is no $N$-eligible node $t \in T$ with $\pred t \subseteq N$,
even though (provided $\left|N\right| = 2^{<\kappa}$) 
there are nodes $t \in T$ with $ht_T(t) = \sup(N \cap (2^{<\kappa})^+)$.

\begin{proof}
Suppose $t \in T$ is $N$-eligible and $\pred t \subseteq N$.
Applying Corollary~\ref{homog-k+1} to the colouring $c$ (with %$\kappa = \aleph_0$ and 
$\mu = 2$),
we get a $c$-homogeneous chain of order-type $\kappa+1$,
contradicting our choice of $c$.
\end{proof}
\end{claim}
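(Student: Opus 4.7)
The plan is to argue by contradiction via Corollary~\ref{homog-k+1}. Suppose, toward contradiction, that there is some $N$-eligible node $t \in T$ with $\pred t \subseteq N$. Since $c: [T]^2 \to 2$ is a $2$-coloring and $2 < \kappa$, and since $T, c \in N$ with $[N]^{<\kappa} \subseteq N$, the hypotheses of Corollary~\ref{homog-k+1} (with $\mu = 2$) are satisfied at the node $t$. Applying that corollary produces a chain $Y \in [\pred t]^{\kappa}$ such that $Y \cup \{t\}$ is $c$-homogeneous. But $Y \cup \{t\}$ is a $c$-homogeneous chain of order-type $\kappa + 1$, directly contradicting the fact that $c$ witnesses $T \not\to (\kappa+1, \kappa)^2$ (which implies in particular no $c$-homogeneous chain of order-type $\kappa+1$ exists, regardless of color).

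For the parenthetical observation that such nodes $t$ with $\height_T(t) = \sup(N \cap (2^{<\kappa})^+)$ do exist when $|N| = 2^{<\kappa}$: the ordinal $\delta := \sup(N \cap (2^{<\kappa})^+)$ satisfies $\delta < (2^{<\kappa})^+$ because $|N \cap (2^{<\kappa})^+| \leq |N| = 2^{<\kappa}$ and $(2^{<\kappa})^+$ is regular, so $\delta$ is bounded. Since $T$ has height $(2^{<\kappa})^+$, the $\delta$-th level is non-empty, and any $t$ on this level satisfies the height condition. The point of this observation is to underscore that the failure is not for trivial reasons of height; the obstruction comes purely from $t$ failing to be $N$-eligible (or failing to have $\pred t \subseteq N$).

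The main obstacle here is conceptually minor: one only needs to confirm that the hypotheses of Corollary~\ref{homog-k+1} match, specifically that $\mu = 2 < \kappa$ (automatic from $\kappa$ infinite), that $c \in N$ (given), and that $[N]^{<\kappa} \subseteq N$ (given). No technical subtlety arises in the contradiction step, and the example thus demonstrates sharply that existence of $N$-eligible nodes $t$ with $\pred t \subseteq N$ is genuinely a nontrivial hypothesis on $T$, forcing us in the sequel to work harder (i.e., to exploit non-$(2^{<\kappa})$-specialness of $T$) in order to produce such combinations of models and nodes.
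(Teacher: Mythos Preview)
Your proof is correct and follows the same approach as the paper: assume an $N$-eligible node $t$ with $\pred t \subseteq N$ exists, invoke Corollary~\ref{homog-k+1} with $\mu = 2$, and obtain a $c$-homogeneous chain of order-type $\kappa+1$, contradicting the choice of $c$. You also supply a justification for the parenthetical remark about nodes of height $\sup(N \cap (2^{<\kappa})^+)$, which the paper leaves implicit.
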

%FILL IN
\end{example}

However, $(2^{<\kappa}$)-special trees are essentially the only counterexamples.
%
%DISCUSS HERE\footnote{May have to wait until defining very nice collections, 
%so that we can press down to get a model $N$ with an $N$-eligible node.} 
%how to get the Erd\H os-Rado equivalent,
%that is, homogeneous chains of order-type $\kappa +1$ for $<\kappa$ many colours.
%
%Having built up an algebraic structure based on hypothetical
%elementary submodels $N \prec H(\theta)$, 
We will proceed in the next section %it is now time 
to show how to construct collections of elementary submodels $N \prec H(\theta)$,
and to show that provided $T$ is a non-$(2^{<\kappa})$-special tree 
for some regular cardinal $\kappa$, %discuss under what circumstances 
we can %how to 
guarantee that some of the models $N$ satisfying $[N]^{<\kappa} \subseteq N$ 
will have some $N$-eligible nodes $t$ such that $\pred t \subseteq N$.%
\footnote{In the special case where $T$ is the cardinal $(2^{<\kappa})^+$ for some regular cardinal $\kappa$,
and we have fixed a colouring $c : [T]^2 \to \mu$ for some cardinal $\mu < \kappa$,
we can use~\cite[Lemma~24.28 and Claim~24.23(b)]{J-W} or~\cite[p.~245]{Milner-esm}
to fix an elementary submodel $N \prec H(\theta)$ with $T, c \in N$, such that
$\left|N\right| = 2^{<\kappa}$, $[N]^{<\kappa} \subseteq N$, and
$N \cap (2^{<\kappa})^+ = \delta$ for some ordinal $\delta$ with $\left|\delta\right| = 2^{<\kappa}$.
Then, by Remark~\ref{isomorphism-when-cardinal} and footnote~\ref{Mostowski},
we can set $t = \delta$, so that $t$ is $N$-eligible and $\pred t \subseteq N$.
Then Corollary~\ref{homog-k+1} gives us a chain of order-type $\kappa+1$, homogeneous for $c$.
This is the proof of the balanced Erd\H os-Rado Theorem for regular cardinals 
given in~\cite[Section~2, Theorem~2.1]{BHT}.

In the slightly more general case of a tree $T$ of height $(2^{<\kappa})^+$ such that
every antichain of $T$ has cardinality $\leq 2^{<\kappa}$ 
(such as a $(2^{<\kappa})^+$-Souslin tree, if one exists),
we can modify the constructions of~\cite[Lemma~24.28]{J-W} and~\cite[p.~245]{Milner-esm}
so that, for each ordinal $\eta < \kappa$, we impose the extra condition
\[
\bigcup_{B \in N_\eta} \left\{ t\in T : \pred t \subseteq B \text{ and } t \notin B \right\} \subseteq N_{\eta+1}.
\]
Then the model $N = N_\kappa$ has the property that all nodes in $T \setminus N$ are $N$-eligible,
and in particular we have $T \cap N = \left\{ t \in T : \height_T(t) < \delta \right\}$, 
where $\delta = N \cap (2^{<\kappa})^+$.
We can then choose any $t \in T$ of height $\delta$, and as before,
Corollary~\ref{homog-k+1} gives us a chain of order-type $\kappa+1$, homogeneous for $c$,
proving the case of Theorem~\ref{ER-trees-Stevo} for the tree $T$.

However, in the general case of an arbitrary non-$(2^{<\kappa})$-special tree,
this method is insufficient.
We will need the full strength of the construction in \autoref{section:very-nice}
in order to find models $N$ with $N$-eligible nodes $t$ such that $\pred t \subseteq N$,
before we can return to proving Theorem~\ref{ER-trees-Stevo} (for regular cardinals $\kappa$)
in \autoref{section:ER}.

Furthermore, our ultimate goal is to prove the Main Theorem~\ref{BHT-trees-regular},
and for this we will need many models with eligible nodes, 
requiring the full strength of the subsequent constructions even in the simpler cases mentioned above.
%
%Do some special cases here, such as when each antichain has size $\leq 2^{<\kappa}$.
}

% !TeX encoding = UTF-8
% !TeX TS-program = xelatex
            % this should also work with pdflatex but eventually want to migrate to xelatex
% very_nice_BHT.tex

% !TeX root = nonspecial.tex

\section{Very Nice Collections of Elementary Submodels}
\label{section:very-nice}

We will generalize %begin this subsection by generalizing 
Kunen's definition \cite[Definition~III.8.14]{New-nen} 
of a \emph{nice chain} of elementary submodels of $H(\theta)$:%
\footnote{Kunen's \emph{nice chains} are the special case of our \emph{nice collections of elementary submodels}
where $T = \lambda = \omega_1$, except that we require $N_\emptyset$ to be an elementary submodel,
rather than Kunen's $N_0 = \emptyset$.}
\todo{Compare also the \emph{approximation sequences} in Chapter 4 of
\emph{Introduction to Cardinal Arithmetic} (1999; Holz, Stefffens, Weitz).}

\begin{definition}
Let $\lambda$ be any regular uncountable cardinal, and let $T$ be a tree of height $\lambda$.
The collection $\left<W_t\right>_{t \in T}$ is called a 
\emph{nice collection of sets indexed by $T$} if:
\begin{enumerate}
\item For each $t \in T$, $\left|W_t\right| < \lambda$;
\item The collection is \emph{increasing}, 
\todo{Instead of saying ``for all $s <_T t$'', 
consider changing to ``for all $s, t \in T$ where $t$ is an immediate successor of $s$''.}
meaning that for $s, t \in T$ with $s <_T t$, $W_s \subseteq W_t$;
\item The collection is \emph{continuous} (with respect to its indexing),%
\footnote{%\label{tree-topology}
The continuity condition 
is consistent with the topological notion of continuity if we define the appropriate topologies:

The topology on $\mathcal P(W)$, where $W = \bigcup_{t \in T} W_t$,  %$\mathcal P(H(\theta))$ 
is the product topology on $\{0,1\}^W %{H(\theta)}
$,
that is, the topology of pointwise convergence on the set of characteristic functions on $W$, %$H(\theta)$,
where of course $\{0,1\}$ has the discrete topology.

The topology on $T$ is the \emph{tree topology}, defined earlier in footnote~\ref{tree-topology}.
%by any of the following equivalent formulations:
%\begin{enumerate}
%\item The tree topology has, as its basic open sets, 
%all chains $C \subseteq T$ such that $\height_T''C$ is open as a set of ordinals.
%\item \cite[p.~14]{Rudin-lectures} The tree topology has, as its basic open sets,
%the singleton root $\{\emptyset\}$ as well as all intervals (chains) of the form $(s,t]$ for $s <_T t$ in $T$.
%\item \cite[p.~244]{Stevo-TLOS} The tree topology has, as its basic open sets,
%all intervals (chains) of the form $(s,t]$ for $s <_T t$ in $T \cup \{-\infty\}$.
%\item \cite[Definition~III.5.15]{New-nen} 
%A set $U \subseteq T$ is open in the tree topology iff for all $t \in U$ with height a limit ordinal,
%\[
%\left(\exists s <_T t\right) \left[\cone{s} \cap \pred{t} \subseteq U\right].
%\]
%\item A point $t \in T$ is a limit point of a set $X \subseteq T$ in the tree topology iff 
%($\height_T (t)$ is a limit ordinal and)
%$X \cap \pred{t}$ is unbounded below $t$.  %in $\pred{t}$.  
%%DOES THIS ADEQUATELY DEFINE A TOPOLOGY?
%\end{enumerate}
%The fact that the tree topology doesn't have an easily intuitive definition in terms of basic open sets
%(as seen especially by the awkward semi-open intervals in (b) and (c) above)
%seems to relate to the fact that there is no obvious order topology on an arbitrary partial order.
} 
meaning that for all limit nodes $t \in T$, 
%with height a limit ordinal,\footnote{Such a node $t$ is called a \emph{limit node}.}
\[
W_t = \bigcup_{s <_T t} W_s.
\]
    \setcounter{condition}{\value{enumi}}
\end{enumerate}
Suppose furthermore that
%Let $\lambda$ be any regular uncountable cardinal, let $T$ be a tree of height $\lambda$, and let 
$\theta \geq \lambda % > \left|T\right|
$ is a regular cardinal such that% 
\footnote{We could simplify the requirements on $\theta$ in this definition and in the subsequent lemmas %hypotheses 
%(in both this lemma and the following one, and in the prior definition of a nice collection of elementary submodels) 
by requiring the stronger condition $T \in H(\theta)$.
This implies both of the required conditions $T \subseteq H(\theta)$ and $\theta \geq \lambda$,
as well as the extra condition $\theta > \left|T\right|$, 
but this seems to be unnecessary.
%Would it help to require $\theta > \left|T\right|$, or even $T \in H(\theta)$?
} 
$T \subseteq H(\theta)$.
The collection $\left<N_t\right>_{t \in T}$ is called a 
\emph{nice collection of elementary submodels of $H(\theta)$ indexed by $T$} if,
in addition to being a nice collection of sets as above, we have:
\begin{enumerate}
    \setcounter{enumi}{\value{condition}}
\item For each $t \in T$, $N_t \prec H(\theta)$;
%\item For each $t \in T$, $\left|N_t\right| < \lambda$;
\item For each $t \in T$, $\pred{t} \subseteq N_t$;
%\footnote{Decide exactly whether we need this here.}
\todo{If $\lambda$ is a successor cardinal, say $\lambda = \nu^+$, we can also require $\nu \subseteq N_0$,
so that (1) and (6) together %the first half of (4) 
would imply (2), %the second half,
and this also gives that $N_t \cap \lambda$ is always an ordinal.}
\item For $s, t \in T$ with $s <_T t$, $N_s \in N_t$. % and $N_s \subseteq N_t$;
\todo{Consider using this condition:  
For $s, t \in T$ with $s <_T t$, $\left< N_r : r \leq_T s \right> \in N_t$.
We can easily accomplish this in the construction.
Together with condition (5) (or if $\left|\height_T(t)\right| \subseteq N_t$), it certainly implies (6), %the first half of (4),
but not (2) %the second half 
unless we also impose $\left|N_s \right| \subseteq N_t$ for all $s <_T t$.
Decide whether we ever need it for this proof, or whether it is ever better to include this.}
    \setcounter{condition}{\value{enumi}}
\end{enumerate}
If $\kappa$ is an infinite cardinal, then we say $\left<N_t\right>_{t \in T}$ is a
\emph{$\kappa$-very nice collection of elementary submodels} if, in addition to the above conditions, we have
\begin{enumerate}
    \setcounter{enumi}{\value{condition}}
\item For $s, t \in T$ with $s <_T t$, $[N_s]^{<\kappa} \subseteq N_t$.
\end{enumerate}
If $\left<M_t\right>_{t \in T}$ and $\left<N_t\right>_{t \in T}$ are 
two nice collections of sets, %elementary submodels of $H(\theta)$,
then we say that
%\footnote{Decide whether to reverse the usage of $M$ and $N$ in this context.}
$\left<N_t\right>_{t \in T}$ is a \emph{fattening} of $\left<M_t\right>_{t \in T}$ if
for all $t \in T$ we have $M_t \subseteq N_t$.
\end{definition}

%This is a generalization of Kunen's definition \cite[Definition~III.8.14]{New-nen} of a \emph{nice chain}
%(%where Kunen's chains are 
%indexed by $\omega_1$), 
%except that we require $N_\emptyset$ to be an elementary submodel, rather than setting it to $\emptyset$.

Notice that all nice collections of elementary submodels are $\aleph_0$-very nice collections, 
since any elementary submodel of $H(\theta)$ contains all of its finite subsets.

What condition on the combinatorial relationship between $\kappa$ and $\lambda$ is necessary for the existence
of a $\kappa$-very nice collection of elementary submodels indexed by a tree $T$ of height $\lambda$?

\begin{lemma}
Suppose $\lambda$ is any regular uncountable cardinal, $T$ is a tree of height $\lambda$, 
and
$\theta \geq \lambda$ is a regular cardinal such that 
$T \subseteq H(\theta)$.
Suppose 
$\kappa$ is any infinite cardinal.
If there exists a $\kappa$-very nice collection of elementary submodels of $H(\theta)$ indexed by $T$,
then we must have 
\[
\tag{$**$}
%\begin{equation}
\label{necessary-k-very-nice}
\left(\forall \text{ cardinals } \nu < \lambda \right) \left[\nu^{<\kappa} < \lambda\right].
%\end{equation}
\]

\begin{proof}
Let $\left<N_t\right>_{t\in T}$ be a $\kappa$-very nice collection of elementary submodels of $H(\theta)$,
and fix any cardinal $\nu < \lambda$.
Since $T$ has height $\lambda$, 
we can choose some $s, t \in T$ with $\height_T(s) = \nu$ and $s <_T t$.
Then $\pred s \subseteq N_s$, so that $\left|N_s\right| \geq \left|\pred s\right| = \nu$.
Since the collection is $\kappa$-very nice, we must have $[N_s]^{<\kappa} \subseteq N_t$, so that
\[
\left| N_t \right| \geq \left| \left[N_s\right]^{<\kappa} \right| = \left|N_s\right|^{<\kappa} \geq \nu^{<\kappa}.
\]
But we must also have $\left|N_t\right| < \lambda$,
giving the requirement $\nu^{<\kappa} < \lambda$.
\end{proof}
\end{lemma}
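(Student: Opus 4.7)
The plan is to argue by directly leveraging the defining conditions of a $\kappa$-very nice collection to extract the stated cardinal inequality. Fix any cardinal $\nu < \lambda$. Since $T$ has height $\lambda$ and $\nu < \lambda$, I can pick a node $s \in T$ with $\height_T(s) = \nu$, and then, since $\nu < \lambda = \height(T)$, also a node $t \in T$ with $s <_T t$. The goal is to use conditions (5) and (7) of the definition to bound $|N_t|$ from below by $\nu^{<\kappa}$, and then use condition (1) to conclude $\nu^{<\kappa} < \lambda$.

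For the lower bound: by condition (5), $\pred s \subseteq N_s$; the set $\pred s$ is a chain of order type $\height_T(s) = \nu$, so $|N_s| \geq |\pred s| = \nu$. Then condition (7) (the $\kappa$-very-niceness) gives $[N_s]^{<\kappa} \subseteq N_t$. Since $[N_s]^{<\kappa}$ has cardinality at least $\nu^{<\kappa}$ (any $\mu$-sized subset of a $\nu$-sized subset of $N_s$ is a $\mu$-sized element of $N_t$, for each $\mu < \kappa$), we obtain $|N_t| \geq \nu^{<\kappa}$. Combined with condition (1), which gives $|N_t| < \lambda$, this yields $\nu^{<\kappa} < \lambda$.

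This proof is essentially routine once one spots the right two conditions to combine. The main (mild) obstacle is verifying the cardinality calculation $|[N_s]^{<\kappa}| \geq \nu^{<\kappa}$ when $|N_s|$ might be strictly greater than $\nu$; one just notes that since $|N_s| \geq \nu$, there is a subset $A \subseteq N_s$ with $|A| = \nu$, and $[A]^{<\kappa} \subseteq [N_s]^{<\kappa}$, so the bound reduces to a standard fact about $\nu^{<\kappa}$ as the supremum over $\mu < \kappa$ of the numbers of $\mu$-subsets of a $\nu$-set. No new ideas beyond the defining properties are required; the lemma is really just pointing out that the obvious obstruction is the only one captured by the definition.
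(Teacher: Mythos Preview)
Your proof is correct and follows essentially the same argument as the paper: pick $s <_T t$ with $\height_T(s) = \nu$, use condition~(5) to get $|N_s| \geq \nu$, condition~(7) to get $[N_s]^{<\kappa} \subseteq N_t$, and condition~(1) to bound $|N_t| < \lambda$. Your extra remark justifying $|[N_s]^{<\kappa}| \geq \nu^{<\kappa}$ via a $\nu$-sized subset is a nice clarification the paper leaves implicit.
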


In the intended applications, the height $\lambda$ of our tree will be a successor cardinal $\nu^+$.
In that case, %the requirement on $\kappa$ given by Lemma
condition~\eqref{necessary-k-very-nice}
becomes simply $\nu^{<\kappa} = \nu$,
from which we obtain 
%
%FROM OTHER FILE:
%Then we have 
the following chain of equations and inequalities (using
Lemma~\ref{nu-and-kappa} for one of them), which we will refer to
when necessary: %often:
\[
%\cf(\kappa) \leq 
\kappa \leq \cf(\nu) \leq \nu = %2^{<\kappa} =
\nu^{<\kappa} < \nu^+ = \height (T) %\lambda = \cf(\lambda)
\]

In general, it turns out that the necessary 
condition~% given by Lemma~
\eqref{necessary-k-very-nice} is also sufficient,
as the following lemma shows (particularly, part~(3)):

\begin{lemma}\label{build-nice-collection}
Suppose $\lambda$ is any regular uncountable cardinal, $T$ is a tree of height $\lambda$, 
and
$\theta \geq \lambda$ is a regular cardinal such that 
$T \subseteq H(\theta)$.
Fix $X \subseteq H(\theta)$ with $\left|X\right| < \lambda$.
Then:
\begin{enumerate}
\item There is a nice collection $\left<N_t \right>_{t \in T}$ of elementary submodels of $H(\theta)$ %indexed by $T$
such that $X \subseteq N_\emptyset$ (and therefore $X \subseteq N_t$ for every $t \in T$).
\item Given any nice collection $\left<M_t\right>_{t \in T}$ of elementary submodels of $H(\theta)$, %indexed by $T$,
we can {fatten} the collection to include $X$, that is, we can construct another nice collection 
$\left<N_t\right>_{t \in T}$ of elementary submodels of $H(\theta)$, %indexed by $T$, 
that is a fattening of $\left<M_t\right>_{t \in T}$,
such that 
%for all $t \in T$, we have $M_t \subseteq N_t$ and 
$X \subseteq N_\emptyset$.
\item If $\kappa$ is an infinite cardinal such that 
for all cardinals $\nu < \lambda$ we have $\nu^{<\kappa} < \lambda$,
then the nice collections we construct in parts (1) and (2) can be $\kappa$-very nice collections.
\end{enumerate}

\begin{proof}
We construct the nice collection recursively.  The Downward
L\"owenheim-Skolem-Tarski Theorem guarantees the existence of elementary
submodels of arbitrary infinite cardinality, and a version of it
given in~\cite[Theorem~I.15.10]{New-nen},
\cite[Corollary~24.13]{J-W},
\cite[Theorem~1.1]{Dow}, and~\cite[Theorem~2]{Milner-esm}
\todo{Include more references to~\protect\cite{Dow} and~\protect\cite{Milner-esm}.}
says that the submodel can even be guaranteed to
contain any number of specified items, up to the cardinality of the
desired submodel. This is our main tool for the construction, which
proceeds as follows:

\begin{description}
\item [For $\emptyset$]  \hfill
\begin{enumerate}
\item By the Downward L\"owenheim-Skolem-Tarski Theorem version
just mentioned, %extension \cite[Corollary~24.13]{J-W}
we can choose $N_\emptyset \prec H(\theta)$ such that
%\[
$X %\nu \cup \left\{ f, \kappa, \nu, \lambda \right\} 
\subseteq N_\emptyset$,
%\]
with 
\[
\left|N_\emptyset\right| = \max \left\{ \left|X\right|, \aleph_0 \right\} < \lambda,
\] 
satisfying the required properties.

\item If we are fattening an already-existing collection, there is no
difficulty in ensuring as well that
%\[
$M_\emptyset %\cup \left\{X\right\} 
\subseteq N_\emptyset$.
%\]
In this case we would have 
\[
\left|N_\emptyset\right| = \max \left\{ \left|X\right|, \left|M_\emptyset\right| \right\} < \lambda.
\] 

\item There is no additional requirement on $N_\emptyset$ in a $\kappa$-very nice collection.
\end{enumerate}
\item [For successor nodes]  \hfill
\begin{enumerate}
\item Fix $s \in T$, %$\alpha < \lambda$, 
and assume that we
have already constructed $N_s \prec H(\theta)$ satisfying
the required properties,
and suppose that $t\in T$ is an immediate successor of $s$.
%
%such that $\left|N_\alpha\right| = 2^{<\kappa}$.
%
Again, by the Downward L\"owenheim-Skolem-Tarski Theorem,
%extension \cite[Corollary~24.13]{J-W}
we can choose
%\[
$N_{t} \prec H(\theta)$
%\]
such that 
%
%$N_\alpha \in N_{\alpha+1}$, and
\[
N_s \cup \left\{N_s\right\} \cup \pred{t} %\{s\}
%\left[N_\alpha\right]^{<\kappa} 
\subseteq N_{t},
\]
with
%\[
$\left|N_{t}\right| = \left| N_s \right| < \lambda$.
%\]
%In particular, we also have $N_\alpha \subseteq N_{\alpha+1}$, and
The required properties are easy to verify.

\item If we are fattening an already-existing collection, then again there
is no difficulty in ensuring as well that $M_{t} \subseteq
N_{t}$. %as required.
In this case we would have 
\[
\left|N_t\right| = \max \left\{ \left|N_s\right|, \left|M_t\right| \right\} < \lambda.
\] 

\item Since $\left|N_s\right| < \lambda$, the extra hypothesis in this part gives us
%
%By Lemma~\ref{power-of-subsets} and the fact that $\nu^{<\kappa} =
%\nu$, we have
\[
\left|\left[N_s\right]^{<\kappa}\right| \leq %=
\left|N_s\right|^{<\kappa} < \lambda, %= \nu^{<\kappa} = \nu,
\]
so that we can choose $N_t$ such that 
\[
%N_\alpha \cup \left\{N_\alpha\right\} \cup
\left[N_s\right]^{<\kappa} \subseteq N_{t},
\]
while still having $\left|N_t \right| < \lambda$.
\end{enumerate}

\item [For limit nodes]  Fix limit node $t \in T$, %ordinal $\alpha<\lambda$, 
and assume that we have already constructed the chain $\left<N_s %\gamma
: s <_T t \right>$ satisfying the required properties.
%
%for all $\gamma<\delta<\alpha$ we have already constructed $N_\gamma
%\prec H(\theta)$ such that $\left|N_\gamma\right| = 2^{<\kappa}$
%and $N_\gamma \subseteq N_{\delta}$.
Define
\[
N_t = \bigcup_{s <_T t} N_s.
\]
As the union of an increasing chain of elementary submodels is an
elementary submodel (\cite[Lemma~24.5]{J-W},
\cite[Corollary~1.3]{Dow}, and~\cite[top of p.~245]{Milner-esm}), we have $N_t \prec
H(\theta)$. 
Since $\lambda$ is a regular cardinal, and
%Also, since 
$\height_T(t)<\lambda$ (so that $\left|\pred{t}\right| < \lambda$), and each $\left|N_s\right| < \lambda$,
%we have $\left|\alpha\right| \leq \nu$, so 
it is clear that
%\[
$\left|N_t\right|  < \lambda$. %= \left|\alpha\right| \cdot \nu = \nu.
%\]
The remaining properties are easy to verify.\qedhere
\end{description}
\end{proof}
\end{lemma}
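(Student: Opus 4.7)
The plan is to construct $\langle N_t \rangle_{t \in T}$ by recursion on the tree, handling three cases (the root, successor nodes, limit nodes) and treating all three parts of the lemma simultaneously. The main tool will be the Downward L\"owenheim-Skolem-Tarski Theorem in the form that lets us specify, in advance, a subset of the desired elementary submodel, provided its size does not exceed our target cardinality.

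At the root $\emptyset$, I would apply LST to produce $N_\emptyset \prec H(\theta)$ containing $X$ (and also $M_\emptyset$ in the fattening case) of cardinality $\max\{|X|, |M_\emptyset|, \aleph_0\} < \lambda$. For an immediate successor $t$ of some $s \in T$ (with $N_s$ already built), I would apply LST again to obtain $N_t \prec H(\theta)$ containing $N_s \cup \{N_s\} \cup \pred{t} \cup M_t$; including $N_s$ as an \emph{element} (not merely a subset) is what forces condition~(6), while $\pred{t} \subseteq N_t$ handles condition~(5). In the $\kappa$-very nice setting we also require $[N_s]^{<\kappa} \subseteq N_t$, and this is where the hypothesis $\nu^{<\kappa} < \lambda$ for all $\nu < \lambda$ is used: since $|N_s| < \lambda$, we have $|[N_s]^{<\kappa}| = |N_s|^{<\kappa} < \lambda$, so the enlarged set still has size $< \lambda$ and LST applies.

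For a limit node $t$, I would simply set $N_t = \bigcup_{s <_T t} N_s$. The standard fact that the union of an increasing chain of elementary submodels is again an elementary submodel gives condition~(4), while the continuity condition~(3) holds by construction. Condition~(5) at a limit $t$ follows because, $t$ being a limit, every $r <_T t$ lies strictly below some $s <_T t$, so $r \in \pred{s} \subseteq N_s \subseteq N_t$. The $\kappa$-very nice condition is automatic at limit stages, since any element of $[N_t]^{<\kappa}$ is already a subset of some $N_s$ along the chain, and the condition at the successor step placed $[N_s]^{<\kappa}$ into the next model.

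The main obstacle I anticipate is controlling cardinalities at limit nodes to keep $|N_t| < \lambda$. Here I would use that $\lambda$ is regular and uncountable: since $|\pred{t}| \leq |\height_T(t)| < \lambda$ and each $|N_s| < \lambda$ along the chain, regularity of $\lambda$ gives $|N_t| \leq |\pred{t}| \cdot \sup_{s <_T t}|N_s| < \lambda$. Part~(2) of the lemma requires only a trivial modification of the construction: at every recursion step we feed $M_t$ as an additional subset that the LST application must cover, which never increases the cardinality past $\lambda$ since $\langle M_t \rangle$ is itself a nice collection. Part~(3) is then the same recursion with $[N_s]^{<\kappa}$ added to the LST requirement at each successor step, with the hypothesis on $\nu^{<\kappa}$ ensuring the recursion does not break out of $H(\theta)$-sized cardinalities below $\lambda$.
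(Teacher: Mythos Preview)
Your proposal is correct and follows essentially the same approach as the paper: a recursive construction along the tree using the Downward L\"owenheim--Skolem--Tarski theorem at the root and at successor nodes, and taking unions at limit nodes, with the same cardinality bookkeeping throughout. One small wording slip: at a limit node $t$, the $\kappa$-very nice condition~(7) asks for $[N_s]^{<\kappa} \subseteq N_t$ for each $s <_T t$, not anything about $[N_t]^{<\kappa}$; the correct reason is that any such $s$ has a strict successor $s' <_T t$ along the chain, and the inductive hypothesis at $s'$ gives $[N_s]^{<\kappa} \subseteq N_{s'} \subseteq N_t$ --- but this is exactly what the paper dismisses as ``easy to verify''.
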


Given any tree $T$ of height $\lambda$, any large enough $\theta$, 
and any cardinal $\kappa$ satisfying %the requirement given by Lemma
condition~\eqref{necessary-k-very-nice},
we can use Lemma~\ref{build-nice-collection} to construct a $\kappa$-very nice collection of elementary submodels
$\left<N_t\right>_{t\in T}$ of $H(\theta)$,
such that $N_\emptyset$ contains any relevant sets,
and in particular we can ensure that $T \in N_\emptyset$.
We can then use any node $t \in T$ and its associated model $N_t$ to build the algebraic structures defined in 
\autoref{section:ideals-esm},
including the ideal $I_{N_t,t}$ on $\pred t$.
By definition of our nice collections, we always have $N_t \prec H(\theta)$ and $\pred t \subseteq N_t$,
but in order to get the most value from these structures,
we will need to find nodes and models with two extra features:
eligibility and $\kappa$-completeness.

%Taking any sequence $\mathcal N$ constructed as in
%Lemma~\ref{build-sequence-models}, each model $N$ in the sequence
%satisfies most of the conditions required by the algebraic
%structures analyzed in Lemmas~\ref{delta-from-model} through
%\ref{homog-2.2}, namely $N \prec H(\lambda^+)$, $\left|N\right| =
%\nu$, $\nu \subseteq N$, and $\lambda, f \in N$. However,
First, $\kappa$-completeness of the required algebraic structures (and, in
particular, the recursive construction of Lemma~\ref{homog-2.2})
depends on the additional condition $[N]^{<\kappa} \subseteq N$
introduced before Lemma~\ref{closed-subsets}. 
For any $s <_T t$ in $T$,
%$\alpha<\beta<\lambda$, 
we have $[N_s]^{<\kappa} \subseteq N_{t}$.  
But we want to know: Which nodes $t \in T$ can we
choose so that the model $N_t$ satisfies the stronger condition
$[N_t]^{<\kappa} \subseteq N_{t}$?

\begin{lemma}\label{model-kappa-complete}
Suppose $\lambda$ is any regular uncountable cardinal, $T$ is a tree of height $\lambda$, 
and
$\theta \geq \lambda$ is a regular cardinal such that 
$T \subseteq H(\theta)$.
Suppose %$T$ is any tree, 
$\kappa$ is any infinite cardinal, 
and %$\theta > \left|T\right|$ is a regular cardinal.
%Suppose 
$\left<N_t\right>_{t \in T}$ is a $\kappa$-very nice collection of elementary submodels of $H(\theta)$.
%
%Let $\mathcal N = \left<N_\alpha : \alpha< \lambda\right>$ be any
%sequence of elementary submodels of $H(\lambda^+)$ constructed as in
%Lemma~\ref{build-sequence-models}.  
Then
%\footnote{This lemma and
%the preceding paragraph (in the other document) can be moved later, and expanded with
%superscripts.%  Note also that this lemma does not depend on the
%%relationship between $\kappa$ and $\nu$.  YES IT DOES!!!
%}
for every %ordinal 
$t \in T$,
we have:
%\footnote{Only the $\implies$ direction of this statement
%is ever used.  Also, stating the hypothesis that $\alpha$ must be a
%limit ordinal is redundant for the $\implies$ implication, as the
%left side already implies that.}
\begin{enumerate}
\item
%\[
if $\cf(\height_T(t)) \geq \kappa$ then $[N_t]^{<\kappa} \subseteq N_t$.
%\]
\item If $t$ is $N_t$-eligible, then%
%For every limit ordinal $\alpha < \lambda$, we have
\footnote{This part is not actually used.} 
\[
\cf\left(\height_T(t)\right) \geq \kappa \iff 
%\cf\left(\Psi_\mathcal N (\alpha)\right) \geq \kappa \iff 
\left[N_t\right]^{<\kappa} \subseteq N_t.
\]
\end{enumerate}

\begin{proof}\hfill
\begin{enumerate}
\item
%Since $\cf(\alpha) \geq \kappa$, $\alpha$ is certainly a limit
%ordinal.
Fix $t \in T$ %an ordinal $\alpha < \lambda$ 
such that $\cf(\height_T (t)) \geq \kappa$.  
Fix a cardinal $\mu<\kappa$, and some collection
\[
\mathcal C = \left<A_\iota\right>_{\iota<\mu} \in
\left[N_t\right]^\mu.
\]
For each ordinal $\iota<\mu$, we have $A_\iota \in N_t$.  Since
$\cf(\height_T(t))\geq \kappa$, $t$ must be a limit node, %$\alpha$ must be a limit ordinal, 
so since the collection of models
%the sequence of models $\mathcal N$ 
is %was constructed to be
continuous, we have $A_\iota \in N_{s_\iota}$ for some
$s_\iota <_T t$. Then define
\todo{Formalize this notation, since the $\sup$ of a sequence of nodes
is defined only because we are clearly within a chain $\pred{t}$ (or possibly $\pred{t} \cup \{t\}$).}
\[
s = \sup_{\iota<\mu} s_\iota, 
\]
where the $\sup$ is taken along the chain $\pred t \cup \{t\}$.
Since each $s_\iota <_T t$ and $\mu < \kappa \leq
\cf(\height_T(t))$, we have $s <_T t$.  We then have, 
since the collection is $\kappa$-very nice,
%using
%property~(7) of the construction in
%Lemma~\ref{build-sequence-models},
\[
\mathcal C \in \left[N_s\right]^\mu \subseteq
\left[N_s\right]^{<\kappa} \subseteq N_t,
\]
as required.
\item This is simply a combination of the previous part with
Lemma~\ref{k-complete}(7). 
%and Lemma~\ref{psi-normal}(\ref{psi-preserves-cf}). 
\qedhere
\end{enumerate}
\end{proof}
\end{lemma}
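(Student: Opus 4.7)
The plan is to prove part~(1) directly from the continuity and $\kappa$-very-nice properties of the collection, and then obtain part~(2) by combining part~(1) with the earlier Lemma~\ref{k-complete}(7).

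For part~(1), fix $t \in T$ with $\cf(\height_T(t)) \geq \kappa$. Since $\kappa$ is infinite, $\height_T(t)$ cannot be a successor ordinal, so $t$ is a limit node. Take any cardinal $\mu < \kappa$ and any collection $\mathcal C = \langle A_\iota \rangle_{\iota < \mu} \in [N_t]^\mu$. The plan is to find a single $s <_T t$ such that $\mathcal C \subseteq N_s$. By continuity of the nice collection at the limit node $t$, we have $N_t = \bigcup_{s <_T t} N_s$, so for each $\iota < \mu$ we may choose $s_\iota <_T t$ with $A_\iota \in N_{s_\iota}$. The chain $\pred t \cup \{t\}$ has order type $\height_T(t) + 1$, and since $\mu < \kappa \leq \cf(\height_T(t))$, the sup $s = \sup_{\iota < \mu} s_\iota$ taken along this chain satisfies $s <_T t$. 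Then each $A_\iota \in N_{s_\iota} \subseteq N_s$, so $\mathcal C \in [N_s]^\mu \subseteq [N_s]^{<\kappa}$. Finally, the $\kappa$-very-nice condition applied to $s <_T t$ gives $[N_s]^{<\kappa} \subseteq N_t$, and so $\mathcal C \in N_t$, as required.

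For part~(2), the forward implication is just part~(1). For the reverse implication, suppose $t$ is $N_t$-eligible and $[N_t]^{<\kappa} \subseteq N_t$. Since $\langle N_t \rangle_{t \in T}$ is a nice collection, we have $\pred t \subseteq N_t$ by definition. All the hypotheses of Lemma~\ref{k-complete}(7) applied to $N = N_t$ are thus satisfied, and the conclusion is exactly $\cf(\height_T(t)) \geq \kappa$.

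I do not anticipate any genuine obstacles: the only subtle point is confirming that the sup $s = \sup_\iota s_\iota$ can be formed along the chain $\pred t \cup \{t\}$ and stays strictly below $t$, which is precisely where the hypothesis $\cf(\height_T(t)) \geq \kappa > \mu$ is used. Everything else is a direct invocation of the definitions of \emph{nice} and \emph{$\kappa$-very nice} (continuity, plus $[N_s]^{<\kappa} \subseteq N_t$ for $s <_T t$) and of the previously-established Lemma~\ref{k-complete}(7).
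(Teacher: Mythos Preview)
Your proposal is correct and follows essentially the same approach as the paper's own proof: for part~(1) you use continuity at the limit node to push each $A_\iota$ down to some $N_{s_\iota}$, take the sup $s$ along $\pred t \cup \{t\}$ (using $\mu < \kappa \leq \cf(\height_T(t))$ to keep $s <_T t$), and then invoke the $\kappa$-very-nice condition; for part~(2) you combine part~(1) with Lemma~\ref{k-complete}(7), exactly as the paper does.
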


Provided we start with a non-special tree, 
this guarantees a large supply of $\kappa$-complete models:

\begin{lemma}\label{k-complete-stationary}
Suppose $\nu$ is any infinite cardinal, 
%Suppose $\lambda$ is any regular uncountable cardinal, 
$T$ is a non-$\nu$-special tree (necessarily of height $\nu^+$), 
and
$\theta > \nu$ is a regular cardinal such that 
$T \subseteq H(\theta)$.
Suppose 
$\kappa$ is an infinite cardinal, 
and 
$\left<N_t\right>_{t \in T}$ is a $\kappa$-very nice collection of elementary submodels of $H(\theta)$.
Then the set 
\[
\left\{ t \in T : %t \text{ is $N_t$-eligible and } 
	\left[N_t\right]^{<\kappa} \subseteq N_t \right\}
\]
is a stationary subtree of $T$.

\begin{proof}
Since $T$ is a non-$\nu$-special tree,
Lemma~\ref{stationary subtree} gives%
\footnote{In the definition of this stationary subtree, %$S$, 
we can replace $= \cf(\nu)$ with $\geq \cf(\nu)$, if desired, 
or even with $\geq \kappa$ (since $\cf(\nu) \geq \kappa$).  
In the special case where $T$ is the cardinal $\nu^+$,
the more general textbook theorem applies (see our comment before Theorem~\ref{stationary subtree}),
so that if $\kappa$ is regular, we can alternatively use $= \kappa$,
as is done in the definition of $S_0$ given in~\cite[p.~5]{BHT}.} 
\[
T \upharpoonright S^{\nu^+}_{\cf(\nu)} = 
	\left\{ t \in T : \cf(\height_T(t)) = \cf(\nu) \right\} \notin NS^T_\nu.
\]
Since there exists a $\kappa$-very nice collection of elementary submodels indexed by $T$,
condition~\eqref{necessary-k-very-nice} must be satisfied, so that $\nu^{<\kappa} = \nu$.
Then Lemma~\ref{nu-and-kappa} gives $\kappa \leq \cf(\nu)$.
For any $t \in T \upharpoonright S^{\nu^+}_{\cf(\nu)}$,
we have $\cf(\height_T(t)) = \cf(\nu) \geq \kappa$,
so by Lemma~\ref{model-kappa-complete}(1) we have $[N_t]^{<\kappa} \subseteq N_t$.
It follows that
\[
T \upharpoonright S^{\nu^+}_{\cf(\nu)} \subseteq
	\left\{ t \in T : \left[N_t\right]^{<\kappa} \subseteq N_t \right\},
\]
so that this last set is also a stationary subtree of $T$,
as required.
\end{proof}
\end{lemma}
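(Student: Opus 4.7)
My plan is to exhibit a stationary subtree of $T$ that is contained in the set in question; since $NS^T_\nu$ is an ideal (Lemma~\ref{special-is-nonstationary}), any superset of a stationary set is itself stationary, so this will suffice. The natural candidate is the stationary subtree $T \upharpoonright S^{\nu^+}_{\cf(\nu)}$ provided by Theorem~\ref{stationary subtree}, which is available precisely because $T$ is non-$\nu$-special. Thus the proof will reduce to verifying the inclusion
\[
T \upharpoonright S^{\nu^+}_{\cf(\nu)} \subseteq \left\{ t \in T : [N_t]^{<\kappa} \subseteq N_t \right\}.
\]

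To establish this inclusion I would first extract the necessary cardinal arithmetic from the hypotheses. The mere existence of a $\kappa$-very nice collection indexed by $T$ forces condition~\eqref{necessary-k-very-nice}, which in the present setting (where $T$ has successor-cardinal height $\nu^+$) collapses to $\nu^{<\kappa} = \nu$. Feeding this into Lemma~\ref{nu-and-kappa} yields $\kappa \leq \cf(\nu)$. Then for any $t \in T \upharpoonright S^{\nu^+}_{\cf(\nu)}$, by definition we have $\cf(\height_T(t)) = \cf(\nu) \geq \kappa$, so Lemma~\ref{model-kappa-complete}(1) immediately delivers $[N_t]^{<\kappa} \subseteq N_t$, which is exactly what membership in the target set requires.

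There does not appear to be any genuine obstacle here: all of the heavy lifting has already been done in the preceding lemmas. The only subtlety is recognizing which prior results to invoke and in what order, in particular that the existence of the $\kappa$-very nice collection itself provides the arithmetic bound $\kappa \leq \cf(\nu)$ needed to apply Lemma~\ref{model-kappa-complete}(1), so the hypothesis on $\kappa$ need not be stated separately in the lemma.
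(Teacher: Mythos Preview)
Your proposal is correct and follows essentially the same argument as the paper: exhibit the stationary subtree $T \upharpoonright S^{\nu^+}_{\cf(\nu)}$ via Theorem~\ref{stationary subtree}, extract $\nu^{<\kappa}=\nu$ and hence $\kappa\le\cf(\nu)$ from the existence of the $\kappa$-very nice collection, and then apply Lemma~\ref{model-kappa-complete}(1) to obtain the inclusion. The steps and the lemmas invoked match the paper's proof almost verbatim.
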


Next, recall the earlier eligibility condition for nodes and models:
Given a nice collection of sets $\left<W_t\right>_{t \in T}$,
%we will say that 
the node $t \in T$ is $W_t$-{eligible} if 
%$t$ and $W_t$ satisfy the eligibility condition described earlier, that is,
\[
\nexists B \in W_t [\pred t \subseteq B \text{ and } t \notin B].
\]

We would like to know that not too many nodes $t$ %have models that 
are $W_t$-\emph{ineligible}.

\begin{lemma}\label{most-are-eligible}
Suppose $\nu$ is any infinite cardinal, and let $T$ be a tree of height $\nu^+$.
Suppose $\left<W_t\right>_{t \in T}$ is a nice collection of sets.
Then
%the set of ineligible nodes is a nonstationary subtree of $T$.
\[
%X = 
\left\{ t \in T : t \text{ is not $W_t$-eligible } %\exists B \in W_t \left[ \pred t \subseteq B \text{ and } t \notin B \right]
			 \right\} \in NS^T_\nu.
\]

\begin{proof}

%Let $\lambda = \nu^+$.
For any fixed set $B$, the set
$\left\{t \in T : \pred t \subseteq B \text{ and } t \notin B \right\}$
is an antichain.
For any $s \in T$, we have $\left|W_s\right| \leq \nu$, so it follows that
\[
\bigcup_{B \in W_s} \left\{t \in T : \pred t \subseteq B \text{ and } t \notin B \right\}
\]
is a union of $\leq \nu$ antichains,
that is, it is a $\nu$-special subtree.

Since the set of successor nodes is always a nonstationary subtree by Lemma~\ref{clear subtrees}(3), %we have 
%$X \upharpoonright \{ \text{successor ordinals} \} \in NS^T_\nu$.
we can consider only limit nodes.  
Suppose $t$ is a limit node.
Then by continuity of the nice collection $\left<W_t\right>_{t \in T}$,
if $B \in W_t$ then $B \in W_s$ for some $s <_T t$.
So
\begin{align*}
&\left\{ \text{limit nodes $t$ that are not $W_t$-eligible } \right\}		\\
%X \upharpoonright \left\{\text{limit ordinals}\right\} 		\\
	&= \left\{\text{limit } t \in T : 
		\exists s <_T t \exists B \in W_s \left[ \pred t \subseteq B \text{ and } t \notin B \right] \right\}	\\
	&= \bigtriangledown_{s \in T} \left\{\text{limit } t \in T : 
				   \exists B \in W_s \left[ \pred t \subseteq B \text{ and } t \notin B \right] \right\}	\\
	&= \bigtriangledown_{s \in T} \bigcup_{B \in W_s } \left\{\text{limit } t \in T : 
						         \left[ \pred t \subseteq B \text{ and } t \notin B \right] \right\}
				\in NS^T_\nu,
%\qedhere
\end{align*}
and it follows that the set of %ineligible 
nodes $t$ such that $t$ is not $W_t$-eligible is in $NS^T_\nu$, as required.
\end{proof}
\end{lemma}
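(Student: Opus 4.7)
The plan is to write the set of $W_t$-ineligible nodes as a union of a clearly nonstationary piece (the successor nodes) and a diagonal union of $\nu$-special subtrees indexed along the tree. The key observation is that for a single fixed set $B$, the collection $\{t \in T : \pred t \subseteq B \text{ and } t \notin B\}$ is automatically an antichain: if two such nodes $t_1 <_T t_2$ existed, then $t_1 \in \pred{t_2} \subseteq B$ while simultaneously $t_1 \notin B$, a contradiction. This lets us convert ``number of witnessing sets $B$'' directly into ``number of antichains''.

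First I would dispose of the successor nodes: by Lemma~\ref{clear subtrees}(3), the set of successor nodes of $T$ lies in $NS^T_\nu$, so it suffices to show that the set of limit nodes $t$ which are $W_t$-ineligible is nonstationary. For such a limit node $t$, the continuity condition gives $W_t = \bigcup_{s<_T t} W_s$, so any witnessing $B \in W_t$ (i.e.\ with $\pred t \subseteq B$ and $t \notin B$) already lies in $W_s$ for some $s <_T t$. This is precisely the form required to express the offending set as a diagonal union indexed by $s$.

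Next, for each $s \in T$, I would define
\[
A_s = \bigcup_{B \in W_s} \bigl\{ t \in T : \pred t \subseteq B \text{ and } t \notin B \bigr\}.
\]
Since $\left|W_s\right| < \nu^+$ (that is, $\left|W_s\right| \leq \nu$) by niceness, and each bracketed set is an antichain by the key observation above, $A_s$ is a union of at most $\nu$ antichains and hence is a $\nu$-special subtree of $T$. The step above then identifies the set of $W_t$-ineligible limit nodes as $\bigtriangledown_{s \in T} A_s$, which is in $NS^T_\nu$ by the very definition of the nonstationary ideal. Adding back the nonstationary set of successor nodes preserves membership in the ideal $NS^T_\nu$ by Lemma~\ref{special-is-nonstationary}.

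The main subtlety I expect is making sure the diagonal union reconstruction is correct in its handling of the quantifier ``$\exists s <_T t$'' versus ``$\exists s \in T$''; concretely, one must verify that only \emph{strict} predecessors $s$ contribute to membership of $t$, which is exactly what the definition of $\bigtriangledown$ delivers (via $\cone s$ excluding $s$ itself, except at the root). Everything else is a routine bookkeeping combination of the antichain observation, the cardinality bound on $W_s$, and the continuity of the nice collection.
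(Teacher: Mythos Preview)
Your proposal is correct and follows essentially the same approach as the paper: the key antichain observation for a fixed $B$, the disposal of successor nodes via Lemma~\ref{clear subtrees}(3), the use of continuity at limit nodes to push the witnessing $B$ down to some $W_s$ with $s <_T t$, and the resulting expression as a diagonal union of $\nu$-special subtrees. The only cosmetic difference is that the paper restricts each $A_s$ to limit nodes while you do not, but since you only need the containment of the ineligible limit nodes in $\bigtriangledown_{s\in T} A_s$, this is harmless.
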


Combining the last two lemmas,
we are guaranteed a large supply of nodes and models that satisfy 
both the $\kappa$-completeness and eligibility requirements,
provided we start with a non-special tree and that condition~\eqref{necessary-k-very-nice} holds:
%FIX

\begin{corollary}\label{eligible-and-complete}
Suppose $\nu$ is any infinite cardinal, 
%Suppose $\lambda$ is any regular uncountable cardinal, 
$T$ is a non-$\nu$-special tree (necessarily of height $\nu^+$), 
and
$\theta > \nu$ is a regular cardinal such that 
$T \subseteq H(\theta)$.
Suppose 
$\kappa$ is an infinite cardinal, 
and 
$\left<N_t\right>_{t \in T}$ is a $\kappa$-very nice collection of elementary submodels of $H(\theta)$.
Then the set 
\[
\left\{ t \in T : t \text{ is $N_t$-eligible and } \left[N_t\right]^{<\kappa} \subseteq N_t \right\}
\]
is a stationary subtree of $T$.

\begin{proof}
%Since $T$ is a non-$\nu$-special tree,
%Lemma~\ref{stationary subtree} gives%
%\footnote{In the definition of this stationary subtree, %$S$, 
%we can replace $= \cf(\nu)$ with $\geq \cf(\nu)$, if desired, 
%or even with $\geq \kappa$ (since $\cf(\nu) \geq \kappa$).  
%In the special case where $T$ is the cardinal $\nu^+$,
%the more general textbook theorem applies (see our comment before Theorem~\ref{stationary subtree}),
%so that if $\kappa$ is regular, we can alternatively use $= \kappa$,
%as is done in the definition of $S_0$ given in~\cite[p.~5]{BHT}.} 
%\[
%T \upharpoonright S^{\nu^+}_{\cf(\nu)} = 
%	\left\{ t \in T : \cf(\height_T(t)) = \cf(\nu) \right\} \notin NS^T_\nu.
%\]
%Since there exists a $\kappa$-very nice collection of elementary submodels indexed by $T$,
%condition~\eqref{necessary-k-very-nice} must be satisfied, so that $\nu^{<\kappa} = \nu$.
%Then Lemma~\ref{nu-and-kappa} gives $\kappa \leq \cf(\nu)$.
%For any $t \in T \upharpoonright S^{\nu^+}_{\cf(\nu)}$,
%we have $\cf(\height_T(t)) = \cf(\nu) \geq \kappa$,
%so by Lemma~\ref{model-kappa-complete}(1) we have $[N_t]^{<\kappa} \subseteq N_t$.
%It follows that

From Lemma~\ref{k-complete-stationary},
the set
\[
%T \upharpoonright S^{\nu^+}_{\cf(\nu)} \subseteq
	\left\{ t \in T : \left[N_t\right]^{<\kappa} \subseteq N_t \right\},
\]
%so that this last set 
is %also 
a stationary subtree of $T$.
By Lemma~\ref{most-are-eligible}, we have
\[
\left\{ t \in T : t \text{ is not $N_t$-eligible } \right\} \in NS^T_\nu.
\]
Our desired set is obtained by subtracting a nonstationary subtree from a stationary subtree, so it must be stationary,
as required.
\end{proof}
\end{corollary}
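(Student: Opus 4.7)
The plan is to prove this as a straightforward consequence of the two immediately preceding lemmas, using the fact that the ideal $NS^T_\nu$ is closed under subtracting nonstationary sets from stationary ones.

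First, I would apply Lemma~\ref{k-complete-stationary} to the non-$\nu$-special tree $T$ with the given $\kappa$-very nice collection $\langle N_t \rangle_{t \in T}$. This tells us that the set
\[
S_1 = \left\{ t \in T : \left[N_t\right]^{<\kappa} \subseteq N_t \right\}
\]
is a stationary subtree of $T$, i.e.\ $S_1 \notin NS^T_\nu$. The hypothesis that a $\kappa$-very nice collection exists ensures that condition~\eqref{necessary-k-very-nice} holds, which combined with Lemma~\ref{nu-and-kappa} gives $\kappa \leq \cf(\nu)$; this is exactly what is needed to invoke Lemma~\ref{model-kappa-complete}(1) inside the proof of Lemma~\ref{k-complete-stationary}.

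Next, I would apply Lemma~\ref{most-are-eligible}, observing that a $\kappa$-very nice collection of elementary submodels is in particular a nice collection of sets, so that
\[
S_2 = \left\{ t \in T : t \text{ is not $N_t$-eligible} \right\} \in NS^T_\nu.
\]
Now the desired set is exactly $S_1 \setminus S_2$. Since $NS^T_\nu$ is an ideal (Lemma~\ref{special-is-nonstationary}) and we are removing an ideal set from a set outside the ideal, the difference remains outside the ideal, i.e.\ stationary. Implicitly this relies on $NS^T_\nu$ being a \emph{proper} ideal on $T$, which is guaranteed by the Pressing-Down Lemma for Trees (Theorem~\ref{pressing-down-trees}) applied to the non-$\nu$-special $T$; otherwise the notion of ``stationary'' would be vacuous.

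There is no real obstacle here since the substantial work has already been done in Lemmas~\ref{k-complete-stationary} and~\ref{most-are-eligible}. The only thing one might wish to double-check is the bookkeeping that our $\kappa$-very nice collection indeed satisfies the hypotheses of \emph{both} lemmas simultaneously, and that condition~\eqref{necessary-k-very-nice} is automatic from the existence of the collection (so no extra hypothesis on $\kappa$ is needed in the statement). The proof can therefore be written in just a few lines.
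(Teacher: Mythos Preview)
Your proposal is correct and follows exactly the same approach as the paper: invoke Lemma~\ref{k-complete-stationary} to get stationarity of the $\kappa$-completeness set, invoke Lemma~\ref{most-are-eligible} to get nonstationarity of the ineligible set, and subtract. Your additional remarks about why the hypotheses of those lemmas are met and why $NS^T_\nu$ is proper are accurate but go slightly beyond what the paper spells out; they would make fine parenthetical justifications if desired.
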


%************Since $\cf(\nu) < \lambda = \cf(\lambda)$, the set
%\[
%S = \left\{ \alpha < \lambda : \cf(\alpha) = \cf(\nu) \right\}
%\]
%is stationary in $\lambda$ by \cite[Chapter~II, Lemma~6.10]{Kunen}.
%By Lemma~\ref{psi-normal}(\ref{psi-preserve-stationary}), the set
%$\Psi_\mathcal N '' S$ is also stationary, and part (1) of this
%lemma (along with $\kappa \leq \cf(\nu)$) gives $\Psi_\mathcal N ''
%S \subseteq S_0$.  This shows that
%%
%%Since $S_0$ is the intersection of a club set (given by
%%Lemma~\ref{prop-sequence-models}(\ref{club-delta-alpha})) and a
%%stationary set,
%$S_0$ is stationary in $\lambda$.

%Recall Theorem~\ref{stationary subtree} that for a non-$\nu$-special tree $T$,
%we have $T \upharpoonright S^{\nu^+}_{\cf(\nu)}$ is a stationary subtree.
%
%Since we %are starting with a non-$\nu$-special tree, where 
%have $\cf(\nu) \geq \kappa$,
%%a theorem of 
%Lemma~\ref{model-kappa-complete}(1)
%%this 
%will give us a stationary subtree of nodes $t$ whose associated models $N_t$ 
%satisfy $[N_t]^{<\kappa} \subseteq N_t$.
%
%Then, using the last lemma, %From the previous two lemmas, 
%we get a stationary subtree $S \subseteq T$ such that
%every $t \in S$ is eligible and satisfies $[N_t]^{<\kappa} \subseteq N_t$.

\section{Erd\H os-Rado Theorem for Trees %Generalization% Equivalent
						}
\label{section:ER}

To demonstrate the power of the tools we have developed in the previous two sections,
we now (similarly to~\cite[Section~2]{BHT}) divert our attention from the Main Theorem to show how the
machinery we have developed allows us to prove Theorem~\ref{ER-trees-Stevo} for the case
where $\kappa$ is a regular cardinal:

%a generalization to trees of the Erd\H{o}s-Rado
%Theorem for pairs:

%\begin{theorem}\label{ER-trees}(cf.~\cite[Theorem~2.1]{BHT})
%Let $\kappa$ be any infinite regular cardinal.
%%Let $\nu$ and $\kappa$ be infinite cardinals such that $\nu ^ {<\kappa} = \nu$. 
%%and let $T$ be a non-$\nu$-special tree of height $\nu^+$. 
%Then for any cardinal $\mu < \kappa$, we have
%\[
%\text{non-$\left(2^{<\kappa}\right)$-special tree } \to \left(\kappa + 1 \right)^2_\mu.
%\]

\begin{proof}[Proof of Theorem~\ref{ER-trees-Stevo} for regular $\kappa$]
As in the hypotheses, 
{fix} 
an infinite regular cardinal $\kappa$,
%Let $\nu = 2^{<\kappa}$.
%Fix a non-$\nu$-special tree $T$ (necessarily of height $\nu^+$), 
%a natural number $k$, and a colouring %partition function
%\[
%c : \left[ T\right]^2 \to k.
%\]
%
and a non-$(2^{<\kappa})$-special tree $T$ (necessarily of height $(2^{<\kappa})^+$), and
let $c : [T]^2 \to \mu$ be a colouring, %partition function, 
where $\mu < \kappa$.  
We are looking for a chain of order type $\kappa +1$, homogeneous for $c$.
%If all we wanted was a homogeneous set of order type $\kappa +1$ (as
%in Theorem~\ref{ER}), we would now proceed as follows:

Let $\theta$ be any regular cardinal large enough so that $T \in H(\theta)$.
Using Theorem~\ref{regular-2<kappa}, we have $(2^{<\kappa})^{<\kappa} = 2^{<\kappa}$.
Then we use Lemma~\ref{build-nice-collection} (parts~(1) and~(3)) to
fix a $\kappa$-very nice collection $\left<N_t\right>_{t \in T}$ of elementary submodels of $H(\theta)$
such that $T, c \in %\{T, c\} \cup \mu \subseteq 
N_\emptyset$.
%(possible since $\mu < \kappa \leq \nu < \height(T)$).

Since $T$ is a non-$(2^{<\kappa})$-special tree,
Corollary~\ref{eligible-and-complete} tell us that
the set 
\[
\left\{ t \in T : t \text{ is $N_t$-eligible and } \left[N_t\right]^{<\kappa} \subseteq N_t \right\}
\]
is a stationary subtree of $T$,
so we can choose some node $t \in  T$ such that
$t$ is $N_t$-eligible and $[N_t]^{<\kappa} \subseteq N_t$.
Fix such a node $t$. %and let $N = N_t$.
Since $N_t$ was taken from a nice collection of elementary submodels, 
we have $\pred t \subseteq N_t$.
Then Corollary~\ref{homog-k+1} gives us a chain of order-type $\kappa+1$ homogeneous for $c$, as required.
\end{proof}
%\end{theorem}

%FIX FROM HERE!
%
%This concludes the proof of Theorem~\ref{ER} given
%in~\cite[Theorem~2.1]{BHT}.\footnote{We have essentially covered all
%of~\cite[Section~2]{BHT}, except for Theorem~2.3.}  Notice that we
%have proved the theorem not only for regular cardinals, but for any
%cardinal $\kappa$ satisfying the (possibly weaker) hypothesis
%$(2^{<\kappa})^{<\kappa} = 2^{<\kappa}$.
%
%Fix any elementary submodel $N \prec H(\lambda^+)$ such that
%$\left|N\right| = \nu$, $\nu \subseteq N$, and $\lambda, f \in N$,
%and also $[N]^{<\kappa} \subseteq N$.  (Since we have $\nu^{<\kappa}
%= \nu$, we obtain such a model from~\cite[Lemma~24.28]{J-W}% using
%%$\nu = 2^{<\kappa}$, since Lemma~\ref{equal-powers} guarantees
%%$\nu^{<\kappa} = \nu$ in this case
%; the patient reader will notice that, since $\cf(\nu) \geq \kappa$,
%$N_\nu$
%%\footnote{\cite{J-W} seems to use $N_\nu$ rather than $N_\kappa$;
%%investigate this.  Does it depend on regularity?}
%from our soon-to-be-constructed sequence will work,\footnote{In fact
%$N_\alpha$ will work for any $\alpha < \lambda$ such that
%$\cf(\alpha) \geq \kappa$.  In case $\kappa$ is regular, then
%$N_\kappa$ itself will work.} based on
%Lemmas~\ref{build-sequence-models} and
%\ref{model-kappa-complete}(1).)
%%use $N_\alpha$ for any $\alpha \in S_0$.

% !TeX encoding = UTF-8
% !TeX TS-program = xelatex
            % this should also work with pdflatex but eventually want to migrate to xelatex
% BHT_main_proof.tex

% !TeX root = nonspecial.tex

\section{Proof of the Main Theorem}

In this section, we will prove the Main Theorem, Theorem~\ref{BHT-trees-regular}.

%Now we return to proving Theorem~\ref{main}.  $\kappa$, $\nu$, and
%$\lambda$ remain as before.

As in the hypotheses of the Main Theorem~\ref{BHT-trees-regular}, %assumptions, %\textbf
{fix} %infinite cardinals $\nu$ and $\kappa$ such that $\nu^{<\kappa} = \nu$.
an infinite regular cardinal $\kappa$.
Let $\nu = 2^{<\kappa}$.
Fix a non-$\nu$-special tree $T$ (necessarily of height $\nu^+$), 
a natural number $k$, and a colouring %partition function
\[
c : \left[ T\right]^2 \to k.
\]

Since $\kappa$ is regular,
Theorem~\ref{regular-2<kappa} gives us $\nu^{<\kappa} = \nu$,
a fact that will be essential in the proof.

Let $\theta$ be any regular cardinal large enough so that $T \in H(\theta)$.
Using Lemma~\ref{build-nice-collection} (parts~(1) and~(3)) and the fact that $\nu^{<\kappa} = \nu$,
fix a $\kappa$-very nice collection $\left<N_t\right>_{t \in T}$ of elementary submodels of $H(\theta)$
such that $T, c \in N_\emptyset$.

The proof of Theorem~\ref{ER-trees-Stevo} in \autoref{section:ER} relied on Corollary~\ref{homog-k+1}, 
where we were able to obtain a
homogeneous set of order type $\kappa +1$ relatively easily using
the co-ideal $I_{N,t}^+$ constructed from a single elementary submodel
$N$. However, to obtain a homogeneous set of order type $\kappa +
\xi$, where $\xi > 1$, we need to do some more work.  In particular,
it will not be so easy to determine, initially, which $i<k$ will be
the colour of the required homogeneous set, so we must devise a
technique for describing sets that simultaneously include
homogeneous subsets for several colours. For this, we will need some
more machinery.

%Given our tree $T$ and a colouring $c : [T]^2 \to k$,
%We start by using Lemma~\ref{build-nice-collection} to fix 
%a $\kappa$-very nice collection $\left<N_t\right>_{t\in T}$ of elementary submodels of $H(\theta)$
%with $T, c \in N_\emptyset$.

Recall that in \autoref{section:ideals-esm} 
we used %arbitrary 
nodes $t \in T$ and models $N$ to create algebraic structures on $\pred t$,
including the ideal $I_{N,t}$.
%Recall that we defined an ideal $I_{N,t}$ on $\pred t$ determined by node $t \in T$ and model $N \prec H(\theta)$.
Now that we have fixed a nice collection %$\left<N_t\right>_{t\in T}$ 
of elementary submodels indexed by $T$,
we will generally %always 
allow the node $t$ to determine the model $N_t$ and therefore the corresponding structures on $\pred t$.
%use (for various $t \in T$) the model $N_t$ indexed by $t$ to define structures on $\pred t$.
We will therefore simplify our notation as follows:

\begin{definition}
%Having fixed a sequence of models $\mathcal N$,
We define, for each $t \in T$:
\begin{gather*}
\pi_t = \pi_{N_t,t};                   \\
\mathcal A_t = \mathcal A_{N_t,t};       \\
\mathcal G_t = \mathcal G_{N_t,t};       \\
I_t = I_{N_t,t}.
\end{gather*}
Furthermore, we will say that $t$ is \emph{eligible} if it is $N_t$-eligible.
\end{definition}

%we will write $I_t$ instead of $I_{N_t, t}$, 
%because the node $t$ determines the model $N_t$ and therefore the ideal.

\begin{lemma}\label{models-in-cone}
Fix $r \in T$ and $B \in N_r$.
Then:
\begin{enumerate}
\item For all $s \geq_T %\in \{r\} \cup \cone 
r$, we have $B \in N_s$ and $B \cap \pred s \in \mathcal A_s$.
\item For all eligible nodes $s \geq_T %\in \{r\} \cup \cone 
r$, we have
\[
s \in B \iff B \cap \pred s \in \mathcal G^+_s \iff B \cap \pred s \in I^+_s.
\]
\end{enumerate}

\begin{proof}\hfill
\begin{enumerate}
\item This result follows from the fact that
the nice collection of models is %was constructed to be 
increasing, as well as
the definition of $\mathcal A_s$. %, $\mathcal G_s$, and $I_s$.
\item Since $B \in N_s$ from part~(1), 
this result follows from Lemmas~\ref{eligible-equivalent} and~\ref{I_N-formulations}.
\qedhere
\end{enumerate}
\end{proof}

\end{lemma}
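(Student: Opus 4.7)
The plan is straightforward: this lemma is essentially a repackaging of earlier results into the simplified notation just introduced, so I do not expect any substantive obstacle. The task is to trace the defining properties of a nice collection and invoke the characterizations from \autoref{section:ideals-esm}.

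For part (1), the key observation is that any nice collection of elementary submodels is increasing: condition (2) of the definition gives $N_r \subseteq N_s$ whenever $r <_T s$, and the case $r = s$ is trivial. So starting from $B \in N_r$, I would immediately conclude $B \in N_s$ for every $s \geq_T r$. The second assertion $B \cap \pred s \in \mathcal{A}_s$ is then immediate from the very definition $\mathcal{A}_s = \{A \cap \pred s : A \in N_s\}$ introduced in \autoref{section:ideals-esm}.

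For part (2), once $B \in N_s$ has been secured from part (1), the first equivalence $s \in B \iff B \cap \pred s \in \mathcal{G}^+_s$ is precisely condition (11) in the list of characterizations of eligibility in Lemma~\ref{eligible-equivalent}, applied to the node $s$ (which is assumed eligible). The second equivalence $B \cap \pred s \in \mathcal{G}^+_s \iff B \cap \pred s \in I^+_s$ is the identity $\mathcal{A}_{N,t} \cap I^+_{N,t} = \mathcal{G}^+_{N,t}$ from Lemma~\ref{I_N-formulations}, applied to the set $B \cap \pred s$, which by part (1) lies in $\mathcal{A}_s$. Chaining these two biconditionals gives the claim.

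The only thing worth flagging is the convention for $s = r$: in that case $B \in N_s$ is immediate without using the increasing property, and the eligibility argument still applies verbatim, so no separate case needs to be discussed in the writeup. The content of the lemma is really a notational bridge, setting up convenient shorthand for later arguments in the proof of the Main Theorem, rather than a new combinatorial result.
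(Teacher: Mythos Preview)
Your proposal is correct and follows essentially the same approach as the paper's proof: both invoke the increasing property of the nice collection together with the definition of $\mathcal A_s$ for part~(1), and both cite Lemmas~\ref{eligible-equivalent} and~\ref{I_N-formulations} for part~(2). You have merely made explicit which clauses of those lemmas are being used (condition~(11) of Lemma~\ref{eligible-equivalent} and the identity $\mathcal A_s \cap I^+_s = \mathcal G^+_s$ from Lemma~\ref{I_N-formulations}), which the paper leaves implicit.
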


%COME BACK LATER TO INSERT DEFINITION 12 AND LEMMA 32 FROM OTHER FILE!

\begin{definition}
Let $S \subseteq T$ be any subtree, and suppose $t \in T$.
If $S \cap \pred t \in I^+_t$, then $t$ is called a
\emph{reflection point} of $S$.
%\footnote{Check all uses of reflection point for consistency.  Edit these next few paragraphs.}
\end{definition}

Some easy facts about reflection points:

\begin{lemma}\label{easy-reflection}
%\footnote{Make sure to reference this lemma as appropriate.}
\hfill
\begin{enumerate}
\item If $t \in T$ is a reflection point of some subtree $S \subseteq T$, then $t$ is eligible.
\item If $t \in T$ is a reflection point of $S$, then $t$ is a limit point of $S$.
\item If $R \subseteq S \subseteq T$ and $t \in  T$ is a reflection point of $R$, then $t$ is a reflection point of $S$.
\end{enumerate}

\begin{proof}\hfill
\begin{enumerate}
\item Since $S \cap \pred t \in I^+_t$, we have in particular that $I^+_t \neq \emptyset$,
which is equivalent by Lemma~\ref{eligible-equivalent} to $t$ being eligible.
\item Since $t$ is eligible by part~(1), and also $\pred t \subseteq N_t$,
Lemma~\ref{eligible-implications}(4) tells us that $t$ must be a limit node.
%A reflection point of $S$ is certainly a limit point of $S$, since
By Lemma~\ref{bounded-in-ideal}(3), since $S \cap \pred t \in
I^+_t$, $S \cap \pred t$ must be cofinal in $\pred t$.  
%By Lemma~\ref{delta-limit}, $\delta$ is a limit ordinal, so 
It follows that $t$ must
be a limit point of $S \cap \pred t$, and therefore also of $S$.
\item $I^+_t$ is a co-ideal and therefore closed under
supersets. \qedhere
\end{enumerate}
\end{proof}

\end{lemma}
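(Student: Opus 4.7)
The plan is to deduce each of the three assertions directly from the machinery of \autoref{section:ideals-esm}. Each part amounts to reading off an equivalence or a closure property of the co-ideal $I^+_t$, together with one invocation of the eligibility equivalences.

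For part (1), observe that the very hypothesis $S \cap \pred t \in I^+_t$ witnesses $I^+_t \neq \emptyset$, which is equivalent to $\pred t \notin I_t$, i.e., $I_t$ being a proper ideal on $\pred t$. This is precisely condition~(8) of Lemma~\ref{eligible-equivalent}, which is equivalent to $t$ being $N_t$-eligible, i.e., eligible in our abbreviated notation.

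For part (2), I would first invoke (1) to conclude that $t$ is eligible. Since the nice collection satisfies $\pred t \subseteq N_t$, Lemma~\ref{eligible-implications}(4) forces $\height_T(t)$ to be a limit ordinal, so $t$ is a limit node. Next, since $S \cap \pred t \in I^+_t$, Lemma~\ref{bounded-in-ideal}(3) tells us that $S \cap \pred t$ must be cofinal in $\pred t$ (the contrapositive of the statement that non-cofinal subsets lie in $I_t$). The combination "$t$ is a limit node" and "$S \cap \pred t$ is unbounded below $t$" matches characterization~(5) of the tree topology from footnote~\ref{tree-topology}, so $t$ is a limit point of $S$.

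For part (3), I would use that $I^+_t = \mathcal P(\pred t) \setminus I_t$ is closed under supersets within $\pred t$, because $I_t$ is closed under subsets. From $R \subseteq S$ we get $R \cap \pred t \subseteq S \cap \pred t \subseteq \pred t$, and $R \cap \pred t \in I^+_t$ immediately promotes the larger intersection into the co-ideal as well.

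No genuine obstacle is expected; the entire lemma is just a package of immediate consequences of definitions plus earlier results. The only bookkeeping is to thread the correct equivalent formulation of eligibility through Lemma~\ref{eligible-equivalent}, and to recognize that, in the tree topology, being a limit point of $S$ amounts exactly to being a limit node with $\pred t \cap S$ cofinal below $t$.
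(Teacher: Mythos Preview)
Your proposal is correct and matches the paper's proof essentially step for step: part~(1) via $I^+_t \neq \emptyset$ and Lemma~\ref{eligible-equivalent}, part~(2) via eligibility plus Lemma~\ref{eligible-implications}(4) and Lemma~\ref{bounded-in-ideal}(3), and part~(3) via closure of the co-ideal under supersets. The only difference is that you are slightly more explicit about which clause of Lemma~\ref{eligible-equivalent} is being invoked and about the tree-topology characterization of limit points, but the argument is the same.
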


We want to be able to know when some eligible $t \in T$ is
a reflection point of some subtree $S \subseteq T$.  Is it enough
to assume that $t \in S$?  If $S \in N_t$ and $t \in
S$ is eligible, then we have $S \cap \pred t \in \mathcal G^*_t = G^+_t 
\subseteq 
I^+_t$ by Lemma~\ref{eligible-equivalent}, so %it does follow 
that $t$ is
a reflection point of $S$.  Furthermore, if $S \in N_t$ for
some $t \in T$, 
the combination of Lemma~\ref{models-in-cone}(2) and Lemma~\ref{easy-reflection}(1) tells us 
%we know 
precisely which $u \in \cone t$ are reflection points of
$S$, namely those eligible $u \in \cone t$ such that $u \in S $.  
But what if $S \notin N_t$? Then we can't
guarantee that every eligible $t \in S$ is a reflection point of $S$,
but we can get close.  The following lemma will be applied several
times %\footnote{Is it so many?} 
throughout the proof of the Main Theorem:

\begin{lemma}\label{reflection}(cf.~\cite[Lemma~3.2]{BHT})
For any $S \subseteq T$, we have
\[
\left\{ t \in S : S \cap \pred t \in I_t \right\} \in NS^T_\nu.
\]

\end{lemma}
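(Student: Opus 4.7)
The plan is to split the set in the statement according to whether each node is a successor or a limit, and handle each part separately. The set of successor nodes is nonstationary by Lemma~\ref{clear subtrees}(3), so the work concerns the limit nodes. The key combinatorial observation driving the argument is that for any subset $A \subseteq T$, the collection
\[
X_A = \{t \in S : t \notin A \text{ and } S \cap \pred t \subseteq A\}
\]
is an antichain of $T$: if $t_1 <_T t_2$ both lay in $X_A$, then $t_1 \in S \cap \pred{t_2} \subseteq A$, contradicting $t_1 \notin A$.

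Next, for each $s \in T$ I would set $Y_s = \bigcup_{A \in N_s} X_A$. Since $|N_s| < \nu^+$ by the nice collection property, $Y_s$ is a union of at most $\nu$ antichains and hence a $\nu$-special subtree of $T$.

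To finish, consider any limit node $t \in S$ with $S \cap \pred t \in I_t$. By the description of the ideal in Lemma~\ref{I_N-formulations}, there is some $A_t \in N_t$ with $t \notin A_t$ and $S \cap \pred t \subseteq A_t$. Continuity of the nice collection of models at the limit node $t$ gives $A_t \in N_{s_t}$ for some $s_t <_T t$. Hence $t \in X_{A_t} \subseteq Y_{s_t}$, and because $s_t <_T t$ we have $t \in Y_{s_t} \cap \cone{s_t}$. Consequently, the limit portion of the set in question is contained in $\bigtriangledown_{s \in T} Y_s$, which lies in $NS^T_\nu$ by definition. Combined with the nonstationarity of the successor nodes of $T$, this yields the claim.

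The main obstacle is ensuring that the witness $A_t$ in $N_t$ can be pushed down into some earlier model $N_{s_t}$; this is precisely what the continuity of the nice collection of sets provides at limit nodes, and is why successor nodes must be absorbed into the trivial nonstationary piece. Once that reflection of the witness is in place, the antichain observation about $X_A$ makes the bookkeeping immediate.
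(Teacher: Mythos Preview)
Your argument is correct. The antichain observation about $X_A$ is sound (and is essentially the same observation made in the proof of Lemma~\ref{most-are-eligible}), the cardinality bound $|N_s|\leq\nu$ follows from the nice-collection axioms, and continuity at limit nodes lets you push each witness $A_t$ down to some $N_{s_t}$ with $s_t<_T t$, so the limit part lands in a diagonal union of $\nu$-special subtrees.

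However, your route is genuinely different from the paper's. The paper does not analyse witnesses $A_t$ directly; instead it \emph{fattens} the collection $\langle N_t\rangle$ to a new nice collection $\langle M_t\rangle$ with $S\in M_\emptyset$, observes that any $t\in S$ with $S\cap\pred t\in I_t$ must then be $M_t$-ineligible (since $S\in M_t$ forces $S\cap\pred t\in I^+_{M_t,t}\subseteq I^+_t$ for $M_t$-eligible $t$), and concludes by quoting Lemma~\ref{most-are-eligible} applied to $\langle M_t\rangle$. What you do is effectively inline the proof of Lemma~\ref{most-are-eligible}, but using the ideal witnesses $A_t$ that already live in $N_t$ rather than forcing $S$ itself into the models. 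This avoids the fattening construction entirely and is more self-contained; the paper's version, on the other hand, is more modular and gives a cleaner conceptual picture (``non-reflection points become ineligible once $S$ is visible to the models'').
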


In the case where $S$ itself is a nonstationary subtree, 
Lemma~\ref{reflection} is trivially true. %, since we can choose $C^S$ disjoint from $S$.  
But then the result is also
useless, as we do not obtain any reflection points.  The
significance of the lemma is when $S$ is stationary in $T$. 
In that case, the lemma %provides us with %a club set $C^S$, which must
%intersect $S$ in 
%a subset of $S$ that is a stationary subtree of $T$, whose points are all reflection
%points of $S$.  We find 
tells us that ``almost all'' points of $S$ are
reflection points: 
the set of points of $S$ that are \emph{not}
reflection points is a nonstationary subtree of $T$.  
In \cite[Lemma~3.2]{BHT} (dealing with the special case where the tree is a cardinal), %the original paper, 
the lemma is stated for stationary sets S, and the conclusion is worded differently, but the
fact that $S$ is stationary is not actually used at all in the
proof.

\begin{proof}[Proof of Lemma~\ref{reflection}]
Recall that the problem was that $S$ is not necessarily in any of
the models $N_t$ already defined.  We therefore \emph{fatten}
the models to include $\{S\}$.  %contain $S$.  
That is, we use
Lemma~\ref{build-nice-collection}(2) to construct another nice collection %sequence
$%\mathcal M = 
\left<M_t \right>_{t \in T}$ of
elementary submodels of $H(\theta)$ that is a fattening of the collection %the sequence
$\left< N_t \right>_{t \in T}$ %$\mathcal N$ 
(meaning that for all $t \in T$ we have $N_t \subseteq M_t$),
and such that $S \in M_\emptyset$ (so that $S \in M_t$ for all $t \in T$). %each submodel $M_t$ contains $S$.
The idea is that while initially there may be some eligible nodes in $S$ that are not reflection points of $S$,
by fattening the models to contain $S$ all of those points will become ineligible 
with respect to the new collection of models, showing that there cannot be too many of them.

%That is, the new sequence $\left<M_t \right>_{t \in T}$ satisfies conditions
%(1)--(\thecondition) of Lemma~\ref{build-sequence-models}, as well
%as these additional conditions for each $t \in T$:
%\begin{enumerate}
%    \setcounter{enumi}{\value{condition}}  % wow, it works to sequence the conditions
%\item $S \in M_t$;
%\item $N_t \subseteq M_t$.\label{M-extends-N}
%\end{enumerate}
%%It is clear that the construction in
%%Lemma~\ref{build-sequence-models} can be adapted to satisfy these
%%extra conditions.

More precisely:  %Now, 
Fix any $t \in T$.  We have $S \in M_t$.
If $t$ is $M_t$-eligible, then we have (using Lemmas~\ref{eligible-equivalent} ans~\ref{I_N-formulations})
\[
t \in S \iff S \cap \pred t \in \mathcal G^*_{M_t,t} = \mathcal G^+_{M_t,t} \iff S \cap \pred t \in I^+_{M_t,t}.
\]
Since $N_t \subseteq M_t$, we apply Lemma~\ref{fatten} to get
\[
I^+_{M_t,t} \subseteq I^+_{N_t,t} = I^+_t.
\]
It follows that if $t \in S$ is $M_t$-eligible, then $S \cap \pred t \in I^+_t$,
so that $t$ is a reflection point of $S$.
Equivalently, if $t \in S$ satisfies $S \cap \pred t \in I_t$, then $t$ must not be $M_t$-eligible.

%\begin{claim}
%For any $t \in S$, if $t$ is $M_t$-eligible, then $t$ is a reflection point of $S$.
%
%\end{claim}

Applying Lemma~\ref{most-are-eligible} to the nice collection $\left< M_t \right>_{t\in T}$,
we then have
\[
\left\{ t \in S : S \cap \pred t \in I_t \right\} \subseteq
	\left\{ t \in T : t \text{ is not $M_t$-eligible } \right\} \in NS^T_\nu.
%\qedhere
\]
which is the required result.
\end{proof}

%\begin{proof}[Proof Sketch]
%The subtree that we claim to be nonstationary is included in the set of nodes 
%that are ineligible after we fatten the models to contain $S$.
%\end{proof}

\begin{definition}
We define subtrees $S_n \subseteq T$, for $n \leq
\omega$, by recursion on $n$, as follows:%
\footnote{%See what happens if we 
It may be possible to omit the requirement ``$t$ is eligible'' from the definition of $S_0$.
% Notice, however, that there is nothing guaranteeing points in $S_0$
%to be limit points of $\Gamma_\mathcal N$, though 
Ultimately, this should not be a problem,
as the subsequent sets $S_n$ (for $n >0$) will consist only of reflection points (by Lemma~\ref{Sn-properties}(2)),
which are eligible by Lemma~\ref{easy-reflection}(1).
%limit points.
However, some of the subsequent lemmas will have to be qualified,
such as Lemma~\ref{equivalence-cone-above} and
Corollary~\ref{some-ideal-is-proper}.}

First, define %We define the set
\[
S_0 = \left\{ t \in T : t \text{ is eligible and } 
\left[N_t\right]^{<\kappa} \subseteq N_t \right\}.
%
%\left\{ \alpha < \lambda : t^{\alpha} = \alpha \text{ and }
%\cf(\alpha) \geq \kappa \right\}.
\]
Then, for every $n <\omega$, define
\todo{Say something about the changing the condition to $t$ is a \emph{limit point} of $S_n$, as done in~\cite{BHT},
or possibly removing the condition altogether as done in~\cite{Hajnal-Larson-PR-Handbook}.}
\[
S_{n+1} = \left\{ t \in S_n : S_n \cap \pred t \in I^+_t
%\text{ is a reflection point of } S_n
\right\}.
\]
Finally, define
\[
S_\omega = \bigcap_{n<\omega} S_n.
\]
\end{definition}

\begin{lemma}\label{Sn-properties}%\hfill
The sequence $\left<S_n\right>_{n\leq\omega}$ satisfies the
following properties:
%\footnote{Check item numbers in references to this lemma.}
\begin{enumerate}
%\item For every $\alpha < \lambda$, if $\cf(\alpha) \geq \kappa$
%then $\Psi_\mathcal N (\alpha) \in S_0$.
%%\item For every $\alpha \in S_0$, we have $[N_\alpha]^{<\kappa}
%%\subseteq N_\alpha$.
\item The sequence is decreasing, that is,
\[
T \supseteq S_0 \supseteq S_1 \supseteq S_2 \supseteq \dots \supseteq S_\omega.
\]
\item For all $n<m\leq\omega$, each $t \in S_m$ is a reflection
point of $S_n$, and therefore also a limit point of $S_n$.
\item For all $n<m\leq\omega$, the set $S_n \setminus S_m$ is a
nonstationary subtree of $T$.
\item For all $n\leq\omega$, $S_n$ is a stationary subtree of $T$.
%\item THIS IS FALSE AND PROBABLY UNNECESSARY
%Every $\alpha \in S_0$ must be a limit ordinal.  ACTUALLY IT'S TRUE
%BUT USELESS, as $\Gamma_\mathcal N$ consists only of limit ordinals,
%but what we really want is limit points of $\Gamma_\mathcal N$ and
%this is false.
\end{enumerate}

\begin{proof}\hfill
\begin{enumerate}
%\item This is
%%
%%Every $\alpha \in S_0$ must be a limit ordinal (since successor
%%ordinals have cofinality 1).  Also, by
%Lemma~\ref{model-kappa-complete}(1), because $N_\alpha = N^\delta$
%where $\delta = \Psi_\mathcal N (\alpha)$.

%for every $\alpha \in S_0$ we have $[N_\alpha]^{<\kappa} \subseteq
%N_\alpha$.
\item Straight from the definition.
\item For every $n<m\leq\omega$ we have $S_m \subseteq S_{n+1}$ from
(1), and $S_{n+1}$ consists only of reflection points of $S_n$ by
definition.  By Lemma~\ref{easy-reflection}(2), any reflection point
of $S_n$ must also be a limit point of $S_n$.
\item For each $j<\omega$, we have
\[
S_j \setminus S_{j+1} = \left\{ t \in S_j : S_j \cap \pred t \in I_t \right\},
\]
and this subtree is nonstationary by Lemma~\ref{reflection}.
% gives us a club set
%$C^{S_j} \subseteq \Gamma_\mathcal N$ such that
%\[
%S_j \cap C^{S_j} \subseteq \left\{ \delta \in S_j : S_j \cap \delta
%\in I^+_\delta \right\} = S_{j+1}.
%\]
%It follows that %$S_{n+1}$ is stationary in $\lambda$, and
%$S_j \setminus S_{j+1} \subseteq S_j \setminus C^{S_j}$ is
%nonstationary in $\lambda$, as it is disjoint from the club set
%$C^{S_j}$.  
We then have, for any $n < m \leq \omega$,
\[
S_n \setminus S_m = \bigcup_{n \leq j < m} \left(S_j \setminus
S_{j+1}\right),
\]
so this subtree is nonstationary, as it is the union of at most
countably many nonstationary subtrees.
\item Since $T$ is a non-$\nu$-special tree,
the fact that $S_0$ is stationary is Corollary~\ref{eligible-and-complete}.

For $0 < n \leq \omega$, we know from (3) that $S_0 \setminus S_n$
is nonstationary, so it follows that $S_n$ is stationary. \qedhere
%\item Successor ordinals have cofinality 1.  IRRELEVANT HERE, but
%maybe helpful somewhere else?
\end{enumerate}
\end{proof}
\end{lemma}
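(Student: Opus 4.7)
The plan is to prove the four parts in order, exploiting the fact that each part feeds into the next.

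Part (1) is immediate from the recursive definition: $S_{n+1}$ is explicitly a subset of $S_n$ (it is carved out as those $t \in S_n$ satisfying an extra condition), so the finite decreasing chain follows by induction on $n$, and then $S_\omega = \bigcap_{n<\omega} S_n$ sits at the bottom. Part (2) then falls out of the definitions combined with (1): if $n < m$ and $t \in S_m$, then by the decreasing property $t \in S_{n+1}$, so by the very definition of $S_{n+1}$ we have $S_n \cap \pred t \in I^+_t$, meaning $t$ is a reflection point of $S_n$. Applying Lemma~\ref{easy-reflection}(2) upgrades ``reflection point'' to ``limit point''.

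For part (3) the key observation is that for each individual step $j < \omega$,
\[
S_j \setminus S_{j+1} = \left\{ t \in S_j : S_j \cap \pred t \in I_t \right\},
\]
which is a set of precisely the type handled by Lemma~\ref{reflection}, and is therefore in $NS^T_\nu$. Since $S_n \setminus S_m = \bigcup_{n \leq j < m} (S_j \setminus S_{j+1})$ is at most a countable union of such one-step differences, and $NS^T_\nu$ is $\nu^+$-complete (hence in particular closed under countable unions) by Lemma~\ref{special-is-nonstationary}, the whole difference $S_n \setminus S_m$ lies in $NS^T_\nu$.

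Finally, part (4) is where the non-$\nu$-speciality of $T$ is used: Corollary~\ref{eligible-and-complete} tells us precisely that $S_0$ is a stationary subtree of $T$. For $0 < n \leq \omega$, part (3) gives $S_0 \setminus S_n \in NS^T_\nu$; if $S_n$ were also nonstationary, then $S_0 \subseteq (S_0 \setminus S_n) \cup S_n$ would be a union of two nonstationary sets, hence itself nonstationary, contradicting the stationarity of $S_0$. The main ``obstacle'' in the whole argument is really just the need to have Lemma~\ref{reflection} available for part (3); once that lemma (together with the $\nu^+$-completeness of $NS^T_\nu$ and Corollary~\ref{eligible-and-complete}) is in hand, the proof is essentially bookkeeping.
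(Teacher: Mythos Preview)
Your proof is correct and follows essentially the same route as the paper's: the same four-step structure, the same invocation of Lemma~\ref{reflection} for the one-step differences in part~(3), the countable-union argument for the general difference, and Corollary~\ref{eligible-and-complete} to start off part~(4). If anything, you are slightly more explicit in citing Lemma~\ref{special-is-nonstationary} for the $\nu^+$-completeness of $NS^T_\nu$ and in spelling out the contradiction argument in part~(4), but these are exactly the implicit steps the paper relies on.
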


\begin{lemma}\label{equivalence-cone-above}\hfill
\begin{enumerate}
\item
For every $r \in T$ and $B \in N_r$,
we have
\[
S_0 \cap \left( \{r\} \cup \cone r \right) \cap B
	= \left\{ s \in S_0 \cap \left( \{r\} \cup \cone r \right) : B \cap \pred s \in \mathcal G^+_s \right\}.
%\Gamma_\mathcal N \cap \left[\beta^A_{\min}, \lambda\right) \cap A =
%\left\{ \beta \in \Gamma_\mathcal N \cap
%\left[\beta^A_{\min},\lambda\right) : A \cap
%\beta \in \mathcal G^+_\beta \right\} %= \left\{ \beta \in S_0 \cap
%%\left[\beta^A_{\min},\lambda\right) : A \cap \beta \in I^*_\beta
%%\right\}
%.
\]
\item For every $r \leq_T t$ in $T$ and $B \in N_r$,
we have
\[
S_0 \cap \left( \pred t \setminus \pred r \right) \cap B
	= \left\{ s \in S_0 \cap \left( \pred t \setminus \pred r \right) : B \cap \pred s \in \mathcal G^+_s \right\}.
\]
\end{enumerate}

\begin{proof}\hfill
\begin{enumerate}
\item
This follows from Lemma~\ref{models-in-cone}(2),
using the fact that nodes in $S_0$ are eligible.
\item This follows from part~(1), since (for $r \leq_T t$)
\[
\pred t \setminus \pred r \subseteq \{r\} \cup \cone r.
\qedhere
\]
\end{enumerate}
\end{proof}

\end{lemma}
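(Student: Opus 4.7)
The plan is to derive both parts of the lemma essentially as a direct application of Lemma~\ref{models-in-cone}(2), with part~(2) following from part~(1) by a simple chain observation. The content of the lemma is that, for nodes $s \in S_0$ lying appropriately above a node $r$ with $B \in N_r$, membership in $B$ can be detected from the algebraic structure on $\pred s$.

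For part~(1), I would fix $r \in T$ and $B \in N_r$, and show both inclusions at once by the biconditional for each element. Take any $s \in S_0 \cap (\{r\} \cup \cone r)$. By the convention that $\cone \emptyset = T$, in either case ($r = \emptyset$ or $r \neq \emptyset$) we have $s \geq_T r$, so the models are arranged as $N_r \subseteq N_s$ and $B \in N_s$. Since $s \in S_0$, by definition $s$ is eligible. Therefore Lemma~\ref{models-in-cone}(2) applies and yields
\[
s \in B \iff B \cap \pred s \in \mathcal G^+_s,
\]
which is precisely the equality between membership in the set on the left-hand side and the defining condition of the set on the right-hand side of the displayed equation.

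For part~(2), I would simply observe that the hypothesis $r \leq_T t$ implies $\pred t \setminus \pred r \subseteq \{r\} \cup \cone r$. Indeed, if $s \in \pred t \setminus \pred r$, then $s$ lies on the chain $\pred t \cup \{t\}$ together with $r$, so $s$ and $r$ are comparable; since $s \not<_T r$, we must have $s = r$ or $r <_T s$, i.e. $s \in \{r\} \cup \cone r$. Intersecting both sides of the equality from part~(1) with $\pred t \setminus \pred r$ then yields part~(2) immediately.

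There is essentially no obstacle here: the substance of the lemma is entirely contained in the equivalence of Lemma~\ref{models-in-cone}(2), the definition of $S_0$ (which supplies eligibility of $s$ and hence the applicability of that equivalence), and the elementary chain fact about $\pred t \setminus \pred r$. The only point requiring the slightest care is the edge case $r = \emptyset$ in part~(1), which is handled cleanly by the convention $\cone \emptyset = T$ adopted earlier in the paper.
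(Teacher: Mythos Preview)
Your proposal is correct and follows essentially the same approach as the paper: part~(1) is deduced directly from Lemma~\ref{models-in-cone}(2) together with the eligibility of nodes in $S_0$, and part~(2) follows from part~(1) via the inclusion $\pred t \setminus \pred r \subseteq \{r\} \cup \cone r$. You simply spell out in more detail what the paper leaves implicit (the edge case $r=\emptyset$ and the comparability argument for the inclusion).
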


We will now define ideals $I(t, \sigma)$ and $J(t,
\sigma)$ on $\pred t$, for certain nodes $t \in T$
and finite sequences of colours $\sigma \in
k^{<\omega}$.  
We continue to follow the convention as explained in Section~\ref{Section:notation},
% in~\cite{J-W} and~\cite[p.~171, Definition~29.5(i),(ii)]{EHMR}
that
properness is \emph{not} required for a collection of sets to be
called an ideal (or a filter).  
Some of the ideals we are about to define may not be proper.
%In fact, some of our ideals will not be proper.
%\todo{Is this last sentence true?}

Though we define the ideals $I(t,\sigma)$ and
$J(t,\sigma)$, our intention will be to focus on the
corresponding co-ideals, just as we said earlier regarding the
co-ideals $I^+_t$ corresponding to the ideals $I_t$.  
As
we shall see (Lemma~\ref{homog-from-coideal}), 
for a set to be in
some co-ideal $I^+(t,\sigma)$ implies that it will include
homogeneous sets of size $\kappa$ for every colour in the sequence
$\sigma$.  This gives us the flexibility to choose later which
colour in $\sigma$ will be used when we combine portions of such
sets to get a set of order type $\kappa + \xi$, homogeneous for the
colouring $c$. 
When reading the definitions, it will help the
reader's intuition to think of the co-ideals rather than the ideals.

\begin{definition}

We will define
\todo{Can we define $I(\dots)$ and $J(\dots)$ more
generally, not just for $t \in S_n$?  
See~\cite{Hajnal-Larson-PR-Handbook}.
%We have expanded $S_0$ to
%make this more general, but can we broaden the ideals even more?
}
ideals $J(t,\sigma)$ and $I(t,\sigma)$ jointly by
recursion on the length of the sequence $\sigma$.
%$n$, where $n = \left|\sigma\right|$.
The collection $J(t, \sigma)$ will be defined for all $\sigma
\in k^{<\omega}$ but only when $t \in S_{\left|\sigma\right|}$,
while the collection $I(t, \sigma)$ will be defined only for
nonempty sequences $\sigma$ but for all $t \in
S_{\left|\sigma\right|-1}$.

%OLD VERSION:  The collection $I(\alpha, \sigma)$ will be defined for
%$\alpha \in S_{\left|\sigma\right|}$, while the collection
%$J(\alpha, \sigma)$ will be defined only for $\alpha \in
%S_{\left|\sigma\right|+1}$.

(When $\sigma \in k^n$ we say $\left|\sigma\right| = n$.)

\begin{itemize}
%\item OLD VERSION:  Begin with the empty sequence, $\sigma = \left<\right>$.  For
%$\alpha \in S_0$, we define
%\[
%I \left( \alpha, \left<\right> \right) = I_\alpha.
%\]
\item Begin with the empty sequence, $\sigma = \left<\right>$.  For
$t \in S_0$, we define
\[
J \left( t, \left<\right> \right) = I_t.
\]
\item Fix $\sigma \in k^{<\omega}$ and $t \in
S_{\left|\sigma\right|}$, and assume we have defined
$J(t,\sigma)$.  Then, for each colour $i<k$, we define
$I(t, \sigma^\frown{}\left<i\right>) \subseteq \mathcal
P(\pred t)$ by setting, for $X \subseteq \pred t$,
\[
X \in I(t, \sigma^\frown\left<i\right>) \iff X \cap c_i(t)
\in J(t, \sigma).
\]
\item Fix $\sigma \in k^{<\omega}$ with $\sigma\neq\emptyset$,
and assume we have defined $I(s,\sigma)$ for all $s \in
S_{\left|\sigma\right|-1}$.  Fix $t \in
S_{\left|\sigma\right|}$. We define $J(t,\sigma) \subseteq
\mathcal P(\pred t)$ by setting, for $X \subseteq \pred t$,
\[
X \in J(t, \sigma) \iff \left\{ s \in
S_{\left|\sigma\right|-1} \cap \pred t : X \cap \pred s \in I^+(s,
\sigma) \right\} \in I_t.
\]
%\item OLD VERSION:  Fix $\sigma \in k^{<\omega}$, and assume we have defined
%$I(\beta,\sigma)$ for all $\beta \in S_{\left|\sigma\right|}$.  Fix
%$\alpha \in S_{\left|\sigma\right|+1}$.  We define $J(\alpha,\sigma)
%\subseteq \mathcal P(\alpha)$ by setting, for $X \subseteq \alpha$,
%\[
%X \in J(\alpha, \sigma) \iff \left\{ \beta \in
%S_{\left|\sigma\right|} \cap \alpha : X \cap \beta \in I^+(\beta,
%\sigma) \right\} \in I_\alpha.
%\]
%Then, for each colour $i<k$, we define $I(\alpha,
%\sigma^\frown{}\left<i\right>) \subseteq \mathcal P(\alpha)$ by
%setting, for $X \subseteq \alpha$,
%\[
%X \in I(\alpha, \sigma^\frown\left<i\right>) \iff X \cap c_i(\alpha)
%\in J(\alpha, \sigma).
%\]
\end{itemize}
\end{definition}

We have introduced the intermediary $J(t,\sigma)$ as an
intuitive aid to understand the recursive construction and
subsequent inductive proofs.  
As can be seen by examining the definition,
each $I$-ideal is defined from a $J$-ideal by extending the sequence of colours $\sigma$,
without changing the node $t$;
but each $J$-ideal is defined by considering $I$-ideals from nodes lower down in the tree,
without changing the sequence of colours.

%But 
In fact the collections
$I(t,\sigma)$ can be described explicitly without the
intermediary $J(t,\sigma)$ (%this is what was done in the original paper 
as in~\cite{BHT}), by saying, for $\sigma \neq \emptyset$,
\[
X \in I(t, \sigma^\frown\left<i\right>) \iff \left\{ s \in
S_{\left|\sigma\right|-1} \cap \pred t : X \cap c_i(t) \cap
\pred s \in I^+(s, \sigma) \right\} \in I_t.
\]
However, we have changed the base case from \cite{BHT}:  We do not
define $I(t,\left<\right>)$, and by setting
$J(t,\left<\right>) = I_t$, we eliminate an unnecessary
reflection step for the sequences of length 1.  This way,
$I(t,\left<i\right>)$ is defined in a more intuitive way, by
setting (for $X \subseteq \pred t$)
\[
X \in I \left(t, \left<i\right> \right) \iff X \cap c_i(t)
\in I_t,
\]
and this definition is valid for all $t \in S_0$, not just in
$S_1$.
%, as we do not need an extra reflection step for the sequence of
%length 1.
Subsequent lemmas are easier to prove with this definition, and this
seems to be how $I(t,\sigma)$ is intuitively understood, even
in \cite{BHT}.

In particular, Corollary~\ref{some-ideal-is-proper} will be proven much
more easily, and its meaning is the intended intuitive one, which
was not the case under the original definition of
$I(t,\left<i\right>)$ in \cite{BHT}.  Furthermore,
Lemma~\ref{reflection-gives-nonempty-sigma} would not have been true
using the original definition in \cite{BHT}.

We now investigate some properties of the various collections
$I(t,\sigma)$ and $J(t,\sigma)$ and the relationships
between them.

\begin{lemma}\label{I-J-k-complete}
For each sequence $\sigma$ and each relevant $t$, the
collections $I(t,\sigma)$ and $J(t,\sigma)$ are
$\kappa$-complete ideals on $\pred t$ (though not necessarily
proper).

\begin{proof}
Easy, by induction over the length of the sequence
$\sigma$,
\todo{Consider writing this out.} 
using the fact that
we are using only nodes $t \in S_0$, so that each $I_t$
is a $\kappa$-complete ideal on $\pred t$ (Lemma~\ref{k-complete}(5)).
\end{proof}
\end{lemma}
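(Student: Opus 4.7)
The plan is to prove the lemma by induction on $\left|\sigma\right|$, simultaneously establishing both claims and alternating between the two construction rules. The base case $\sigma = \langle\rangle$ is immediate: $J(t,\langle\rangle) = I_t$, and by Lemma~\ref{k-complete}(5) applied to $t \in S_0$ (which satisfies $[N_t]^{<\kappa} \subseteq N_t$), $I_t$ is a $\kappa$-complete ideal on $\pred t$.

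For the inductive step, first I would handle the passage from $J(t,\sigma)$ to $I(t,\sigma^\frown\langle i\rangle)$. Observe that the map $\varphi : \mathcal P(\pred t) \to \mathcal P(\pred t)$ given by $\varphi(X) = X \cap c_i(t)$ is monotone and commutes with arbitrary unions. Therefore $I(t,\sigma^\frown\langle i\rangle) = \varphi^{-1}[J(t,\sigma)]$ inherits closure under subsets and $\kappa$-unions from $J(t,\sigma)$.

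Next, the passage from the $I(s,\sigma)$'s (for $s \in S_{\left|\sigma\right|-1} \cap \pred t$) to $J(t,\sigma)$. Here I introduce the set-valued operator
\[
F(X) = \left\{ s \in S_{\left|\sigma\right|-1} \cap \pred t : X \cap \pred s \in I^+(s,\sigma) \right\},
\]
so that by definition $X \in J(t,\sigma) \iff F(X) \in I_t$. Monotonicity ($X \subseteq Y \Rightarrow F(X) \subseteq F(Y)$) is immediate since each $I^+(s,\sigma)$ is upward closed, and this gives closure of $J(t,\sigma)$ under subsets. The crucial point, which I expect to be the main (though still routine) step, is establishing the inclusion
\[
F\!\left(\bigcup_{\alpha < \mu} X_\alpha\right) \subseteq \bigcup_{\alpha<\mu} F(X_\alpha)
\quad\text{for } \mu < \kappa.
\]
Indeed, if $s$ lies in the left-hand side, then $\bigcup_\alpha (X_\alpha \cap \pred s) \in I^+(s,\sigma)$; since $I(s,\sigma)$ is $\kappa$-complete by the inductive hypothesis and $\mu < \kappa$, we cannot have every $X_\alpha \cap \pred s \in I(s,\sigma)$, so some $X_\alpha \cap \pred s \in I^+(s,\sigma)$, placing $s$ in $F(X_\alpha)$. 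Combining this with the $\kappa$-completeness of $I_t$ yields $F(\bigcup X_\alpha) \in I_t$, hence $\bigcup X_\alpha \in J(t,\sigma)$.

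This completes the induction. The main obstacle is purely organizational: getting the correct inductive hypothesis to be fed into the $J$-step, where the $\kappa$-completeness of the \emph{co-ideals} $I^+(s,\sigma)$ (in the guise of the inductive claim for $I(s,\sigma)$) is exactly what converts a union into a union of $F$-values. No new combinatorial input beyond the inductive hypothesis and the already-established $\kappa$-completeness of $I_t$ is required.
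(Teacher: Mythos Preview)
Your proposal is correct and follows exactly the approach the paper indicates: induction on $\left|\sigma\right|$, with the base case supplied by Lemma~\ref{k-complete}(5) (since $t \in S_0$ guarantees $[N_t]^{<\kappa} \subseteq N_t$), and the two inductive steps handled via the obvious monotonicity and union-commuting properties of $X \mapsto X \cap c_i(t)$ and $X \mapsto F(X)$. The paper merely asserts the induction is ``easy''; you have written out precisely the details it omits, with the key point in the $J$-step---that $\kappa$-completeness of each $I(s,\sigma)$ forces $F\bigl(\bigcup_{\alpha<\mu} X_\alpha\bigr) \subseteq \bigcup_{\alpha<\mu} F(X_\alpha)$---being exactly what the inductive hypothesis is for.
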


We are going to need to take sets from co-ideals, so it would help
us to get a sense of when the ideals are proper.%, as a proper ideal
%means that the co-ideal is non-empty, and this will be the desired
%situation.
\todo{Maybe focus less on properness and more on when
sets from the co-ideal $I^+_t$ are in some other co-ideal.}

For a fixed $t$ and $\sigma$, there is no particular
relationship between $I(t,\sigma)$ and any
$I(t,\sigma^\frown\left<i\right>)$.  In fact there is no
relationship between $I(t,\sigma)$ and $J(t,\sigma)$, as
the latter is defined in terms of $I(s,\sigma)$ for
$s<_T t$ only.  We need to explore relationships that do exist
between various %different 
ideals.

\begin{lemma}\label{J-intersect-I}
For each sequence $\sigma \in k^{<\omega}$ and each $t \in
S_{\left|\sigma\right|}$, we have
\[
J(t,\sigma) = \bigcap_{i<k}
I(t,\sigma^\frown\left<i\right>),
\]
and equivalently,
\[
J^+(t,\sigma) = \bigcup_{i<k}
I^+(t,\sigma^\frown\left<i\right>).
\]
In particular, if $J(t,\sigma)$ is proper, then for at least
one $i<k$, $I(t,\sigma^\frown\left<i\right>)$ must be proper.

\begin{proof}
For $X \subseteq \pred t$, we have
\begin{align*}
X \in J(t, \sigma)
    &\iff X \cap \bigcup_{i<k} c_i(t) \in J(t,\sigma)
%            &&\text{(by Lemma~\ref{union-of-colours})}
            &&\left(\text{since } \pred t =
                                \bigcup_{i<k} c_i(t) \right)
                                                                \\
    &\iff \bigcup_{i<k}
        \left(X \cap c_i(t)\right) \in J(t,\sigma) \\
    &\iff \forall i<k
        \left[X \cap c_i(t) \in J(t,\sigma) \right]
            &&\text{(since $J(t,\sigma)$ is an ideal)}  \\
    &\iff \forall i<k
        \left[X \in I(t, \sigma^\frown\left<i\right>)\right] \\
    &\iff X \in \bigcap_{i<k}
        I(t, \sigma^\frown\left<i\right>)
            &&\qedhere
\end{align*}
\end{proof}
\end{lemma}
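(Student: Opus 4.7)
The plan is to prove both set-equalities by unfolding the definitions and using the fact, established in the preceding Lemma~\ref{I-J-k-complete}, that $J(t,\sigma)$ is a ($\kappa$-complete, hence in particular) finitely additive ideal on $\pred t$. The key structural observation that makes this work is that the family $\{c_i(t) : i < k\}$ partitions $\pred t$, since every node below $t$ receives exactly one colour under $c$ when paired with $t$.

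First I would fix $X \subseteq \pred t$ and rewrite it as
\[
X = X \cap \pred t = X \cap \bigcup_{i<k} c_i(t) = \bigcup_{i<k}\bigl(X \cap c_i(t)\bigr).
\]
Then, since $J(t,\sigma)$ is an ideal closed under finite unions (and $k$ is a finite cardinal), $X \in J(t,\sigma)$ iff every piece $X \cap c_i(t)$ belongs to $J(t,\sigma)$. By the defining clause
\[
X \in I\bigl(t,\sigma^\frown\langle i\rangle\bigr) \iff X \cap c_i(t) \in J(t,\sigma),
\]
this is in turn equivalent to $X$ lying in $I(t,\sigma^\frown\langle i\rangle)$ for every $i < k$, which yields the first equality. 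The co-ideal identity $J^+(t,\sigma) = \bigcup_{i<k} I^+(t,\sigma^\frown\langle i\rangle)$ then follows immediately by taking complements in $\mathcal{P}(\pred t)$ and applying De~Morgan's law to the finite intersection. The final ``in particular'' clause is a contrapositive: if every $I(t,\sigma^\frown\langle i\rangle)$ contained $\pred t$, then $\pred t$ would lie in their intersection, i.e.\ in $J(t,\sigma)$, contradicting properness.

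There is no real obstacle here; the only subtle point to be careful about is that finiteness of $k$ is what lets the argument go through with just the ideal axioms (so one does not need to invoke $\kappa$-completeness, although it is available). The proof uses no elementarity, no eligibility, and no reflection — it is purely an algebraic unfolding of the recursive definition of $I(t,\sigma^\frown\langle i\rangle)$ from $J(t,\sigma)$, combined with the partition of $\pred t$ induced by the colouring $c$ at the node $t$.
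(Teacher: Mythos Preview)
Your proposal is correct and follows essentially the same approach as the paper's own proof: decompose $X$ via the partition $\pred t = \bigcup_{i<k} c_i(t)$, use that $J(t,\sigma)$ is an ideal (so a finite union lies in it iff each piece does), and then invoke the defining clause for $I(t,\sigma^\frown\langle i\rangle)$. Your additional remarks deriving the co-ideal identity via De~Morgan and the properness clause by contrapositive are straightforward elaborations the paper leaves implicit.
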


The following special case of Lemma~\ref{J-intersect-I} where $\sigma = \left<\right>$ can be thought of as
a reformulation of Lemma~\ref{some-colour-in-co-ideal} using the terminology of our new ideals
$J(t, \left<\right>)$ and $I(t, \left<i\right>)$:

\begin{corollary}\label{some-colour-in-coideal-reformulated}
For each $t \in S_0$, we have
\[
I^+_t = 
J^+(t,\left<\right>) = \bigcup_{i<k}
I^+(t,\left<i\right>).
\]
\end{corollary}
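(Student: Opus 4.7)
The plan is to observe that this corollary is the immediate specialization of Lemma~\ref{J-intersect-I} to the empty sequence $\sigma = \left<\right>$, combined with the base case of the recursive definition of the $J$-ideals.

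First I would note that since $t \in S_0 = S_{\left|\left<\right>\right|}$, the ideal $J(t, \left<\right>)$ is defined, and by the base case of the recursive definition we have $J(t, \left<\right>) = I_t$; taking complements in $\mathcal P(\pred t)$ then yields $J^+(t, \left<\right>) = I^+_t$, establishing the first equality.

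Next I would apply Lemma~\ref{J-intersect-I} directly with $\sigma = \left<\right>$. The second formulation given in that lemma states
\[
J^+(t, \sigma) = \bigcup_{i<k} I^+(t, \sigma^\frown\left<i\right>),
\]
and substituting $\sigma = \left<\right>$ gives
\[
J^+(t, \left<\right>) = \bigcup_{i<k} I^+(t, \left<\right>^\frown\left<i\right>) = \bigcup_{i<k} I^+(t, \left<i\right>),
\]
which is precisely the second equality.

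There is no real obstacle here: the corollary is a direct combination of a definition unfolding and an instance of an already-proven lemma. The only thing worth checking is that the hypothesis $t \in S_{\left|\sigma\right|}$ of Lemma~\ref{J-intersect-I} is satisfied, which it is because $\left|\left<\right>\right| = 0$ and by assumption $t \in S_0$. The content of the corollary lies in the interpretation it offers, namely that the co-ideal $I^+_t$, which by Lemma~\ref{some-colour-in-co-ideal} decomposes according to the colouring $c$ restricted to pairs with top element $t$, can be re-expressed in the new language of the ideals $I(t, \left<i\right>)$; this links the algebraic structure developed in Section~\ref{section:ideals-esm} with the more refined ideal hierarchy $I(t, \sigma)$ required for longer sequences $\sigma$ later in the proof of the Main Theorem.
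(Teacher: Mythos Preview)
Your proposal is correct and matches the paper's own reasoning exactly: the paper states the corollary as the special case $\sigma = \left<\right>$ of Lemma~\ref{J-intersect-I}, combined with the base-case definition $J(t,\left<\right>) = I_t$, which is precisely what you do.
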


In particular, since $S_0$ consists only of eligible nodes, any $t \in S_0$ satisfies
%\[
$\pred t \in I^+_t$ %= J^+(t,\left<\right>)
%\]
by Lemma~\ref{eligible-equivalent},
so applying Corollary~\ref{some-colour-in-coideal-reformulated} to $\pred t$ gives:

\begin{corollary}\label{some-ideal-is-proper}
For each $t \in S_0$, there is some colour $i<k$ such that
$I(t,\left<i\right>)$ is proper, that is,
$I^+(t,\left<i\right>)$ is nonempty.%
\footnote{This result is not actually used,
but it corresponds to the sentence at the bottom of~\cite[p.~5]{BHT}.}

%\begin{proof}
%Fix $t \in S_0$.  
%Then $t$ must be an eligible node, so by Lemma~\ref{eligible-equivalent} $I_t$ must be proper.
%We then have
%\begin{align*}
%\pred t
%    &\in I^+_t
%        %&&\text{(by Lemma~\ref{I_N-ideal})}           
%								\\
%    &= J^+(t,\left<\right>)
%        &&\text{(by definition of $J^+(t,\left<\right>)$)} \\
%    &= \bigcup_{i<k} I^+(t,\left<i\right>)
%        &&\text{(by Lemma~\ref{J-intersect-I})},
%\end{align*}
%so we can fix some colour $i<k$ such that $\pred t \in
%I^+(t,\left<i\right>)$, so that $I^+(t,\left<i\right>)$ is
%nonempty and $I(t,\left<i\right>)$ is proper.
%\end{proof}
\end{corollary}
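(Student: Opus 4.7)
The plan is to derive this corollary as a direct consequence of Corollary~\ref{some-colour-in-coideal-reformulated}, which has just been stated. The key observation is that membership in $S_0$ comes with two pieces of data: $t$ is eligible, and $[N_t]^{<\kappa} \subseteq N_t$. For this corollary only the eligibility is needed.

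First I would note that since $t \in S_0$ is eligible, Lemma~\ref{eligible-equivalent} tells us that $I_t$ is a proper ideal on $\pred t$, so in particular $\pred t \notin I_t$, i.e., $\pred t \in I_t^+$. Hence $I_t^+$ is nonempty. Then I would invoke the equality
\[
I_t^+ = \bigcup_{i<k} I^+(t, \langle i \rangle)
\]
from Corollary~\ref{some-colour-in-coideal-reformulated}. Since the left-hand side is nonempty (containing at least $\pred t$) and $k$ is finite, at least one of the sets $I^+(t, \langle i \rangle)$ on the right must be nonempty, which by definition means that $I(t, \langle i \rangle)$ is a proper ideal on $\pred t$.

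There is no real obstacle here; the work has already been done in establishing eligibility of the nodes of $S_0$ and in proving Lemma~\ref{J-intersect-I} (from which Corollary~\ref{some-colour-in-coideal-reformulated} follows via the special case $\sigma = \langle \rangle$ together with the definitional identity $J(t, \langle \rangle) = I_t$). The corollary is essentially a bookkeeping restatement packaging those facts into a form convenient for later use, where we will need to commit to a specific colour $i$ whose co-ideal $I^+(t, \langle i \rangle)$ is available for constructing $i$-homogeneous chains.
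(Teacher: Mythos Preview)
Your proof is correct and follows exactly the paper's own argument: use eligibility of $t \in S_0$ via Lemma~\ref{eligible-equivalent} to get $\pred t \in I^+_t$, then apply Corollary~\ref{some-colour-in-coideal-reformulated} to conclude that some $I^+(t,\left<i\right>)$ is nonempty. One minor remark: the finiteness of $k$ is not actually needed (a nonempty union always has a nonempty summand), and your closing comment about ``later use'' is slightly off since the paper's footnote notes this corollary is not actually used.
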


Recall that each $S_{n+1}$ consists of reflection points of $S_n$.
This is for a good reason:  If we were to allow $t \in S_{n+1}$
that was not a reflection point of $S_n$, then we would have $S_{n}
\cap \pred t \in I_t$, so for all $\sigma \in k^{n+1}$ we would
have $\pred t \in J(t,\sigma)$, so that $J(t,\sigma)$ could
not be proper, regardless of the sequence $\sigma$.  In contrast,
since we allow only reflection points in $S_{\left|\sigma\right|}$
each time we lengthen $\sigma$, we obtain the following lemma:

%If we were to define $J(\alpha,\sigma)$ and
%$I(\alpha,\sigma^\frown\left<i\right>)$ for $\alpha$ that is not a
%reflection point of $S_{\left|\sigma\right|}$, we would have

\begin{lemma}\label{ideal-intersection-of-levels}\hfill
\begin{enumerate}
\item
For all $n\geq0$ and all $t \in S_n$, we have%
\footnote{We do %may
not really need the full strength of this lemma, 
though it is %but I think it's
one of the elegant results from the ideals being defined the way they are.  
The problem is that it requires all sequences of a given length, including sequences with repeated colours.
The only consequence of this lemma that we will actually need is the inclusion described in 
%It is the inclusion of 
Lemma~\ref{ideal-inclusion}. %, which follows from this lemma, that we will use.
}
%\[
%I^+_\alpha \cap \left\{ A \cap \alpha : A \in N_\alpha \right\} =
%\bigcup_{\sigma \in k^n} J^+(\alpha, \sigma) \cap \left\{ A \cap
%\alpha : A \in N_\alpha \right\},
%\]
\[
\mathcal G^+_t = \bigcup_{\sigma \in k^n} J^+(t, \sigma)
\cap \mathcal A_t,
\]
and equivalently,
\[
\mathcal G_t = \bigcap_{\sigma \in k^n} J(t, \sigma) \cap
\mathcal A_t.
\]
\item Similarly, For all $n\geq1$ and all $t \in S_{n-1}$, we
have
\[
\mathcal G^+_t = \bigcup_{\sigma \in k^n} I^+(t, \sigma)
\cap \mathcal A_t,
\]
and equivalently,
\[
\mathcal G_t = \bigcap_{\sigma \in k^n} I(t, \sigma) \cap
\mathcal A_t.
\]
\end{enumerate}

\begin{proof}
We prove parts (1) and (2) jointly by induction on $n$.
\begin{description}
\item[Base case for (1)]  When $n=0$, the only $\sigma \in k^0$ is the
empty sequence $\left<\right>$, and for any $t \in S_0$,
$J(t,\left<\right>)$ is defined to equal $I_t$, so the
equality reduces to Lemma~\ref{I_N-formulations}.

\item[Induction step, (1) $\implies$ (2)]  Fix $n\geq0$ and $t
\in S_{n}$, and assume that we have
\[
\mathcal G^+_t = \bigcup_{\sigma \in k^n} J^+(t, \sigma)
\cap \mathcal A_t.
\]
We need to show that
\[
\mathcal G^+_t = \bigcup_{\tau \in k^{n+1}} I^+(t, \tau)
\cap \mathcal A_t.
\]
But we have
\begin{align*}
\bigcup_{\tau \in k^{n+1}} I^+(t, \tau)
%    &= \bigcup_{\sigma^\frown\left<i\right> \in k^{n+1}}
%                        I^+(\alpha, \sigma^\frown\left<i\right>) \\
%    &= \bigcup_{\substack{\sigma \in k^{n} \\
%                            i < k}}
%                        I^+(\alpha, \sigma^\frown\left<i\right>) \\
    &= \bigcup_{\sigma \in k^{n}} \bigcup_{i < k}
                        I^+(t, \sigma^\frown\left<i\right>) \\
    &= \bigcup_{\sigma \in k^{n}} J^+(t, \sigma)
            &&\text{(by Lemma~\ref{J-intersect-I}).}
\end{align*}
%where the final equivalence comes from Lemma~\ref{J-intersect-I}.
The conclusion now follows easily from the Induction Hypothesis.

\item[Induction step, (2) $\implies$ (1)]  Fix $n\geq1$, and assume
that for all $s \in S_{n-1}$ we have
\[
\mathcal G^+_s = \bigcup_{\sigma \in k^n} I^+(s, \sigma)
\cap \mathcal A_s.
\]
We now fix $t \in S_{n}$, and we must show that
\[
\mathcal G^+_t = \bigcup_{\sigma \in k^{n}} J^+(t, \sigma)
\cap \mathcal A_t.
\]

Fix $X \in \mathcal A_t$.  We need to show that
%
%Fix $B \in N_\alpha$.  We need to show that
\[
X \in \mathcal G^+_t \iff X \in \bigcup_{\sigma \in k^{n}}
J^+(t, \sigma).
\]
Since $X \in \mathcal A_t$, we can fix some
%Let $B = \pi_t^{-1}(X)$.  So we have 
$B \in \mathcal P(T) \cap N_t$ such that $X = B \cap \pred t$.  
It follows that for every
$s \leq_T t$ we have $X \cap \pred s = B \cap \pred s$.

%Recall that we defined
%\[
%s^B_{\min} = \min \left\{ s \in \Gamma_\mathcal N : B \in
%N_s \right\}.
%\]
Since $t \in S_n$ (where $n\geq1$), $t$ is a reflection point of $S_{n-1}$, that is,
$S_{n-1} \cap \pred t \in I^+_t$.
By Lemma~\ref{easy-reflection}(2), $t$ is also a limit point of $S_{n-1}$,
%
%we know from
%Lemma~\ref{Sn-properties}(2) that $t$ is a limit point of $S_0$,
so that %in particular 
$t$ must also be a limit node of $T$.
% $\height_T(t)$ %its height 
%must be a limit ordinal.
%and therefore of $\Gamma_\mathcal N$.  
By continuity of the nice collection of submodels,
we can fix some $r^{\min} <_T t$ such that $B \in N_{r^{\min}}$.
%
%DECIDE WHETHER WE NEED THIS
So Lemma~\ref{models-in-cone}(1)
gives %$s^B_{\min} < t$, and 
$B \cap \pred s \in \mathcal A_s$
%$B \in N_\beta$
for all %eligible 
nodes $s \geq_T r^{\min}$,
and in particular, for all nodes $s \in \pred t \setminus \pred{r^{\min}}$.
  % $s \in \Gamma_\mathcal N \cap [s^B_{\min}, \lambda)$.
%\footnote{Make sure this is correct after that lemma is fixed.}

%It follows that for all $\beta \in S_0 \cap [\beta^B_{\min},
%\alpha]$, we have
%\[
%X \cap \pred s = B \cap \pred s = \Phi_\beta(B) \in \mathcal A_\beta.
%\]

Using all of these facts, we have
%We then have
\begin{align*}
&X \in \mathcal G^+_t                  \\
%    &\iff X \in I^+_\alpha    \\
    &\iff X \cap S_{n-1} \in I^+_t
            &&\text{(Lemma~\ref{reflection-intersection}%;
                %$\alpha$ is reflection point of $S_n$
                )}     \\
    &\iff X \cap S_{n-1} \setminus \pred {r^{\min}} %\cap \left[s^B_{\min}, t\right)
        \in I^+_t
            &&\text{(Lemma~\ref{bounded-in-ideal}(4))}
                                                    \\
    &\iff B \cap S_{n-1} \cap \left( \pred t \setminus \pred {r^{\min}} \right) %\cap \left[s^B_{\min}, t\right)
        \in I^+_t
		&&(%\text{since } 
			X = B \cap \pred t)
                                                    \\
    &\iff \left\{ s \in S_{n-1} \cap \left( \pred t \setminus \pred {r^{\min}} \right) %\left[s^B_{\min}, t\right) 
			: B \cap \pred s \in \mathcal G^+_s \right\}
        \in I^+_t
            &&\text{(Lemma~\ref{equivalence-cone-above}(2))}    \\
    &\iff \left\{ s \in S_{n-1} \cap \left( \pred t \setminus \pred {r^{\min}} \right) %\left[s^B_{\min}, t\right) 
			: B \cap \pred s \in \bigcup_{\sigma \in k^n}
        I^+(s, \sigma) \right\} \in I^+_t
            &&\text{(by Ind.\ Hyp.)}      \\
    &\iff \bigcup_{\sigma \in k^n} \left\{ s \in S_{n-1} \cap
        \left( \pred t \setminus \pred {r^{\min}} \right) %\left[s^B_{\min}, t\right) 
			: B \cap \pred s \in
        I^+(s, \sigma) \right\} \in I^+_t  \\
    &\iff \bigcup_{\sigma \in k^n} \left\{ s \in S_{n-1} \cap
        \pred t : B \cap \pred s \in
        I^+(s, \sigma) \right\} \in I^+_t 
            &&\text{(Lemma~\ref{bounded-in-ideal}(4))}
                                                    \\
%
%    &\iff \exists \sigma \in k^n \left(\left\{ s \in S_{n-1} \cap
%        %\left( 
%		\pred t \setminus \pred {r^{\min}} %\right) %\left[s^B_{\min}, t\right) 
%			: B \cap \pred s \in
%        I^+(s, \sigma) \right\} \in I^+_t \right)  \\
    &\iff \exists \sigma \in k^n \left(\left\{ s \in S_{n-1} \cap
        \pred t : B \cap \pred s \in
        I^+(s, \sigma) \right\} \in I^+_t \right)
%            &&\text{(Lemma~\ref{bounded-in-ideal}(4))}
                                                    \\
    &\iff \exists \sigma \in k^n \left(\left\{ s \in S_{n-1} \cap
        \pred t : X \cap \pred s \in
        I^+(s, \sigma) \right\} \in I^+_t \right)  \\
    &\iff \exists \sigma \in k^n
        \left( X \in J^+(t, \sigma) \right)    \\
    &\iff X \in
        \bigcup_{\sigma \in k^n} J^+(t, \sigma),
\end{align*}
as required.\qedhere
\end{description}
\end{proof}
\end{lemma}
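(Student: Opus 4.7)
The plan is to prove parts (1) and (2) simultaneously by induction on $n$, alternating between them. For the base case $n=0$ of part~(1), the only $\sigma \in k^0$ is the empty sequence, and by definition $J(t,\langle\rangle) = I_t$, so the claim reduces to Lemma~\ref{I_N-formulations}, which gives $\mathcal{A}_t \cap I^+_t = \mathcal{G}^+_t$.

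The implication (1)~$\Rightarrow$~(2) (passing from level $n$ of part~(1) to level $n+1$ of part~(2)) is immediate from Lemma~\ref{J-intersect-I}: the union $\bigcup_{\tau \in k^{n+1}} I^+(t,\tau)$ decomposes as $\bigcup_{\sigma\in k^n}\bigcup_{i<k} I^+(t,\sigma^\frown\langle i\rangle) = \bigcup_{\sigma\in k^n} J^+(t,\sigma)$, and the induction hypothesis equates this (modulo $\mathcal{A}_t$) with $\mathcal{G}^+_t$.

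The main work is in the implication (2)~$\Rightarrow$~(1). Fix $n\geq 1$ and $t \in S_n$, so $t$ is a reflection point of $S_{n-1}$, hence a limit point of $S_{n-1}$ by Lemma~\ref{easy-reflection}(2), and in particular a limit node of $T$. Given $X \in \mathcal{A}_t$, write $X = B \cap \pred t$ for some $B \in N_t$. By continuity of the nice collection, there is some $r^{\min} <_T t$ with $B \in N_{r^{\min}}$, which ensures $B \cap \pred s \in \mathcal{A}_s$ for every $s$ with $r^{\min} \leq_T s <_T t$ (via Lemma~\ref{models-in-cone}(1)). I would then chain the following equivalences: $X \in \mathcal{G}^+_t$ is equivalent to $X \cap S_{n-1} \in I^+_t$ by Lemma~\ref{reflection-intersection} (applicable since $t \in S_n$ means $S_{n-1} \cap \pred t \in I^+_t$); using Lemma~\ref{bounded-in-ideal}(4) we may restrict attention to $s$ with $s \notin \pred{r^{\min}}$; by Lemma~\ref{equivalence-cone-above}(2) this translates to a statement about $\{s \in S_{n-1} \cap (\pred t \setminus \pred{r^{\min}}) : B \cap \pred s \in \mathcal{G}^+_s\} \in I^+_t$; by the induction hypothesis (part~(2) at level $n$) this equals $\{s : B \cap \pred s \in \bigcup_{\sigma \in k^n} I^+(s,\sigma)\} \in I^+_t$; and by distributing the finite union over $\sigma \in k^n$ and using that $I^+_t$ is closed under supersets and its complement under finite unions, we pull out an existential to get $\exists \sigma \in k^n$ with $X \in J^+(t,\sigma)$.

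The main obstacle is the quantifier-interchange step at the end of (2)~$\Rightarrow$~(1): moving the existential quantifier over $\sigma \in k^n$ past the $I^+_t$ predicate. This requires finiteness of $k^n$ (so that the union of ideal sets is ideal) and careful bookkeeping with Lemma~\ref{bounded-in-ideal}(4) to absorb the bounded portion $\pred{r^{\min}}$. A subtle point is that the set $B$ (and hence $X$) need not lie in any $N_s$ for $s$ too low, which is why the reflection-point property of $t$ and the limit-node structure are both essential; without continuity of the submodel sequence, we would have no uniform $r^{\min}$ to use in Lemma~\ref{equivalence-cone-above}(2).
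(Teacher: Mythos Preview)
Your proposal is correct and follows essentially the same approach as the paper: the same joint induction scheme, the same base case via Lemma~\ref{I_N-formulations}, the same use of Lemma~\ref{J-intersect-I} for (1)$\Rightarrow$(2), and the same chain of equivalences for (2)$\Rightarrow$(1) using Lemma~\ref{reflection-intersection}, continuity to obtain $r^{\min}$, Lemma~\ref{equivalence-cone-above}(2), Lemma~\ref{bounded-in-ideal}(4), and the finiteness of $k^n$ to pull the existential past $I^+_t$. Your closing remarks about where the subtleties lie (the quantifier interchange and the role of continuity) are exactly on point.
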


Since (for $t \in S_0$) $\mathcal G_t$ is a proper ideal in $\mathcal A_t
\subseteq \mathcal P(\pred t)$,
Lemma~\ref{ideal-intersection-of-levels} implies the following
result:  For each $n\geq0$ and $t \in S_n$, there is some
$\sigma \in k^n$ such that $J(t,\sigma)$ is proper; similarly,
for each $n\geq1$ and $t \in S_{n-1}$, there is some $\sigma
\in k^n$ such that $I(t,\sigma)$ is proper.  When $n=1$, this
gives Corollary~\ref{some-ideal-is-proper}.  However, for larger $n$,
this fact is not as useful, because there is no way to guarantee
that the relevant sequence does not contain repeated colours.

%We will make use of this last result in particular when $n=0$:  For
%each $\alpha \in S_0$, $J(\alpha,\left<\right>)$ is proper, or
%equivalently there is some $i<k$ such that $I(\alpha,
%\left<i\right>)$ is proper, that is, $I^+(\alpha,\left<i\right>)$ is
%nonempty.

Our main use of Lemma~\ref{ideal-intersection-of-levels} will be the
following:

\begin{lemma}\label{ideal-inclusion}(cf.~\cite[Lemma~3.3]{BHT}) %\hfill
\begin{enumerate}
\item
For all $\sigma \in k^{<\omega}$ and all $t \in
S_{\left|\sigma\right|}$, we have
\[
I_t \subseteq J(t, \sigma),
\]
and equivalently,
\[
I^+_t \supseteq J^+(t, \sigma), \text{ and } I^*_t
\subseteq J^*(t, \sigma).
\]
\item Similarly,
for all 
nonempty $\sigma \in k^{<\omega}$ and all $t \in
S_{\left|\sigma\right|-1}$, we have
\[
I_t \subseteq I(t, \sigma),
\]
and equivalently,
\[
I^+_t \supseteq I^+(t, \sigma), \text{ and } I^*_t
\subseteq I^*(t, \sigma).
\]
\end{enumerate}

\begin{proof}\hfill
\begin{enumerate}
\item
Fix $\sigma \in k^{<\omega}$ and $t \in
S_{\left|\sigma\right|}$.  From
Lemma~\ref{ideal-intersection-of-levels}(1), we see that
%\[
$\mathcal G_t \subseteq J(t, \sigma)$.
%\]
But $\mathcal G_t$ is a generating set for the ideal $I_t$
on $\pred t$.
%
%But the ideal $I_\alpha$ is generated by the set on the left side of
%this inclusion, and
%
Since $J(t,\sigma)$ is an ideal on $\pred t$, it follows that
%\[
$I_t \subseteq J(t, \sigma)$,
%\]
as required.
%
%and therefore $I^+_\alpha \supseteq I^+(\alpha, \sigma)$ as well.
\item
Fix nonempty $\tau \in k^{<\omega}$ and $t \in
S_{\left|\tau\right|-1}$.  We write $\tau =
\sigma^\frown\left<i\right>$ for some $\sigma \in
k^{\left|\tau\right|-1}$ and $i<k$.  We then have
\begin{align*}
I_t    &\subseteq J(t,\sigma)
                    &&\text{(from part (1))}                    \\
            &\subseteq I(t,\sigma^\frown\left<i\right>)
                    &&\text{(from Lemma~\ref{J-intersect-I})}   \\
            &= I(t,\tau),
\end{align*}
%
%, we have $I_\alpha \subseteq J(\alpha,\sigma)$.  But from
%Lemma~\ref{J-intersect-I} we have $J(\alpha,\sigma) \subseteq
%I(\alpha,\sigma^\frown\left<i\right>) = I(\alpha,\tau)$.  This gives
%$I_\alpha \subseteq I(\alpha,\tau)$,
%
%Again, from the previous lemma,\footnote{Prove this from $J
%\subseteq I$, using part (1), rather than using the models.} we see
%that
%\[
%I_\alpha \cap \left\{ B \cap \alpha : B \in N_\alpha \right\}
%\subseteq I(\alpha, \sigma).
%\]
%As before, the ideal $I_\alpha$ is generated by the set $\mathcal
%G_\alpha$ on the left side of this inclusion, and $I(\alpha,\sigma)$
%is an ideal, so it follows that
%\[
%I_\alpha \subseteq I(\alpha, \sigma),
%\]
as required.\qedhere
\end{enumerate}
\end{proof}
\end{lemma}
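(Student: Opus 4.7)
My plan is to derive both parts of Lemma~\ref{ideal-inclusion} directly from the infrastructure already in place, in particular from Lemma~\ref{ideal-intersection-of-levels} and Lemma~\ref{J-intersect-I}, together with the fact (Lemma~\ref{I-J-k-complete}) that each $J(t,\sigma)$ and $I(t,\sigma)$ is an ideal on the whole power set $\mathcal P(\pred t)$.

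For part~(1), I would fix $\sigma \in k^{<\omega}$ and $t \in S_{|\sigma|}$. Applying Lemma~\ref{ideal-intersection-of-levels}(1) with $n = |\sigma|$ and taking the single sequence $\sigma$ out of the intersection, I get $\mathcal G_t \subseteq J(t,\sigma)$. Now recall the definition of $I_t = I_{N_t,t}$: by construction (see Lemma~\ref{I_N-formulations}), $I_t$ is exactly the collection of subsets of $\pred t$ that are contained in some element of $\mathcal G_t$; equivalently, $I_t$ is the ideal on $\pred t$ generated by $\mathcal G_t$. Since $J(t,\sigma)$ is itself an ideal on $\pred t$ and contains every element of the generating set $\mathcal G_t$, it must contain the entire generated ideal, giving $I_t \subseteq J(t,\sigma)$. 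The equivalent statements about $I^+_t \supseteq J^+(t,\sigma)$ and $I^*_t \subseteq J^*(t,\sigma)$ then follow by taking complements and duals in $\mathcal P(\pred t)$.

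For part~(2), I would simply reduce to part~(1). Given a nonempty sequence $\tau \in k^{<\omega}$ and $t \in S_{|\tau|-1}$, write $\tau = \sigma^\frown \langle i \rangle$, where $\sigma$ has length $|\tau|-1$ and $i < k$. Since $t \in S_{|\tau|-1} = S_{|\sigma|}$, part~(1) applies to $\sigma$ and gives $I_t \subseteq J(t,\sigma)$. By Lemma~\ref{J-intersect-I}, $J(t,\sigma) = \bigcap_{j<k} I(t, \sigma^\frown \langle j \rangle) \subseteq I(t,\sigma^\frown\langle i \rangle) = I(t,\tau)$. Chaining these inclusions gives $I_t \subseteq I(t,\tau)$, and the co-ideal/filter statements follow as before.

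I do not anticipate any real obstacle: the heavy lifting has already been done in Lemma~\ref{ideal-intersection-of-levels}, whose proof required the careful bookkeeping with the fattening trick and reflection points. The only subtlety to be careful about here is the distinction between $\mathcal G_t$ (an ideal in the sub-algebra $\mathcal A_t$) and $I_t$ (the corresponding ideal in the full power set $\mathcal P(\pred t)$); the upgrade from containment of $\mathcal G_t$ to containment of $I_t$ uses precisely that $I_t$ is generated from $\mathcal G_t$ by downward closure and that the $J(t,\sigma)$ are already closed under subsets in $\mathcal P(\pred t)$.
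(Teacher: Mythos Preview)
Your proposal is correct and matches the paper's proof essentially line for line: part~(1) uses Lemma~\ref{ideal-intersection-of-levels}(1) to get $\mathcal G_t \subseteq J(t,\sigma)$ and then upgrades via the fact that $I_t$ is generated by $\mathcal G_t$ and $J(t,\sigma)$ is an ideal on $\pred t$; part~(2) writes $\tau = \sigma^\frown\langle i\rangle$ and chains part~(1) with Lemma~\ref{J-intersect-I}. Even the subtlety you flag about $\mathcal G_t$ versus $I_t$ is exactly the point the paper's proof relies on.
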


Lemma~\ref{ideal-inclusion} will be used several times in what
follows.

The following lemma is of slight interest in characterizing the
intersections of the ideals with $\mathcal A_t$ in the case
that they are proper, though we will %it is 
not particularly need to use it:
%useful:

\begin{lemma}%\hfill
%\begin{enumerate}
%\item
For all $\sigma \in k^{<\omega}$ and all $t \in
S_{\left|\sigma\right|}$, either
%\[
$J(t, \sigma) = \mathcal P(\pred t)$
%\]
or
%\[
$J(t, \sigma) \cap \mathcal A_t = \mathcal G_t$.
%\]
%
%\item
Similarly, for all nonempty $\sigma \in k^{<\omega}$ and all $t
\in S_{\left|\sigma\right|-1}$, either
%\[
$I(t, \sigma) = \mathcal P(\pred t)$
%\]
or
%\[
$I(t, \sigma) \cap \mathcal A_t = \mathcal G_t$.
%\]
%\end{enumerate}

\begin{proof}
Let $K$ be either $J(t,\sigma)$ or $I(t,\sigma)$
for some fixed %appropriate 
$t$ and $\sigma$ satisfying the relevant
hypotheses.  By Lemma~\ref{I-J-k-complete}, $K$ is an ideal
on $\pred t$.  By Lemma~\ref{homomorphism-set-algebra} we know
that $\mathcal A_t$ is a set algebra over $\pred t$, so it
follows that $K \cap \mathcal A_t$ is an ideal in
$\mathcal A_t$.  
By Lemma~\ref{ideal-intersection-of-levels} we have
$\mathcal G_t \subseteq K \cap \mathcal A_t$.  
%Since $t$ is an eligible node, %Now, 
But Lemma~\ref{G_N-dichotomy} tells us that
either $\mathcal G_t = \mathcal A_t$ or
$\mathcal G_t$ is a maximal proper ideal in $\mathcal
A_t$.  So
it follows that $K \cap \mathcal A_t$ must equal either
$\mathcal G_t$ or $\mathcal A_t$.  In the first case we
are done. In the second case, we have $\pred t \in \mathcal A_t
= K \cap \mathcal A_t \subseteq K$, so that
$K = \mathcal P(\pred t)$, and we are done.
\end{proof}
\end{lemma}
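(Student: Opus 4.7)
The plan is to treat the two cases uniformly by writing $K$ for either $J(t,\sigma)$ or $I(t,\sigma)$ under the appropriate hypotheses, and then show that $K \cap \mathcal A_t$ must equal either $\mathcal G_t$ or the whole algebra $\mathcal A_t$, using the dichotomy that $\mathcal G_t$ is either improper or maximal in $\mathcal A_t$. The desired conclusion then follows immediately by checking what happens in each of the two sub-cases.

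First I would invoke Lemma~\ref{I-J-k-complete} to observe that $K$ is an ideal on $\pred t$, so since $\mathcal A_t$ is a set algebra over $\pred t$ (Lemma~\ref{homomorphism-set-algebra}), the intersection $K \cap \mathcal A_t$ is an ideal in the algebra $\mathcal A_t$. Next I would use Lemma~\ref{ideal-intersection-of-levels} to obtain the containment $\mathcal G_t \subseteq K \cap \mathcal A_t$; this is the key point where the recursive construction pays off, because $\mathcal G_t$ is precisely the intersection over \emph{all} sequences of the appropriate length of $J$-ideals (resp. $I$-ideals) with $\mathcal A_t$, so in particular it sits inside $K \cap \mathcal A_t$ for any single $\sigma$.

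Now I apply Lemma~\ref{G_N-dichotomy}, which gives exactly two alternatives: either $\mathcal G_t = \mathcal A_t$, or $\mathcal G_t$ is a maximal proper ideal in $\mathcal A_t$. In the first alternative, $\pred t \in \mathcal A_t = \mathcal G_t \subseteq K$, and since $K$ is an ideal on all of $\pred t$ and now contains its top element, $K = \mathcal P(\pred t)$, giving the first disjunct. In the second alternative, since $\mathcal G_t$ is maximal proper and $\mathcal G_t \subseteq K \cap \mathcal A_t$ with $K \cap \mathcal A_t$ an ideal of $\mathcal A_t$, we must have $K \cap \mathcal A_t$ equal to either $\mathcal G_t$ or $\mathcal A_t$; the former case yields the second disjunct of the lemma, and the latter case again gives $\pred t \in K$ and hence $K = \mathcal P(\pred t)$, yielding the first disjunct.

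There is no real obstacle here, because all of the work has already been done in the earlier lemmas (especially Lemma~\ref{ideal-intersection-of-levels} and the dichotomy in Lemma~\ref{G_N-dichotomy}). The only subtle point to keep in mind is that $K$ was defined as an ideal on the entire power set $\mathcal P(\pred t)$ (not merely an ideal within $\mathcal A_t$), so once $K$ contains $\pred t$, closure under subsets forces $K = \mathcal P(\pred t)$ rather than just $K \cap \mathcal A_t = \mathcal A_t$; this is what lets the ``$\mathcal A_t$'' case of the dichotomy upgrade to the stronger ``$\mathcal P(\pred t)$'' conclusion in the statement.
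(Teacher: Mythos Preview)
Your proposal is correct and follows essentially the same approach as the paper's proof: both treat $K$ uniformly, use Lemma~\ref{I-J-k-complete} and Lemma~\ref{homomorphism-set-algebra} to see that $K \cap \mathcal A_t$ is an ideal in $\mathcal A_t$, invoke Lemma~\ref{ideal-intersection-of-levels} for the inclusion $\mathcal G_t \subseteq K \cap \mathcal A_t$, and then apply the dichotomy of Lemma~\ref{G_N-dichotomy} to conclude. Your case split is organized slightly differently (you handle the improper-$\mathcal G_t$ case first rather than folding it into the maximality argument), but the content is identical.
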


\begin{lemma}\label{homog-from-coideal}(cf.~\cite[Lemma~3.4]{BHT})%\hfill
\begin{enumerate}
\item Fix $\sigma \in k^{<\omega}$ and $t \in
S_{\left|\sigma\right|}$.  If $X \subseteq \pred t$ and $X \in
J^+(t,\sigma)$, then for all $j \in \range(\sigma)$ there is a
%$W \subseteq X$ such that $\left|W\right| = \kappa$ and $W$ is
$j$-homogeneous chain $W \in [X]^\kappa$. 
\item 
Fix nonempty $\sigma \in k^{<\omega}$ and $t \in
S_{\left|\sigma\right|-1}$.  If $X \subseteq \pred t$ and $X \in
I^+(t,\sigma)$, then for all $j \in \range(\sigma)$ there is a 
%$W \subseteq X$ such that $\left|W\right| = \kappa$ and $W$ is $j$-homogeneous.
$j$-homogeneous chain $W \in [X]^\kappa$. 
\end{enumerate}

\begin{proof}
We prove parts (1) and (2) jointly by induction over the length of
the sequence $\sigma$.

\begin{description}
\item[Base case for (1)]  If $\sigma = \left<\right>$ then
$\range(\sigma) = \emptyset$ so there is nothing to
show.%\footnote{This seems to be overkill, and similarly in some
%subsequent lemmas.  Can we refine the definition of
%$I(\alpha,\left<\right>)$ or $J(\alpha,\left<\right>)$ to be more
%efficient?  Consider setting $J(\alpha,\left<\right>) = I_\alpha$.}

\item[Induction step, (1) $\implies$ (2)]  Fix $\sigma \in
k^{<\omega}$ and $t \in S_{\left|\sigma\right|}$, and assume
that for all
$Z \in J^+(t,\sigma)$ and all $j \in \range(\sigma)$ there is
$W \subseteq Z$ such that $\left|W\right| = \kappa$ and $W$ is
$j$-homogeneous.  We then fix $i<k$,
$X \in I^+(t, \sigma^\frown\left<i\right>)$, and $j \in
\range(\sigma^\frown\left<i\right>)$, and we must find $W \subseteq
X$ such that $\left|W\right| = \kappa$ and $W$ is $j$-homogeneous.

Since $X \in I^+(t, \sigma^\frown\left<i\right>)$, we have $X
\cap c_i(t) \in J^+(t, \sigma)$.

There are two cases to consider:
%\begin{description}
\begin{itemize}
\item $j \in \range(\sigma)$:  Since $X
\cap c_i(t) \in J^+(t, \sigma)$,
%it follows that $X \in J^+(\alpha,\sigma)$.  Then by
we use the Induction Hypothesis to find $W \subseteq X \cap
c_i(t)$ such that $\left|W\right| = \kappa$ and $W$ is
$j$-homogeneous.  But then $W \subseteq X$ and we are done.
\item $j=i$:  From Lemma~\ref{ideal-inclusion}(1) we have
$J^+(t,\sigma) \subseteq I^+_t$, so it follows that $X
\cap c_i(t) \in I^+_t$.  Applying Lemma~\ref{homog-2.2} to
the set $X \cap c_i(t)$, we get $i$-homogeneous $W \subseteq X
\cap c_i(t)$ of size $\kappa$, as required.
\end{itemize}
%\end{description}

\item[Induction step, (2) $\implies$ (1)]
Fix nonempty $\sigma \in k^{<\omega}$ and assume that for all $s
\in S_{\left|\sigma\right|-1}$ and all $Z \subseteq \pred s$ such that
$Z \in I^+(s,\sigma)$ and all $j \in \range(\sigma)$ there is $W
\subseteq Z$ such that $\left|W\right| = \kappa$ and $W$ is
$j$-homogeneous.  We then fix $t \in S_{\left|\sigma\right|}$,
$X \in J^+(t, \sigma)$, and $j \in \range(\sigma)$, and we must
find $W \subseteq X$ such that $\left|W\right| = \kappa$ and $W$ is
$j$-homogeneous.

Since $X \in J^+(t, \sigma)$, we have
\[
\left\{ s \in S_{\left|\sigma\right|-1} \cap \pred t : X \cap
\pred s \in I^+(s, \sigma) \right\} \in I^+_t.
\]
In particular, this set, being in the co-ideal $I^+_t$, must be
non-empty.  So we fix $s \in S_{\left|\sigma\right|-1} \cap
\pred t$ such that $X \cap \pred s \in I^+(s, \sigma)$.  Then we
use the Induction Hypothesis to find $W \subseteq X \cap \pred s$ such
that $\left|W\right| = \kappa$ and $W$ is $j$-homogeneous.  But then
$W \subseteq X$ and we are done.\qedhere
\end{description}
\end{proof}
\end{lemma}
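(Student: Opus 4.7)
The plan is to establish (1) and (2) simultaneously by mutual induction on $\left|\sigma\right|$, matching the way the ideals $J(t,\sigma)$ and $I(t,\sigma)$ are themselves defined by mutual recursion. The two halves of the induction step flow naturally from the two clauses of that definition: extending $\sigma$ by one colour passes from $J$ to $I$ at the same node $t$, while the reflection clause that defines $J(t,\sigma)$ in terms of $I(s,\sigma)$ at nodes $s <_T t$ passes from $I$ back up to $J$ one level higher in the tree.

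The base case for (1) is vacuous since $\range(\left<\right>) = \emptyset$, so no $j$ needs to be handled. For the step $(1)_\sigma \Rightarrow (2)_{\sigma^\frown\left<i\right>}$, I would let $X \in I^+(t, \sigma^\frown\left<i\right>)$, which by definition means $X \cap c_i(t) \in J^+(t,\sigma)$, and fix $j \in \range(\sigma^\frown\left<i\right>)$. If $j$ already occurs in $\sigma$, the inductive hypothesis applied to $X \cap c_i(t)$ immediately produces a $j$-homogeneous chain of size $\kappa$ sitting inside $X$. The genuinely new case is $j = i$: here Lemma~\ref{ideal-inclusion}(1) provides the inclusion $J^+(t,\sigma) \subseteq I^+_t$, so $X \cap c_i(t) \in I^+_t$, and then Lemma~\ref{homog-2.2}, applied with colour $\chi = i$ (which lies in $N_t$ since $k \in N_\emptyset \subseteq N_t$), delivers an $i$-homogeneous chain of length $\kappa$ inside $X \cap c_i(t) \subseteq X$.

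For the step $(2)_\sigma \Rightarrow (1)_\sigma$ with $\left|\sigma\right| \geq 1$, the hypothesis $X \in J^+(t,\sigma)$ unwinds, by the very definition of $J$, to say that the set $\left\{ s \in S_{\left|\sigma\right|-1} \cap \pred t : X \cap \pred s \in I^+(s,\sigma) \right\}$ lies in $I^+_t$ and is in particular nonempty. I would pick any such $s$, invoke the inductive hypothesis (2) on $X \cap \pred s$ to obtain a $j$-homogeneous chain $W$ of size $\kappa$, and note that $W \subseteq X$, which closes the induction.

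The real work is all encapsulated in Lemma~\ref{homog-2.2}, which uses the $\kappa$-completeness of $I_t$ to build a chain of length $\kappa$ inside any set from the co-ideal $I^+_t$; the present argument is essentially bookkeeping that lifts that single combinatorial construction through the mutually recursive layers of $J$ and $I$. Thus the main obstacle I expect is purely a conceptual one, namely identifying the correct induction scheme; the only non-routine algebraic ingredient on top of Lemma~\ref{homog-2.2} is the inclusion $J^+(t,\sigma) \subseteq I^+_t$ from Lemma~\ref{ideal-inclusion}(1), which is what allows a newly-introduced colour $j = i$ at the end of the sequence to be tracked back down to the base co-ideal $I^+_t$ where Lemma~\ref{homog-2.2} is available to invoke.
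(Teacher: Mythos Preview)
Your proposal is correct and follows essentially the same approach as the paper: a mutual induction on $\left|\sigma\right|$ with a vacuous base case, the step $(1)_\sigma \Rightarrow (2)_{\sigma^\frown\left<i\right>}$ splitting into the case $j\in\range(\sigma)$ (handled by the induction hypothesis) and the case $j=i$ (handled via Lemma~\ref{ideal-inclusion}(1) and Lemma~\ref{homog-2.2}), and the step $(2)_\sigma \Rightarrow (1)_\sigma$ obtained by picking a single witness $s$ from the nonempty reflection set. Your remark that $i \in N_t$ (needed for invoking Lemma~\ref{homog-2.2}) is a detail the paper leaves implicit, but it is indeed immediate since every natural number lies in any elementary submodel of $H(\theta)$.
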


Until here, we have focused on %building 
describing co-ideals from which we can extract homogeneous sets of 
order-type $\kappa$.
Ultimately we will fix an ordinal $\xi < \log\kappa$, and our 
strategy will be to find some node $s \in T$ and chains $W, Y \subseteq T$ 
%and $Y$ of ordinals 
such that
\[
W <_T \{s\} <_T Y, %< T, 
\]
where $W$ has order type $\kappa$, $Y$ has order type $\xi$, and $W
\cup Y$ is homogeneous for the colouring $c$.  
We will now work on building structures from which we will be able to extract homogeneous sets of order-type $\xi$.

\begin{definition}
For any ordinal $\rho$ and sequence $\sigma \in k^{<\omega}$, we
consider chains in $T$ of order type
$\rho^{\left|\sigma\right|}$, and we define, by recursion over the
length of $\sigma$, what it means for such a chain to be
$(\rho,\sigma)$-good:
\todo{Can we come up with an alternative,
intuitive, non-recursive definition?  Something along the lines of:
$X$ has order-type $\rho^{\left|\sigma\right|}$, and when
enumerating the elements of $X$ in increasing order, as sums of
powers of $\rho$, if $s$ and $\gamma$ differ in the $j$th
coordinate but not in any higher coordinate, then $c\{s,\gamma\}
= \sigma(j)$.  See how Jones defines these sets. Then modify the
proofs of the following three lemmas accordingly.}
\begin{itemize}
\item Beginning with the empty sequence $\left<\right>$, we say that
every singleton set is $(\rho,\left<\right>)$-good.
\item Fix a sequence $\sigma \in k^{<\omega}$, and suppose we have
already decided which chains in $T$ are $(\rho,\sigma)$-good.
Fix a colour $i<k$.  We say that a chain $X \subseteq T$ of
order type $\rho^{\left|\sigma\right|+1}$ is
$(\rho,\sigma^\smallfrown\left<i\right>)$-good if
\[
X = \bigcup_{\eta<\rho} X_\eta,
\]
where the sequence $\left<X_\eta : \eta < \rho\right>$ satisfies the
following conditions:
\begin{enumerate}
\item for each $\eta<\rho$, the chain $X_\eta$ is
$(\rho,\sigma)$-good,
\item for each $\iota<\eta<\rho$, we have\footnote{The definition in %original paper 
\cite[p.~6]{BHT} uses $\sup X_\iota < \inf X_\eta$, which is
slightly stronger but seems not to be necessary.} 
$X_\iota <_T X_\eta$, and
\item for each $\iota<\eta<\rho$,
\[
c''\left(X_\iota \otimes X_\eta\right) = \{i\}.
\]
that is, for each $s \in X_\iota$ and $t \in X_\eta$, we have $c(\{s,t\}) = i$.
\end{enumerate}
\end{itemize}
\end{definition}

%for the order-respecting bijection $\Phi : X \to
%\rho^{\left|\sigma\right|}$,
%
%Definition of $(\rho,\sigma)$-good is basically fine, by recursion
%over the length of $\sigma$.

\begin{lemma}(cf.~\cite[Lemma~3.5]{BHT})
\label{homog-from-good}
Fix $\sigma \in k^{<\omega}$ and ordinal $\rho$.  If a chain $X \subseteq T$ is
$(\rho,\sigma)$-good, then for all $j \in \range(\sigma)$ there is
$Y \subseteq X$ such that $Y$ is $j$-homogeneous for $c$ and has
order-type $\rho$.

\begin{proof}
By induction over the length of the sequence $\sigma$.
\begin{description}
\item[Base case]  If $\sigma = \left<\right>$ then $\range(\sigma) =
\emptyset$ so there is nothing to show.

\item[Induction step]  Fix $\sigma \in k^{<\omega}$ and assume that
for every $(\rho,\sigma)$-good set $Z$ and all $j \in
\range(\sigma)$ there is $Y \subseteq Z$ such that $Y$ has order
type $\rho$ and $Y$ is $j$-homogeneous.  We then fix $i<k$,
$(\rho,\sigma^\frown\left<i\right>)$-good set $X$,
%$\alpha \in
%S_{\left|\sigma\right|+1}$, $X \in I^+(\alpha,
%\sigma^\frown\left<i\right>)$,
and $j \in \range(\sigma^\frown\left<i\right>)$, and we must find $Y
\subseteq X$ such that $Y$ has order type $\rho$ and $Y$ is
$j$-homogeneous.
%Since $X \in I^+(\alpha, \sigma^\frown\left<i\right>)$, we have $X
%\cap c_i(\alpha) \in J^+(\alpha, \sigma)$.

There are two cases to consider:
\begin{itemize}
\item $j \in \range(\sigma)$:  By definition of $X$ being
$(\rho,\sigma^\frown\left<i\right>)$-good, $X$ includes some set
$X_0$ that is $(\rho,\sigma)$-good.
%Since $X
%\cap c_i(\alpha) \in J^+(\alpha, \sigma)$, it follows that $X \in
%J^+(\alpha,\sigma)$.  This is equivalent to
%\[
%\left\{ \beta \in S_{\left|\sigma\right|} \cap \alpha : X \cap \pred s
%\in I^+(\beta, \sigma) \right\} \in I^+_\alpha.
%\]
%In particular, this set, being in the co-ideal $I^+_\alpha$, must be
%non-empty.  So we fix $\beta \in S_{\left|\sigma\right|} \cap
%\alpha$ such that $X \cap \pred s \in I^+(\beta, \sigma)$.
Then by the Induction Hypothesis, there is $Y \subseteq X_0$  with
order type $\rho$ that is $j$-homogeneous.  But then $Y \subseteq X$
and we are done.
\item $j=i$:  We decompose $X$ into its component subsets $X_\eta$,
$\eta<\rho$.  For each $\eta<\rho$, choose $\gamma_\eta \in X_\eta$.
Then the set
\[
Y = \left<\gamma_\eta \right>_{\eta<\rho}
\]
is
%From Lemma~\ref{ideal-inclusion}(2) we have
%$J^+(\alpha,\sigma) \subseteq I^+_\alpha$, so it follows that $X
%\cap c_i(\alpha) \in I^+_\alpha$.  Applying Lemma~\ref{homog-2.2} to
%the set $X \cap c_i(\alpha)$, we get
$i$-homogeneous and satisfies the required conditions.
%$W \subseteq X
%\cap c_i(\alpha)$ of size $\kappa$, as required.
\qedhere
\end{itemize}
\end{description}
\end{proof}
\end{lemma}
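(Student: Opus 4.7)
My plan is to prove this by induction on $\left|\sigma\right|$, exploiting the recursive definition of $(\rho,\sigma)$-goodness. The base case $\sigma = \left<\right>$ is vacuous, since $\range(\left<\right>) = \emptyset$ leaves no colour $j$ to consider.

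For the inductive step, suppose the result holds for some $\sigma \in k^{<\omega}$, and let $X$ be $(\rho, \sigma^\frown\left<i\right>)$-good for some colour $i < k$. Unpacking the definition, I would write $X = \bigcup_{\eta<\rho} X_\eta$ where each $X_\eta$ is $(\rho,\sigma)$-good, the blocks satisfy $X_\iota <_T X_\eta$ for $\iota < \eta$, and $c''(X_\iota \otimes X_\eta) = \{i\}$ whenever $\iota < \eta$. Fix $j \in \range(\sigma^\frown\left<i\right>) = \range(\sigma) \cup \{i\}$, and split into two exhaustive (though possibly overlapping) cases.

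In the first case, $j \in \range(\sigma)$: apply the inductive hypothesis to any single block, say $X_0$, to extract a $j$-homogeneous chain of order-type $\rho$ contained in $X_0 \subseteq X$, and we are done. In the second case, $j = i$: pick one representative $\gamma_\eta \in X_\eta$ for each $\eta < \rho$, and take $Y = \left\{\gamma_\eta : \eta < \rho \right\}$. The separation $X_\iota <_T X_\eta$ for $\iota < \eta$ forces $\gamma_\iota <_T \gamma_\eta$, so $Y$ is a chain of order-type exactly $\rho$; and the colour condition between distinct blocks guarantees $c\{\gamma_\iota, \gamma_\eta\} = i$ for all $\iota < \eta$, making $Y$ $i$-homogeneous.

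There is no real obstacle here: the definition of $(\rho,\sigma)$-good was engineered precisely so that this argument goes through with a clean two-case split at each level of the recursion. The only point requiring mild care is recognising that in the $j=i$ case one must use \emph{both} the ordering condition (to secure that the representatives form a chain of full length $\rho$) and the colour condition (to secure homogeneity), and that only one representative per block is taken, discarding any internal structure of the $X_\eta$. The inductive hypothesis is invoked only in the first case, so the induction is bottomed out cleanly once $j$ first enters $\range(\sigma)$ at some stage of the recursion.
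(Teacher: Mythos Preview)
Your proposal is correct and follows essentially the same approach as the paper's proof: induction on $|\sigma|$ with a vacuous base case, and in the inductive step the same two-case split, applying the induction hypothesis to a single block $X_0$ when $j \in \range(\sigma)$ and selecting one representative per block when $j = i$. If anything, you are slightly more explicit than the paper in noting that the ordering condition $X_\iota <_T X_\eta$ is what guarantees the representatives form a chain of order-type exactly $\rho$.
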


\begin{lemma}\label{exist-good}(cf.~\cite[Lemma~3.6]{BHT}) %\hfill
\begin{enumerate}
\item
Fix $\sigma \in k^{<\omega}$ and $t \in
S_{\left|\sigma\right|}$.  If $X \in J^+(t,\sigma)$ then for
all $\rho<\kappa$ there is $Y \subseteq X$ that is
$(\rho,\sigma)$-good.
\item 
Fix nonempty $\sigma \in k^{<\omega}$ and $t \in
S_{\left|\sigma\right|-1}$.  If $X \in I^+(t,\sigma)$ then for
all $\rho<\kappa$ there is $Y \subseteq X$ that is
$(\rho,\sigma)$-good.
\end{enumerate}

\begin{proof}
Fix any ordinal $\rho < \kappa$.  We prove parts (1) and (2) jointly
by induction over the length of the sequence $\sigma$.

\begin{description}
\item[Base case for (1)]
If $X \in J^+(\sigma,\left<\right>)$ then
%by Lemma~\ref{bounded-in-ideal}(3),
$X$ is certainly nonempty, so choose any $u \in X$, so that
$\{u\}$ is $(\rho,\left<\right>)$-good.
\item[Induction step, (1) $\implies$ (2)]
Fix $\sigma \in k^{<\omega}$ and $t \in
S_{\left|\sigma\right|}$, and assume that for all
%$\beta \in S_{\left|\sigma\right|}$ and all $Z \subseteq \pred s$
%such that
$Z \in J^+(t,\sigma)$ %and all $j \in \range(\sigma)$
there is $W \subseteq Z$ such that $W$ is $(\rho,\sigma)$-good.
%$\left|W\right| = \kappa$ and $W$ is $j$-homogeneous.
We then fix $i<k$, %$\alpha \in S_{\left|\sigma\right|+1}$,
and $X \in I^+(t, \sigma^\frown\left<i\right>)$, and
%$j \in \range(\sigma^\frown\left<i\right>)$, and
we must find $Y \subseteq X$ that is
$(\rho,\sigma^\frown\left<i\right>)$-good.
%$\left|W\right| = \kappa$ and $W$ is $j$-homogeneous.

Since $X \in I^+(t, \sigma^\frown\left<i\right>)$, we have $X
\cap c_i(t) \in J^+(t, \sigma)$.

We will recursively construct a sequence $\left<Y_\eta : \eta <
\rho\right>$  of subsets of $X \cap c_i(t)$ that satisfies the
requirements for its union to be
$(\rho,\sigma^\frown\left<i\right>)$-good.

Fix an ordinal $\eta < \rho$ and assume that we have constructed a
sequence $\left<Y_\iota : \iota < \eta\right>$ satisfying the
required properties.  We show how to construct $Y_\eta$.

Let
\[
V = \bigcup_{\iota<\eta} Y_\iota.
\]
Since $\eta<\rho<\kappa$, $\left|\sigma\right|$ is finite, $\kappa$
is infinite, and for each $\iota < \eta$ we have
$\left|Y_\iota\right| = \left|\rho^{\left|\sigma\right|}\right|$,
%and $\kappa$ is regular,
it follows that
\[
\left|V\right|
    = \left|\bigcup_{\iota<\eta} Y_\iota \right|
    = \sum_{\iota<\eta} \left|Y_\iota \right|
    = \sum_{\iota<\eta} \left|\rho^{\left|\sigma\right|}\right|
    = \left|\eta\right| \cdot\left|\rho^{\left|\sigma\right|}\right|
    < \kappa.
\]
Of course $V \subseteq \pred t \subseteq N_t$, so that $V \in
[N_t]^{<\kappa}$. Since $t \in S_{\left|\sigma\right|}
\subseteq S_0$, we have $[N_t]^{<\kappa} \subseteq N_t$,
%Lemma~\ref{prop-sequence-models}(\ref{model-kappa-complete})
giving us $V \in N_t$.

Define
\[
B = \left\{ u \in T : \left(\forall s \in V \right)
\left[s <_T u \text{ and } c \left\{ s, u \right\} =
i \right] \right\}.
\]
Since $B$ is defined from parameters $T, V, c$, and $i$ that
are all in $N_t$, it follows by elementarity of $N_t$ that
$B \in N_t$.

Since $V \subseteq c_i(t)$, it follows from the definition of
$B$ that $t \in B$.  But then %applying
%Lemma~\ref{ultrafilter-on-algebra} [(2) $\implies$ (3)] to $B$,
we have $B \cap \pred t \in %\mathcal G^+_t = 
\mathcal G^*_t
\subseteq I^*_t$.  By Lemma~\ref{ideal-inclusion}(1), we then
have $B \cap \pred t \in J^*(t,\sigma)$.  Recall that $X \cap
c_i(t) \in J^+(t, \sigma)$.  The intersection of a filter
set and a co-ideal set must be in the co-ideal, so we have $B \cap X
\cap c_i(t) \in J^+(t,\sigma)$.  We now apply the
Induction Hypothesis, obtaining $(\rho,\sigma)$-good
\[
Y_\eta \subseteq B \cap X \cap c_i(t).
\]
Since $Y_\eta \subseteq B$, we clearly have $V <_T Y_\eta$ and $c''(V
\otimes Y_\eta) = \{i\}$, as required, and we have completed the
recursive construction.

We now let
\[
Y = \bigcup_{\eta<\rho} Y_\eta
\]
so that $Y \subseteq X$ is $(\rho,
\sigma^\frown\left<i\right>)$-good, as required.
%
%Let $\gamma = \sup V$.  Since $\cf(\alpha) \geq \kappa$, it follows
%that $\gamma < \alpha$.  By Lemma~\ref{bounded-in-ideal}(1) and
%Lemma~\ref{ideal-inclusion}(1), we then have $\gamma \in I_\alpha
%\subseteq I(\alpha,\sigma^\frown\left<i\right>)$, so it follows that
%\[
%X \setminus \gamma \in I^+(\alpha,\sigma^\frown\left<i\right>).
%\]
%
%
%By double induction, both over the length of the sequence $\sigma$
%and over the ordinal $\rho$.

\item[Induction step, (2) $\implies$ (1)]
Fix nonempty $\sigma \in k^{<\omega}$ and assume that for all $s
\in S_{\left|\sigma\right|-1}$ and all $Z \subseteq \pred s$
such that $Z \in I^+(s,\sigma)$ %and all $j \in \range(\sigma)$
there is $Y \subseteq Z$ such that $Y$ is $(\rho,\sigma)$-good.
%$\left|W\right| = \kappa$ and $W$ is $j$-homogeneous.
We then fix $t \in S_{\left|\sigma\right|}$ and $X \in
J^+(t, \sigma)$, and
%$j \in \range(\sigma^\frown\left<i\right>)$, and
we must find $Y \subseteq X$ that is $(\rho,\sigma)$-good.
%$\left|W\right| = \kappa$ and $W$ is $j$-homogeneous.

Since $X \in J^+(t, \sigma)$, we have
\[
\left\{ s \in S_{\left|\sigma\right|-1} \cap \pred t : X \cap
\pred s \in I^+(s, \sigma) \right\} \in I^+_t.
\]
In particular, this set, being in the co-ideal $I^+_t$, must be
non-empty.  So we fix $s \in S_{\left|\sigma\right|-1} \cap
\pred t$ such that $X \cap \pred s \in I^+(s, \sigma)$.  Then we
use the Induction Hypothesis to find $(\rho,\sigma)$-good $Y
\subseteq X \cap \pred s$.  But then $Y \subseteq X$ and we are
done.\qedhere
\end{description}
\end{proof}

\end{lemma}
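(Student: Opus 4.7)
The natural approach is a joint induction on $|\sigma|$, alternating between parts (1) and (2), following the template already used for Lemmas~\ref{ideal-intersection-of-levels} and~\ref{homog-from-coideal}. The base case for (1) at $\sigma = \left<\right>$ is immediate: since $J(t,\left<\right>) = I_t$ by definition, any $X \in J^+(t,\left<\right>)$ is nonempty, and any singleton from $X$ is vacuously $(\rho,\left<\right>)$-good. The direction (2) $\Rightarrow$ (1) at a nonempty $\sigma$ is also straightforward: unpacking the definition of $J^+(t,\sigma)$, the set of $s \in S_{|\sigma|-1} \cap \pred t$ with $X \cap \pred s \in I^+(s,\sigma)$ lies in $I^+_t$ and is hence nonempty; pick any such $s$ and apply the inductive hypothesis (part 2 at $\sigma$) to $X \cap \pred s \subseteq X$.

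The substantive step is (1) $\Rightarrow$ (2): given $X \in I^+(t, \sigma^\frown\left<i\right>)$ with $t \in S_{|\sigma|}$, the definition of $I$ gives $X \cap c_i(t) \in J^+(t,\sigma)$, and one must construct a $(\rho,\sigma^\frown\left<i\right>)$-good $Y \subseteq X$. I would build $\left<Y_\eta : \eta < \rho\right>$ recursively, maintaining the invariants that each $Y_\eta \subseteq X \cap c_i(t)$ is $(\rho,\sigma)$-good and that $Y_\eta$ sits $<_T$-above $V_\eta := \bigcup_{\iota < \eta} Y_\iota$ with all cross-colors equal to $i$. At stage $\eta$, the estimate $|V_\eta| \leq |\eta| \cdot |\rho|^{|\sigma|} < \kappa$ (using $\eta,\rho < \kappa$ and $|\sigma|$ finite) together with $V_\eta \subseteq \pred t \subseteq N_t$ and $[N_t]^{<\kappa} \subseteq N_t$ (from $t \in S_{|\sigma|} \subseteq S_0$) ensures $V_\eta \in N_t$. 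Setting $B_\eta = \{u \in T : \forall s \in V_\eta\,[s <_T u \text{ and } c\{s,u\} = i]\}$, elementarity of $N_t$ gives $B_\eta \in N_t$. Since the invariant forces $V_\eta \subseteq c_i(t)$, we have $t \in B_\eta$, so by eligibility of $t$ the set $B_\eta \cap \pred t$ lies in $\mathcal G^*_t \subseteq I^*_t \subseteq J^*(t,\sigma)$, the last inclusion from Lemma~\ref{ideal-inclusion}(1). Intersecting this filter set with the co-ideal set $X \cap c_i(t) \in J^+(t,\sigma)$ yields $B_\eta \cap X \cap c_i(t) \in J^+(t,\sigma)$; the inductive hypothesis (part 1 at $\sigma$) then produces the required $(\rho,\sigma)$-good $Y_\eta$ inside this intersection, which automatically respects the invariants because $Y_\eta \subseteq B_\eta \cap c_i(t)$. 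Finally, $Y := \bigcup_{\eta < \rho} Y_\eta$ is $(\rho, \sigma^\frown\left<i\right>)$-good by construction.

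The main obstacle will be the (1) $\Rightarrow$ (2) recursion: one must simultaneously control the size of $V_\eta$ so that it enters $N_t$, define the witness set $B_\eta$ in $N_t$ by elementarity, and translate $t \in B_\eta$ into membership in the filter $J^*(t,\sigma)$ via the chain of inclusions furnished by eligibility together with Lemma~\ref{ideal-inclusion}. Each ingredient has been prepared by the preceding theory, but orchestrating them into a single well-founded recursion of length $\rho$---especially verifying the cardinal-arithmetic bound on $|V_\eta|$ at every stage and arranging that each freshly chosen $Y_\eta$ remains inside $c_i(t)$ so that the invariant propagates---is where the argument has the most moving parts.
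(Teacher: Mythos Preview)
Your proposal is correct and matches the paper's proof essentially line for line: the same joint induction on $|\sigma|$, the same trivial base case and (2)$\Rightarrow$(1) step, and the same recursive construction of the $Y_\eta$'s in the (1)$\Rightarrow$(2) step using the set $B_\eta \in N_t$ and the chain $\mathcal G^*_t \subseteq I^*_t \subseteq J^*(t,\sigma)$ from Lemma~\ref{ideal-inclusion}(1). The only cosmetic difference is that you invoke eligibility of $t$ to place $B_\eta \cap \pred t$ in $\mathcal G^*_t$, whereas this actually follows directly from the definition of $\mathcal G^*_t$ once $B_\eta \in N_t$ and $t \in B_\eta$.
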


\begin{lemma}(cf.~\cite[Lemma~3.7]{BHT})\label{refine-good}
Fix $\sigma \in k^{<\omega}$ and% 
\footnote{This lemma remains true with $m$ replaced by any cardinal, but we need only the finite case.}
$m<\omega$.  If $\rho$ and $\xi$
are any two ordinals such that
\[
\rho \to \left(\xi\right)^1_m,
\]
if $X \subseteq T$ is $(\rho,\sigma)$-good, and $g : X \to m$
is some colouring, then there is some $Y \subseteq X$, homogeneous
for $g$, such that $Y$ is $(\xi,\sigma)$-good.

\begin{proof}
Fix $m< \omega$ and ordinals $\rho$ and $\xi$ satisfying the
hypothesis.  We prove the lemma by induction over the length of the
sequence $\sigma$.
\begin{description}
\item[Base case]  If $X$ is $(\rho,\left<\right>)$-good, then it is a
singleton.  Any colouring $g$ on a singleton must go to only one
colour, so $X$ is homogeneous for $g$, and being a singleton it is
also $(\xi,\left<\right>)$-good.

\item[Induction step]  Fix $\sigma \in k^{<\omega}$ and assume that
for every $(\rho,\sigma)$-good $Z \subseteq T$ and colouring
$g : Z \to m$ there is a $(\xi,\sigma)$-good $W \subseteq Z$
homogeneous for $g$.  We then fix a colour $i<k$,
$(\rho,\sigma^\frown\left<i\right>)$-good $X \subseteq T$, and
a colouring $g : X \to m$, and we must find some $(\xi,
\sigma^\frown\left<i\right>)$-good $Y \subseteq X$ that is
homogeneous for $g$.

Since $X$ is $(\rho,\sigma^\frown\left<i\right>)$-good, we fix a
sequence $\left<X_\eta : \eta < \rho\right>$ satisfying the
conditions in the definition for
\[
X = \bigcup_{\eta<\rho} X_\eta
\]
to be $(\rho,\sigma^\frown\left<i\right>)$-good.

Consider any $\eta<\rho$.  The set $X_\eta$ is $(\rho,\sigma)$-good,
so we apply the Induction Hypothesis to $X_\eta$ and the restricted
colouring $g \upharpoonright X_\eta : X_\eta \to m$.  This gives us
$(\xi,\sigma)$-good $Y_\eta \subseteq X_\eta$ and a colour $j_\eta <
m$ such that $g'' Y_\eta = \{j_\eta\}$.

Now for each colour $j<m$, define the set
\[
V_j = \left\{ \eta < \rho : j_\eta = j \right\}.
\]
We now have a partition
\[
\rho = \bigcup_{j<m} V_j,
\]
so we can fix some $j<m$ and a set $H \subseteq V_j$ of order type
$\xi$.

Now set
\[
Y = \bigcup_{\eta \in H} Y_\eta.
\]
It is clear that $Y \subseteq X$ is $(\xi,
\sigma^\frown\left<i\right>)$-good and $j$-homogeneous for
$g$.\qedhere
\end{description}
\end{proof}
\end{lemma}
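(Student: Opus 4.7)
The plan is to proceed by induction on the length of the sequence $\sigma$, mirroring the recursive definition of $(\rho,\sigma)$-goodness. The base case $\sigma = \langle\rangle$ is essentially free: a $(\rho,\langle\rangle)$-good chain is a singleton regardless of $\rho$, so $Y = X$ is trivially $g$-homogeneous and $(\xi,\langle\rangle)$-good.

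For the inductive step, fix $\sigma \in k^{<\omega}$ and assume the lemma holds for $\sigma$. Given a colour $i<k$, a $(\rho, \sigma^\frown\langle i\rangle)$-good chain $X \subseteq T$, and a colouring $g : X \to m$, unpack the goodness structure to write
\[
X = \bigcup_{\eta<\rho} X_\eta,
\]
where each $X_\eta$ is $(\rho,\sigma)$-good, the $X_\eta$ are stacked $<_T$-increasingly, and $c$ takes the constant value $i$ across any two distinct levels. For each $\eta < \rho$, apply the inductive hypothesis to $X_\eta$ and the restricted colouring $g \upharpoonright X_\eta$, extracting a $(\xi,\sigma)$-good subset $Y_\eta \subseteq X_\eta$ on which $g$ takes a constant value $j_\eta < m$.

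The crux is then a single application of the hypothesized partition relation $\rho \to (\xi)^1_m$: the map $\eta \mapsto j_\eta$ is a colouring of $\rho$ with $m$ colours, so there is a set $H \subseteq \rho$ of order-type $\xi$ on which this colouring is constant, say with value $j$. Setting
\[
Y = \bigcup_{\eta \in H} Y_\eta,
\]
the inter-level condition on $X$ is inherited by $Y$ (since $H$-indexed levels are $<_T$-stacked and the cross colour is still $i$), which makes $Y$ a $(\xi, \sigma^\frown\langle i\rangle)$-good chain; and within each $Y_\eta$ we have $g = j$, so $Y$ is $j$-homogeneous for $g$.

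There is no genuine obstacle: the whole argument is a clean, single-threaded recursion, with the partition relation $\rho \to (\xi)^1_m$ invoked exactly once per induction step to compress the indexing ordinal from $\rho$ down to $\xi$ while preserving the goodness structure. The only point requiring care is verifying that the inter-level conditions (items (2) and (3) in the definition of good) survive restriction to the subindex set $H$, but this is immediate since they are inherited from $X$ to any sub-collection of its levels.
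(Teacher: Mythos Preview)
Your proposal is correct and follows essentially the same approach as the paper's proof: induction on $\left|\sigma\right|$, with the base case trivial since singletons are $(\xi,\langle\rangle)$-good for any $\xi$, and the inductive step applying the hypothesis to each block $X_\eta$ to extract a $(\xi,\sigma)$-good $Y_\eta$ with constant $g$-colour $j_\eta$, then using $\rho \to (\xi)^1_m$ on the map $\eta \mapsto j_\eta$ to select an index set $H$ of order-type $\xi$. Your explicit remark that the inter-level conditions are inherited by any sub-collection of levels is a point the paper leaves implicit.
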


\begin{lemma}\label{exist-ordinal-pigeonhole}
Fix $m<\omega$.  For any infinite cardinal $\tau$, and any ordinal
$\xi<\tau$, there is some ordinal $\rho$ with $\xi\leq\rho<\tau$
such that
\[
\rho \to \left(\xi\right)^1_m.
\]

\begin{proof}
To see this, consider two cases:
\begin{itemize}
\item Suppose $\tau = \omega$.  In this case, $\xi <\tau$ is necessarily finite,
and we have
\[
(\xi-1) \cdot m + 1 \to \left(\xi\right)^1_m
\]
so we can let $\rho = (\xi-1)\cdot m+1$.
\item Otherwise, $\tau$ is an uncountable cardinal.  
(This is the case assumed in~\cite[Lemma~3.7]{BHT}.)
For $\xi<\tau$,
let $\rho = \omega^\xi$ (where the operation here is 
ordinal exponentiation).  We clearly have $\xi\leq\rho<\tau$.  Any ordinal
power of $\omega$ is
%
%If $\rho = \omega^\zeta$ for some ordinal $\zeta$, then we know
%$\omega^\zeta$ is
\emph{indecomposable}, that is,
\[
\left(\forall m<\omega\right) \left[\omega^\xi \to
\left(\omega^\xi\right)^1_m\right],
\]
giving us a homogeneous chain even longer than required.  
\qedhere
\end{itemize}
\end{proof}
\end{lemma}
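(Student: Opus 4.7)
The plan is to split into two cases depending on whether the infinite cardinal $\tau$ is countable or uncountable, and in each case to exhibit an explicit ordinal $\rho$ with $\xi \leq \rho < \tau$ for which the pigeonhole-type partition relation $\rho \to (\xi)^1_m$ holds.

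First I would handle the case $\tau = \omega$. Here the hypothesis $\xi < \tau$ forces $\xi$ to be a finite ordinal, say $\xi = n$, so the required partition relation reduces to the classical finite pigeonhole principle. Taking $\rho = (n-1) \cdot m + 1$ (or any slightly larger finite ordinal), a counting argument shows that any colouring $c : \rho \to m$ must take some value on at least $n$ points, yielding a monochromatic subset of order type $n = \xi$. Since $\xi \leq \rho < \omega = \tau$, this $\rho$ is as required.

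Next I would handle the case where $\tau$ is an uncountable cardinal. The natural candidate is $\rho = \omega^\xi$ in the sense of ordinal exponentiation. The key facts I would invoke are: (i) $\xi \leq \omega^\xi$, which is a standard property of ordinal exponentiation, and (ii) $\omega^\xi < \tau$, which follows from $\xi < \tau$ together with the fact that $\tau$, being an uncountable cardinal, is closed under ordinal exponentiation with base $\omega$ for exponents smaller than $\tau$. Finally (iii), every ordinal of the form $\omega^\xi$ is \emph{indecomposable} (additively indecomposable), and it is a standard result that indecomposable ordinals satisfy $\omega^\xi \to (\omega^\xi)^1_m$ for every finite $m$; since $\xi \leq \omega^\xi$, this gives in particular $\omega^\xi \to (\xi)^1_m$.

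The proof should be quite short since both cases reduce to well-known facts about the ordinal pigeonhole principle, so I do not anticipate any real obstacle. The only point requiring a moment of care is verifying $\omega^\xi < \tau$ for uncountable cardinal $\tau$ and $\xi < \tau$; this is a routine induction on $\xi$ using regularity/cardinal arithmetic (and works uniformly for uncountable cardinals since each step $\omega^\xi \cdot \omega$ stays below $\tau$). Everything else is invoking standard results about indecomposable ordinals and finite pigeonhole.
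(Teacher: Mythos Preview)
Your proposal is correct and follows essentially the same approach as the paper: the same two-case split on whether $\tau = \omega$ or $\tau$ is uncountable, with the same explicit choices $\rho = (\xi-1)\cdot m + 1$ in the finite case and $\rho = \omega^\xi$ (ordinal exponentiation, using indecomposability) in the uncountable case. Your justification of $\omega^\xi < \tau$ is slightly more detailed than the paper's (which simply asserts it), but the argument is the same.
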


From here onward, we will generally be working within the subtree
%$S_\omega$.
\[
S_\omega = \bigcap_{n<\omega} S_n,
\]
as defined earlier.
%Recall that $S_\omega$ is stationary in
%$\lambda$, and
Notice that if $t \in S_\omega$, then (because $S_\omega \subseteq S_n$ for all $n<\omega$) 
$I(t,\sigma)$ and $J(t,\sigma)$ are 
defined for all %nonempty 
$\sigma \in k^{<\omega}$
(provided $\sigma \neq \emptyset$ for defining $I(t,\sigma)$).

Also, rather than considering all possible finite sequences of
colours $\sigma \in k^{<\omega}$, we will consider only those
sequences that are:
\begin{itemize}
\item non-empty (to ensure that we can obtain a homogeneous set
of some colour), and 
\item one-to-one (distinct colours; without repetition --- to ensure that its length cannot be longer
than the number of colours, so that the collection of such sequences is finite).%
\footnote{We could have started from the beginning by allowing only sequences without repeated colours
in the definition of $I(t,\sigma)$ and $J(t,\sigma)$.
Some of the lemmas as stated would be problematic, 
such as Lemmas~\ref{J-intersect-I} and~\ref{ideal-intersection-of-levels},
but they are the ones whose full strength we are not using anyway.
%
%Why couldn't we have started by allowing only
%sequences without repeated colours? Try that, and see which lemmas
%become incorrect; this may mean they were useless in the first
%place.
}
\end{itemize}

\begin{definition}
We begin by defining
\[
\Sigma_0 = \left\{ \sigma \in k^{<\omega} : \sigma \neq \emptyset
\text{ and $\sigma$ is one-to-one} \right\}.
\]

For a stationary subtree $S \subseteq S_\omega$ and $t \in S$,
define
\todo{Why does this have to be defined only for stationary $S$?  And why only for $S \subseteq S_\omega$?}
\[
\Sigma(t,S) = \left\{ \sigma \in \Sigma_0 : S \cap \pred t \in
I^+(t,\sigma) \right\}.
\]
\end{definition}

%Each sequence $\sigma \in \Sigma_0$ has length $\leq k$, so the set $\Sigma_0$ is finite.

%so that the collection of such sequences is finite

For any $\sigma \in \Sigma_0$ it is clear that $1\leq
\left|\sigma\right| \leq k$.  We then have%
\todo{Can't use
$\sum$-notation for the sum, because of confusion with the $\Sigma$
symbol!}
\[
\left|\Sigma_0\right| = k + k(k-1) + \dots + k!
\]
which is finite.  
Since for any $t, S$ we have
$\Sigma(t,S) \subseteq \Sigma_0$, there are only finitely many distinct
%possible 
sets $\Sigma(t,S)$.

\begin{lemma}\label{subset-gives-smaller-sigma}
For any stationary $R,S \subseteq S_\omega$, if $t \in R
\subseteq S$ then $\Sigma(t,R) \subseteq \Sigma(t,S)$.

\begin{proof}
If $R \subseteq S$ then certainly $R \cap \pred t \subseteq S \cap
\pred t$.  For any sequence $\sigma \in \Sigma_0$, we then have
\begin{align*}
\sigma \in \Sigma(t,R)
    &\implies R \cap \pred t \in I^+(t,\sigma)   \\
    &\implies S \cap \pred t \in I^+(t,\sigma)   \\
    &\implies \sigma \in \Sigma(t,S),
\end{align*}
as required.
\end{proof}
\end{lemma}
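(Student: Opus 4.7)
The plan is to exploit the fact that each $I^+(t,\sigma)$, being the co-ideal of an ideal, is upward closed under supersets within $\pred t$. So the containment $R \subseteq S$ transfers to a containment $R \cap \pred t \subseteq S \cap \pred t$, which then lifts through membership in the co-ideal.

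More concretely, I would first invoke Lemma~\ref{I-J-k-complete} to ensure that $I(t,\sigma)$ is genuinely an ideal on $\pred t$ for each $\sigma \in \Sigma_0$ (this uses $t \in S_\omega \subseteq S_{|\sigma|-1}$, so $I(t,\sigma)$ is defined). Then, for the sake of showing $\Sigma(t,R) \subseteq \Sigma(t,S)$, I would fix an arbitrary $\sigma \in \Sigma(t,R)$, which by definition means $R \cap \pred t \in I^+(t,\sigma)$. Since $R \subseteq S$ implies $R \cap \pred t \subseteq S \cap \pred t \subseteq \pred t$, and since $I^+(t,\sigma)$ is the co-ideal of an ideal on $\pred t$ (hence closed under taking supersets within $\pred t$), I conclude $S \cap \pred t \in I^+(t,\sigma)$. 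By definition of $\Sigma(t,S)$, this gives $\sigma \in \Sigma(t,S)$, as required.

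There is no genuine obstacle here: the lemma is essentially a restatement of the monotonicity of the co-ideal membership relation. The only step that requires care is to make sure that $I(t,\sigma)$ is defined for the relevant $\sigma$, which is automatic because $t \in S_\omega \subseteq S_n$ for all $n < \omega$, so both $I(t,\sigma)$ and $J(t,\sigma)$ are well-defined for every nonempty $\sigma \in k^{<\omega}$. No recursive unpacking of the definitions of $I(t,\sigma)$ or $J(t,\sigma)$ is needed, which is why this is a genuinely elementary observation that will serve as a monotonicity building block for later, more substantial reasoning about the sets $\Sigma(t,S)$.
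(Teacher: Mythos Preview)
Your proposal is correct and follows essentially the same approach as the paper: both arguments observe that $R \subseteq S$ gives $R \cap \pred t \subseteq S \cap \pred t$, and then use upward closure of the co-ideal $I^+(t,\sigma)$ to transfer membership. Your version is slightly more explicit about why $I(t,\sigma)$ is defined and why it is an ideal, but the substance is identical.
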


For any stationary subtree $S \subseteq S_\omega$, recall that $t$
is called a reflection point of $S$ if $S \cap \pred t \in
I^+_t$.  Also recall that by Lemma~\ref{reflection}, we have
\[
\left\{ t \in S : S \cap \pred t \in I_t \right\} \in NS^T_\nu.
\]

%Lemma~\ref{reflection} allows us to
%choose a club set $C^S \subseteq \Gamma_\mathcal N$ such that
%\[
%\left(\forall t \in S \cap C^S\right) \left[S \cap \pred t \in
%I^+_t\right].
%\]

\begin{lemma}\label{reflection-gives-nonempty-sigma}
Fix any stationary subtree $S \subseteq S_\omega$.  
For any $t \in S$, the following are equivalent:
\begin{enumerate}
\item $S \cap \pred t \in I^+_t$;
\item There is some colour $i<k$ such that $\left<i\right> \in \Sigma(t,S)$;
\item $\Sigma(t,S) \neq \emptyset$.
\end{enumerate}
%
%If $t$ is any
%reflection point of $S$, then we have
%\[
%\Sigma(t,S) \neq \emptyset.
%\]
%In particular, for any $t \in S \cap C^S$ (where $C^S$ is the
%club set chosen in Lemma~\ref{reflection}), we have
%$\Sigma(t,S) \neq \emptyset$.  
It follows that
\[
\left\{t \in S : \Sigma(t,S) = \emptyset \right\}
\]
must be a nonstationary subtree

\begin{proof}\hfill
\begin{description}
\item[(1) $\implies$ (2)]
%Recall that $C^S$ was chosen in Lemma~\ref{reflection} so that
%
Let $t$ be any reflection point of $S$.
%Consider any $\alpha \in S \cap C^S$.
We have
%\begin{align*}
\[
S \cap \pred t
    \in I^+_t
%        &&\text{(by choice of $C^S$ in Lemma~\ref{reflection})}
%                                                                \\
%    &= J^+(t,\left<\right>)
%        &&\text{(by definition of $J(t,\left<\right>)$)}   \\
    = \bigcup_{i<k} I^+(t,\left<i\right>)
\]
%        &&\text
{by Corollary~\ref{some-colour-in-coideal-reformulated}}.
%\end{align*}
So there is some colour $i<k$ such that $S \cap \pred t \in
I^+(t,\left<i\right>)$.  But then $\left<i\right> \in
\Sigma(t,S)$, %showing that $\Sigma(t,S) \neq \emptyset$,
as required.
\item[(2) $\implies$ (3)] Clear.
\item[(3) $\implies$ (1)] 
Suppose $\Sigma(t,S) \neq \emptyset$, and choose $\sigma \in \Sigma(t,S)$.
So $S \cap \pred t \in I^+(t,\sigma)$.
Then Lemma~\ref{ideal-inclusion} gives $S \cap \pred t \in I^+_t$, as required.
\end{description}

The final statement then follows from Lemma~\ref{reflection}.
\end{proof}

\end{lemma}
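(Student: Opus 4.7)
The plan is to prove the equivalence of the three conditions by a short cycle (1) $\Rightarrow$ (2) $\Rightarrow$ (3) $\Rightarrow$ (1), relying almost entirely on tools already developed. Each implication reduces to applying a single prior result, so this is really a matter of wiring together the definitions of $\Sigma(t,S)$, the ideal inclusions $I_t \subseteq I(t,\sigma)$ proved in Lemma~\ref{ideal-inclusion}, and the decomposition of $I^+_t$ in Corollary~\ref{some-colour-in-coideal-reformulated}.

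For (1) $\Rightarrow$ (2), I would fix $t \in S$ with $S \cap \pred t \in I^+_t$ and invoke Corollary~\ref{some-colour-in-coideal-reformulated}, which gives
\[
S \cap \pred t \in I^+_t = \bigcup_{i<k} I^+(t,\langle i \rangle),
\]
so $S \cap \pred t \in I^+(t,\langle i \rangle)$ for some colour $i<k$. Since $\langle i \rangle$ is trivially in $\Sigma_0$ (nonempty and one-to-one), this witnesses $\langle i \rangle \in \Sigma(t,S)$. The implication (2) $\Rightarrow$ (3) is immediate from the definition of nonemptiness. For (3) $\Rightarrow$ (1), I would pick any $\sigma \in \Sigma(t,S)$, so that $S \cap \pred t \in I^+(t,\sigma)$, and apply Lemma~\ref{ideal-inclusion}(2), which tells us $I^+(t,\sigma) \subseteq I^+_t$, yielding $S \cap \pred t \in I^+_t$.

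The final statement about nonstationarity then follows by combining the equivalence with Lemma~\ref{reflection}: by the just-proved equivalence,
\[
\{t \in S : \Sigma(t,S) = \emptyset\} = \{t \in S : S \cap \pred t \in I_t\},
\]
and Lemma~\ref{reflection} already asserts that the right-hand side lies in $NS^T_\nu$.

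There is no real obstacle here; the only thing to be careful about is invoking Lemma~\ref{ideal-inclusion}(2) in the correct form, since that lemma requires $\sigma$ to be nonempty, which is precisely ensured by membership in $\Sigma_0$. All three conditions rest on $t \in S \subseteq S_\omega$ and the fact that for such $t$ the ideals $I(t,\sigma)$ are defined for every $\sigma \in k^{<\omega}$ with $\sigma \neq \emptyset$; this is automatic from the definition of $S_\omega$ as $\bigcap_{n<\omega} S_n$.
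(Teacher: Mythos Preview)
Your proof is correct and follows essentially the same route as the paper: the same cycle (1) $\Rightarrow$ (2) $\Rightarrow$ (3) $\Rightarrow$ (1) using Corollary~\ref{some-colour-in-coideal-reformulated} for the first step and Lemma~\ref{ideal-inclusion} for the last, with the final statement obtained from Lemma~\ref{reflection}. Your extra remark making the identity $\{t \in S : \Sigma(t,S) = \emptyset\} = \{t \in S : S \cap \pred t \in I_t\}$ explicit is a welcome clarification.
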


For any stationary subtree, Lemma~\ref{reflection-gives-nonempty-sigma}
tells us that ``almost all'' of its points have nonempty $\Sigma$,
%Having a big set on which $\Sigma \neq \emptyset$ is nice, 
but we
would like to have a large set on which $\Sigma$ is constant.
%\footnote{Say
%something useful here or else remove this sentence.}
Only the case $R_0 = S_\omega$ of the following lemma will be used in our proof 
of the Main Theorem~\ref{BHT-trees-regular}, 
but there is no extra effort in stating it with greater generality:

\begin{lemma} (cf.~\cite[Lemma~3.8]{BHT})\label{constant-sigma}
For every stationary subtree $R_0 \subseteq S_\omega$, there are a
stationary subtree $R \subseteq R_0$ and a fixed $\Sigma \subseteq
\Sigma_0$ such that for all stationary $S \subseteq R$ 
we have
\[
\left\{ t \in S : \Sigma(t,S) \neq \Sigma \right\} \in NS^T_\nu.
\]

%there is a
%club set $C^\Sigma_S \subseteq \lambda$ such that
%\[
%\left(\forall t \in S \cap C^\Sigma_S \right)
%\left[\Sigma(t,S) = \Sigma\right].
%\]

\begin{proof}
Fix a stationary subtree $R_0 \subseteq S_\omega$, and recall that
$\Sigma_0$ is defined previously.

We will attempt to construct, recursively,
\todo{Decide whether to start at 0 or 1, 
and whether to end at finite $m$ or to present them as infinite sequences.} 
decreasing sequences
\todo{Settled on using symbols $\subsetneqq$ and $\supsetneqq$,
rather than the variants 
$\subsetneq, \varsubsetneq$, and $\varsubsetneqq$ (and their mirror-images).
%Decide which of the four symbols to use for $\supsetneq$, 
%and use it consistently, 
%including for $\subsetneq$, and in subsequent lemmas as well.
}
\[
R_0 \supseteq R_1 \supseteq R_2 \supseteq R_3 \supseteq \cdots
\text{ and } 
\Sigma_0 \supsetneqq \Sigma_1 \supsetneqq \Sigma_2 \supsetneqq \Sigma_3 \supsetneqq \cdots
\]
satisfying the following properties for all $n\geq0$:
\begin{enumerate}
\item $R_n$ is stationary; and
\item
for all $t \in R_{n}$, we have%
\footnote{This condition was misstated in~\cite{BHT} and~\cite{Baumgartner}, leading to some confusion.} 
$\Sigma(t, R_n) \subseteq \Sigma_n$
%
%$\Sigma(\alpha, R_{n-1}) = \Sigma_{n}$.
\end{enumerate}

When $n=0$, we see that $R_0$ and $\Sigma_0$ satisfy the required
properties because $R_0$ was chosen to be stationary and every
possible $\Sigma(t,R_0)$ is a subset of $\Sigma_0$.

%$\Sigma_0$ was defined to contain all possible values in
%$\Sigma(\alpha,R)$ for any $\alpha$ and any $R$.

Fix $n\geq0$, and assume we have constructed $R_n$ and $\Sigma_n$
satisfying the requirements.  We attempt to choose $R_{n+1}$ and
$\Sigma_{n+1}$, as follows:

%$\Sigma(\alpha, R_n) \subseteq \Sigma_n$, because:
%\begin{itemize}
%\item If $n=0$, then this is true because
%\item If $n\geq1$,
%\end{itemize}

Consider any stationary set $S \subseteq R_n$.
%
%Using Lemma~\ref{club-nonempty-sigma}, fix club set $C^{S}_1
%\subseteq \lambda$ such that
%\[
%\left(\forall \alpha \in S \cap C^{S}_1\right)
%\left[\Sigma(\alpha,S) \neq \emptyset \right].
%\]
%Of course $S \cap C^{S}_1$ is stationary.
%
For each $\Gamma \subseteq \Sigma_{n}$ define
\[
S^\Gamma = \left\{ t \in S : \Sigma(t,S) = \Gamma
\right\}.
\]

There are now two possibilities:
\begin{itemize}
\item If there is some stationary $S \subseteq R_n$ and
$\Gamma \subsetneqq \Sigma_n$ such that $S^\Gamma$ is stationary,
then let $\Sigma_{n+1} = \Gamma$ and $R_{n+1} = S^\Gamma$.  
For each
$t \in R_{n+1}$, since $R_{n+1} \subseteq S$, we have (using
Lemma~\ref{subset-gives-smaller-sigma})
\[
\Sigma(t, R_{n+1}) \subseteq \Sigma(t, S) = \Gamma = \Sigma_{n+1},
\]
so it is clear that $R_{n+1}$ and $\Sigma_{n+1}$ satisfy the
properties required for our decreasing sequences.

Recall that $\Sigma_0$ is finite.  A strictly decreasing sequence of
subsets of a finite set cannot be infinite, so after some finite
$m$, this alternative will be impossible.

\item Otherwise, for all stationary $S \subseteq R_n$ and all $\Gamma
\subsetneqq \Sigma_n$, $S^\Gamma$ is nonstationary.  So we set $R =
R_n$ and $\Sigma = \Sigma_n$ and we verify that these sets satisfy
the conclusion of the lemma:

Fix a stationary subtree $S \subseteq R_n$.
For any $t \in S$, Lemma~\ref{subset-gives-smaller-sigma} and property~(2) above give
\[
\Sigma(t,S) \subseteq \Sigma(t,R_n) \subseteq \Sigma_n, 
\]
so that we have
%%We then have
%\[
%S = \bigcup_{\Gamma \subseteq \Sigma_n} S^\Gamma.
%\]
%
%For each stationary $S \subseteq R$, 
%we have
%\footnote{Not good that there's still a bound variable $\Sigma$ after we have fixed $\Sigma$.}
\[
\left\{ t \in S : \Sigma(t,S) \neq \Sigma_n \right\} 
	= \bigcup_{\Gamma \subsetneqq \Sigma_n} S^\Gamma.
	%\in NS^T_\nu,
\]
There are only finitely many subsets of $\Sigma_n$,
so this set is is the union of finitely many nonstationary subtrees, so it must be nonstationary, 
as required.
%giving the required result.
%
%
%define the set
%\[
%C^\Sigma_S = \lambda \setminus \bigcup_{\Sigma \subsetneqq \Sigma_n}
%S^\Sigma.
%\]
%This set $C^\Sigma_S$ is club in $\lambda$, as we have removed
%finitely many nonstationary sets from $\lambda$.  Also, we have $S
%\cap C^\Sigma_S = S^{\Sigma_n}$, implying that for all $t \in S
%\cap C^\Sigma_S$, $\Sigma(t,S) = \Sigma_n = \Sigma$, as
%required.
\qedhere
\end{itemize}
\end{proof}

\end{lemma}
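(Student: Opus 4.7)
The plan is to proceed by descent on the finite collection $\Sigma_0$. I would construct, recursively, a weakly decreasing sequence of stationary subtrees $R_0 \supseteq R_1 \supseteq R_2 \supseteq \cdots$ together with a strictly decreasing sequence $\Sigma_0 \supsetneqq \Sigma_1 \supsetneqq \Sigma_2 \supsetneqq \cdots$ of subsets of $\Sigma_0$, maintaining the invariant that $\Sigma(t, R_n) \subseteq \Sigma_n$ for every $t \in R_n$. The invariant holds trivially at stage $n=0$, since $\Sigma(t, R_0) \subseteq \Sigma_0$ by definition.

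Given $(R_n, \Sigma_n)$, I would ask whether there exist a stationary $S \subseteq R_n$ and some proper subset $\Gamma \subsetneqq \Sigma_n$ such that the set
\[
S^\Gamma = \left\{ t \in S : \Sigma(t,S) = \Gamma \right\}
\]
is stationary. If such $S$ and $\Gamma$ exist, set $R_{n+1} = S^\Gamma$ and $\Sigma_{n+1} = \Gamma$. For any $t \in R_{n+1} \subseteq S$, Lemma~\ref{subset-gives-smaller-sigma} gives $\Sigma(t, R_{n+1}) \subseteq \Sigma(t, S) = \Gamma = \Sigma_{n+1}$, so the invariant is preserved. If no such $S$ and $\Gamma$ exist, stop.

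Because $\Sigma_0$ is finite and the $\Sigma_n$ are strictly decreasing, the recursion must terminate after some finite number of steps $m$. Set $R = R_m$ and $\Sigma = \Sigma_m$. To verify the conclusion, fix an arbitrary stationary $S \subseteq R$. For each $t \in S \subseteq R$, Lemma~\ref{subset-gives-smaller-sigma} combined with the invariant yields $\Sigma(t,S) \subseteq \Sigma(t,R) \subseteq \Sigma$, so the exceptional set decomposes as
\[
\left\{ t \in S : \Sigma(t,S) \neq \Sigma \right\} = \bigcup_{\Gamma \subsetneqq \Sigma} S^\Gamma.
\]
By the termination condition applied at stage $m$, each $S^\Gamma$ appearing in this union is nonstationary, and the union is finite (there being only $2^{|\Sigma|}$ possible choices of $\Gamma$), so the whole set lies in $NS^T_\nu$ by finite additivity of the ideal.

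There is no serious obstacle: the entire argument rests on the finiteness of $\Sigma_0$, which forces termination, and on Lemma~\ref{subset-gives-smaller-sigma}, which propagates the shrinking $\Sigma$ down to stationary subsets. The only point needing care is to ensure that the invariant is carried from stage $n$ to stage $n+1$ and specialized correctly from $R_n$ to the new candidate $R_{n+1}$; this is handled by the inclusion $R_{n+1} \subseteq S \subseteq R_n$ together with one application of Lemma~\ref{subset-gives-smaller-sigma}.
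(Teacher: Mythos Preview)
Your proposal is correct and follows essentially the same approach as the paper's proof: the same descent on the finite set $\Sigma_0$, the same invariant $\Sigma(t,R_n)\subseteq\Sigma_n$, the same choice $R_{n+1}=S^\Gamma$ when a witness exists, and the same verification at termination via Lemma~\ref{subset-gives-smaller-sigma} and finite additivity of $NS^T_\nu$.
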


From Lemma~\ref{reflection-gives-nonempty-sigma}, it follows that
any $\Sigma$ obtained from Lemma~\ref{constant-sigma} must be
nonempty.  Since any $\Sigma \subseteq \Sigma_0$ is also finite, it
is reasonable to consider a sequence of colours $\sigma \in \Sigma$
that is maximal by inclusion.  Here we explore the consequences of
$\sigma$ being maximal.

%From Lemma~\ref{reflection-gives-nonempty-sigma}, it follows that
%Any $\Sigma$ obtained from this lemma %~\ref{constant-sigma} 
%must be
%nonempty.  %Since any $\Sigma \subseteq \Sigma_0$ is also finite, 
%it is reasonable to 
%we consider a sequence of colours $\sigma \in \Sigma$
%that is maximal by inclusion.  Here 
%We explore the consequences of the sequence
%$\sigma$ being maximal %(by inclusion) 
%in $\Sigma$:

\begin{lemma}(cf.~\cite[Lemma~3.9]{BHT})\label{ideal-from-maximal-sigma}
Suppose $S \subseteq S_\omega$ is stationary, and there is $\Sigma
\subseteq \Sigma_0$ %and club $C^\Sigma_S \subseteq \lambda$ 
such that
\[
\left\{ t \in S : \Sigma(t,S) \neq \Sigma \right\} \in NS^T_\nu.
\]
%\[
%\left(\forall t \in S \cap C^\Sigma_S\right)
%\left[\Sigma(t, S) = \Sigma \right].
%\]
Suppose also that $\sigma \in \Sigma$ is maximal by inclusion.  
Then
there are $s \in S$ with $\Sigma(s,S) = \Sigma$ and stationary $R \subseteq
S$, with $\{s\} <_T R$, such that%\footnote{More intuitive to state
%the conclusion using filter sets rather than ideal sets; can we?}
%and
\[
\left(\forall t \in R \right) 
	\Biggl[ S \cap \pred s \cap \bigcup_{i \notin \range(\sigma)} c_i(t) \in I(s,\sigma) \Biggr].
\]

\begin{proof}
We define
\[
S' = \left\{ t \in S : \Sigma(t,S) = \Sigma \right\}
\]
and 
\[
S'' = \left\{ t \in S' : S' \cap \pred t \in I^+_t \right\}.
\]
%and we will show that $S'$ is a stationary subtree:
By hypothesis, $S$ is stationary, and $\left\{ t \in S : \Sigma(t,S) \neq \Sigma \right\}$ is nonstationary,
so $S'$ is stationary.
Applying Lemma~\ref{reflection} to $S'$ gives us that $\left\{ t \in S' : S' \cap \pred t \in I_t \right\}$ is nonstationary,
%Putting all these facts together, 
so it follows that $S''$ is stationary.

%Start by choosing stationary $R \subseteq S$ and club $C^\Sigma_R$
%such that
%\[
%\left(\forall \alpha \in R \cap C^\Sigma_R \right)
%\left[\Sigma(\alpha,R) = \Sigma_S\right].
%\]

%First, apply Lemma~\ref{reflection} to $S \cap C^\Sigma_S$ to find a
%club set $C^{S \cap C^\Sigma_S} \subseteq \lambda$ such that
%\[
%\left(\forall t \in \left(S \cap C^\Sigma_S\right) \cap C^{S
%\cap C^\Sigma_S} \right) \left[S \cap C^\Sigma_S \cap \pred t \in
%I^+_t\right].
%\]

By assumption, $\sigma$ is maximal in $\Sigma$.  That is, $\sigma
\in \Sigma$ but
\[
\left(\forall i\notin\range(\sigma)\right)
\left[\sigma^\frown\left<i\right> \notin \Sigma \right].
\]

Now consider any $t \in S''$.  
Since $t \in S'$, we have $\Sigma(t,S) = \Sigma$.
%Now consider any $t \in (S \cap C^\Sigma_S) \cap C^{S \cap
%C^\Sigma_S}$ (a stationary set).  We have
%\[
%S \cap C^\Sigma_S \cap \pred t \in I^+_t,
%\]
%and also $\Sigma(t,S) = \Sigma$. 
For every $i \notin
\range(\sigma)$, we have $\sigma^\frown\left<i\right> \notin
\Sigma(t,S)$, meaning that $S \cap \pred t \notin I^+(t,
\sigma^\frown\left<i\right>)$, equivalently $S \cap \pred t \in
I(t, \sigma^\frown\left<i\right>)$, and $S \cap \pred t \cap
c_i(t) \in J(t, \sigma)$.  It follows that
\[
\bigcup_{i\notin\range(\sigma)} S \cap c_i(t) \in J(t,
\sigma),
\]
meaning that
\todo{This is the step that would fail if we had
added $\cap X$ to the definition of $J(t,\sigma)$.}
\[
\left\{ s \in S_{\left|\sigma\right|-1} \cap \pred t :
\bigcup_{i\notin\range(\sigma)} S \cap c_i(t) \cap \pred s \in
I^+(s, \sigma) \right\} \in I_t.
\]
Since $t \in S''$, we have $S' \cap \pred t \in I^+_t$.
Then, since $S' \subseteq S \subseteq S_\omega \subseteq S_{\left|\sigma\right|-1}$,
we can choose $s_t \in S' \cap %C^\Sigma_S \cap 
\pred t$
such that
\[
\bigcup_{i\notin\range(\sigma)} S \cap c_i(t) \cap \pred {s_t}
\in I(s_t, \sigma).
\]

So for every $t \in S'' %(S \cap C^\Sigma_S) \cap C^{S \cap C^\Sigma_S}
$ (a stationary subtree of $T$), we have chosen
$s_t <_T t$ with $s_t \in S' %\cap C^\Sigma_S
$,
satisfying the formula immediately above.  This defines a regressive
function on a stationary subtree, 
so by Theorem~\ref{nonstationary-normal} it must have a constant value $s \in S'$ on some stationary subtree
$R \subseteq S''$ with $\{s\} <_T R$, meaning that for all $t \in R$, we have $s_t = s$,
%Applying Fodor's Lemma
%\cite[Chapter~II, Lemma~6.15]{Kunen}, the function must be constant
%on a stationary set.  That is, we can fix $s \in S \cap
%C^\Sigma_S$ and a stationary set
%\[
%R \subseteq S \cap C^\Sigma_S \cap C^{S \cap C^\Sigma_S}
%\]
%such that for all $t \in R$, we have $s_t = s$,
giving
\[
\bigcup_{i\notin\range(\sigma)} S \cap c_i(t) \cap \pred s \in
I(s, \sigma).
\]
Since $R \subseteq S'' \subseteq S' \subseteq S$, this implies the required result.
\end{proof}
\end{lemma}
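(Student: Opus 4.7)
The plan is to execute a standard two-stage pressing-down, using the maximality of $\sigma$ to feed a single colour-set into the ideal $J(t,\sigma)$, and then unpacking the definition of $J$ to extract the required $s$. First I would pass to the stationary subtree $S' = \{t \in S : \Sigma(t,S) = \Sigma\}$, which is stationary because by hypothesis its complement in $S$ is in $NS^T_\nu$. Next, applying Lemma~\ref{reflection} to $S'$ itself, the set $S'' = \{t \in S' : S' \cap \pred t \in I^+_t\}$ is still stationary.

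The heart of the argument is exploiting maximality of $\sigma$ at each $t \in S''$. For every $i \notin \range(\sigma)$, maximality forces $\sigma^\frown\langle i\rangle \notin \Sigma = \Sigma(t,S)$, which unwinds to $S \cap \pred t \in I(t,\sigma^\frown\langle i\rangle)$, equivalently $S \cap \pred t \cap c_i(t) \in J(t,\sigma)$. Since $J(t,\sigma)$ is $\kappa$-complete (Lemma~\ref{I-J-k-complete}) and only finitely many colours are involved, their union
\[
\bigcup_{i \notin \range(\sigma)} S \cap c_i(t) \cap \pred t \in J(t,\sigma).
\]
Now I would unpack the defining formula for $J(t,\sigma)$: this last membership means that the set
\[
B_t = \left\{ s \in S_{|\sigma|-1} \cap \pred t : \bigcup_{i \notin \range(\sigma)} S \cap c_i(t) \cap \pred s \in I^+(s,\sigma) \right\}
\]
lies in $I_t$.

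Since $t \in S''$ we have $S' \cap \pred t \in I^+_t$, and $I_t$-co-ideal minus $I_t$-ideal is still co-ideal, so $(S' \cap \pred t) \setminus B_t$ is $I^+_t$-positive; in particular it is nonempty. Using $S' \subseteq S_\omega \subseteq S_{|\sigma|-1}$, pick any $s_t$ in this difference. Then $s_t \in S'$ with $s_t <_T t$, and $s_t \notin B_t$ gives
\[
\bigcup_{i \notin \range(\sigma)} S \cap c_i(t) \cap \pred{s_t} \in I(s_t,\sigma).
\]

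Finally, $t \mapsto s_t$ is a regressive function on the stationary subtree $S''$, so by Theorem~\ref{nonstationary-normal} (normality of $NS^T_\nu$) it is constant on some stationary $R \subseteq S''$, say with value $s$. Then $s \in S'$, so $\Sigma(s,S) = \Sigma$; each element of $R$ strictly extends $s$, so $\{s\} <_T R$; and rewriting the displayed membership with $s_t$ replaced by $s$ yields exactly the required conclusion. The main obstacle is purely bookkeeping: keeping track of which of the five pairwise-nested subtrees ($S, S', S'', R$, and the intermediate reflection set) a given node lives in, and making sure that when the definitions of $I(\cdot,\sigma^\frown\langle i\rangle)$ and $J(\cdot,\sigma)$ are unrolled the co-ideal/ideal switch is applied in the correct direction. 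Nothing here requires new machinery beyond what has been assembled in Lemmas~\ref{reflection}, \ref{I-J-k-complete}, \ref{ideal-inclusion}, and Theorem~\ref{nonstationary-normal}.
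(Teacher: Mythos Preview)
Your proposal is correct and follows essentially the same route as the paper's proof: pass to $S'$, then to its reflection points $S''$, use maximality of $\sigma$ to force the colour-union into $J(t,\sigma)$, unpack the definition of $J$ to find $s_t \in S' \cap \pred t$ outside the bad set, and press down. The only cosmetic difference is that you invoke $\kappa$-completeness of $J(t,\sigma)$ for the union step (where mere closure under finite unions suffices), and you list Lemma~\ref{ideal-inclusion} among the tools even though it is not actually used here.
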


Now it's time to put everything together to get the required
homogeneous sets.  Fix an ordinal%
\footnote{Recall that $\log\kappa$ is the smallest cardinal $\tau$ such that
$2^\tau \geq \kappa$.}
\todo{Consider not fixing $\xi$ until the next page.} 
$\xi < \log \kappa$. %, where  %such that $2^{\left|\xi\right|} < \kappa$.  
Recall that $T$ is a non-$\nu$-special tree (where $\nu = 2^{<\kappa}$), and $c : [T]^2 \to k$, and
we need to find a chain $X \subseteq T$ of
order type $\kappa + \xi$ that is homogeneous for the partition $c$.
%where $\lambda = (2^{<\kappa})^+$, and $c$ is a partition into $k$
%colours on pairs from $\lambda$.

%Now, putting together all the lemmas to get the result: that's the
%exciting part, to be presented at the seminar!

Recall that $S_\omega$ is stationary (Lemma~\ref{Sn-properties}(4)).
Using Lemma~\ref{constant-sigma}, 
we fix stationary $S \subseteq
S_\omega$ and $\Sigma \subseteq \Sigma_0$ such that for all
stationary $R \subseteq S$ 
we have
\[
\left\{ u \in R : \Sigma(u,R) \neq \Sigma \right\} \in NS^T_\nu.
\]

%there is club $C^\Sigma_R \subseteq
%T$
%%with corresponding $\Sigma_R = \Sigma$, meaning
%such that
%\[
%\left(\forall u \in R \cap C^\Sigma_R \right)
%\left[\Sigma(u,R) = \Sigma\right].
%\]
%%We then choose stationary $S \subseteq S'$ such that in fact we have
%%club $C^\Sigma_S$ and corresponding $\Sigma_S = \Sigma$.

%Then,
Using Lemma~\ref{reflection-gives-nonempty-sigma}, 
$\Sigma \neq
\emptyset$.  Fix $\sigma \in \Sigma$ that is maximal by inclusion,
and let $m = \left|\sigma\right|$.

%Fix\footnote{Don't fix them here; move this and check} such a
%$\sigma$.

We now apply Lemma~\ref{ideal-from-maximal-sigma} 
to $S$, $\Sigma$, and $\sigma$.  
This gives us $s \in S$ with $\Sigma(s,S) = \Sigma$ and
stationary $R \subseteq S$, with $\{s\} <_T R$, such that
\[
\left(\forall u \in R \right) \left[ S \cap \pred s \cap
\bigcup_{i \notin \range(\sigma)} c_i(u) \in I(s,\sigma)
\right].
\]
Our goal will be to find chains $W \subseteq S \cap \pred s$ and $Y \subseteq R$ such that
$W$ has order-type $\kappa$, $Y$ has order-type $\xi$, and $W \cup Y$ is homogeneous for $c$.
%
%The strategy will be to find some node $s \in T$ and chains $W, Y \subseteq T$ 
%%and $Y$ of ordinals 
%such that
%\[
%W <_T \{s\} <_T Y, %< T, 
%\]
%where $W$ has order type $\kappa$, $Y$ has order type $\xi$, and $W
%\cup Y$ is homogeneous as required.  
That is, we require the chains
$W$ and $Y$ to satisfy
\[
\left[W\right]^2 \cup \left(W \otimes Y\right) \cup \left[Y\right]^2
\subseteq c^{-1}(\{i\})
\]
for some $i<k$.

Since %$s \in S \cap C^\Sigma_S$, we have 
$\Sigma(s,S) = \Sigma$, we have %so that 
$\sigma \in \Sigma(s,S)$, meaning
\[
S \cap \pred s \in I^+(s,\sigma).
\]

Since $R \subseteq S$, %(and $R$ is stationary), 
by choice of $S$ we have
\[
\left\{ u \in R : \Sigma(u,R) \neq \Sigma \right\} \in NS^T_\nu,
\]
and $R$ is stationary, so we can fix
%we choose
%stationary $R \subseteq R$ such that we have club $C^\Sigma_R$ and
%corresponding $\Sigma_R = \Sigma$.  We then choose
$u \in R$ such that $\Sigma(u,R) = \Sigma$, %(stationary set and therefore nonempty), 
so that $\sigma \in \Sigma = \Sigma(u,R)$, giving
\[
R \cap \pred u \in I^+(u,\sigma).
\]

We have $\xi < \log \kappa \leq \kappa$, where of course $\log\kappa$ is infinite.

%Let $\theta$ be the smallest cardinal\footnote{In the original paper
%\cite{BHT}, this $\theta$ is called $\log \kappa$.} such that
%$2^\theta \geq \kappa$.  Certainly $\theta$ is infinite and $\theta
%\leq \kappa$.  Since $\xi$ was assumed to satisfy
%$2^{\left|\xi\right|}<\kappa$, we have $\xi < \theta$.  
We apply Lemma~\ref{exist-ordinal-pigeonhole} to the ordinal $\xi$, 
obtaining
an ordinal $\rho$ with $\xi\leq\rho<\log\kappa$ such that
\[
\rho \to \left(\xi\right)^1_m.
\]

We then apply Lemma~\ref{exist-good} to $R \cap \pred u$ and the ordinal $\rho$, to 
obtain $Z \subseteq R \cap \pred u$ that is
$(\rho,\sigma)$-good.  Since $Z \subseteq R$, we have $\{s\} <_T
Z$ and for every $t \in Z$ we have
\[
S \cap \pred s \cap \bigcup_{i \notin \range(\sigma)} c_i(t) \in
I(s,\sigma).
\]

Since $Z$ is $(\rho,\sigma)$-good, it
has order type $\rho^m$, and therefore $\left|Z\right| =
\left|\rho^m\right| < \log\kappa \leq \kappa$.  Since $I(s,\sigma)$
is a $\kappa$-complete ideal (Lemma~\ref{I-J-k-complete}), 
it follows that
\[
\bigcup_{t \in Z} \left( S \cap \pred s \cap \bigcup_{i \notin
\range(\sigma)} c_i(t) \right) \in I(s,\sigma),
\]
or
\[
    S \cap \pred s \cap \bigcup_{t \in Z}
\left(
    \bigcup_{
    \substack{%t \in Z \\
              i \notin \range(\sigma)}}
    c_i(t)
\right)
    \in I(s,\sigma).
\]
We now let
\[
H =
    S \cap \pred s \setminus \bigcup_{t \in Z}
\left(
    \bigcup_{
    \substack{%t \in Z \\
              i \notin \range(\sigma)}}
    c_i(t)
\right),
\]
and since $S \cap \pred s \in I^+(s,\sigma)$, it follows that
\[
H \in I^+(s,\sigma).
\]
We can also write
\[
H = \left\{ r \in S \cap \pred s :
    \left(\forall t \in Z \right)
    \left[c(\{r,t\}) \in \range(\sigma) \right]
\right\}.
\]

For each $r \in H$, we define a function $g_r : Z \to
\range(\sigma)$ by setting, for each $t \in Z$,
\[
g_r (t) = c(\{r,t\}).
\]
How many different functions from $Z$ to $\range(\sigma)$ can there
be?  At most $\left|\sigma\right|^{\left|Z\right|}$.  
But $\left|Z\right| < \log\kappa$ and $\sigma$ is finite, so $\left|\sigma\right|^{\left|Z\right|} < \kappa$.

% $\sigma$
%is finite and $2^{\left|Z\right|} < \kappa$, so the number of
%functions\footnote{Insert more rigourous counting argument.} is less
%than $\kappa$.  
For each function $g : Z \to \range(\sigma)$, define
\[
H_g = \left\{ r \in H : g_r = g \right\}.
\]
There are fewer than $\kappa$ such sets, and their union is all of
$H$, which is in the $\kappa$-complete co-ideal $I^+(s,\sigma)$,
so there must be some function $g$ such that $H_g \in
I^+(s,\sigma)$.  Fix such a function $g : Z \to \range(\sigma)$.

We then apply Lemma~\ref{refine-good} 
to the colouring $g$, and we
obtain $Z' \subseteq Z$, homogeneous for $g$, that is
$(\xi,\sigma)$-good.  That is, we have a $(\xi,\sigma)$-good $Z'
\subseteq Z$ and a fixed colour $i \in \range(\sigma)$ such that for
all $t \in Z'$ we have $g(t) = i$.
%$Z' \subseteq g^{-1}(\{i\})$.
But this means that for all $r \in H_g$ and all $t \in Z'$
we have
\[
c(\{r,t\}) = g_r(t) = g(t) = i,
\]
showing that $H_g \otimes Z' \subseteq c^{-1}(\{i\})$.

Now $Z'$ is $(\xi,\sigma)$-good and $i \in \range(\sigma)$, so using Lemma~\ref{homog-from-good} 
we fix $Y \subseteq Z'$ that is
$i$-homogeneous for $c$ and has order type $\xi$.

Also, applying Lemma~\ref{homog-from-coideal} to $H_g$, 
we get $W
\subseteq H_g$ such that $\left|W\right| = \kappa$ and $W$ is
$i$-homogeneous for $c$.

So then $W \cup Y$ is $i$-homogeneous of order type $\kappa + \xi$,
as required.  This completes the proof of the Main Theorem, Theorem~\ref{BHT-trees-regular}.

%\cleardoublepage		% use this with book classes as long as the bibliography is an even number of pages

% Bibliography

%\cleardoublepage	% remove this if it would save a page
%\clearpage

% This workaround is required for \todolist to work with an AMS document class,
% according to subsubsection 1.6.3 (p. 8--9) of todolist.pdf:
\makeatletter
\providecommand\@dotsep{5}
\makeatother
%\todototoc		% with AMS classes this seems to happen automatically
\listoftodos\relax

\end{document}